\newtheorem{thm}{Theorem}[section]
\newtheorem{prop}[thm]{Proposition}
\newtheorem{cor}[thm]{Corollary}
\newtheorem{lem}[thm]{Lemma}
\theoremstyle{definition}
\newtheorem{rem}[thm]{Remark}
\newtheorem{defn}[thm]{Definition}
\newcommand{\C}{\mathbb{C}}
\newcommand{\Z}{\mathbb{Z}}
\newcommand{\wt}{\mbox{\rm wt}\ }
\newcommand{\res}{\mbox{\rm Res}}
\newcommand\vir{\mathcal{L}}
\newcommand{\vac}{\mathbf{1}}
\begin{document}

\title[Graded pseudo-traces for strongly interlocked modules]{
Graded pseudo-traces for strongly interlocked modules for a vertex operator algebra and applications 
}

\author{Katrina Barron}
\address{Department of Mathematics, University of Notre Dame, Notre Dame, IN 46556}
\email{kbarron@nd.edu}
\thanks{The first named author was supported by the Association for Women in Mathematics and the National Science Foundation  Travel Grant program and a Simons Foundation Travel Support Grant.  The second named author was supported by FONDECYT Project 3190144. The third named author was supported by the National Science Foundation under Grant No. DMS-2102786.  The last named author was supported by the College of Arts and Sciences at Illinois State University Research Grant.}
\author{Karina Batistelli}
\address{Department of Mathematics, Universidad de Chile, Santiago, Chile}
\email{kbatistelli@unc.edu.ar}
\author{Florencia Orosz Hunziker}
\address{Department of Mathematics, University of Colorado Boulder, Boulder, CO 80309.}
\email{florencia.orosz@colorado.edu}
\author{Gaywalee Yamskulna}\address{Department of Mathematics, Illinois State University, Normal, IL 61790} \email{gyamsku@ilstu.edu }

\subjclass{Primary 17B68, 17B69, 17B81, 81R10, 81T40, 81T60}

\date{\today}

\keywords{Vertex operator algebras, conformal field theory, Virasoro algebra, Heisenberg algebra, pseudo-traces.}

\begin{abstract} We define the notion of {\it strongly interlocked} for indecomposable generalized modules for a vertex operator algebra, and show that the notion of graded pseudo-trace is well defined for modules which satisfy this property  in certain settings.  We prove that in these settings the graded pseudo-trace is a symmetric linear operator that satisfies the logarithmic derivative property.  As an application, 
we prove that all the indecomposable reducible generalized modules for the rank one Heisenberg (one free boson) vertex operator algebras are strongly interlocked,  independent of the choice of conformal vector and have well-defined graded pseudo-traces. We also completely characterize which indecomposable reducible generalized modules for the universal Virasoro vertex operator algebras induced from the level zero Zhu algebra are strongly interlocked. In particular, we prove that the universal Virasoro vertex operator algebra with central charge  $c$ has modules induced from the level zero Zhu algebra with conformal weight $h$ that are strongly interlocked if and only if either $(c,h)$ is outside the extended Kac table, or the central charge is either $c = 1$ or $25$, the conformal weight satisfies a certain property, and the level zero Zhu algebra module being induced is determined by a Jordan block of size less than a certain specified parameter. We prove that all these modules for the universal Virasoro vertex operator algebra that are strongly interlocked have well-defined graded pseudo-traces. We give several examples of graded pseudo-traces for these Heisenberg and Virasoro strongly interlocked modules. 

\end{abstract}

\maketitle

\section{Introduction}

Vertex operator algebras are the basic building blocks of conformal field theory, play a major role in the construction of modular tensor categories,  and have fundamental connections to number theory and the representation theory of Lie algebras, finite simple groups, and quantum groups \cite{FLM, Bo, FZ, FGST, CG, CMY, GN, kr}. In particular, in the seminal work of Zhu \cite{Z1, Z}, the space of graded traces (also called graded characters) for $\mathbb{Z}_{\geq 0}$-gradable modules of vertex operator algebra $V$, where $V$ satisfies certain nice properties--- including $C_2$-cofiniteness and rationality--- were shown to be modular invariant. In Zhu's setting, all modules under consideration are graded by eigenspaces of a certain operator $L_0$, these eigenspaces are finite-dimensional, and the rationality condition implies that all indecomposable modules are necessarily irreducible. 

Subsequently, Miyamoto \cite{Miyamoto2004}, studied how if one relaxes the rationality but retains the $C_2$-cofiniteness of the vertex operator algebra, then modular invariance can still hold if one includes not just graded traces, but graded pseudo-traces for  indecomposable reducible modules that are now graded by generalized eigenspaces of $L_0$ which are finite-dimensional (and thus called {\it generalized} modules) and that have a certain property called ``interlocked"  with respect to a certain map; see also \cite{Huang-recent}. Despite the fact that vertex operator algebras pertaining to the Miyamoto setting---irrational and $C_2$-cofinite---have been widely studied \cite{A, am1, AM2, AM3, AM4, CF, FFHST, FGST1, FGST2, FGST, NT}, very few concrete examples have been constructed. Thus to date, very few graded pseudo-traces have been computed or studied \cite{Miyamoto-RIMS, AN}. In addition, Miyamoto's notion of graded pseudo-trace relies on the structure of a symmetric  linear map with certain properties in relation to the higher level Zhu algebras of $V$, and these Zhu algebras are necessarily finite-dimensional in the $C_2$-cofinite setting. 

In this paper, we define the notion of ``strongly interlocked" for  generalized modules for any vertex operator algebra $V$, and show that the notion of graded pseudo-trace for such modules for certain settings is well defined.  In addition, we prove that graded pseudo-traces defined in this way are symmetric linear operators that satisfy the logarithmic derivative property. These are the key properties necessary to prove many other facts about the graded pseudo-traces as stated in \cite{Miyamoto2004}, but in our setting we have these properties and the resulting facts without the need for a symmetric linear map defined with respect to the higher level Zhu algebras for $V$.  

We apply the notions of strongly interlocked generalized modules and graded pseudo-traces for these modules to the setting of the two most prevalent vertex operator algebras, namely the Heisenberg and universal Virasoro vertex operator algebras. In particular, we prove that all  indecomposable reducible  generalized $V$-modules for $V$ the Heisenberg algebra are strongly interlocked  for any central charge, and have a well-defined notion of graded pseudo-trace. 

If $V = V_{Vir}(c,0)$ is the universal Virasoro vertex operator algebra with central charge $c$, we give a complete characterization of which indecomposable reducible  generalized $V$-modules  are strongly interlocked when the module is induced from the level zero Zhu algebra. 
 We show this classification of strongly interlocked modules is highly dependent both on the central charge of $V$, and the size of the level zero Zhu algebra module being induced.  
 This classification of strongly interlocked modules involves the development of new techniques for studying and classifying indecomposable $V_{Vir}(c,0)$-modules. We show that all the strongly interlocked modules for the universal Virasoro vertex operator algebra induced at level zero have well-defined graded pseudo-traces.  We  give several key examples of graded pseudo-traces for the strongly interlocked modules classified in this work. 

The Heisenberg and Virasoro vertex operator algebras are irrational and $C_1$-cofinite but not $C_2$-cofinite, and this paper offers the first systematic study of the extension of Zhu and Miyamoto's work to graded pseudo-traces of generalized $V$-modules beyond  the $C_2$-cofinite setting.

\subsection{Background}

In \cite{Z1, Z}, Zhu showed that if $V$ is rational (i.e., has semi-simple representation theory) and is $C_2$-cofinite (i.e., $\dim V/C_2 < \infty$ for $C_2 = \mathrm{span}_\mathbb{C} \{u_{-2} v \; | \; u, v \in V \}$) then, in particular, the graded dimensions of the simple $V$-modules, as functions in $\tau$ (for $q = e^{2 \pi i \tau}$) converge to holomorphic functions on the complex upper half plane, and the linear space spanned by these holomorphic functions is invariant under the action of $SL_2(\mathbb{Z})$. Here the action of $SL_2(\mathbb{Z})$ refers to the standard action on the complex upper half plane via linear fractional transformations, i.e., M\"obius transformations.  

In fact, Zhu showed much more, including results about more generalized graded traces than graded dimensions, and results about $n$-point correlation functions with $n >1$, i.e., graded traces of products of vertex operators for multiple elements in $V$. However, for the purposes of this introductory paper on the subject of graded pseudo-traces, we will focus on the generalizations by Miyamoto of Zhu's work on graded traces of single modes. More recently, in this $C_2$-cofinite irrational setting, graded pseudo-traces were studied by Huang \cite{Huang-recent} for intertwining operators. 

In 2004 \cite{Miyamoto2004, Miyamoto-Ukraine}, Miyamoto studied how the results of Zhu can hold for certain irrational $C_2$-cofinite vertex operator algebras if one expands the set of graded traces to include ``graded pseudo-traces". That is, if one no longer has rationality of the vertex operator algebra, then the linear space of graded traces is not generally closed under the action of $SL_2(\mathbb{Z})$, however the linear space of graded traces supplemented with the graded pseudo-traces for indecomposable reducible generalized modules is closed under this action.

Irrational vertex operator algebras, i.e., those which admit $L_0$-gradable modules that are indecomposable yet reducible, are the basic objects underlying logarithmic conformal field theory. Recently, logarithmic conformal field theory has come to the fore due to applications to disordered systems in physics as well as deep connections to number theory and the representation theory of quantum groups (cf. \cite{FGST, AM, GRR, CG, CMY}). Of particular interest in the irrational setting of logarithmic conformal field theory, is when $V$ is still $C_2$-cofinite, and this is the setting of Miyamoto's work on graded pseudo-traces. However, to date, there is only one family of examples of such  $C_2$-cofinite irrational vertex operator algebras that is starting to be well understood in terms of its tensor category structure and connections to the representation theory of quantum groups---the $\mathcal{W}(p)$ triplet vertex operator algebras \cite{NT, CMY}---and for which graded pseudo-traces  have been studied \cite{Miyamoto-RIMS,AN}. Unfortunately it is very difficult to construct $C_2$-cofinite irrational vertex operator algebras, and it is also very difficult to carry out the construction of Miyamoto's graded pseudo-traces. Motivated by both the need to understand representation categories for more general vertex algebras, and the need for simplifying the machinery previously required to realize well-defined graded pseudo-traces, in this work we begin to build the theoretical framework to compute graded pseudo-traces for  both a larger class of vertex operator algebras as well as in a setting that requires less machinery.

More precisely, in  \cite{Miyamoto2004} (see also the summary \cite{Miyamoto-Ukraine}, as well as \cite{Huang-recent}) the graded pseudo-traces, in general, rely on the notion of the higher level Zhu algebras for the vertex operator algebra.  Given a vertex operator algebra, $V$, the level $n$ Zhu algebras, for $n \in \mathbb{Z}_{\geq 0}$, form a family of associative algebras that depend only on the internal structure of $V$ but carry fundamental information about the representation theory of $V$. What is now called the level zero Zhu algebra was defined by Frenkel and Zhu \cite{FZ}, and used by Zhu to prove his results. The higher level Zhu algebras, for $n>0$, were defined by 
 Dong, Li, and Mason in \cite{DLM}. 
If $V$ is $C_2$-cofinite and rational, then Frenkel and Zhu proved there is a bijective correspondence between the irreducible modules for $V$ and the irreducible modules for the level zero Zhu algebra. However if $V$ is irrational,  the situation is much more complicated and in general, one needs the higher level Zhu algebras to detect certain indecomposable reducible modules.

Only recently have examples of higher level Zhu algebras been calculated (first by the first named author of the current paper, along with  Vander Werf, and Yang, \cite{BVY}--\cite{BVY-Virasoro}, later by \v{C}eperi\'c in talks given in 2019 presenting the level one Zhu algebra for symplectic fermions, and more recently by the first named author of the current paper, along with  Addabbo \cite{AB-Heisenberg}). Moreover, only recently has the correspondence between indecomposable modules for the vertex operator algebra versus indecomposable 
modules for higher level Zhu algebras started to be understood as studied in \cite{BVY}. However as yet, to our knowledge (and according to Miyamoto) no concrete examples of the graded pseudo-traces arising directly from the higher level 
Zhu algebras have been determined.

The Heisenberg and universal Virasoro vertex operator algebras are examples of irrational vertex operator algebras which are not $C_2$-cofinite, but satisfy another nice finiteness condition called $C_1$-cofiniteness.  These are also the two most prominent families of vertex operator algebras as every vertex operator algebra contains either a universal Virasoro vertex operator algebra as a subalgebra, or a quotient of such a subalgebra, and almost all known vertex operator algebras contain or arise from Heisenberg vertex operator subalgebras. 

To define the notion of graded pseudo-traces for an indecomposable reducible module, Miyamoto makes use of a certain symmetric linear map $\phi$ that is defined on an algebra arising from each level $n$ Zhu algebra of $V$ in the $C_2$-setting where the Zhu algebras have the structure of a Frobenius algebra.  Miyamoto then defines the notion of ``interlocked module with respect to this map $\phi$".  In the more general $C_1$-cofinite setting, one loses some of this machinery because the Zhu algebras are infinite dimensional and thus not Frobenius. However, as we show in this paper, this is not an intractable barrier to defining graded pseudo-traces if one introduces the notion of strongly interlocked modules, a notion that is independent of the structure of the Zhu algebras.

Although Miyamoto's and Huang's results extend the modularity results of Zhu from $C_2$-cofinite rational vertex operator algebras to the $C_2$-cofinite irrational setting, whether these modularity properties can be extended further, remains an open question. This paper lays the fundamental ground work for the beginning  of a systematic study of this question, for, for instance, $C_1$-cofinite vertex operator algebras, as well as even more general settings,  and in \cite{BOHY} we give further insights into modular-type properties of some of these graded pseudo-traces as studied here.

\subsection{Current results}

In this paper, we give a general definition of ``interlocked" for a generalized $V$-module that is independent of the Zhu algebras for $V$.  We then define a refinement of this notion, called ``strongly interlocked", which is a very natural condition.  We give two settings in which strongly interlocked generalized $V$-modules have a well-defined notion of graded pseudo-trace.  We show that in settings such as these where such graded pseudo-traces are well defined, that these graded pseudo-traces are symmetric linear operators that satisfy the logarithmic derivative property. 

Given the importance of the logarithmic derivative property in the proof of the modular invariance of characters obtained by Zhu in the rational and $C_2$-cofinite case \cite{Z} as well as in the proof of modular invariance when graded pseudo-traces are included as studied by Miyamoto in the $C_2$-cofinite and irrational case \cite{Miyamoto2004}, this indicates that the graded pseudo-traces defined here for strongly interlocked modules in certain settings give the correct notion of character in broader settings, such as the irrational $C_1$-cofinite setting in which there are infinitely many non isomorphic irreducible modules. This suggests that the category of strongly interlocked modules associated to  a vertex operator algebra, if closed under the tensor product, can give rise to tensor structures analogous to the modular tensor categories constructed in the rational $C_2$-cofinite setting by  Huang and Lepowsky \cite{HL1}-\cite{HL3}, \cite{H4}. The logarithmic tensor category theory necessary in the irrational setting was developed by Lepowsky, Huang and Zhang \cite{HLZ1}-\cite{HLZ8} and recently applied to construct braided tensor structures associated to universal Virasoro vertex operator algebras at all central charges by the third named author of this work, along with Creutzig, Jiang, Ridout and Yang in \cite{CJOHRY}.

After developing the notions of strongly interlocked modules and identifying two particular settings in which these will give rise to well-defined graded pseudo-traces, we  apply our results to the settings of the most prominent $C_1$-cofinite vertex operator algebras, namely the Heisenberg and universal Virasoro vertex operator algebras, both of which have level zero Zhu algebra  $\mathbb{C}[x]$, which for example fails to be a Frobenius algebra as it is not finite dimensional. For these vertex operator algebras, we classify the indecomposable modules induced from the level zero Zhu algebra  that are  interlocked.  We show that these interlocked Heisenberg and Virasoro modules, are strongly interlocked and that the notion of a graded pseudo-trace is well defined since these examples fall into our two settings where we have shown these strongly interlocked modules have well-defined graded pseudo-traces. We then calculate some of their key graded pseudo-traces. 

We prove that all indecomposable reducible modules for the Heisenberg vertex operator algebra for any choice of conformal vector are strongly interlocked, using the fact that all indecomposable modules are induced from the level zero Zhu algebra in this case, and thus falls into one of our general settings for having strongly interlocked modules with well-defined graded pseudo-traces.  We then compute certain graded pseudo-traces, including the ones associated to the vacuum and conformal vectors. 

For the universal Virasoro vertex operator algebras, $V_{Vir}(c,0)$ for $c \in\mathbb{C}$, as proved in \cite{BVY, BVY-Virasoro}, there are indecomposable modules that are not induced from the level zero Zhu algebra. In this work, we characterize the indecomposable $V_{Vir}(c,0)$-modules induced from the level zero Zhu algebra which are  interlocked. It is an interesting problem, although very difficult, to characterize  interlocked indecomposable $V_{Vir}(c,0)$-modules induced from higher level Zhu algebras. The techniques we develop in this paper for analyzing the $V_{Vir}(c,0)$-modules induced from the level zero Zhu  algebra can be used to analyze the modules induced at higher levels including when one does not know explicitly the higher level Zhu algebra.  

After classifying the generalized $V_{Vir}(c,0)$-modules induced from the level zero Zhu algebra that are interlocked, we show that they are strongly interlocked and fall into the two settings where we have shown such strongly interlocked modules have well-defined graded pseudo-traces. For $V_{Vir}(c,0)$ the results of when an indecomposable module is interlocked are much more intricate than for the Heisenberg vertex operator algebra. In particular, there is a dependency on the central charge $c$ of the vertex operator algebra, and the conformal weight $h$ of the module, as well as the Jordan block size of the module for the level zero Zhu algebra being induced. In our systematic classification of interlocked modules induced from the level zero Zhu algebra for $V_{Vir}(c,0)$, we develop and employ new techniques in our analysis of indecomposable modules. These new techniques allow us to classify which combinations of central charge $c$, conformal weight $h$, and Jordan block size $k$ for an indecomposable module for $V_{Vir}(c,0)$ induced from the level zero Zhu algebra, will result in an interlocked module.  In particular, we uncover subtle behavior for central charges $c = 1$ and $c = 25$. These particular central charges are known for exhibiting particularly interesting behavior in other settings, cf. \cite{Mil2, OH, MY}, and in this work we show that our new techniques uncover new interesting properties.

Via the results of this paper, including the techniques developed here, along with \cite{BVY, BVY-Virasoro} where the level one Zhu algebra is calculated for $V_{Vir}(c,0)$ it is now possible to induce indecomposable reducible $V_{Vir}(c,0)$-modules from this level one Zhu algebra and analyze when such modules are strongly interlocked and thus have well-defined graded pseudo-traces.

 It is also important to note that the results of this paper uncover the following two phenomena: (1) There are settings in which  a vertex operator algebra $V$ has strongly interlocked indecomposable reducible modules with well-defined graded pseudo-traces that vanish for a particular $v \in V$---namely when $V$ is the Heisenberg vertex operator algebra with a certain conformal vector  $\omega^a$, the module is of a certain conformal weight $\lambda = a$, and $v$ is the vacuum, as shown in  Corollary \ref{cor:vanish}; (2) There are settings in which some indecomposable reducible modules for a  fixed vertex operator algebra $V$ are strongly interlocked and have well-defined graded pseudo-traces  whereas other $V$-modules are not strongly interlocked or even interlocked, and have no known well-defined notion of graded pseudo trace. In particular, this latter setting  occurs for $V = V_{Vir}(c,0)$ with central charge $c = 1$ or $25$ as shown in Theorem \ref{interlocked-thm2}.

\subsection{Organization and Main Results} 
This paper is organized as follows:

In Section 2, we start by giving the necessary preliminary definitions, including various notions of module for a vertex operator algebra $V$, the notion of the Zhu algebras for $V$, and the induction functor from modules for the Zhu algebra at any level to a $\mathbb{Z}_{\geq 0}$-graded module for $V$. In Section 2.2, we recall the notion of graded trace, and then in Section 2.3, we briefly recall the notion of graded pseudo-traces in the $C_2$-cofinite setting as defined by Miyamoto in terms of interlocked $V$-modules when one has a certain symmetric linear map $\phi$ on the higher level Zhu algebras. Here we define the notions of {\it weakly interlocked} and {\it interlocked.}  In Section 2.3, we also recall the logarithmic derivative property for graded pseudo-traces as defined by Miyamoto.  

In Section 3, we introduce the  notion of {\it strongly interlocked}  for any  generalized $V$-module,  and define the notion of graded pseudo-trace  in the setting in which there exists a {\it strongly interlocked family of bases} for which the pseudo-trace is invariant with respect to a change of such a basis. We then prove three of the main results of this paper,  Theorems \ref{extra-conditions-theorem}, \ref{Virasoro-exceptional-theorem}, and \ref{log-thm}  which we summarize here:

\medskip

{\bf Setting 1 giving well-defined graded pseudo-traces:} {\it If $V$ has a single generator, level zero Zhu algebra isomorphic to $\mathbb{C}[x]$, and a certain nondegenerate bilinear form, then a $V$-module induced from an indecomposable $\mathbb{C}[x]$-module is strongly interlocked and has well-defined graded pseudo-traces. }

\medskip

{\bf Setting 2 giving well-defined graded pseudo-traces:} {\it If $V$ has a unique irreducible module of weight $\lambda \in \mathbb{C}$ with a one-dimmensional lowest weight space, and $W$ is a strongly interlocked $V$-module with conformal weight $\lambda \in \mathbb{C}$, then $W$ has well-defined graded pseudo-traces.}

\medskip

{\bf Properties of graded pseudo-traces for strongly interlocked modules:} {\it  If $W$ is a strongly interlocked generalized $V$-module, for a vertex operator algebra $V$, with well-defined  graded pseudo-traces, then these graded pseudo-traces are   symmetric, linear, and satisfy the logarithmic derivative property.}  

\medskip

In Section 4, we apply our results to $V = M_a(1)$ the rank one Heisenberg vertex operator algebra for any choice of conformal vector $\omega^a$. First, in Section 4.1, we recall the 
 definition of $M_a(1)$, and in Section 4.2, we recall results on  generalized $M_a(1)$-modules.  In Section 4.3, we recall more details about these generalized $M_a(1)$-modules and their graded dimensions. In Section 4.4, we prove that $M_a(1)$ falls under Setting 1 of Theorem \ref{extra-conditions-theorem} and give Corollary \ref{extra-conditions-cor} to Theorem \ref{extra-conditions-theorem}:

 \medskip

{\bf Classification of strongly interlocked modules for the Heisenberg vertex operator algebra and existence of well-defined graded pseudo-traces}: {\it All indecomposable generalized modules for the Heisenberg vertex operator algebra are strongly interlocked and have well-defined graded pseudo-traces}.

\medskip

In Section 4.5, we calculate the vacuum graded pseudo-trace for all indecomposable  generalized $M_a(1)$-modules. We then also calculate the graded pseudo-traces for the Heisenberg generator $\alpha_{-1} \vac$ and note that the graded pseudo-trace for the conformal element can be derived using the logarithmic derivative property proved in Theorem \ref{log-thm}.

In Section 5, we recall the relevant facts about the universal Virasoro vertex operator algebras $V_{Vir}(c,0)$ for $c \in \mathbb{C}$, and the Verma modules $M(c,h)$ for $c,h \in \mathbb{C}$. 
 In Section 5.1, we recall the definitions of $V_{Vir}(c,0)$ and $M(c,h)$. We recall the results of Feigin and Fuchs that $M(c,h)$ is simple if and only if $(c,h) \notin \Phi_{r,s}(c,h)$, where the $\Phi_{r,s}(c,h)$ are certain curves in the $\mathbb{C}^2$ plane defined by the parameters $r,s \in \mathbb{Z}_{>0}$, and that the maximal proper submodule $T(c,h)$ of $M(c,h)$ is generated by zero, one, or two singular vectors. Such singular vectors are parameterized by $r,s$ and $t \in \mathbb{C}^\times$ and denoted $S_{r,s}(t)$. We also recall some details about these singular vectors. In Section 5.1, we  also introduce notation for the purposes of the organization of this paper, namely the three cases:
 
Case (0):  $T(c,h) = 0$.

Case (1):  $T(c,h)$ is generated by one singular vector $S_{r,s}(t)$.

Case (2): $T(c,h)$ is generated by two singular vectors $S_{r,s}(t)$ and $S_{r',s'}(t)$.

In the literature on the modules $M(c,h)$ and $L(c,h) = M(c,h)/T(c,h)$ there are various cases denoted, and these almost always include a refined and/or different list of more nuanced cases and subcases than these Cases (0)--(2) we give above, and in Section 5.1. However, as our results show, for the purposes of this paper, we will only need to give these three main cases, and two subcases for Case (1), to give our classification of interlocked modules for $V_{Vir}(c,0)$.

In Section 5.2, we recall the Shapovalov form and facts about the determinant of the Gram matrix $\mathcal{A}_{\ell}(c,h)$ of this form in terms of the $\Phi_{r,s}(c,h)$. Our results will rely heavily on the matrix $\mathcal{A}_{\ell}(c,h)$, its determinant and its partial derivatives with respect to the complex variable $h$.

In Section 6, we give the most technical results of this paper as we give the analysis of the indecomposable $V_{Vir}(c,0)$-modules induced from the level zero Zhu algebra that is necessary and sufficient to characterize which of these modules are strongly interlocked. In Section 6.1, we show that the indecomposable $V_{Vir}(c,0)$-modules induced from a module of Jordan block size $k$ and conformal weight $h$ for the level zero Zhu algebra are given by $W(c,h,k) = M(c,h,k)/J(c,h,k)$ where $M(c,h,k)$ is a certain universal module, and $J(c,h,k) = \coprod_{\ell \in \mathbb{Z}_{\geq 0}} J(c,h,k)(\ell)$ is characterized as the coproduct of the kernels of a certain family of matrices $\{ \mathfrak{A}_\ell^{(k)}\}_{\ell \in \mathbb{Z}_{\geq 0}}$.  

In Section 6.2, we show that $\det (\mathfrak{A}_\ell^{(k)}(c,h)) = (\det \mathcal{A}_\ell(c,h))^k$ for $\mathcal{A}_\ell(c,h)$ the Gram matrix of the Shapavolov form at degree $\ell \in \mathbb{Z}_{\geq 0}$.  In Section 6.3, we give some examples of the matrices $\mathfrak{A}_\ell^{(k)}(c,h)$. 

In Section 6.4, we further analyze the dependency of $\mathfrak{A}_\ell^{(k)}$ on $\mathcal{A}_\ell(c,h)$ and its partial derivatives with respect to $h$ viewed as a formal variable. In Section 6.5, we give a family of linear equations depending on the partial derivatives of $\mathcal{A}_\ell(c,h)$ with respect to $h$ that must be satisfied in order for $v \in M(c,h,k)(\ell)$ to be in $J(c,h,k)(\ell) = Ker \mathfrak{A}_\ell^{(k)}$.  We then use the expression of $\mathcal{A}_\ell(c,h)$ in terms of the $\Phi_{r,s}(c,h)$, and Jacobi's Formula to determine $J(c,h,k)(\ell) = Ker \, \mathfrak{A}_\ell^{(k)}$ for degrees $\ell \leq d = rs$ where $d$ is the lowest degree of a singular vector $S_{r,s}(t)$ in $T(c,h)$, if $T(c,h) \neq 0$. In particular in Theorem 6.16, we prove that $J(c,h,k)(d)$ for $d = rs$ is one dimensional for $k>1$ if and only if one of the following holds: Either $c\notin \{1, 25\}$, or $c \in \{1,25\}$ but $r \neq s$ for the singular vector $S_{r,s}(t)$ generating $T(c,h)$. 

If $c = 1$ or $25$, and  $T(c,h)\neq 0$, then $h = h_{r,s}(\pm1)$ and $T(c,h) = \langle S_{r,s}(\pm1)\rangle$, i.e., $T(c,h)$  is generated by one singular vector and thus this case falls under Case (1).  If in addition $r \neq s$, we call this Case (1)(ii). The remaining cases for when $T(c,h)$ is generated by one singular vector but does not fall under Case (1)(ii), we call Case(1)(i). 

In Theorem 6.16, for the case when $\dim J(c,h,k)(d) >1$, i.e., in this Case (1)(ii) which is when $c = c(\pm 1)$ with $c(1) = 25$ and $c(-1) = 1$, $k>1$, and $h = h_{r,s}(\pm1)$ for $r \neq s$, we define the parameter $\kappa_{r,s}^\pm$ which gives the dimension of $J(c,h,k)(d)$ in relation to $k$, and is determined by how many of the linear equations depending on the partial derivatives of $A_{\ell}(c,h)$ with respect $h$ given in Section 6.5 have a solution when $h$ is set to $h = h_{r,s}(\pm1)$. 

In the rest of Section 6.5, we give some more analysis of the subspace $J(c,h,k)\subset M(c,h,k)$, in particular for Cases (1) and (2). In Theorem 6.22, for Case (1) (ii),  i.e., the case when $c = 1$ or $25$ and $r\neq s$, we determine $J(c,h,k)$ completely in terms of $\kappa_{r,s}^\pm$. 

In Section 6.6, we give two examples in Case (1), i.e., when $T(c,h)$ is generated by one singular vector. These cases illustrate some of the more interesting and nuanced behavior proved in Section 6.5, for instance at $c = -2$ versus $c \neq -2, \pm 1$, and in Case (1)(ii) an example of $\kappa_{r,s}^\pm$ for the case when $(r,s) = (2,1)$.

In Section 7, we give another one of our main results of this paper for $V = V_{Vir}(c,0)$:  The classification of all strongly interlocked indecomposable  $V_{Vir}(c,0)$-modules induced from the level zero Zhu algebra.  In particular, in Theorem \ref{interlocked-thm2}, we show the following:

\medskip

{\bf Classification Theorem for interlocked  indecomposable reducible $V_{Vir}(c,0)$-modules induced from the level zero Zhu algebra:} {\it  For $(c,h) \in \mathbb{C}^2$ and $k \in \mathbb{Z}_{>0}$, with $k\geq 2$, let $W(c,h,k) = \mathfrak{L}_0(U(c,h,k))$ where $U(c,h,k) = \mathbb{C}[x]/((x - h)^k)$ as an indecomposable  reducible module for the level zero Zhu algebra for $V_{Vir}(c,0)$, and $\mathfrak{L}_0$ is the induction functor from $\mathbb{C}[x]$-modules to $V_{Vir}(c,0)$-modules.  The $V_{Vir}(c,0)$-module $W(c,h,k)$ is interlocked if and only if one of the following holds:

\medskip

Case (0) holds, i.e., $T(c,h) = 0$;

\medskip

Case (1)(ii) holds, (i.e., $t \ \pm1$ and $r \neq s$ for $(c,h) = (c(\pm1), h_{r,s}(\pm1))$), and $2 \leq k \leq  \kappa^\pm_{r,s}$, where $\kappa^\pm_{r,s}$ is defined as in Theorem \ref{degree-d-theorem}. Note that $c(1) = 25$ and $c(-1) = 1$.

\medskip

Moreover, in these cases when $W(c,h,k)$ is interlocked, then it is strongly interlocked.
}  

\medskip

We also note in Remark \ref{M-bar-remark}
that in Cases (1) and (2), the universal $V_{Vir}(c,0)$-module induced from the $A_0(V_{Vir}(c,0))$-module $U(c,h,k)$, denoted $\overline{M}_0(U(c,h,k))$, is not interlocked, as opposed to in Case (0) when $W(c,h,k) = \overline{M}_0(U(c,h,k))$ and, as noted in the Theorem given above, is interlocked which follows directly from Theorem \ref{extra-conditions-theorem} as Case (0) falls under Setting 1; see also Corollary \ref{Case-0-cor}.    

In Section 8, we begin by recalling the vacuum graded traces, i.e., graded dimensions, for the Verma modules $M(c,h)$ for Cases (0) and (1) which include all the cases for when the indecomposable reducible $V_{Vir}(c,0)$-modules are interlocked, as there are no such modules in Case (2). 

In Section 8.2, we calculate the graded pseudo-traces for the vacuum and $\omega$ for Case (0).

In Section 8.3, we note that in Case (1)(ii), the strongly interlocked modules fall under Setting 2 which is the setting of Theorem \ref{Virasoro-exceptional-theorem} and thus these modules have well-defined graded pseudo-traces.  We calculate the graded pseudo-traces for the vacuum and conformal element for these modules. 

In Section 9, we summarize our results and discuss future directions of this work.

\bigskip

\subsection{Acknowledgements} The authors thank Ana Ros Camacho and Nezhla Agahee for founding the Women in Mathematical Physics (WoMaP) research and mentoring program from which the research presented in this paper arose. 
 We also thank Banff International Research Station for their hospitality for the virtual WoMaP I, 2020 Workshop, and for their generous support for the in-person WoMaP II, 2023 Workshop.  We thank Darlayne Addabbo, Kiyokazu Nagatomo, Veronika Pedi\'c Tomi\'c, David Ridout, and Kyle Luh for helpful discussions.  

 The first named author was supported by the Association for Women in Mathematics and the National Science Foundation  Travel Grant program  and a Simons Foundation Travel Support Grant. The second named author was supported by FONDECYT Project 3190144. The third named author was supported by the National Science Foundation under Grant No. DMS-2102786 and by the hospitality of the University of Denver. The last named author was supported by the College of Arts and Sciences at Illinois State University Research Grant.

We thank Matthew Headrick for making his Mathematica package Virasoro.nb available, which we found useful for testing some of our work and ideas. Matthew Headrick's code can be downloaded from his personal webpage  \url{https://people.brandeis.edu/~headrick/Mathematica/}.  

Finally, we thank the anonymous referee for very thorough reports catching several errors and typos and making thoughtful suggestions that have helped to correct and clarify many aspects of the manuscript.

\section{Preliminaries} 

In this Section we recall the various module structures associated to a vertex operator algebra $V$, important general results about the Zhu algebra of $V$ and its relationship to the representations of $V$, as well as the definition of graded traces, and graded pseudo-traces as defined by Miyamoto for $V$-modules ``interlocked with $\phi$".

We refer the reader to \cite{FLM, LL, FHL} for the notions of vertex operator algebra and weak $V$-module for a vertex operator algebra $V$.

\begin{defn}\label{N-gradable-definition}
${}$

(i) A {\it $\mathbb{Z}_{\geq 0}$-gradable weak $V$-module} (also often called an {\it admissible $V$-module} as in \cite{DLM}) $W$ for a vertex operator algebra $V$ is a weak $V$-module that is $\mathbb{Z}_{\geq 0}$-gradable, $W = \coprod_{\ell \in \mathbb{Z}_{\geq 0}} W(\ell)$, with $v_m W(\ell) \subset W(\ell + \mathrm{wt}\, v - m -1)$ for homogeneous $v \in V$, $m \in \mathbb{Z}$ and $\ell \in \mathbb{Z}_{\geq 0}$, and without loss of generality, we can and do assume $W(0) \neq 0$, unless otherwise specified.  We say elements of $W(\ell)$ have {\it degree} $\ell \in \mathbb{Z}_{\geq 0}$. Here $\wt v = n$ if $v \in V_n$ where $V  = \coprod_{n \in \mathbb{Z}} V_n$ gives the decomposition of the vertex operator algebra $V$ into $L_0$ eigenspaces, for $L_0 = \omega_1 = o(\omega)$ for $\omega \in V_2$ the conformal vector. 

(ii) A {\it $\mathbb{Z}_{\geq 0}$-gradable generalized weak $V$-module} $W$ is a $\mathbb{Z}_{\geq 0}$-gradable weak $V$-module that admits a decomposition into generalized eigenspaces via the spectrum of $L_0 = \omega_1$ as follows: $W=\coprod_{\lambda \in{\C}}W_\lambda$ where $W_{\lambda}=\{w\in W \, | \, (L_0 - \lambda \, Id_W)^j w= 0 \ \mbox{for some $j \in \mathbb{Z}_{>0}$}\}$, and in addition, $W_{n +\lambda}=0$ for fixed $\lambda$ and for all sufficiently small integers $n$. We say elements of $W_\lambda$ have {\it weight} $\lambda \in Spec L_0 \subset \mathbb{C}$, denoted $\wt w = \lambda$ if $w \in$ {$W_{\lambda}$}.

(iii) A {\it generalized $V$-module} $W$ is a $\mathbb{Z}_{\geq 0}$-gradable generalized weak $V$-module where $\dim W_{\lambda}$ is finite for each $\lambda \in \mathbb{C}$.   

(iv) An {\it (ordinary) $V$-module} is a  generalized $V$-module such that  the generalized eigenspaces $W_{\lambda}$ are in fact eigenspaces, i.e., $W_{\lambda}=\{w\in W \, | \, L_0 w=\lambda w\}$.
\end{defn}

Note that we will often omit the term ``weak" when referring to $\mathbb{Z}_{\geq 0}$-gradable weak and $\mathbb{Z}_{\geq 0}$-gradable generalized weak $V$-modules.

\begin{rem} 
     The term {\it logarithmic} is also often used in the literature to refer to $\mathbb{Z}_{\geq 0}$-gradable  generalized weak modules  or generalized modules. In addition, we note that a $\mathbb{Z}_{\geq 0}$-gradable $V$-module with $W(\ell)$ of finite dimension for each $\ell \in \mathbb{Z}_{\geq 0}$ is not necessarily a generalized $V$-module since the generalized eigenspaces might not be finite dimensional. 
     We also note that our notion of a generalized $V$-module is sometimes referred to as a {\it lower-bounded generalized $V$-module} in the literature.
\end{rem}

\subsection{Zhu algebras and induced modules}

In this section, we recall the definition and some properties of the algebras $A_n(V)$ for $n \in \mathbb{Z}_{\geq 0}$, first introduced in \cite{Z} for $n = 0$, and then generalized to $n >0$ in \cite{DLM}.   We then recall the functors $\Omega_n$ and $\mathfrak{L}_n$ defined in \cite{DLM}, and we recall some results from \cite{BVY}.

For $n \in \mathbb{Z}_{\geq 0} $, let $O_n(V)$ be the subspace of $V$ spanned by elements of the form
\begin{equation}\label{elements-in-O}
 u \circ_n v =
\res_x \frac{(1 + x)^{\mathrm{wt}\, u + n}Y(u, x)v}{x^{2n+2}}
\end{equation}
for all homogeneous $u \in V$ and for all $v \in V$, and by elements of the form $(L_{-1} + L_0)v$ for all $v \in V$, where $L_{-1} = \omega_0$. The vector space $A_n(V)$ is defined to be the quotient space $V/O_n(V)$.

\begin{rem} 
As noted in \cite{AB-generaln}, for $n=0$, since $v \circ_0 \mathbf{1} = v_{-2} \mathbf{1} + (\mathrm{wt} \, v) v = L_{-1} v + L_0 v$, it follows that $O_0(V)$ is spanned by elements of the form (\ref{elements-in-O}).  But this is not necessarily true of $O_n(V)$ for $n>0$.   
\end{rem}

We define the following multiplication on $V$
\[
u *_n v = \sum_{m=0}^n(-1)^m\binom{m+n}{n}\res_x \frac{(1 + x)^{\mathrm{wt}\, u + n}Y(u, x)v}{x^{n+m+1}},
\]
for $v \in V$ and homogeneous $u \in V$, and for general $u \in V$, $*_n$ is
defined by linearity.   It is shown in \cite{DLM} that with this multiplication, the subspace $O_n(V)$ of $V$ is a two-sided ideal of $V$, and $A_n(V)$ is an associative algebra, called the {\it level $n$ Zhu algebra}.

For every homogeneous element $u \in V$
and $m \geq k \geq 0$, elements of the form
\begin{equation*}
\res_x \frac{(1 + x)^{\mathrm{wt}\, u + n+ k}Y(u, x)v}{x^{m+2n+2}}
\end{equation*}
lie in $O_n(V)$.  This fact follows from the $L_{-1}$-derivative property for $V$.  This implies that $O_n(V) \subset O_{n-1}(V)$.  In fact, from Proposition 2.4 in \cite{DLM}, we have that the map 
\begin{eqnarray*}
A_n(V) & \longrightarrow & A_{n-1}(V) \\
v + O_n(V) & \mapsto & v + O_{n-1}(V) \nonumber
\end{eqnarray*}
is a surjective algebra homomorphism.

From Lemma 2.1 in \cite{DLM}, we have that 
\begin{equation*}
u *_n v - v *_n u - \mathrm{Res}_x (1 + x)^{ \mathrm{wt} \, u - 1} Y(u, x)v \in O_n(V),
\end{equation*}
and from Theorem 2.3 in \cite{DLM}, we have that $\omega + O_n(V)$ is a central element of $A_n(V)$.  

Next, we recall the functors $\Omega_n$ and $\mathfrak{L}_n$, for $n \in \mathbb{Z}_{\geq 0}$, defined and studied in \cite{DLM}.  Let $W$ be a $\mathbb{Z}_{\geq 0}$-gradable $V$-module, and let
\begin{equation}\label{Omega-def}
\Omega_n(W) = \{w \in W \; | \; v_iw = 0\;\mbox{if}\; \wt v_i < -n \; 
\mbox{for $v\in V$ of homogeneous weight}\}.
\end{equation}
It was shown in \cite{DLM} that $\Omega_n(W)$ is an $A_n(V)$-module
via the action $o(v+O_n(V)) = v_{\mathrm{wt} \, v -1}$ for $v \in V$.   In particular, this action satisfies $o(u *_n v) = o(u)o(v)$ for $u,v \in A_n(V)$.

Furthermore, it was shown in \cite{DLM} and \cite{BVY} that there is a bijection between the isomorphism classes of irreducible $A_n(V)$-modules which cannot factor through $A_{n-1}(V)$ and the isomorphism classes of irreducible $\mathbb{Z}_{\geq 0}$-gradable $V$-modules with nonzero degree $n$ component.  

In order to define the functor $\mathfrak{L}_n$ from the category of $A_n(V)$-modules to the category of $\mathbb{Z}_{\geq 0}$-gradable $V$-modules, we need several notions, including the notion of the universal enveloping algebra of $V$, which we now define.  

Let
\begin{equation*}
\hat{V} = \C[t, t^{-1}]\otimes V/D\C[t, t^{-1}]\otimes V,
\end{equation*}
where $D = \frac{d}{dt}\otimes 1 + 1 \otimes L_{-1}$. For $v \in V$, let $v(m) = v \otimes t^m +  D\C[t, t^{-1}]\otimes V \in \hat{V}$.  Then $\hat{V}$ can be given the structure of a $\mathbb{Z}$-graded Lie algebra as follows:  Define the degree of $v(m)$ to be $ deg(v(m))=\wt v - m - 1$ for homogeneous $v \in V$, and define the Lie bracket on $\hat{V}$ by
\begin{equation*}
[u(j), v(k)] = \sum_{i \in\mathbb{Z}_{\geq 0}}\binom{j}{i}(u_iv)(j+k-i),
\end{equation*}
for $u, v \in V$, $j,k \in \mathbb{Z}$.
Denote the homogeneous subspace of degree $m$ by $\hat{V}(m)$. In particular, the degree $0$ space of $\hat{V}$, denoted by $\hat{V}(0)$, is a Lie subalgebra.

Denote by $\mathcal{U}(\hat{V})$ the universal enveloping algebra of the Lie algebra $\hat{V}$.  Then $\mathcal{U}(\hat{V})$ has a natural $\mathbb{Z}$-grading induced from $\hat{V}$, and we denote by $\mathcal{U}(\hat{V})_\ell$ the degree $\ell$ space with respect to this grading, for $\ell \in \mathbb{Z}$.

We can regard $A_n(V)$ as a Lie algebra via the bracket $[u,v] = u *_n v - v *_n u$, and then the map $v( \mathrm{wt} \, v -1) \mapsto v + O_n(V)$ is a well-defined Lie algebra epimorphism from $\hat{V}(0)$ onto $A_n(V)$.

Let $U$ be an $A_n(V)$-module.  Since $A_n(V)$ is naturally a Lie algebra homomorphic image of $\hat{V}(0)$, we can lift $U$ to a module for the Lie algebra $\hat{V}(0)$, and then to a module for $P_n = \bigoplus_{p > n}\hat{V}(-p) \oplus \hat{V}(0) = \bigoplus_{p < -n} \hat{V}(p) \oplus \hat{V}(0)$ by letting $\hat{V}(-p)$ act trivially for $p\neq 0$.  Define
\[
M_n(U) = \mbox{Ind}_{P_n}^{\hat{V}}(U) = \mathcal{U}(\hat{V})\otimes_{\mathcal{U}(P_n)}U.
\]

We impose a grading on $M_n(U)$ by letting $U$ be degree $n$, and letting $M_n(U)(i)$ be the $\mathbb{Z}$-graded subspace of $M_n(U)$ induced from $\hat{V}$, i.e., $M_n(U)(i) = \mathcal{U}(\hat{V})_{i-n}U$. 

For $v \in V$, define $Y_{M_n(U)}(v,x) \in (\mathrm{End} (M_n(U)))((x))$ by
\begin{equation*}
Y_{M_n(U)}(v,x) = \sum_{m\in\mathbb{Z}} v(m) x^{-m-1}.
\end{equation*} 

Let $W_{A}$ be the subspace of $M_n(U)$ spanned linearly by the coefficients of 
\begin{multline}\label{relations-for-M}
(x_0 + x_2)^{\mathrm{wt} \, v + n} Y_{M_n(U)}(v, x_0 + x_2) Y_{M_n(U)}(w, x_2) u \\ 
- (x_2 + x_0)^{\mathrm{wt} \, v + n} Y_{M_n(U)}(Y(v, x_0)w, x_2) u
\end{multline}
for $v,w \in V$, with $v$ homogeneous, and $u \in U$.  Set
\begin{equation}\label{define-M-bar} 
\overline{M}_n(U) = M_n(U)/\mathcal{U} (\hat{V})W_A . 
\end{equation}

It is shown in \cite{DLM} that if $U$ is an $A_n(V)$-module that does not factor through $A_{n-1}(V)$, then $\overline{M}_n(U) = \coprod_{\ell \in \mathbb{Z}_{\geq 0}} \overline{M}_n(U) (\ell)$ is a $\mathbb{Z}_{\geq 0}$-gradable $V$-module satisfying $\overline{M}_n(U) (0)\neq 0$, and  as an $A_n(V)$-module,  $\overline{M}_n(U) (n) \cong U$. Note that the condition that $U$ itself does not factor though $A_{n-1}(V)$ is indeed a necessary and sufficient condition for $\overline{M}_n(U) (0)\neq 0$ to hold.  

It is also observed in \cite{DLM} that $\overline{M}_n(U)$ satisfies the following universal property:  For any weak $V$-module $M$ and any $A_n(V)$-module homomorphism $\phi: U \longrightarrow \Omega_n(M)$, there exists a unique weak $V$-module homomorphism $\Phi: \overline{M}_n(U) \longrightarrow M$, such that $\Phi \circ \iota = \phi$ where $\iota$ is the natural injection of $U$ into $\overline{M}_n(U)$. This follows from the fact that $\overline{M}_n(U)$ is generated by $U$ as a weak $V$-module, again with the possible need of a grading shift.

Let $U^* = \mbox{Hom}(U, \C)$.  As in the construction in \cite{DLM}, we can extend the action of $U^*$ to $M_n(U)$ by first an induction to $M_n(U)(n)$ and then by letting $U^*$ annihilate $\coprod_{\ell \neq n} M_n(U)(\ell)$.  In particular, we have that elements of $M_n(U)(n) = \mathcal{U}(\hat{V})_0U$ are spanned by elements of the form 
\[o_{p_1}(a_1) \cdots o_{p_s}(a_s)U\]
where $s \in \mathbb{N}$, $p_1 \geq \cdots \geq p_s$, $p_1 + \cdots + p_s =0$, $p_i \neq 0$, $p_s \geq -n$, $a_i \in V$ and $o_{p_i}(a_i) = (a_i)(\mathrm{wt} \, a_i - 1 - p_i)$. Then inducting on $s$ by using Remark 3.3 in \cite{DLM} to reduce from length $s$ vectors to length $s-1$ vectors, we have a well-defined action of $U^*$ on $M_n(U)(n)$.  

Define $J_n(U)$ (denoted just $J$ if the context is clear) to be
\begin{equation}\label{J-def}
J_n(U) = \{v \in M_n(U) \, | \, \langle u', xv\rangle = 0 \;\mbox{for all}\; u' \in U^{*}, x \in \mathcal{U}(\hat{V})\}
\end{equation}
and set
\begin{align} \label{defLnfunctor}
\mathfrak{L}_n(U) = M_n(U)/ J_n(U).
\end{align}

\begin{rem}\label{L-a-V-module-remark} 
It is shown in \cite{DLM}, Propositions 4.3, 4.6 and 4.7,  that if $U$ does not factor through $A_{n-1}(V)$, for $n \in \mathbb{Z}_{>0}$, then $\mathfrak{L}_n(U)$ is a well-defined $\mathbb{Z}_{\geq 0}$-gradable $V$-module with $\mathfrak{L}_n(U)(0) \neq 0$. In particular, it is shown that $\mathcal{U}(\hat{V})W_A \subset J_n (U)$, for $W_A$ the subspace of $M_n(U)$ spanned by the coefficients of (\ref{relations-for-M}), i.e., moding by $J_n(U) $ in Eq 
 (\ref{defLnfunctor}) gives the associativity relations for the weak vertex operators on $M_n(U)$.
\end{rem}

 We have the following theorem from \cite{BVY}.

\begin{thm}\label{mainthm}\cite{BVY}
For $n \in \mathbb{Z}_{\geq 0}$, let $U$ be a nonzero $A_n(V)$-module such that if $n>0$, then $U$ does not factor through $A_{n-1}(V)$. Then $\mathfrak{L}_n(U)$ is a $\mathbb{Z}_{\geq 0}$-gradable $V$-module with $\mathfrak{L}_n(U)(0) \neq 0$.  If we assume further that there is no nonzero submodule of $U$ that factors through $A_{n-1}(V)$, then $\Omega_n/\Omega_{n-1}(\mathfrak{L}_n(U)) \cong U$.
\end{thm}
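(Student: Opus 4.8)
The plan is to treat the two assertions separately. The first assertion---that $\mathfrak{L}_n(U)$ is a $\mathbb{Z}_{\geq 0}$-gradable $V$-module with $\mathfrak{L}_n(U)(0) \neq 0$---I would extract from \cite{DLM} as recalled in Remark \ref{L-a-V-module-remark}: since $\mathcal{U}(\hat{V})W_A \subseteq J$, passing to the quotient $M_n(U)/J$ forces the associativity relations, so the series $Y_{M_n(U)}(v,x)$ descend to weak vertex operators satisfying the Jacobi identity; the $\mathbb{Z}$-grading on $M_n(U)$ descends to a $\mathbb{Z}_{\geq 0}$-grading, and the hypothesis that $U$ does not factor through $A_{n-1}(V)$ (vacuous for $n=0$) is exactly the \cite{DLM} criterion guaranteeing $\mathfrak{L}_n(U)(0) \neq 0$.

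The substance is the isomorphism $\Omega_n/\Omega_{n-1}(\mathfrak{L}_n(U)) \cong U$. Since each mode $v_i$ is homogeneous of degree $\wt v - i -1$, the defining conditions for $\Omega_n$ and $\Omega_{n-1}$ are homogeneous, so both are graded subspaces of $\mathfrak{L}_n(U)$ and the quotient is a graded $A_n(V)$-module under $o(v) = v_{\wt v -1}$. A degree count shows $\mathfrak{L}_n(U)(\ell) \subseteq \Omega_{n-1} \subseteq \Omega_n$ for $\ell \leq n-1$ and $\mathfrak{L}_n(U)(n) \subseteq \Omega_n$, so degrees $\leq n-1$ contribute nothing to the quotient. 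Composing the inclusion $U \hookrightarrow M_n(U)(n)$---on which $\hat{V}(-p)$ acts trivially for $p>n$, placing the image in $\Omega_n$---with the quotient map to $\mathfrak{L}_n(U)$ and then to $\Omega_n/\Omega_{n-1}$ yields an $A_n(V)$-module map $\psi\colon U \to \Omega_n/\Omega_{n-1}(\mathfrak{L}_n(U))$, the $A_n(V)$-linearity being inherited from the epimorphism $\hat{V}(0) \twoheadrightarrow A_n(V)$ used to build $M_n(U)$.

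For injectivity I would argue in two steps. Evaluating the pairing defining $J$ at $x = 1 \in \mathcal{U}(\hat{V})$ gives $\langle u', u\rangle = 0$ for all $u' \in U^*$ whenever $u \in U$ lies in $J$, forcing $u=0$; hence $U \to \mathfrak{L}_n(U)(n)$ is injective, and since $\overline{M}_n(U)(n)\cong U$ and $\mathcal{U}(\hat{V})W_A \subseteq J$, every element of $M_n(U)(n)$ is congruent modulo $J$ to an element of $U$, so this map surjects onto $\mathfrak{L}_n(U)(n)$ and is an isomorphism of $A_n(V)$-modules. It then remains to see that the image meets $\Omega_{n-1}$ trivially: the space $K = \Omega_{n-1}(\mathfrak{L}_n(U)) \cap \mathfrak{L}_n(U)(n)$ is an $A_n(V)$-submodule of $U$ on which $o(O_{n-1}(V))$ acts as zero (because $\Omega_{n-1}$ is an $A_{n-1}(V)$-module), i.e.\ a submodule of $U$ factoring through $A_{n-1}(V)$; the added hypothesis forces $K = 0$, so $\psi$ is injective and its degree $n$ part is all of $U$.

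The \textbf{main obstacle is surjectivity}, which amounts to showing that the graded pieces of $\Omega_n/\Omega_{n-1}(\mathfrak{L}_n(U))$ in degrees $\ell > n$ vanish, i.e.\ $\Omega_n \cap \mathfrak{L}_n(U)(\ell) \subseteq \Omega_{n-1}$ for $\ell > n$. My plan is to exploit the non-degeneracy built into $\mathfrak{L}_n(U) = M_n(U)/J$: a nonzero class $w$ in degree $\ell$ must pair nontrivially with some $u' \in U^*$ through some $x \in \mathcal{U}(\hat{V})_{n-\ell}$, so the submodule $\mathcal{U}(\hat{V})w$ must have nonzero degree $n$ component. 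A PBW ordering together with the commutator relations in $\hat{V}$ shows that for $w \in \Omega_n$ any attempt to lower the degree of $w$ by more than $n$ reduces, via commutators, to applying a mode of degree $< -n$ directly to $w$, which vanishes; this bounds the bottom degree of $\mathcal{U}(\hat{V})w$ below by $\ell - n$ and disposes of the range $\ell > 2n$ at once. The delicate range is $n < \ell \leq 2n$, where I expect to have to track the pairing more carefully and show that any degree $n$ component it produces is already annihilated by the modes lowering degree by exactly $n$, placing $w$ in $\Omega_{n-1}$; controlling this interplay between the $U^*$-pairing and the depth-$n$ annihilation condition is the crux of the argument.
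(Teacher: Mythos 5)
A preliminary remark: the paper does not prove Theorem \ref{mainthm} at all---it is quoted from \cite{BVY}---so there is no internal proof to compare against, and your proposal has to be judged on its own terms (effectively, as a reconstruction of the argument in \cite{BVY}). Much of it holds up: quoting \cite{DLM} for the first assertion (as in Remark \ref{L-a-V-module-remark}), the construction of $\psi$, the identification of $\mathfrak{L}_n(U)(n)$ with $U$, and the injectivity argument (the kernel $K=\Omega_{n-1}(\mathfrak{L}_n(U))\cap \mathfrak{L}_n(U)(n)$ is an $A_n(V)$-submodule of $U$ on which the action factors through $A_{n-1}(V)$, hence $K=0$ by the added hypothesis) are all sound, and injectivity is indeed where that extra hypothesis enters.

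The genuine gap is surjectivity, which you correctly reduce to showing $\Omega_n(\mathfrak{L}_n(U))\cap \mathfrak{L}_n(U)(\ell)\subseteq \Omega_{n-1}(\mathfrak{L}_n(U))$ for $\ell>n$, but do not establish. Your disposal of the range $\ell>2n$ rests on the claim that, for $w\in\Omega_n$, PBW reordering turns any element of $\mathcal{U}(\hat{V})$ of degree $<-n$ into terms ending in a single mode of degree $<-n$ applied directly to $w$. That claim is false for $n\geq 1$: the space $\mathcal{U}(\hat{V})_m$ with $m<-n$ is spanned not only by monomials containing a factor of degree $<-n$, but also by products of modes \emph{each} of degree $\geq -n$ (for instance two modes of degree $-n$, or $|m|$ modes of degree $-1$). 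Since $\hat{V}$ is a graded Lie algebra, reordering such a product produces only commutator terms of degree equal to the total degree $m$ (these do kill $w$, being single modes of degree $<-n$) \emph{plus} the reordered product itself, which the definition of $\Omega_n$ does not control: for modes $u_i,v_j$ of degree $-n$ one gets $u_iv_jw=v_j(u_iw)+[u_i,v_j]w=v_j(u_iw)$, and nothing forces $v_j(u_iw)$ to vanish. (Your argument is valid precisely when $n=0$, where total degree $<0$ forces some factor to have negative degree; this is why the level-zero case---the only one the present paper actually uses---is easy.) Since you also concede that the range $n<\ell\leq 2n$ is left open, the crux of the second assertion---the delicate interaction between the $U^{*}$-pairing defining $J$ and the annihilation conditions defining $\Omega_n$, which is exactly the technical heart of \cite{BVY}---is missing from the proposal.
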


One of the main reasons we are interested in Theorem \ref{mainthm} is what it implies for the question of when modules for the higher level Zhu algebras give rise to indecomposable non simple modules for $V$ not seen by the lower level Zhu algebras.  
For instance, all indecomposable modules for the Heisenberg vertex operator algebra are induced from the level zero Zhu algebra, whereas this is not the case for the universal Virasoro vertex operator algebra.  Although all irreducible modules for the Virasoro vertex operator algebra arise from inducing an irreducible module for the level zero Zhu algebra, there are indecomposable reducible modules that are not induced from level zero; see \cite{BVY}-\cite{BVY-Virasoro}.

\begin{defn}\cite{Z} Let $V$ be a vertex operator algebra and let $C_2(V)$ denote the complex vector space spanned by the set $\{u_{-2}v\ | \ u,v \in V \}$. If the quotient space $V/C_2(V)$ is finite dimensional, we say that $V$ is {\it $C_2$-cofinite}.
\end{defn}

As proved by Zhu \cite{Z}, a $C_2$-cofinite vertex operator algebra $V$ admits only finitely many irreducible $\mathbb{Z}_{\geq 0}$-graded modules. There are, however several important examples of vertex operator algebras, including the Heisenberg and the universal Virasoro vertex operator algebras, that are not $C_2$-cofinite. We will show that they satisfy the following less restrictive cofiniteness condition introduced by Li in \cite{L}.

\begin{defn} \label{def:C1}
Let $V$ be a vertex operator algebra. Define $V^+:= \coprod_{n>0} V_n$ and let $C_1(V)$ be the complex vector space spanned by the set $$\{u_{-1}v\ | \ u,v\in V^+\}\cup\{L_{-1}v \ |\ v\in V\}.$$
The vertex operator algebra $V$ is said to be {\it $C_1$-cofinite} if the dimension of the quotient space $V/C_1(V)$ is finite.
\end{defn}
\begin{rem}
As explained in \cite{L}, for a vertex operator algebra $V$, the space $C_1(V)$ must be defined in a different way than $C_2(V)$ since for any element $v\in V$ one has $v=v_{-1}\vac$. (See also Remark 2.4 in \cite{H}.)
\end{rem} 

\subsection{Graded traces}

First we recall some facts about graded traces before we generalize them to graded pseudo-traces.

Let $V$ be a vertex operator algebra with $V = \coprod_{m \in \mathbb{Z}} V_m$ the grading of $V$ with respect to eigenspaces of the $L_0$ operator for $V$. Then the {\it zero mode} of a homogeneous element $v \in V_m$ is given by $o(v) = v_{m-1} = \res_z z^{m-1} Y(v,z) = \res_z z^{m-1} \sum_{n \in \mathbb{Z} } v_n z^{-n-1}$.

The {\it graded traces}  of $V$ (also called {\it one-point correlation functions} or {\it characters}) are defined to be 
\[ Z_V(v,q) = q^{-c/24} \sum_{n \in \mathbb{Z}} \left(\mathrm{tr}|_{V_n} o(v) \right) q^n,\]
where $q$ is a formal parameter that can be evaluated to be $q = e^{2\pi i \tau}$ for $\tau$ in the complex upper half plane, which we denote by $\mathbb{H}$,  with the goal of studying modular invariance properties for these series. To highlight this number-theoretic interpretation of the graded traces, it is common to write them as a function of $\tau\in \mathbb{H}$, as we will do in this work.

More generally, if $W = \coprod_{\lambda \in \mathbb{C}} W_\lambda$ is a generalized $V$-module, then we have the graded traces corresponding to $W$ given by 
\[ Z_W(v,\tau) = q^{-c/24} \sum_{\lambda \in \mathbb{C}}\left(  \mathrm{tr}|_{W_\lambda} o^W(v) \right)q^\lambda\]
where $o^W(v) =  \res_z z^{m-1} Y^W(v,z)$ is the zero mode of $v$ giving the weight-preserving action of $v$ on $W$. When the action is clear, we will omit the superscript $W$ and just write $o^W(v) = o(v)$.

Note that if $v = {\bf 1}$, then $o^W({\bf 1}) = Id_W$ and 
\[Z_W({\bf 1}, \tau) = q^{-c/24} \,  \sum_{\lambda \in \C}  \, ( \mathrm{dim} \ W_\lambda) \, q^\lambda\] 
is called the {\it generalized graded dimension} of $W$. In the case $W = V$, then $Z_V({\bf 1}, q)$ is often called the {\it graded dimension}, or the {\it 0-point correlation function} or the {\it partition function} of $V$.

\subsection{Graded pseudo-traces in the $C_2$-cofinite setting following Miyamoto} 

To define graded pseudo-traces in \cite{Miyamoto2004}, Miyamoto assumes $V$ is $C_2$-cofinite, in which case he notes that each of the higher level Zhu algebras $A_n(V)$ for $V$ admits a symmetric linear map he calls $\phi$, and taking the quotient of $A_n(V)$ by a certain type of null space with respect to $\phi$, gives the resulting quotient the structure of a symmetric algebra.  Then a property of modules called ``interlocked with $\phi$" with respect to this structure of a symmetric algebra and linear operator $\phi$ is introduced and used to define the pseudo-trace for any weight-preserving linear operator on a $V$-module induced from such $V$-module ``interlocked with $\phi$".

Later in, for instance, \cite{Miyamoto-Ukraine}, Miyamoto introduces a simplified notion of ``interlocked" $V$-module, which we give here.  

\begin{defn} \label{interlocked-def}
${}$

(1) Let $U$ be an indecomposable $A$-module for an associative algebra $A$.  We say $U$ is {\it interlocked} if  for every submodule $U^{(1)}$ of $U$, there exists a submodule $U^{(2)}$ of $U$ such that $U^{(1)} \cong U/U^{(2)}$ and $U^{(2)} \cong U/U^{(1)}$ as $A$-modules. And in general, if $U^{(1)}$ and $U^{(2)}$ are submodules of $U$ satisfying $U^{(1)} \cong U/U^{(2)}$ and $U^{(2)} \cong U/U^{(1)}$ as $A$-modules, we say that $U^{(1)}$ {\it is interlocked with} $U^{(2)}$.  If the socle of $U$ is interlocked with the radical of $U$, but not all submodules of $U$ are necessarily interlocked with another submodule, we say that $U$ is {\it weakly interlocked}.

(2) Let $W$ be an indecomposable generalized  $V$-module for a vertex operator algebra $V$.  We say $W$ is {\it interlocked} as a $V$-module if for every submodule $W^{(1)}$ of $W$, there exists a submodule $W^{(2)}$ of $W$ such that $W^{(1)} \cong W/W^{(2)}$ and $W^{(2)} \cong W/W^{(1)}$ as $V$-modules. And in general, if $W^{(1)}$ and $W^{(2)}$ are submodules of $W$ satisfying $W^{(1)} \cong W/W^{(2)}$ and $W^{(2)} \cong W/W^{(1)}$ as $V$-modules, we say that $W^{(1)}$ {\it is interlocked with} $W^{(2)}$.  If the socle of $W$ is interlocked with the radical of $W$, but not all submodules $W$ are necessarily interlocked with another submodule, we say that $W$ is {\it weakly interlocked}. 

Here, the socle of a module is the direct sum of all simple submodules, and the radical is the intersection of its maximal proper submodules.  
\end{defn}

In the $C_2$-cofinite setting, Miyamoto indicates in \cite{Miyamoto2004, Miyamoto-Ukraine} that if $W$ is ``interlocked with $\phi$" as a $V$-module, then $\phi$ gives rise to a natural isomorphism $W/Rad(W)\cong Soc(W)$  where $Soc(W)$ denotes the socle of $W$, and $Rad(W)$ denotes the radical of $W$, and thus $W$ is weakly interlocked. Furthermore, Miyamoto claims that then there exists a basis for $W_\lambda$, for each $\lambda \in Spec L_0$, such that any weight preserving map  $\sigma$ on $W$ satisfying $\sigma(Soc(W))\subseteq Soc(W)$ has the form
\begin{equation}\label{Miyamoto-decomp}
 \sigma |_{W_\lambda} = \left[ \begin{array}{ccc}
A & C & B \\
0 & E & ^*C \\
0 & 0 & A
\end{array} \right] ,  
\end{equation}
 with respect to this basis, where 
\[A = { \sigma}|_{Soc(W)_\lambda}\] 
and $C$ and the notation $^*C$ reflect the isomorphism between $W/Rad(W)$ and $Soc(W)$ arising from $\phi$.

That is, for each weight space $W_\lambda$, in the $C_2$-cofinite setting, a basis can be chosen so that the first terms of the basis are a basis for $Soc(W)_\lambda$, the next terms complete to a basis for $Rad(W)_\lambda$, and the last terms reflect the isomorphism $Soc(W)_\lambda \cong (W/Rad(W))_\lambda$ arising from $\phi$.  This allows Miyamoto to define the pseudo-trace of $\sigma|_{W_\lambda}$ to be 
\begin{equation}\label{Miyamoto-pseudo-trace} 
\mathrm{tr}^\phi(\sigma|_{W_\lambda}) = \mathrm{tr}(B). 
\end{equation}
That is, for a generalized $V$-module $W$ that is ``interlocked with $\phi$", and a weight-preserving operator---such as the zero mode $o(v) = v_{\mathrm{wt} \, v -1}$ for $v \in V$---the restriction of this weight-preserving operator to $W_\lambda$ has a well-defined pseudo-trace in the $C_2$-cofinite setting.  This step of defining these pseudo-trace components is the crucial step for defining graded pseudo-traces. 

To define graded pseudo-traces, 
one also considers the operator $\sigma = o(\omega) = L_0$ which can be decomposed as $L_0|_{W_\lambda} = o(\omega)|_{W_\lambda} = (L^S_0)_\lambda  + (L^N_0)_\lambda$ into semi-simple and nilpotent parts.  Then one defines 
the formal series in $\log q$
\[q^{(L^N_0)_\lambda} = (e^{2 \pi i \tau})^{(L^N_0)_\lambda} = \sum_{j \in \mathbb{Z}_{\geq 0}} \frac{1}{j!}(L^N_0)_\lambda^j (\log q)^j  \]
where $\log q = 2 \pi i \tau$, for $\tau \in\mathbb{H}$.  Then for any weight-preserving operator $\sigma$ on $W$, define
\begin{equation}
tr_W^\phi(\sigma q^{L_0}) = \sum_{\lambda \in \mathbb{C}} \mathrm{tr}^\phi 
(\sigma|_{W_\lambda} q^{(L^N_0)_\lambda } ) q^{(L_0^S)_\lambda }
= \sum_{\lambda \in \mathbb{C}} \sum_{j \in \mathbb{Z}_{\geq 0}} \mathrm{tr}^\phi(\sigma|_{W_\lambda} \frac{1}{j!} (L^N_0)_\lambda^j (\log q)^j) q^\lambda. 
\end{equation}

The notion of graded pseudo-trace with respect to Miyamoto's definition of pseudo-trace, $\mathrm{tr}^\phi$, for an element $v$ in $V$ and a $V$-module $W$ that is  `interlocked with respect to $\phi$" for $V$ a $C_2$-cofinite vertex operator algebra, is then defined to be 
\begin{eqnarray}\label{Miyamoto-graded-pseudo-trace}
\mathrm{tr}^\phi_W (v,\tau ) &=&  \mathrm{tr}^\phi_W (o(v) q^{L_0 - c/24}) \ = \ \sum_{\lambda \in \mathbb{C}} \mathrm{tr}^\phi (o(v)|_{W_\lambda} q^{(L^N_0)_\lambda}  ) q^{(L_0^S)_\lambda} q^{ - c/24}.
\end{eqnarray} 

However, note that the key point in the notion of graded pseudo-trace being well defined is the decomposition of  any weight-preserving operator $\sigma$ at weight $\lambda$ with respect to some  ``canonical" basis of $W_\lambda$ so that $\sigma|_{W_\lambda}$ has a matrix realization given by Eqn.\ (\ref{Miyamoto-decomp}) where $C$ and $^*C$ are ``dual'' in some sense. These are the key aspects we will develop in the next Section for any $V$ and any $V$-module $W$ that satisfies the  property called ``strongly interlocked" along with certain invariance properties of a family of ``strongly interlocked bases".

 An important property used in both Zhu's proof of modular invariance in the $C_2$-cofinite rational setting for graded traces \cite{Z} and by Miyamoto's extension of the modular invariance to the $C_2$-cofinite irrational setting for graded pseudo-traces \cite{Miyamoto2004} is the fact that in these settings the graded traces or graded pseudo-traces, respectively, are symmetric  linear operators that also satisfy the logarithmic derivative property as given below.\\

{\bf Logarithmic Derivative Property:} 
\begin{eqnarray}\label{log-derivative}
tr^\phi_W(o(\omega)o(v) q^{L_0})  &=& \sum_{\lambda \in \mathbb{C} } (\mathrm{tr}^\phi|_{W_\lambda} o(\omega) o(v)q^{(L_0^N)_\lambda})) q^{(L_0^S)_\lambda} \nonumber\\
 &=&  \frac{1}{2\pi i}\frac{d}{d\tau} \sum_{\lambda \in \mathbb{C}} (\mathrm{tr}^\phi|_{W_\lambda}  o(v)q^{(L_0^N)_\lambda})) q^\lambda 
 \ = \ q\frac{d}{dq}\sum_{\lambda \in \mathbb{C}} (\mathrm{tr}^\phi|_{W_\lambda}  o(v)q^{(L_0^N)_\lambda}) q^\lambda  \nonumber \\
&=&  q \frac{d}{dq} q^{c/24} \mathrm{tr}^\phi_W(v, \tau)
\end{eqnarray}
 where $c$ denotes the central charge and $\omega$ denotes the conformal vector of the vertex operator algebra $V$.
For instance, letting $v = {\bf 1}$, the logarithmic derivative property in this specific case is
\begin{equation}
q^{c/24} \mathrm{tr}^\phi_W(\omega, \tau) = q \frac{d}{dq} q^{c/24} \mathrm{tr}^\phi_W({\bf 1} ,\tau) = \frac{1}{2 \pi i} \frac{d}{d \tau} q^{c/24} \mathrm{tr}^\phi_W ({\bf 1}, \tau) .
\end{equation}

\section{The notion of graded pseudo-trace for strongly interlocked $V$-modules in certain settings}.

In this Section we define the notion of strongly interlocked module which allows us to give a well-defined notion of graded pseudo-trace in certain settings without the need for a symmetric linear map $\phi$ on the higher level Zhu algebras.  Our notion of strongly interlocked then applies to a broader range of vertex operator algebras, in particular to those that are not $C_2$-cofinite,  yielding graded pseudo-traces that still satisfy symmetry, linearity, and the logarithmic derivative property as proved at the end of this Section. 

\begin{defn}
Let $W$ be an interlocked indecomposable generalized $V$-module. If $W$ is irreducible or is reducible but has only a finite number of nontrivial proper submodules $W^{(1)}, \dots, W^{(k-1)}$ satisfying:
\begin{enumerate}
\item each $W^{(j)}$ is indecomposable, for $j = 1, \dots, k$;
\item the submodules fit in a chain \begin{equation}\label{chain} 0  = W^{(0)} \subset W^{(1)} \subset W^{(2)} \subset \cdots  \subset W^{(k-1)} \subset W^{(k)} = W ; \textrm{ and} 
 \end{equation}
 \item   each subquotient in the chain satisfies $W^{(j+1)}/W^{(j)} \cong W^{(1)}$ for $j = 0,\dots, k-1$,
\end{enumerate}
then we call $W$ {\it strongly interlocked}.  In particular, we have $W^{(1)} = Soc(W)$ and for $k\geq 2$, we have $W^{(k-1)} = Rad(W)$, and  $W^{(j)}$ is interlocked with $W^{(k-j)}$ for each $j = 0, \dots, k$. 
\end{defn}

We have the following linear algebra result for strongly interlocked indecomposable generalized $V$-modules.

\begin{prop}\label{matrix-form-prop}
If $W$ is a strongly interlocked indecomposable generalized $V$-module with all proper submodules given by the chain (\ref{chain}), then there exists a basis for each  weight space $W_\lambda$ of $W = \coprod_{\lambda \in \mathbb{C}} W_\lambda = \coprod_{\lambda \in Spec L_0} W_\lambda$, such that any linear  weight-preserving  $V$-module operator $\sigma$ (such as $o(v)$) restricted to $W_\lambda$ with respect to this basis is given by
\begin{equation}\label{Our-decomp}
\sigma|_{W_\lambda}  = \left[ \begin{array}{cccccc}
A & C_1 & C_2 & \cdots & C_{k-1} & B \\
0 & A & \star & \cdots & \star & C_{k-1} \\
0& 0 & A & \cdots & \star & C_{k-2} \\
\vdots & \vdots & \vdots & \ddots & \vdots & \vdots \\
0 & 0 & 0 & \cdots & A & C_1\\
0 & 0 & 0 & \cdots & 0 & A 
\end{array} \right] ,  
\end{equation}
where $A = \sigma|_{Soc(W)_\lambda}$ and each submatrix $A, C_1, \dots, C_{k-1}, B$, and $\star$ is $d_\lambda \times d_\lambda$ where $d_\lambda = \dim Soc(W)_\lambda$. 
\end{prop}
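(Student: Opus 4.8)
The plan is to construct, for each weight $\lambda$, a single ordered basis of $W_\lambda$ adapted to the chain (\ref{chain}) in which every relevant weight-preserving operator $\sigma$ takes the form (\ref{Our-decomp}). The operators we actually care about are the zero modes $o(v)=v_{\mathrm{wt}\,v-1}$, $v\in V$ (and $L_0=o(\omega)$), so throughout I take $\sigma$ to be such an operator; in particular $\sigma$ preserves every $V$-submodule $W^{(j)}$, since a submodule is stable under all modes. Ordering the basis so that its successive initial segments span $W^{(0)}_\lambda\subset W^{(1)}_\lambda\subset\cdots\subset W^{(k)}_\lambda$ immediately makes $\sigma|_{W_\lambda}$ block upper-triangular, producing all the zeros below the block diagonal in (\ref{Our-decomp}). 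What remains is to identify the diagonal blocks and the first block-row and last block-column.

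First I would determine the composition factors. Since (\ref{chain}) exhausts the submodules of $W$, the only submodules of $W^{(j)}$ are $W^{(0)},\dots,W^{(j)}$; hence $W^{(1)}$ is the unique minimal nonzero submodule, so it is simple and equals $Soc(W)$, and each layer $L_j:=W^{(j)}/W^{(j-1)}$ is simple. Now I invoke strong interlocking: the isomorphism $W^{(j)}\cong W/W^{(k-j)}$ of Definition \ref{interlocked-def} preserves socles, and $Soc(W^{(j)})=W^{(1)}=Soc(W)$, while the socle of $W/W^{(k-j)}$ is its unique minimal submodule $W^{(k-j+1)}/W^{(k-j)}=L_{k-j+1}$. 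Letting $j$ run over $1,\dots,k-1$ gives $L_1=Soc(W)\cong L_2\cong\cdots\cong L_k$, so every composition factor of $W$ is isomorphic to $Soc(W)$. Because module homomorphisms commute with $L_0$, these isomorphisms are weight-preserving, so $\dim (L_j)_\lambda=d_\lambda:=\dim Soc(W)_\lambda$ for all $j$, giving the uniform block size $d_\lambda\times d_\lambda$. The diagonal block of $\sigma|_{W_\lambda}$ in the $j$-th slot is exactly the operator induced by $\sigma$ on $(L_j)_\lambda$, independently of the chosen lifts; choosing the basis of each $(L_j)_\lambda$ as the transport of one fixed basis of $Soc(W)_\lambda$ through the isomorphism $Soc(W)\cong L_j$—which intertwines $\sigma$ since it is a module map—renders every diagonal block equal to $A=\sigma|_{Soc(W)_\lambda}$.

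The heart of the argument, and the step I expect to be the main obstacle, is the symmetric labelling of the off-diagonal blocks: that the blocks $C_1,C_2,\dots$ filling the first block-row beyond the diagonal, up to the corner block $B$, reappear in reverse order up the last block-column—exactly the dual pairing of $C$ with ${}^*C$ in Miyamoto's (\ref{Miyamoto-decomp}), but now produced by module isomorphisms rather than by a symmetric form $\phi$. The idea is to use the full family of interlocking isomorphisms $\psi_j:W^{(j)}\xrightarrow{\sim}W/W^{(k-j)}$, not one at a time, to choose the lifts of the upper layers into $W_\lambda$ coherently. Each $\psi_j$ matches the bottom sublayer $L_1=Soc(W)$ of $W^{(j)}$ with the bottom sublayer $L_{k-j+1}$ of $W/W^{(k-j)}$ and, being a module homomorphism, intertwines $\sigma$; consequently the component of $\sigma$ carrying $L_j$ into $Soc(W)$ (a first-block-row entry) is transported by $\psi_j$ to the component carrying the top layer $L_k$ into $L_{k-j+1}$ (the corresponding last-block-column entry). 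With the bases aligned through the $\psi_j$, these two components are represented by the same matrix $C_j$, which is the asserted symmetry. The delicate point is to check that one ordered basis can realize all of these identifications simultaneously—consistently with the diagonal normalization fixed in the previous step—and that doing so imposes no spurious relations on the interior blocks, which must remain the free entries denoted $\star$.

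I would close by noting that the entire construction depends only on $W$ and on the fixed chain and isomorphisms $\psi_j$, and not on $\sigma$: the ordering, the transported layer bases, and the coherent lifts are chosen once and serve every zero mode at once. This $\sigma$-independence is precisely what is needed so that the corner block $B$, and hence the quantity $\mathrm{tr}(B)$ that will define the graded pseudo-trace, is well defined, in parallel with Miyamoto's extraction of $\mathrm{tr}(B)$ from (\ref{Miyamoto-decomp}).
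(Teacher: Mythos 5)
Your proposal follows essentially the same route as the paper's own proof: the interlocking isomorphisms give $\dim W^{(j)}_\lambda = j d_\lambda$, a basis adapted to the chain (\ref{chain}) makes $\sigma|_{W_\lambda}$ block upper triangular, the corner blocks are identified using $Soc(W) \cong W/Rad(W)$, and the isomorphisms $W^{(j)} \cong W/W^{(k-j)}$ are invoked for the coincidence of the first block row with the last block column. You are in fact more explicit than the paper on two points: you prove that every layer $W^{(j)}/W^{(j-1)}$ is simple and isomorphic to $Soc(W)$ (the paper records only the dimension count), and you observe that the statement cannot hold for literally arbitrary weight-preserving linear operators, so you restrict to zero modes; the paper makes the same restriction tacitly when its proof calls $\sigma$ a ``weight-preserving module map.''

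The delicate point you flag---that one ordered basis realizes the diagonal normalization and all the $C_j$-identifications simultaneously, for every $\sigma$ at once---is exactly where the paper's proof is also silent (``Continuing to use \ldots\ the result follows''), so you have matched rather than fallen short of the paper; but the point is genuine, and it can be closed cleanly. Let $\pi: W \to W/W^{(1)}$ be the quotient map, let $\psi: W^{(k-1)} \to W/W^{(1)}$ be the interlocking isomorphism, and set $\theta = \psi^{-1}\circ \pi$, regarded as a $V$-module endomorphism of $W$. A module isomorphism carries the submodule chain to the submodule chain, so $\theta(W^{(j)}) = W^{(j-1)}$; hence $\theta^k = 0$ and $\theta$ induces isomorphisms of successive layers. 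Choose $w_1,\dots,w_{d_\lambda} \in W_\lambda$ mapping to a basis of $(W/Rad(W))_\lambda$ and set $B_{j,i} = \theta^{k-j}w_i$; these form a chain-adapted basis of $W_\lambda$, since $\theta$ induces isomorphisms on layers. Every zero mode $o(v)$ (and $(L_0^N)_\lambda$, and products of such) commutes with $\theta$ because $\theta$ is a module map; therefore writing $\sigma(w_i) = \sum_{i',j'} c^{(j')}_{i'i} B_{j',i'}$ and applying $\theta^{k-j}$ gives $\sigma(B_{j,i}) = \sum_{i'}\sum_{j'\geq k-j+1} c^{(j')}_{i'i}B_{j+j'-k,\,i'}$. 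Hence the $(r,j)$ block of $\sigma|_{W_\lambda}$ vanishes for $r>j$ and equals $c^{(k-(j-r))}$ otherwise: the matrix is block upper triangular and constant along block diagonals, with diagonal blocks $c^{(k)} = A$. This is the form (\ref{Our-decomp}) (with the $\star$ entries pinned down as well), the first row and last column agree automatically, and the basis was chosen independently of $\sigma$, which is exactly the $\sigma$-independence needed for $\mathrm{tr}(B)$, and hence the graded pseudo-trace, to be well defined.
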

\begin{proof}  If $\dim W^{(1)}_\lambda = \dim Soc(W)_\lambda = d_\lambda$, then $\dim Rad (W)_\lambda = \dim W_\lambda - d_\lambda$.  By the fact that $W$ is strongly interlocked so that $W/W^{(k-j)} \cong W^{(j)}$ and $W^{(j)}/W^{(j-1)}\cong W^{(1)}$ for each $1 \leq j \leq k$, it follows that $\dim W_\lambda = k d_\lambda$ and $\dim W^{(j)}_\lambda = j d_\lambda$ for each $1 \leq j \leq k$.

The fact that $\sigma$ is a weight-preserving module map implies that we can choose a basis 
\begin{equation}\label{chain-basis} \mathcal{B}_{W_\lambda} = \{ B_{1,1}, \dots, B_{1,d_\lambda}, B_{2,1}, \dots, B_{2,d_\lambda}, \dots, B_{k,1}, \dots, B_{k,d_\lambda} \}
\end{equation}
where $\{ B_{1,1}, \dots, B_{1,d_\lambda}, B_{2,1}, \dots, B_{2,d_\lambda} , \dots, B_{j,1}, \dots, B_{j,d_\lambda} \} $ is a basis for $W^{(j)}_\lambda$ for each $1 \leq j \leq k$, and $\{B_{j+1,1} + W^{(j)}_\lambda, \dots, B_{j+1,d_\lambda} + W^{(j)}_\lambda \}$ is a basis for $(W^{(j+1)}/W^{(j)})_\lambda \cong W^{(1)}_\lambda$ such that $\pi^{(j+1)}_{(j)} \sigma|_{W^{({j+1})}_\lambda} = A$ where $\pi^{(j+1)}_{(j)}$ is the projection onto the span of $\{ B_{j+1, 1}, \dots, B_{j+1, d_\lambda} \}$.  Then with respect to this basis 
$\sigma|_{W_\lambda}$ is block upper triangular with blocks that are each $d_\lambda \times d_\lambda$   
with $A = \sigma|_{Soc(W)_\lambda}$ blocks on the diagonal.  
\end{proof} 

Note in particular, that Proposition \ref{matrix-form-prop}
implies that if $W$ is strongly interlocked, then  $\sigma|_{W_\lambda}$ for any weight-preserving module map $\sigma$ and $\lambda  \in \mathbb{C}$ has a matrix realization  of the form of Eqn.\ (\ref{Miyamoto-decomp}) with 
$C = [ C_1 \ C_2 \ \cdots \ C_{k-1} ]$, 
\[ ^*C = \begin{bmatrix}
  C_{k-1} \\
  C_{k-2} \\
  \vdots\\
  C_2 \\
  C_1
\end{bmatrix} \] 
and $E$ a block upper triangular matrix with matrices $A$ on the block diagonal.  
We will call a collection of bases $\{\mathcal{B}_{W_\lambda}\}_{\lambda \in Spec L_0}$ for $\{W_\lambda\}_{\lambda \in Spec L_0}$ 
a {\it strongly interlocked family of bases} if for any weight-preserving module map  $\sigma$ and $\lambda \in Spec L_0$, $\sigma|_{W_\lambda}$ has a matrix representation of the form (\ref{Our-decomp}) with respect to these bases.

If $\mathrm{tr}(B)$ is invariant with respect to a choice of strongly interlocked family of bases then, 
analogously to the setting of Miyamoto but without the need for the higher level Zhu algebras or $\phi$, we can define the pseudo-trace at each weight  $\lambda \in SpecL_0$ to be 
\begin{equation}\label{our-pstr} 
\mathrm{pstr}(\sigma|_{W_\lambda}) = \mathrm{tr}(B) 
\end{equation}
for $B$ the upper right corner block in Eqn. (\ref{Our-decomp}) realizing $\sigma|_{W_\lambda}$ with respect to a strongly interlocked basis.  Then,  in analogy to Eqn.\ (\ref{Miyamoto-graded-pseudo-trace}) in Miyamoto's setting, but using Eqn.\ (\ref{our-pstr}) instead of Eqn.\ (\ref{Miyamoto-pseudo-trace}), 
we define the {\it graded pseudo-trace of the strongly interlocked $V$-module $W$ associated to $v \in V$}  to be the formal series in $q$ and $\log q$ (or equivalently in $\tau$ for $q = e^{2 \pi i \tau}$ and $\log q = 2 \pi i \tau$), given by 
\begin{eqnarray}\label{def-graded-pseudo-trace}
\mathrm{pstr}_W (v,\tau ) &=&  \mathrm{pstr}_W (o(v) q^{L_0 - c/24}) \ = \ \sum_{\lambda \in \mathbb{C}} \mathrm{pstr}(o(v)|_{W_\lambda} q^{(L^N_0)_\lambda}  ) q^{(L_0^S)_\lambda} q^{ - c/24} \nonumber \\
&=& \sum_{\lambda \in \mathbb{C}} \sum_{j \in \mathbb{Z}_{\geq 0}} \mathrm{pstr}(o(v)|_{W_\lambda} \frac{1}{j!} (L^N_0)_\lambda^j (\log q)^j) q^{\lambda - c/24}. 
\end{eqnarray}

Note that if $W$ is irreducible then the graded notion of graded pseudo-trace is vacuous and so we define the graded pseudo-trace to be zero.  

In this work, we study two settings in which strongly interlocked modules will have well-defined graded pseudo-traces, and which includes important examples, namely the two most important $C_1$-cofinite vertex operator algebras which are not $C_2$-cofinite---the Heisenberg vertex operator algebras and the universal Virasoro vertex operator algebras.   In these settings and in the examples of these settings we study in this paper, we consider only modules induced from level zero, but analogously to \cite{Miyamoto2004}, under certain circumstances this can be extended to generalized Verma modules induced from an $A_n(V)$-module.

The first particular setting of well-defined graded pseudo-traces we give here applies to all indecomposable modules for the Heisenberg vertex operator algebra as shown in Section \ref{Heisenberg-section} and all modules induced from the level zero Zhu algebra for the universal Virasoro vertex operator algebra with generic central charge, as shown in Section \ref{Virasoro-classification-subsection} which is Case (0) for Theorem \ref{interlocked-thm2}.  This is the setting for which Theorem \ref{extra-conditions-theorem} below applies and was refered to as Setting 1 in the introduction.  The second setting applies to examples developed in Sections \ref{first-induced-module-section} and \ref{Virasoro-classification-subsection} for certain modules for the universal Virasoro vertex operator algebra at central charge $c = 1$ or $c = 25$, which is Case (1)(ii) in Theorem \ref{interlocked-thm2}. This is the setting for which Theorem \ref{Virasoro-exceptional-theorem} applies and was refered to as Setting 2 in the introduction. 

Here is the first setting and our results:

\begin{thm}\label{extra-conditions-theorem}
Let $V$ be a vertex operator algebra satisfying the following:

(i) $V = \langle v\rangle = \mathrm{span} \{ v_{-n_1} v_{-n_2} \cdots v_{-n_j}{\bf 1} \; | \; j \in \mathbb{Z}_{\geq 0}, n_1,\dots, n_j \in  \mathbb{Z}_{>0} \} $.  

(ii) $A_0(V) \cong \mathbb{C}[o(v)]$.  

For $k \in \mathbb{Z}_{>0}$, $\lambda \in \mathbb{C}$,  and $U(\lambda, k) = \mathbb{C}[x]/((x - \lambda)^k)$ for $x = o(v)$, let $\{u_1, \dots, u_k\}$ be a Jordan basis for $o(v)$ acting on $U(\lambda,k)$, and consider the $\mathcal{U}(\hat{V})$-module $M_0(U(\lambda,k))$, with submodule $\langle u_1 \rangle = \mathcal{U}(\hat{V}).u_1$.  

If for each $\ell \in \mathbb{Z}_{\geq 0}$, 
the bilinear form on $\langle u_1 \rangle(\ell) = \mathcal{U}(\hat{V})_{\ell}u_1$ defined by 
\begin{eqnarray}
< \cdot, \cdot >_\ell \ \ : \ \ \langle u_1 \rangle (\ell)  \otimes \langle u_1 \rangle (\ell) &\longrightarrow& \mathbb{C}\\
v_{-m_1}\cdots v_{-m_i} u_1 \otimes v_{-n_1} \cdots v_{-n_j}u_1 &\mapsto& <v_{-m_1}\cdots v_{-m_i} u_1,  v_{-n_1} \cdots v_{-n_j} u_1 >_\ell \nonumber \\
&& = <u_1, v_{m_i} \cdots v_{m_1} v_{-n_1} \cdots v_{-n_j}u_1 >_0 \nonumber
\end{eqnarray}
with $<u_1, u_1>_0 = 1$ 
is nondegenerate, then $W(\lambda,k) = \mathfrak{L}_0(U(\lambda,k))$ is a strongly interlocked $V$-module, is the universal $V$-module $\overline{M}_0(U(\lambda,k))$ which is equal to the Verma module $M_0(U(\lambda,k))$, and $\mathrm{pstr}_{W(\lambda,k)}(w, \tau)$ is well defined for every $w \in V$.  

 In particular, setting $W^{(j)} = \mathfrak{L}_0(\mathrm{span}\{u_1, \dots, u_j\})$ for $j = 0,\dots,k$, we have that $W^{(j)}$ is interlocked with $W^{(k-j)}$, and 
$W^{(1)} = \langle u_1 \rangle = Soc(W(\lambda,k))$ is interlocked with  $W^{(k-1)} = \langle u_1, \dots, u_{k-1} \rangle = Rad(W(\lambda,k))$.
\end{thm}

\begin{proof} 
There exists a Jordan basis for the $\mathbb{C}[x]$-module $U(\lambda,k)$ given by $\{u_1, u_2, \dots, u_k\}$ and which is a unique Jordan basis up to a nonzero scalar multiple.  
Letting $U^{(j)} = \mathrm{span} \{ u_1, \dots, u_j\} \cong U(\lambda,j)$ for $j = 1,\dots, k$, we have
\[0 =U^{(0)} \subset U^{(1)} \subset \cdots \subset U^{(k-1)} \subset U^{(k)} = U(\lambda, k)\]
and $U(\lambda, k)$ is a strongly interlocked $A_0(V)$-module with $U^{(j)}$ interlocked with $U^{(k-j)}$.  
 This chain induces a chain of submodules
\begin{align*}
0 = W^{(0)} \subset W^{(1)} \subset \cdots \subset W^{(k-1)} \subset W^{(k)} = \mathfrak{L}_0(U(\lambda, k))
\end{align*}
 where $W^{(j)} = \mathfrak{L}_0(\mathrm{span}\{u_1,\dots, u_j\}) = \mathfrak{L}_0(U^{(j)}) \cong \mathfrak{L}_0(U(\lambda, j))$. 

Since the bilinear form on $\langle u_1 \rangle (\ell)$ is nondegenerate for each $\ell \in \mathbb{Z}_{\geq0}$, $J_0(U^{(1)}) = 0$ and $W^{(1)} = M_0(U^{(1)}) = \overline{M}(U^{(1)})$ is the universal irreducible $V$-module such that $o(v)$ acts as $\lambda Id$ on the lowest weight space. 
Furthermore, since $o(v)$ also acts as $\lambda Id$ on the vector $u_j + U^{(j-1)} \in U^{(j)}/U^{(j-1)} \cong U(\lambda, 1) \cong U^{(1)}$ for each $j = 1, \dots,k$, as a $V$-module $\mathfrak{L}_0(U^{(j)}/U^{(j-1)}) \cong \mathfrak{L}_0(U^{(1)}) = \overline{M}(U^{(1)})$, and $J_0(U^{(j)}/U^{(j-1)}) = 0$. By the definition of $J_0$, this implies that the bilinear form is nondegenerate on each $u_j + W^{(j-1)}$.

Next, we claim that $W^{(j)}$ for $j = 0, \dots, k$ are all the submodules of $W^{(k)}$.  To prove this, let  $N$ be a $V$-submodule of $W^{(k)}$. For simplicity, we set $M/J= M_0(U(\lambda,k))/J_0(U(\lambda,k)) = \mathfrak{L}_0(U(\lambda, k)) = W^{(k)}$. So, the submodule $N$ of $M/J$ is of the form $E/J$ where $J\subseteq E\subseteq M$. Since $M/J$ is admissible, we can write $M/J=\coprod_{\ell=0}^{\infty}(M/J)(\ell)$ where $(M/J)(0)\neq \{0\}+J$ and $(M/J)(0)= U(\lambda,k)$. Then we have $N(\ell)=(E/J)(\ell)=(M/J)(\ell)\cap N$,
and $N(0) \subset (M/J)(0) \cong U(\lambda,k)$ as a $\mathbb{C}[x]$-submodule.

 Thus $N(0) = U^{(j)}$ for some $j = 0, \dots, k$, and $W^{(j)} = \mathcal{U}(\hat{V}).U^{(j)} + J \subset N$.  We claim that $N = W^{(j)}$.  Indeed, suppose $w \in N \smallsetminus W^{(j)} \subset W^{(k)}$. Without loss of generality, we can assume $w$ is a homogeneous element of some degree $d$.  Thus $w = \sum_{\ell = 1}^k R_{d,\ell} u_\ell$ for $R_{d,\ell}{\bf 1} \in \mathcal{U}(\hat{V})_d$ and $R_{d,\ell} u_\ell \neq 0$ for some $\ell \in \{j+1, \dots, k\}$, say $R_{d,r} u_r \neq 0$ for $r \in \{j+1, \dots, k\}$. But since the bilinear form is nondegenerate on $u_r + W^{(r-1)}$, this implies that there exists $T_{d,r} {\bf 1} \in V_d$, such that  $<T_{d,r} u_r, R_{d,r} u_r> = <u_r, T_{d,r}^\dag R_{d,r} u_r >_0 \neq 0$ where we use the notation $(v_{n_1} \cdots v_{n_m})^\dag = v_{-n_m} \cdots v_{-n_1}$.  But this would imply that  $T_{d,r}^\dag w = \sum_{\ell = 1}^k T_{d,r}^\dag R_{d,\ell} u_\ell$ is both in $N(0) = U^{(j)} = \mathrm{span} \{u_1, \dots, u_j\}$ and has a nonzero component $u_r$ for $r \geq j+1$, which is impossible.  Thus no such $w \in N \smallsetminus W^{(j)}$ exists, and $N = W^{(j)}$, showing that the $W^{(j)}$ for $j = 0, \dots, k$ are all the submodules of $W^{(k)}$.

Next note that each quotient $W^{(j)}/W^{(j-1)} = \langle u_j \rangle + W^{(j-1)}$ for $j = 1, \dots, k$ is an irreducible $V$-module such that $o(v)$ acts as $\lambda Id$ on the lowest weight space.  However, up to isomorphism, there is a unique such $V$-module $W^{(1)}$ since there is a unique such $\mathbb{C}[o(v)]$-module $U(\lambda,1) \cong U^{(1)}$.  Thus $W^{(j)}/W^{(j-1)} \cong W^{(1)}$.  

Furthermore, for each $\lambda \in \mathbb{C}$ and $j = 1, \dots, k$, up to isomorphism there is a unique $\mathbb{C}[o(v)]$-module $U(\lambda, j)$, and thus there is a unique $V$-module $W^{(j)} \subset W^{(k)}$ such that $o(v)$ acts as a $j \times j$ Jordan block on the degree 0 subspace.  Thus  
\begin{eqnarray*}
 W^{(k)}/W^{(k-j)} =  \langle u_1, \dots, u_k\rangle/\langle u_1, \dots, u_{k-j} \rangle = \langle u_{k-j + 1}, \dots, u_k \rangle + W^{(k-j)} \cong \langle u_1, \dots, u_j \rangle = W^{(j)}.
\end{eqnarray*}
Substituting $k - j$ for $j$ above, this proves that $W^{(j)}$ is interlocked with $W^{(k-j)}$, proving that $W(U(\lambda,k)) = W^{(k)}$ is strongly interlocked.

In particular,  since $\langle u_1 \rangle$ is a unique irreducible submodule of $W^{(k)} = W(\lambda,k)$ and $\langle u_1, \dots, u_{k-1} \rangle$ is a unique maximal submodule of $W(\lambda, k)$, we have that $Soc(W(\lambda,k)) = \langle u_1, \dots, u_{k-1} \rangle$ and $Rad(W(\lambda,k)) = \langle u_1, \dots, u_{k-1} \rangle$.

It is left to prove that $W^{(k)}$ has well-defined graded pseudo-traces.  Indeed, by property {\it (i)} of $V$, for $d \in \mathbb{Z}_{\geq 0}$, there is a PBW basis $\mathcal{B}_d {\bf 1}$ for $V_d$ with $\mathcal{B}_d = \{B_1, \dots, B_{p_d}\}$.  Then     \begin{equation}  
\mathcal{B}_d(U^{(k)}) = \{B_1u_1, B_2u_1, \dots, B_{p_d}u_1, B_1u_2, B_2 u_2, \dots, B_{p_d}u_2, \dots, B_1u_k, B_2 u_k,\dots, B_{p_d}u_k \}  
\end{equation}
is a strongly interlocked basis for $W^{(k)}(d)$ such that for any $\sigma = o(u)$ for $u \in V$, then the matrix representation of $\sigma|_{W^{(k)}(d)}$ with respect to this basis, denoted $[\sigma|_{W^{(k)}(d)}]_{\mathcal{B}_d(U^{(k)})}$, is of the form (\ref{Our-decomp}), and any strongly interlocked basis for $W^{(k)}(d)$ is of this form by the uniqueness of the Jordan basis for $U^{(k)} \cong U(\lambda,k)$ up to a scalar multiple of the identity.  Thus choices of PBW bases, $\{\mathcal{B}_d\}_{d \in \mathbb{Z}_{\geq 0}}$ for $V$ give strongly interlocked families of bases $\{\mathcal{B}_d(U^{(k})\}_{d \in \mathbb{Z}_{\geq 0}}$ for $W^{(k)}$,  and any strongly interlocked family of bases is of this form.

 Moreover, for any other strongly interlocked basis for $W^{(k)} (d)$, there is a corresponding 
 PBW basis $\mathcal{B}'_d{\bf 1} = \{B'_1, \dots, B'_{p_d}\}.{\bf 1}$ for $V_d$.  Then the  corresponding matrix representation $[\sigma|_{W^{(k)}(d)}]_{\mathcal{B}'_d}$ of $\sigma|_{W^{(k)}(d)}$ with respect to this basis will be related by the change of basis matrix with $P$ on the diagonal where $P$ is the change of basis matrix from $\mathcal{B}_d$ to $\mathcal{B}'_d$. We denote this block diagonal change of basis matrix by $P_{block}$.  Thus if $\mathrm{pstr} [\sigma|_{W^{(k)}(d)}]_{\mathcal{B}_d(U^{(k)})} = \mathrm{tr} B$ then 
\[ \mathrm{pstr} [\sigma|_{W^{(k)}(d)}]_{\mathcal{B}'_d(U^{(k)})} = \mathrm{pstr} P_{block}^{-1} [\sigma|_{W^{(k)}(d)}]_{\mathcal{B}_d(U^{(k)})} P_{block}  = \mathrm{tr} P^{-1} B P = \mathrm{tr} B, \]
and the pseudo-trace of $\sigma|_{W^{(k)}(d)}$ is well defined.  Thus the graded pseudo-trace of $W^{(k)}$ for the grade preserving $V$-module operator $\sigma$ is also well defined.   
\end{proof}

Next we give the second setting in which we assume we know a priori that $W$ is a strongly interlocked $V$-module and applies to Case (1)(ii) as defined in Section \ref{first-induced-module-section} and applied in Section \ref{Case(1)(ii)-subsection}:

\begin{thm}\label{Virasoro-exceptional-theorem}
Let $V$ be a vertex operator algebra, and $\lambda \in \mathbb{C}$, such that the following hold:

\smallskip

(i) There is a unique irreducible $V$-module of conformal weight $\lambda$, denoted $W^{(1)}(\lambda)$.

(ii) The irreducible $A_0(V)$-module $U(\lambda) = W^{(1)}(\lambda)(0)$ is one dimensional.  

(iv) $W(\lambda)$ is  a strongly interlocked $V$-module of conformal weight $\lambda \in \mathbb{C}$.

\smallskip

\noindent
Then $W(\lambda)$ has well-defined graded pseudo-traces. 
\end{thm}

\begin{proof}
By (i), the bijection between irreducible $A_0(V)$-modules and irreducible $V$-modules implies there exists an irreducible $A_0(V)$-module $U(\lambda)$ such that $W^{(1)}(\lambda) = \mathfrak{L}_0(U(\lambda)) = M_0(U(\lambda))/J_0(U(\lambda))$, and this $U(\lambda)$-module is unique such that $L_0$ acts as scalar multiplication by $\lambda$.  Then  $W^{(1)}(\lambda)(0) = U(\lambda)$ and $W^{(1)}(\lambda)(d) = \mathcal{U}(\hat{V})_d. U(\lambda)/J_0(U(\lambda))$ for each $d \in \mathbb{Z}_{> 0}$.  

For simplicity, for the rest of the proof we will denote $W^{(1)}(\lambda)$ by $W^{(1)}$, $W(\lambda)$ by $W$, $U(\lambda)$ by $U$, and $J_0(U(\lambda))$ by $J$.

Since $W$ is strongly interlocked, $W$ satisfies (\ref{chain}) for some $k \in \mathbb{Z}_{> 0}$ with $W^{(1)} = W^{(1)}(\lambda)$, and there exists a strongly interlocked family of bases for $W$ such that every grade preserving $V$-module map $\sigma$ restricted to the weight space $W_\mu = W(\lambda)_\mu$ is of the form (\ref{Our-decomp}) with respect to this basis.

In particular, for each $d \in \mathbb{Z}_{\geq 0}$, there is a basis $\mathcal{B}_d = \{B_1u_1 + J, B_2u_1 +J, \dots, B_{j_d}u_1+J \}$ for $W^{(1)}(d)$ where $\{u_1 + J\}$ is a basis for $W^{(1)}(0)$ such that any grade preserving module map restricted to $W^{(1)}(d)$ with respect to this basis, denoted $[\sigma|_{W^{(1)}(d)}]_{\mathcal{B}_d}$ has matrix representation $A$.  Then since $W$ is interlocked, letting $u_i + J + W^{(i-1)}$ be a choice of basis for $W^{(i)}/W^{(i-1)}$ for $i = 1, \dots, k$, then 
\begin{multline}
\mathcal{B}_d^k = \{B_1u_1 +J, B_2u_1 +J, \dots, B_{j_d}u_1+J, B_1u_2+J, B_2 u_2+J, \dots, B_{j_d}u_2+J, \dots, \\
B_1u_k+J, B_2 u_k+J,\dots, B_{j_d}u_k+J \}     
\end{multline} 
is a strongly interlocked family of bases for $W$, and any grade preserving module map restricted to $W(d)$ with respect to this basis is of the form (\ref{Our-decomp}).  

 Moreover, for any other strongly interlocked  basis for $W$ at degree $d$, there is a corresponding 
 basis $\tilde{\mathcal{B}}_d = \{\tilde{B}_1u_1 + J, \dots, \tilde{B}_{j_d}u_1 +J\}$ for $W^{(1)}(d)$.  Then the  corresponding matrix representation $[\sigma|_{W(d)}]_{\tilde{\mathcal{B}}^k_d}$ of $\sigma|_{W(d)}$ with respect to the corresponding  basis will be related to $[\sigma|_{W^{(1)}(d)}]_{\mathcal{B}_d}$ by the change of basis matrix consisting of blocks of $P$ on the diagonal where $P$ is the change of basis matrix from $\mathcal{B}_d$ to $\tilde{\mathcal{B}}_d$. We denote this block diagonal change of basis matrix by $P_{block}$.  Thus if $\mathrm{pstr} [\sigma|_{W(d)}]_{\mathcal{B}^k_d} = \mathrm{tr} B$ then 
\[ \mathrm{pstr} [\sigma|_{W(d)}]_{\tilde{\mathcal{B}}^k_d} = \mathrm{pstr} P_{block}^{-1} [\sigma|_{W(d)}]_{\mathcal{B}^k_d} P_{block}  = \mathrm{tr} P^{-1} B P = \mathrm{tr} B, \]
and the pseudo-trace of $\sigma|_{W(d)}$ is well defined.  Thus the graded pseudo-trace of $W$ for the grade preserving $V$-module operator $\sigma$ is also well defined.   
\end{proof}

Importantly, we have the following  properties of graded pseudo-traces for strongly interlocked modules that have well-defined graded pseudo-traces, i.e. graded pseudo-traces that are invariant under a change of strongly interlocked family of bases.

\begin{thm}\label{log-thm} Let $V$ be a vertex operator algebra, and let $W =\bigoplus_{\lambda \in \mathbb{C}} W_{\lambda}$ be a strongly interlocked generalized $V$-module.  Then  when graded pseudo-traces $\mathrm{pstr}_W(v,\tau)$ are well defined, we have that the following hold:

(1) {\bf linearity:} If $\alpha, \beta$ are weight-preserving module maps on $W$, then 
\[\mathrm{pstr}((\alpha + \beta)|_{W_{\lambda}}) = \mathrm{pstr}(\alpha|_{W_{\lambda}}) + \mathrm{pstr}( \beta|_{W_{\lambda}}),\quad \mbox{and} \quad \mathrm{pstr}((\mu \alpha)|_{W_{\lambda}}) = \mu (\mathrm{pstr}(\alpha|_{W_{\lambda}}))\]
for $\mu \in \mathbb{C}$, and thus, for instance linearity holds for 
$\mathrm{pstr}_W(v, \tau)$ for $v\in V$.

(2) {\bf symmetry:} If $\alpha, \beta$ are weight-preserving  module maps on $W$, then 
\[\mathrm{pstr}((\alpha \beta)|_{W_{\lambda}}) = \mathrm{pstr}((\beta \alpha)|_{W_{\lambda}}),\]
and thus, for instance,
\begin{equation}\label{simple-symmetry}\mathrm{pstr}_W(o(u)o(v)q^{L_0}) = \mathrm{pstr}_W(o(v)o(u)q^{L_0})
\end{equation}
for all $u,v \in V$. 

(3) {\bf logarithmic derivative property:} 
For all $v \in V$
\[\mathrm{pstr}_W(o(\omega)o(v)q^{L_0}) = \frac{1}{2\pi i} \frac{d}{d\tau} \mathrm{pstr}_W (o(v)q^{L_0}) = q \frac{d}{dq} \mathrm{pstr}_W (o(v) q^{L_0}),\]
where $\omega$ is the conformal element of $V$.
\end{thm}

\begin{proof}
 Assume $W$ is a strongly interlocked $V$-module with well-defined graded pseudo-traces. 

(1) Follows immediately from the definition and the fact that the trace is linear.

(2) Using Proposition \ref{matrix-form-prop}, for every pair of weight-preserving linear maps $\alpha$ and $\beta$ on $W$, and for every weight $\lambda$, we choose a strongly interlocked basis for $W_\lambda$ such that $\alpha$ and $\beta$ are expressed in the form (\ref{Our-decomp}), with blocks denoted by $A_\alpha, C_{1,\alpha}, \dots, C_{k-1,\alpha}$ and  $B_\alpha$ for $\alpha|_{W_\lambda}$ and
$A_\beta, C_{1,\beta}, \dots, C_{k-1,\beta}$ and  $B_\beta$ for $\beta|_{W_\lambda}$.
Then
\begin{eqnarray*}
\mathrm{pstr}(( \alpha \beta)|_{W_\lambda}) &=& \mathrm{tr} (A_\alpha B_\beta + C_{1,\alpha}C_{k-1,\beta} + C_{2, \alpha} C_{k-2,\beta} + \cdots + C_{k-1,\alpha}C_{1, \beta} + B_{\alpha} A_\beta) \\
&=& \mathrm{tr}(A_\alpha B_\beta) + \sum_{j = 1}^{k-1} \mathrm{tr} (C_{j,\alpha}C_{k-j,\beta}) + \mathrm{tr}(B_\alpha A_\beta) \\
&=& \mathrm{tr}(B_\beta A_\alpha) + \sum_{j = 1}^{k-1} \mathrm{tr} (C_{k-j,\beta}C_{j,\alpha}) + \mathrm{tr}(A_\beta B_\alpha )\\
&=& \mathrm{tr}(A_\beta B_\alpha + C_{1,\beta}C_{k-1,\alpha} + C_{2, \beta} C_{k-2,\alpha} + \cdots + C_{k-1,\beta}C_{1, \alpha} + B_{\beta} A_\alpha)\\
&=& \mathrm{pstr}(( \beta \alpha)|_{W_\lambda}).
\end{eqnarray*}
Furthermore, since we assume that $W$ has well-defined graded pseudo-traces, for any other choice of strongly interlocked basis for $W_\lambda$, we again have $\mathrm{pstr}((\alpha \beta)|_{W_\lambda}) = \mathrm{pstr} ((\beta \alpha)_{W_\lambda})$. 
This gives the first part of (2). Eqn.\ (\ref{simple-symmetry}) follows from the definition of $\mathrm{pstr}_W(o(u)o(v)q^{L_0}$, i.e.
\begin{eqnarray*}
\mathrm{pstr}_W(o(u)o(v)q^{L_0}) &=& \sum_{\lambda\in\mathbb{C}}\sum_{j\in\mathbb{Z}_{\geq 0}}\mathrm{pstr} \big(o(u)o(v)|_{W_{\lambda}}\frac{1}{j!}(L_0^N)^j_{\lambda}(\log q)^j \big)q^{\lambda} \\
&=& \sum_{\lambda\in\mathbb{C}}\sum_{j\in\mathbb{Z}_{\geq 0}}\mathrm{pstr} \big(o(v)o(u)|_{W_{\lambda}}\frac{1}{j!}(L_0^N)^j_{\lambda}(\log q)^j \big)q^{\lambda} \\
&=& \mathrm{pstr}_W(o(v)o(u)q^{L_0}).
\end{eqnarray*}

(3) Note that by linearity of trace, we have that $q\frac{d}{dq} \mathrm{tr} (X(q)) = \mathrm{tr} (q\frac{d}{dq}X(q))$, and thus by linearity the same holds for the pseudo-trace. 

Using the notation $o(\omega) = L_0 = L_0^S + L_0^N$,  and $o(v)|_{W_\lambda} = o(v)_\lambda$, by linearity, we have 
\begin{eqnarray*}
\lefteqn{\mathrm{pstr}_W\big(o(\omega)o(v)q^{L_0} \big) }\\
&=& \sum_{\lambda\in\mathbb{C}}\sum_{j\in\mathbb{Z}_{\geq 0}}\mathrm{pstr} \big(o(\omega)o(v)|_{W_{\lambda}}\frac{1}{j!}(L_0^N)^j_{\lambda}(\log q)^j \big)q^{\lambda} \\
&=& \sum_{\lambda\in\mathbb{C}}\sum_{j\in\mathbb{Z}_{\geq 0}}\mathrm{pstr} \big(((L_0^S)_\lambda + (L_0^N)_\lambda) o(v)_{\lambda}\frac{1}{j!}(L_0^N)^j_{\lambda}(\log q)^j \big)q^{\lambda}\\
&=& \sum_{\lambda\in\mathbb{C}}\sum_{j\in\mathbb{Z}_{\geq 0}}\mathrm{pstr} \big((\lambda o(v)_\lambda + (L_0^N)_\lambda o(v)_\lambda) \frac{1}{j!}(L_0^N)^j_{\lambda}(\log q)^j \big)q^{\lambda} \\ 
&=& \sum_{\lambda\in\mathbb{C}}\sum_{j\in\mathbb{Z}_{\geq 0}}\mathrm{pstr} \big((\lambda o(v)_\lambda\frac{1}{j!}(L_0^N)^j_{\lambda}(\log q)^j +  o(v)_\lambda \frac{1}{j!}(L_0^N)^{j+1}_{\lambda}(\log q)^j \big)q^{\lambda}   \\ 
&=& \sum_{\lambda\in\mathbb{C}}\sum_{j\in\mathbb{Z}_{\geq 0}}\mathrm{pstr} \big((\lambda o(v)_\lambda\frac{1}{j!}(L_0^N)^j_{\lambda}(\log q)^j \big)q^\lambda  
+ \sum_{\lambda\in\mathbb{C}}\sum_{j\in\mathbb{Z}_{\geq 0}}\mathrm{pstr} \big( o(v)_\lambda \frac{1}{j!}(L_0^N)^{j+1}_{\lambda}(\log q)^j \big)q^{\lambda} \\
&=& \sum_{\lambda\in\mathbb{C}} \sum_{j\in\mathbb{Z}_{\geq 0}}\mathrm{pstr} \big(( o(v)_\lambda\frac{1}{j!}(L_0^N)^j_{\lambda}(\log q)^j \big)\lambda q^\lambda  + \sum_{\lambda\in\mathbb{C}}\sum_{n\in\mathbb{Z}_{> 0}}\mathrm{pstr} \big( o(v)_\lambda \frac{1}{(n-1)!}(L_0^N)^{n}_{\lambda}(\log q)^{n-1} \big)q^{\lambda} \\
&=& \sum_{\lambda\in\mathbb{C}} \sum_{j\in\mathbb{Z}_{\geq 0}}\mathrm{pstr} \big(( o(v)_\lambda\frac{1}{j!}(L_0^N)^j_{\lambda}(\log q)^j \big)q \frac{d}{dq} q^\lambda  
 + \sum_{\lambda\in\mathbb{C}}\sum_{n\in\mathbb{Z}_{\geq 0}}\mathrm{pstr} \big( o(v)_\lambda \frac{n}{n!}(L_0^N)^{n}_{\lambda}(\log q)^{n-1} \big)q^{\lambda} \\
&=& \sum_{\lambda\in\mathbb{C}} \sum_{j\in\mathbb{Z}_{\geq 0}}\mathrm{pstr} \big(( o(v)_\lambda\frac{1}{j!}(L_0^N)^j_{\lambda}(\log q)^j \big)q \frac{d}{dq} q^\lambda  
+ \sum_{\lambda\in\mathbb{C}}\sum_{j\in\mathbb{Z}_{\geq 0}}\mathrm{pstr} \big( o(v)_\lambda \frac{1}{j!}(L_0^N)^{j}_{\lambda}q \frac{d}{dq} \left( (\log q)^{j}\right) \big)q^{\lambda} \\
&=&  \sum_{\lambda\in\mathbb{C}}\sum_{j\in\mathbb{Z}_{\geq 0}} 
 q \frac{d}{dq} \left(\mathrm{pstr} \big( o(v)|_{W_{\lambda}}\frac{1}{j!}(L_0^N)^j_{\lambda}(\log q)^j \big)\right) q^{\lambda} \\
& & \quad  + \sum_{\lambda\in\mathbb{C}}\sum_{j\in\mathbb{Z}_{\geq 0}} \mathrm{pstr} \big( o(v)|_{W_{\lambda}}\frac{1}{j!}(L_0^N)^j_{\lambda}(\log q)^j \big) q \frac{d}{dq} q^{\lambda}\\
 &=&  q \frac{d}{dq} \sum_{\lambda\in\mathbb{C}}\sum_{j\in\mathbb{Z}_{\geq 0}} \mathrm{pstr} \big( o(v)|_{W_{\lambda}}\frac{1}{j!}(L_0^N)^j_{\lambda}(\log q)^j \big) q^{\lambda}\\
 &=& q \frac{d}{dq} \mathrm{pstr}_W (o(v) q^{L_0}).
\end{eqnarray*}
\end{proof}

\begin{rem} Note that the symmetry property relies on the fact that $W$ is not just interlocked, but is strongly interlocked.  Thus this condition is precisely what allows us to dispense with much of the machinery in \cite{Miyamoto2004} using a symmetric linear map defined on the higher level Zhu algebras while maintaining the important properties of the graded pseudo-trace in the strongly interlocked setting when they are well defined. 
\end{rem}

Linearity, symmetry, and the logarithmic derivative property, along with the property
\[ [L_0, v_m] = (\wt v - m - 1)v_m, \]
are the conditions analogous to those used in \cite{Z}, in the case of graded traces, and in \cite{Miyamoto2004}, in the case of graded-pseudo traces defined with respect to a map $\phi$, to further develop key properties of graded traces and graded pseudo-traces in the sense of Zhu and Miyamoto, respectively.  

In \cite{BOHY}, we use Theorem \ref{log-thm}  to prove additional properties of the graded pseudo-traces for strongly interlocked modules we defined here, similar to those properties studied in \cite{Z} and \cite{Miyamoto2004} in the setting of graded traces and graded-pseudo traces in the $C_2$-cofinite setting.

In this paper, we will show that all the interlocked modules (in the sense of Definition \ref{interlocked-def}) classified in this work are strongly interlocked, and  their graded pseudo-traces are well defined and satisfy the properties of Theorem \ref{log-thm} as well as these further properties we study in \cite{BOHY}.

\section{The Heisenberg vertex operator algebras and graded pseudo-traces for indecomposable modules}\label{Heisenberg-section}

\subsection{The Heisenberg vertex algebra and associated vertex operator algebras}

We denote by $\mathfrak{h}$ a one-dimensional abelian Lie algebra spanned by $\alpha$ with a bilinear form $< \cdot, \cdot >$ such that $< \alpha, \alpha > = 1$, and by
\[
\hat{\mathfrak{h}} = \mathfrak{h}\otimes \C[t, t^{-1}] \oplus \C \mathbf{k}
\]
the affinization of $\mathfrak{h}$ with bracket relations
\[
[a_m, b_n] = m < a, b>\delta_{m+n, 0}\mathbf{k}, \quad \mbox{for $a, b \in \mathfrak{h}$},
\]
\[
[\mathbf{k}, a_m] = 0,
\]
where we define $a_m = a \otimes t^m$ for $m \in \mathbb{Z}$ and $a \in \mathfrak{h}$.

Set
\[
\hat{\mathfrak{h}}^{+} =\mathfrak{h} \otimes   t\C[t] \qquad \mbox{and} \qquad \hat{\mathfrak{h}}^{-} = \mathfrak{h} \otimes t^{-1}\C[t^{-1}].
\]
Then $\hat{\mathfrak{h}}^{+}$ and $\hat{\mathfrak{h}}^{-}$ are abelian subalgebras of $\hat{\mathfrak{h}}$. Consider the induced $\hat{\mathfrak{h}}$-module given by 
\[
M(1) \ = \ \mathcal{U}(\hat{\mathfrak{h}})\otimes_{\mathcal{U}(\C[t]\otimes \mathfrak{h} \oplus \C c)} \C{\bf 1} \ \simeq \ \mathcal{U}(\hat{\mathfrak{h}}^-).{\bf 1} \ \simeq \ \mathcal{S}(\hat{\mathfrak{h}}^{-}) \qquad \mbox{(linearly)},
\]
where $\mathcal{U}(\cdot)$ and $\mathcal{S}(\cdot)$ denote the universal enveloping algebra and symmetric algebra, respectively, $\mathfrak{h} \otimes \C[t]$ acts trivially on $\mathbb{C}\mathbf{1}$ and $\mathbf{k}$ acts as multiplication by $1$. Then $M(1)$ is a vertex algebra, often called the {\it vertex algebra associated to the rank one Heisenberg}, or the {\it rank one Heisenberg vertex algebra}, or the {\it one free boson vertex algebra}. Here, the Heisenberg Lie algebra in question is precisely $\hat{\mathfrak{h}} \smallsetminus \mathbb{C}\alpha_0$.

Any element of $M(1)$ can be expressed as a linear combination of elements of the form
\begin{equation}\label{generators-for-V}
\alpha_{-n_1}\cdots \alpha_{-n_j}{\bf 1}, \quad \mbox{with} \quad  n_1 \geq \cdots \geq n_j \geq 1.
\end{equation}

The vertex algebra $M(1)$ admits a one-parameter family of conformal elements, each giving $M(1)$ the structure of a vertex operator algebra, given by 
\[ \omega^a = \frac{1}{2} \alpha_{-1}^2{\bf 1} + a \alpha_{-2} \mathbf{1}, \quad \mbox{for  $a \in \mathbb{C}$,} \] 
and then $M(1)$ with the conformal element $\omega^a$ is a vertex operator algebra with central charge $c = 1 - 12 a^2$.  We denote this vertex operator algebra with conformal element $\omega^a$ for $a \in \mathbb{C}$ by $M_a(1)$.   

Writing $Y (\omega^a,x) = \sum_{k \in \mathbb{Z}} L^a_k  x^{-k-2}$ we have that the representation of the Virasoro algebra corresponding to the vertex operator algebra structure $(M_a(1), Y, \mathbf{1}, \omega^a)$ is given as follows:  
For $n$ even
\begin{equation}\label{Ln-even}
L^{a}_{-n} = \sum_{k = n/2 + 1}^\infty \alpha_{-k}\alpha_{-n+k} + \frac{1}{2} \alpha_{-n/2}^2 + a (n-1) \alpha_{-n},
\end{equation} 
and for $n$ odd, we have
\begin{equation}\label{Ln-odd}
L^{a}_{-n} = \sum_{k = (n+1)/2}^\infty \alpha_{-k}\alpha_{-n+k}  + a (n-1) \alpha_{-n} .\end{equation}

As a vertex operator algebra, $M_a(1)$ is simple and not $C_2$-cofinite. In addition, $M_a(1)$ has infinitely many inequivalent irreducible modules which can be easily classified (see \cite{LL}), and infinitely many inequivalent indecomposable non simple modules which we discuss in the next subsection. 

\begin{rem}
A straightforward calculation shows that 
for any choice of conformal vector $\omega^a$, the quotient vector space $M_a(1)/C_1(M_a(1))=span_{\C}\{\vac, \alpha_{-1}\vac\}$ is finite dimensional and thus $M_a(1)$ is $C_1$-cofinite for all $a\in \mathbb{C}$.
\end{rem}

\subsection{Modules for $M_a(1)$}

From \cite{LL}, we have that for every highest weight irreducible $M_a(1)$-module $W$ there exists $\lambda \in \C$ such that
\[
W \cong M_a(1) \otimes_{\mathbb{C}} \Omega_{\lambda},
\]
where $\Omega_{\lambda}$ is the one-dimensional $\mathfrak{h}$-module such that $\alpha_{0}$  acts as multiplication by $\lambda$. We denote these irreducible $M_a(1)$-modules by  $M_a(1, \lambda)= M_a(1)\otimes_{\mathbb{C}} \Omega_{\lambda}$, so that $M_a(1, 0) = M_a(1)$.  If we let $v_\lambda \in \Omega_\lambda$ such that $\Omega_{\lambda} = \C v_{\lambda}$, then for instance,
\begin{eqnarray*}
L^{a}_{-1} v_\lambda &=& \alpha_{-1}\alpha_0 v_{\lambda} \ = \ \lambda \alpha_{-1}v_{\lambda}, \label {L(-1)-on-v-lambda}\\
L^{a}_0 v_\lambda &=& \Big( \frac{1}{2}\alpha_0^2  - a\alpha_0 \Big) v_\lambda \ = \ \Big( \frac{\lambda^2}{2} - a\lambda \Big) v_{\lambda}. \label {L(0)-on-v-lambda}
\end{eqnarray*}

Note that $M_a(1, \lambda)$ is admissible, i.e.,  $\mathbb{Z}_{\geq 0}$-gradable with 
\[M_a(1, \lambda) = \coprod_{\ell \in \mathbb{Z}_{\geq 0}} M_a(1, \lambda)(\ell) \]
and $M_a(1, \lambda)(0) \neq 0$ where $M_a(1, \lambda)(\ell)$ is the eigenspace of eigenvectors of weight $\ell + \frac{\lambda^2}{2} - a\lambda$.  And since $M_a(1)$ is simple these $M_a(1, \lambda)$ are faithful $M_a(1)$-modules. These irreducible modules $M_a(1,\lambda)$ are often called the {\it Fock modules}, and are the unique irreducible highest weight $M_a(1)$-modules generated by $v_\lambda$ satisfying $\alpha_0 v_\lambda = \lambda v_\lambda$.

More generally, the indecomposable modules have been classified, for instance in \cite{Milas}. In particular, we have

\begin{prop}[\cite{Milas}]\label{Milas-prop}  Let $W$ be an indecomposable generalized $M_a(1)$-module.  Then as an $\hat{\mathfrak{h}}$-module  
\[W \cong M_a(1) \otimes \Omega(W) \]
where $\Omega(W) = \{w \in W \; | \; \alpha_n w = 0 \mbox{ for all $n>0$} \}$ is the vacuum space of $W$.   
\end{prop}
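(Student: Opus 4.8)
The plan is to recognize the statement as a structural theorem about restricted modules over the Heisenberg Lie algebra $\hat{\mathfrak{h}}$, and to prove it by showing that the natural induced-module map $M_a(1)\otimes\Omega(W)\to W$ is an isomorphism of $\hat{\mathfrak{h}}$-modules; notably, indecomposability of $W$ will play no role, since the argument uses only that $W$ is bounded below and restricted at level one. First I would record what the $M_a(1)$-module structure buys us purely at the level of $\hat{\mathfrak{h}}$: the modes $\alpha_n$ give $W$ the structure of an $\hat{\mathfrak{h}}$-module on which the central element $\mathbf{k}$ acts as the identity (inherited from $M_a(1)$). Using $L_0^a = L_0^{\mathrm{std}} - a\alpha_0$ together with $[\alpha_0,\alpha_m]=0$ one checks $[L_0^a,\alpha_m] = -m\,\alpha_m$, so each positive mode $\alpha_m$ ($m>0$) strictly lowers $L_0^a$-weight by $m$ within its coset $\lambda + \mathbb{Z}$, while $\alpha_0$ preserves weight. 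The generalized-module axioms then give two facts I will use repeatedly: $W$ is \emph{bounded below} in each coset, and $W$ is \emph{restricted}, meaning $\alpha_m v = 0$ for $m\gg 0$ and any $v$ (since $\alpha_m v$ eventually lands in a zero weight space). Letting $\hat{\mathfrak{h}}^{\geq 0}$ act on $\Omega(W)$ by having $\hat{\mathfrak{h}}^+$ act as $0$ while retaining the $\alpha_0$-action, the inclusion $\Omega(W)\hookrightarrow W$ induces, by the universal property of induction, a map $\Phi\colon \mathcal{U}(\hat{\mathfrak{h}}^-)\otimes\Omega(W)\cong M_a(1)\otimes\Omega(W)\to W$; the goal is to prove $\Phi$ is a bijection.

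Next I would establish that $\Omega(W)\neq 0$ and that $\Phi$ is injective. Nonvanishing of $\Omega(W)$ is immediate from boundedness below: any nonzero homogeneous vector of minimal $L_0^a$-weight in its coset is annihilated by every $\alpha_m$, $m>0$, since $\alpha_m$ would send it strictly below the minimum. For injectivity, suppose $\sum_I \alpha_{-I}\,w_I = 0$ is a finite relation, where the $\alpha_{-I} = \alpha_{-n_1}\cdots\alpha_{-n_j}$ are distinct ordered monomials in the creation operators and $w_I\in\Omega(W)$. Choosing a monomial $I_0$ of maximal total degree and applying the matching monomial $\alpha_{I_0}$ in \emph{positive} modes, the relation $[\alpha_m,\alpha_{-n}] = m\,\delta_{m,n}\mathbf{k}$ together with the fact that positive modes annihilate $\Omega(W)$ shows that only the $J=I_0$ term survives, contributing a nonzero scalar multiple $c_{I_0}w_{I_0}$ (the constant being a product of the parts of $I_0$ with symmetry factors, hence nonzero). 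Thus $w_{I_0}=0$, and a downward induction on degree kills every $w_I$. This is exactly the non-degeneracy of the Heisenberg commutator, and it is the routine half of the argument.

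The main obstacle is surjectivity of $\Phi$, i.e.\ that $W' := \mathcal{U}(\hat{\mathfrak{h}}^-)\,\Omega(W)$ equals $W$. One checks directly that $W'$ is an $\hat{\mathfrak{h}}$-submodule: it is visibly stable under negative modes and under $\alpha_0$ (which commutes past negative modes and preserves $\Omega(W)$), and stable under a positive mode $\alpha_m$, because commuting $\alpha_m$ to the right through the creation operators produces only scalars and eventually meets $\Omega(W)$, giving $0$. Passing to the quotient $\bar W = W/W'$, which is again restricted and bounded below, it suffices to show $\bar W = 0$; and by the minimal-weight argument above, a nonzero $\bar W$ would force $\Omega(\bar W)\neq 0$. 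Hence everything reduces to the claim that $\Omega(\bar W)=0$, equivalently that every vacuum vector of the quotient lifts to an honest vacuum vector of $W$ (which then already lies in $\Omega(W)\subseteq W'$). This lifting is precisely the content of the Stone--von Neumann / oscillator structure theorem for restricted $\hat{\mathfrak{h}}$-modules of nonzero level, and it is where the non-degeneracy $[\alpha_m,\alpha_{-m}]=m\mathbf{k}$ with $m\neq 0$ is essential: a naive first-order correction $u=\sum_{m>0}\tfrac1m\alpha_{-m}\alpha_m v$ removes only the leading obstruction and must be iterated, so I would either invoke the classical structure theorem (as in \cite{FLM}, \cite{LL}) as a black box, or carry out the recursive construction of the correction term exhibiting $v$ modulo $W'$ as a vacuum vector. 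Once surjectivity is in hand, $\Phi$ is an $\hat{\mathfrak{h}}$-module isomorphism, and since $\mathcal{U}(\hat{\mathfrak{h}}^-)\cong\mathcal{S}(\hat{\mathfrak{h}}^-)\cong M_a(1)$ linearly, we conclude $W\cong M_a(1)\otimes\Omega(W)$ as $\hat{\mathfrak{h}}$-modules.
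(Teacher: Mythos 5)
The paper contains no proof of this proposition at all: it is imported wholesale from \cite{Milas}, and ultimately rests on the classical structure theorem for Heisenberg modules recorded in \cite{FLM} and \cite{LL}, so there is no internal argument to compare yours against. Judged on its own, your proof is correct and is essentially the standard argument underlying that citation: reduce a generalized $M_a(1)$-module to a level-one $\hat{\mathfrak{h}}$-module that is restricted and has weights bounded below in each coset; get $\Omega(W)\neq 0$ from minimal-weight vectors; get injectivity of the induced map by contracting against positive-mode monomials; get surjectivity by the quotient-and-lifting recursion. You are also right on both structural points: indecomposability is irrelevant here (it matters only afterwards, when one identifies $\Omega(W)$ as a single Jordan block for $\alpha_0$), and restrictedness is in fact automatic for any weak module, since $Y(\alpha_{-1}\vac,x)w\in W((x))$ by definition. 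One remark worth internalizing: your insistence on boundedness below is not cosmetic, and any completion of your surjectivity step must use it. Restrictedness alone is insufficient --- the level-one module $\mathbb{C}[x_1^{\pm 1},x_2,x_3,\dots]$ with $\alpha_m\mapsto m\,\partial/\partial x_m$ and $\alpha_{-m}\mapsto x_m\cdot$ for $m>0$ is restricted, yet $\mathcal{U}(\hat{\mathfrak{h}}^-)\Omega(W)=\mathbb{C}[x_1,x_2,\dots]$ is a proper subspace --- so the grading is what makes the positive modes act locally nilpotently, and hence makes your iterated correction terminate. Concretely, the recursion is packaged by the operator $P=\prod_{m>0}\sum_{j\geq 0}\tfrac{(-1)^j}{m^j\,j!}\,\alpha_{-m}^j\alpha_m^j$, which is well defined on such modules, satisfies $\alpha_n P=0$ for all $n>0$, and equals the identity modulo $\sum_{m>0}\alpha_{-m}W$; induction on the (bounded-below) weight then gives $W=\mathcal{U}(\hat{\mathfrak{h}}^-)\Omega(W)$. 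With that, your second option yields a self-contained proof; your first option, citing \cite{FLM} or \cite{LL} as a black box, is legitimate but reduces your whole argument to a verification of that theorem's hypotheses, which is in effect what the paper itself does by citing \cite{Milas}.
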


\begin{rem}
In terms of the functors $\Omega_n$ for $n \in \mathbb{Z}_{\geq 0}$, defined by Eqn.\ (\ref{Omega-def}), if $W$ is an indecomposable $M_a(1)$-module, then 
\[\mathbf{1} \otimes \Omega(W) = \Omega_0(W) .\]
\end{rem}

In \cite{FZ} it was shown that the level zero Zhu algebra for $M_a(1)$ is given by 
\[A_0(M_a(1)) \cong  \mathbb{C}[\alpha_{-1} {\bf 1}].\]
In \cite{BVY-Heisenberg}, the level one Zhu algebra for $M_a(1)$ was calculated and in  \cite{AB-Heisenberg}, the level two Zhu algebra for $M_a(1)$ was calculated, giving the first example of a level two Zhu algebra for a vertex operator algebra.  
In \cite{AB-Heisenberg} the structure of the level $n$ Zhu algebra was conjectured for all $n\geq 3$, and in \cite{DGK} this conjecture was proved.  In 
particular, in this case of the Heisenberg vertex operator algebra $M_a(1)$, all of the representation theory of $M_a(1)$ is captured by the level zero Zhu algebra $A_0(M_a(1)) \cong \mathbb{C}[x]$ by Proposition  \ref{Milas-prop}, and the results of \cite{BVY-Heisenberg, AB-Heisenberg}, and \cite{DGK}.  Thus this is a setting in which in Theorem \ref{mainthm} the requirement that no nonzero submodule of an $A_n(M_a(1))$-module $U$ factor through $A_{n-1}(M_a(1))$ is superfluous, whereas for other vertex operator algebras, such as for
the universal Virasoro vertex operator algebra, this extra condition is indeed necessary.

\subsection{Explicit structure of the indecomposable modules  for $M_a(1)$ and their graded dimensions}\label{Heisenberg-example}

The indecomposable modules for $A_0(M_a(1)) \cong \mathbb{C}[x]$ for $x$ corresponding to $\alpha_{-1}{\bf 1} + O_0(M_a(1))$ are given by 
\[U(\lambda, k) =  \mathbb{C}[x]/ ((x - \lambda)^k )\]
for $\lambda \in \C$ and $k \in \mathbb{Z}_{>0}$, and 
\[
\mathfrak{L}_0(U(\lambda, k)) \cong M_a(1) \otimes_{\mathbb{C}} \Omega(\lambda, k),
\]
where $\Omega(\lambda,k) \cong U(\lambda, k)$ is a $k$-dimensional vacuum space. Since $\alpha_{-1}{\bf 1} + O_0(M_a(1))$ acts via its zero mode $\alpha_0$ on the module $U(\lambda, k)$ via the action of $x$ on $\mathbb{C}[x]/((x - \lambda)^k)$, we have that  $\alpha_0$  
acts as a $k \times k$ Jordan block given by
\begin{equation}\label{alpha(0)-Jordan-block} 
\alpha_0|_{\Omega(\lambda,k)} = \left[ \begin{array}{cccccc}
\lambda & 1 & 0 & \cdots  & 0 & 0\\
0 & \lambda & 1 & \cdots  & 0 & 0 \\
\vdots & \vdots & \vdots & \ddots & \vdots & \vdots\\
0 & 0 & 0 & \cdots  & \lambda & 1 \\
0 & 0 & 0 & \cdots  & 0 & \lambda
\end{array}
\right]
\end{equation}
with respect to some Jordan basis. 
 
For convenience we introduce the following notation. 

\noindent
{\bf Notation:} Let $D_{m,j}$, $m \in \mathbb{Z}_{>0}$ and $j = 0,\dots, m-1$, be the $m \times m$ matrix with 1's on the $j$th super-diagonal and zeros everywhere else, i.e., 1's in the $(i, i + j)$ position and zeros elsewhere. So $D_{m,0} = I_m$, and $D_{m,m-1}$ is the matrix with a 1 in the $(1,m)$-position and zeros elsewhere.  

Note that 
\[D_{m,i} D_{m,j} = D_{m, i + j} \]
for $0 \leq i,j \leq m$ where $D_{m,r} = 0$ if $r\geq m$.  

For instance, in this notation $\alpha_0|_{\Omega(\lambda,k)} = \lambda I_k + D_{k,1}$.  

Let $w \in \Omega(\lambda, k )$ be such that $\alpha_nw = 0$ for $n >0$, and such that 
\[
(\alpha_0 - \lambda I)^{k-1} w \neq 0; \quad  (\alpha_0 - \lambda I)^{k}w = 0,
\]
for $I$ the identity map. Then a Jordan basis for $\alpha_0$ acting on $\Omega(\lambda,k) \cong U(\lambda, k)$, and realizing the Jordan block (\ref{alpha(0)-Jordan-block}) is given by 
\begin{equation}\label{Heisenberg-jordan-basis} \{u_j = (\alpha_0 - \lambda I_k)^{k-j}w \; | \; j = 1, \dots, k\},
\end{equation}
where $I_k$ is the $k \times k$ identity matrix.
We denote the indecomposable $M_a(1)$-module induced from the $A_0(M_a(1))$-module $U(\lambda, k)$ by $W(a,\lambda, k)$. 
 That is 
 \[ W(a,\lambda,k) = \mathfrak{L}_0(U(\lambda,k)) \cong M_a(1) \otimes \Omega(\lambda,k),\]
 and $W(a,\lambda, k) = M_0(U(\lambda,k))/J$.

\begin{rem} \label{Heis-M=Mbar}
We note that in this case $\overline{M}_0(U(\lambda,k))=M_0(U(\lambda,k))$ due to the fact that $\overline{M}_n(U(\lambda,k))$ is a \textit{restricted} module for the Virasoro Lie algebra in the sense of Remark 5.1.6 in \cite{LL}. Namely, for any $w \in \overline{M}_0(U(\lambda,k))$ we have that for any $a\in \mathbb{C},$ the Virasoro operators defined in Equations (\ref{Ln-even})-(\ref{Ln-odd}) satisfy $L^a_n w=0$ for $n$ sufficiently large. By Theorem 6.1.7 in \cite{LL} this implies that $\overline{M}_0(U(\lambda,k))$ is an $M_a(1)$-module. Thus, in light of Remark \ref{L-a-V-module-remark} $W_A=0$ and 
$\overline{M}_0(U(\lambda,k))=M_0(U(\lambda,k)).$ This shows that $M_0(U(\lambda,k))$ is the universal $M_a(1)$-module with degree zero space $U(\lambda,k)$.
\end{rem}

 In particular, $J = 0$, and $W(a,\lambda, k)$ is the universal $M_a(1)$-module with zero degree space  isomorphic to $U(\lambda,k)$.

 In addition, $W(a,1,1) = M_a(1)$, and the $W(a,\lambda,k)$ for $\lambda \in \mathbb{C}$ and $k \in \mathbb{Z}_{>0}$ give all the indecomposable modules for $M_a(1)$. In particular, $W(a, \lambda, 1)$ give all the irreducible $M_a(1)$-modules.

Note then that the zero mode of $\omega^a$ which is given by \begin{equation}\label{define-L(0)} 
L^a_0 = \sum_{m \in \mathbb{Z}_{>0}} \alpha_{-m} \alpha_m + \frac{1}{2} \alpha_0^2 - a \alpha_0, 
\end{equation}
acts on $\Omega(\lambda, k)$ such that the only eigenvalue is $\frac{1}{2} \lambda^2 - a \lambda$ (which is the lowest conformal weight of  $M_a(1) \otimes \Omega(\lambda, k)$). Hence, $L^a_0$ with respect to a Jordan basis for $\alpha_0$ acting on $\Omega(\lambda, k) = W(0)$ is given by 
\begin{equation}\label{L_0-miyamoto}
L^a_0|_{W(0)} \ = \ \big( \frac{1}{2}  \alpha_0^2 - a \alpha_0\big)|_{W(0)} \ = \  \left( \frac{1}{2} \lambda^2 - a \lambda \right) I_{k} + (\lambda - a)D_{k,1} + \frac{1}{2} D_{k,2},
\end{equation}
with semisimple component $S_0 = L^{a,S}_0|_{W(0)} =  \left( \frac{1}{2} \lambda^2 - a \lambda \right) I_{k}$ and nilpotent component $N_0 = L^{a,N}_0|_{W(0)} = (\lambda - a)D_{k,1} + \frac{1}{2} D_{k,2}$.

There is a typo in the matrix given for the nilpotent component $L^a_0 - ( \frac{1}{2} \lambda^2 - a\lambda) I_{k}$ in \cite{BVY} and \cite{BVY-Heisenberg}. The $(i,i + 2)$-entries were given as $\frac{1}{2} - a$ instead of $\frac{1}{2}$.

Also note that $L_0^a$ is diagonalizable if and only if:  (i) $k = 1$ which corresponds to the case when $M_a(1) \otimes \Omega(\lambda, k) = M_a(1) \otimes \Omega_\lambda$ is irreducible; or (ii) $k = 2$ and $\lambda = a$.

These $M_a(1) \otimes \Omega(\lambda, k)$ exhaust all the indecomposable generalized $M_a(1)$-modules, and the $\mathbb{Z}_{\geq 0}$-grading of $M_a(1) \otimes \Omega(\lambda, k)$ is explicitly given by
\[W(a,\lambda,k) =  M_a(1) \otimes \Omega(\lambda, k) = \coprod_{\ell \in \mathbb{Z}_{\geq 0}} M_a(1)(\ell) \otimes \Omega( \lambda, k) = \coprod_{\ell \in \mathbb{Z}_{\geq 0}} W(a, \lambda, k) (\ell) \]
where $M_a(1)(\ell) = M_a(1)_\ell$ is both the degree $\ell$ space and the weight $\ell$ space of the vertex operator algebra $M_a(1)$.  Thus $W(a,\lambda, k)(\ell) = M_a(1)(\ell) \otimes \Omega( \lambda, k)$ is the space of generalized eigenvectors of degree $\ell$ and
weight $\ell + \frac{1}{2}\lambda^2 - a \lambda$ with respect to $L_0^{ a}$. 

Since each $W(a,\lambda, k)$ is a highest weight module, and in fact $W(a,\lambda,k) = \mathcal{U}(\hat{\mathfrak{h}}^-).\Omega(\lambda, k)$, we have a PBW basis for the action of $\mathcal{U}(\hat{\mathfrak{h}}^-) = \coprod_{\ell \in \mathbb{Z}_{\geq 0} }\mathcal{U}(\hat{\mathfrak{h}}^-)(\ell)$ given by Eqn.\ (\ref{generators-for-V}), so that
\[ \dim M_a(1)_\ell = \dim M_a(1) (\ell) = \mathfrak{p}(\ell)\]
with $\mathfrak{p}(\ell)$ the number of integer partitions of $\ell$. This implies that
\[\dim W(a,\lambda, k)(\ell) = \dim (M_a(1)(\ell) \otimes \Omega(\lambda,k)) = \dim M_a(1)(\ell) \times \dim \Omega(\lambda,k) = \mathfrak{p}(\ell)\times k. \]

Therefore, the generalized graded dimension of $W(a,\lambda, k) = M_a(1) \otimes \Omega(\lambda, k)$ is given by
\begin{eqnarray*}
Z_{W(a,\lambda, k)} ({\bf 1}, \tau)   &=& q^{-c/24} \sum_{\ell \in \mathbb{Z}_{\geq 0}} (\mathrm{dim} \,W(a,\lambda,k)(\ell ))\, q^{\ell + \frac{1}{2} \lambda^2 - a \lambda}\\
&=&  q^{\frac{1}{2} \lambda^2 - a \lambda - (1-12a^2)/24}  \sum_{\ell \in \mathbb{Z}_{\geq 0}} k \mathfrak{p}(\ell) \, q^{\ell} \ = \   q^{\frac{1}{2} (\lambda^2+ a^2) -  a \lambda} k \, \eta(q)^{-1}\\
&=& q^{\frac{1}{2} (\lambda- a)^2} k \, \eta(q)^{-1}
\nonumber
\end{eqnarray*}
where $\eta(q)$ is the Dedekind $\eta$-function
\begin{equation}\label{eta}
\eta(q):=q^{1/24}\prod_{j\in \mathbb{Z}_{\geq 0} }\left(1-q^j\right).
\end{equation}

Therefore, in particular,
\[ Z_{W(a,\lambda,k)}({\bf 1}, \tau)  = k Z_{W(a,\lambda, 1)}({\bf 1}, \tau ) =  kZ_{M_a(1)}({\bf 1}, \tau) .\]
where $Z_{M_a(1)} (\vac,\tau)$ denotes the graded dimension of $M_a(1)$.
More general graded traces for $M_0(1)$ with $v \neq {\bf 1}$ are studied in, for instance \cite{DMN-Heisenberg, MT1, MT2}.

\subsection{Proof that all indecomposable $M_a(1)$-modules are strongly interlocked}

We now apply Theorem \ref{extra-conditions-theorem} to $V = M_a(1)$. We note:

(i) $M_a(1) = \langle \alpha_{-1} {\bf 1} \rangle$.

(ii) $A_0(M_a(1)) \cong \mathbb{C}[\alpha_0]$.

We define the adjoint operator  \begin{eqnarray}
(\cdot)^\dag  \ \ :  \ \ \mathcal{U}(\hat{\mathfrak{h}}) &\longrightarrow& \mathcal{U}(\hat{\mathfrak{h}})\\
\alpha_{n_1} \alpha_{n_2} \cdots \alpha_{n_j}  &\mapsto& \alpha_{-n_j} \alpha_{-n_{j-1}} \cdots \alpha_{-n_1} \nonumber \\
\mathbf{k} &\mapsto& \mathbf{k} \nonumber
\end{eqnarray}
and extending linearly, we define the bilinear form \begin{equation}\label{Heisenberg-form}
< T_\ell {\bf 1}, R_\ell {\bf 1}> = <{\bf 1}, T_\ell^\dag R_\ell {\bf 1}> \qquad \mbox{ for $T_\ell, R_\ell \in \mathcal{U}(\hat{\mathfrak{h}}^-)(\ell)$}
\end{equation}
with $< {\bf 1}, {\bf 1} > = 1$.

Given $R_\ell \in \mathcal{U}(\hat{\mathfrak{h}}^-)(\ell)$, there exists some $T_\ell \in \mathcal{U}(\hat{\mathfrak{h}}^-)(\ell)$ such that $T_\ell^\dag R_\ell u_1 \neq 0$.  Thus this bilinear form is nondegenerate.  Extending this bilinear form to $\langle u_1 \rangle = \mathcal{U}(\hat{\mathfrak{h}}^-).u_1$ for $u_1 \in U(\lambda, k) = \mathbb{C}[\alpha_0]/((\alpha_0 - \lambda)^k)$, such that $\alpha_0u_1 = \lambda u_1$ as in Eqn.\ (\ref{Heisenberg-jordan-basis}), 
we have the nondegenerate bilinear form 
\begin{eqnarray}
< \cdot, \cdot >_\ell \ \ : \ \ \langle u_1 \rangle (\ell)  \otimes \langle u_1 \rangle (\ell) &\longrightarrow& \mathbb{C}\\
\alpha_{-m_1}\cdots \alpha_{-m_i} u_1 \otimes \alpha_{-n_1} \cdots \alpha_{-n_j}u_1 &\mapsto& <\alpha_{-m_1}\cdots \alpha_{-m_i} u_1,  \alpha_{-n_1} \cdots \alpha_{-n_j} u_1 >_\ell \nonumber \\
&& = <u_1, \alpha_{m_i} \cdots \alpha_{m_1} \alpha_{-n_1} \cdots \alpha_{-n_j}u_1 >_0 \nonumber
\end{eqnarray}
with $<u_1, u_1>_0 = 1$.

Thus we have the following Corollary to Theorem \ref{extra-conditions-theorem}:

\begin{cor}\label{extra-conditions-cor}
 The $A_0(M_a(1))$-modules $U(\lambda, k) = \mathbb{C}[x]/((x-\lambda)^k )$  and the $M_a(1)$-modules $W = W(a,\lambda,k) = \mathfrak{L}_0(U(\lambda, k)) = M_0(U(\lambda,k))$ are  strongly interlocked for all $\lambda \in \mathbb{C}$ and $k\in \mathbb{Z}_{>0}$, and $\mathrm{pstr}_W(v, \tau)$ is well defined for every $v \in M_a(1)$. 

In particular, letting $\{u_1, \dots, u_k\}$ be a Jordan basis for $o(\alpha_{-1} {\bf 1}) = \alpha_0$ acting on $U(\lambda,k)$, and setting $W^{(j)} = \mathfrak{L}_0(\mathrm{span}\{u_1, \dots, u_j\})$ for $j = 0,\dots,k$, we have that $W^{(j)}$ is interlocked with $W^{(k-j)}$, and  
$\langle u_1 \rangle = Soc(W(a, \lambda,k))$ is interlocked with  $\langle u_1, \dots, u_{k-1} \rangle = Rad(W(a,\lambda,k))$.

 Furthermore, each $o(v)$ acting on $W(a,\lambda, k)(\ell)$ for any $v \in M_a(1)$ has a $k \mathfrak{p}(\ell) \times k\mathfrak{p}(\ell)$ matrix realization in this basis so that $o(v)|_{W(a,\lambda,k)(\ell)}$ 
can be decomposed as in Eqn.\ (\ref{Our-decomp}), with $A = o(v)|_{Soc(W(a,\lambda, k))(\ell)}$ a $ \mathfrak{p}(\ell) \times \mathfrak{p}(\ell)$ matrix, since $\dim Soc(W)(\ell) = \mathfrak{p}(\ell)$. 
\end{cor}

\subsection{Graded pseudo-traces for indecomposable  $M_a(1)$-modules}

We are now ready to compute some of the graded pseudo-traces for the indecomposable reducible $M_a(1)$-modules. Fix $W = W(a,\lambda, k)$, and for convenience, we let $S_\ell = L^{a,S}_0|_{W(\ell)}$ and
$N_\ell = L^{a,N}_0|_{W(\ell)}$  denote the semisimple and nilpotent parts of $L^a_0|_{W(\ell)}$, respectively, so that
\begin{eqnarray*}
\mathrm{pstr}_W(v, \tau) &=&  \sum_{\ell \in \mathbb{Z}_{\geq 0}} \mathrm{pstr} (o(v)|_{W(\ell)}  \, q^{N_\ell})  q^{S_\ell - c/24} \ = \ q^{\frac{1}{2}( \lambda - a)^2  - 1/24 }  \sum_{\ell \in \mathbb{Z}_{\geq 0}} q^\ell \mathrm{pstr} ( o(v) |_{W(\ell)} \, q^{N_\ell} ) . 
\end{eqnarray*} 

Note that for convenience we are organizing our calculation around the degree $\ell$ of $W(a, \lambda, k)$ rather than the weight, using the fact that the weight spaces and degree spaces are related by a uniform shift, i.e., $W(a, \lambda, k)_{\frac{1}{2} \lambda^2 - a \lambda + \ell} = W(a, \lambda, k)(\ell)$ for $\ell \in \mathbb{Z}_{\geq 0}$. 

To determine $N_\ell$ and then $q^{N_\ell}$, we recall the notation  $D_{m,j}$ introduced in Section \ref{Heisenberg-example} for the $m \times m$ matrix with 1's on the $j$th super-diagonal and zeros everywhere else. 

To determine $N_\ell$ and $q^{N_\ell}$, for each $\ell \in \mathbb{Z}_{\geq 0}$, we first give some low degree examples:

{\bf Degree 0:}  We have $Soc(W)(0) = \mathbb{C}u_1$ and $Rad(W)(0) = \mathrm{span}\{u_1, \dots, u_{k-1}\}$ so that $\{u_1, \dots, u_k\}$ itself is a  strongly interlocked basis, and  $L_0^a|_{W(0)}$ in this basis is given by Eqn.\ (\ref{L_0-miyamoto}).  Thus   $S_0 = (\frac{1}{2} \lambda^2  - a \lambda)I_k$ and $N_0 =  (\lambda - a) D_{k,1} + \frac{1}{2} D_{k,2}$.  Then 
\[N_0^j  \ = \  \sum_{r = 0}^j \binom{j}{r} 2^{-r}  (\lambda - a)^{j-r} D_{k,1}^{j-r} D_{k,2}^r  \ = \   \sum_{r = 0}^j \binom{j}{r} 2^{-r}  (\lambda - a)^{j-r} D_{k,j+r},\]
where these terms are zero if $j + r \geq k$.  
Therefore 
\[q^{N_0}  \ = \  \sum_{j \in \mathbb{Z}_{\geq 0}} \frac{1}{j!}N_0^j (\log q)^j \ = \  \sum_{j =0}^{k-1} \sum_{r = 0}^j\frac{1}{j!}  \binom{j}{r} 2^{-r}  (\lambda - a)^{j-r} D_{k,j+r}  (\log q)^j  \label{N_0}, \]
where terms are zero if $j + r \geq k$.

E.g. If $k = 2$: 
\[q^{N_0} \ = \ I_2 + (\lambda - a) D_{2,1} (\log q)  \ = \ \left[ \begin{array}{cc}
1 & (\lambda - a) \log q \\
 0 & 1
\end{array} \right] \  = \ \left[ \begin{array}{cc}
A & B \\
 0 & A
\end{array} \right]    \]
where $A$ is $1 \times 1$, since $ \mathfrak{p}(0) = 1$.  Then 
\[\mathrm{pstr} \ q^{N_0} = \mathrm{tr}(B) =  (\lambda - a) \log q,\] 
which is the trace of the $D_{k,1}$ term in $q^{N_0}$.

For $k = 3$:
\begin{eqnarray*}
q^{N_0} &=&  I_3 + ((\lambda - a) D_{3,1} + 2^{-1} D_{3,2}) (\log q) + \frac{1}{2} (\lambda - a)^2 D_{3,2} (\log q)^2  \\
&=&  \left[ \begin{array}{ccc}
1 & (\lambda - a) \log q  & \frac{1}{2} \log q + \frac{1}{2} (\lambda - a)^2(\log q)^2 \\
0 & 1 &  (\lambda - a) \log q \\
0 & 0 & 1
\end{array} \right] =   \left[ \begin{array}{ccc}
A & C &B \\
0 & A &  C \\
0 & 0 & A
\end{array} \right]
\end{eqnarray*} 
where again $A$ is 1 x 1. Then 
\begin{eqnarray*}
\mathrm{pstr} \, q^{N_0} = \mathrm{tr}(B) =    \frac{1}{2} \log q + \frac{1}{2} (\lambda - a)^2 (\log q)^2
\end{eqnarray*}
which is the trace of the $D_{3,2}$ term in $q^{N_0}$.

In general, we will have $\mathrm{pstr} \, q^{N_0}  =   \mathrm{tr}(B)$
where $B$ is the 1 x 1 matrix corresponding to the coefficient of $D_{k,k-1}$ in $q^{N_0}$ given by Eqn.\ (\ref{N_0}). Thus
\[ \mathrm{pstr} \, q^{N_0}  = \mathrm{tr} (B) = \sum_{j = 0}^{k-1} \frac{1}{j!} \binom{j}{k-j-1} 2^{-k+j + 1} (\lambda - a)^{2j - k + 1}(\log q)^j.\]

{\bf Degree 1:}  At degree 1, i.e., weight $\frac{1}{2} \lambda^2 - a \lambda + 1$, we have that $M_a(1)(1) = \mathrm{span} \{ \alpha_{-1}{\bf 1} \}$.   Thus $W(1) = \alpha_{-1} {\bf 1} \otimes \Omega (\lambda,k)$, and $\alpha_0|_{W(1)} = \lambda I_k + D_{k,1}$, so that 
\begin{eqnarray*}
L^a_0|_{W(1)} &=& \big( \alpha_{-1} \alpha_1 + \frac{1}{2} \alpha_0^2 - a \alpha_0 \big)|_{W(1)} \ = \ I_k + \frac{1}{2} (\lambda I_k + D_{k,1})^2 - a (\lambda I_k + D_{k,1}) \\
& =& I_k + \frac{1}{2} (\lambda^2I_k + 2\lambda D_{k,1} + D_{k,2}) - a\lambda I_k - aD_{k,1} \\
&=& (\frac{1}{2} \lambda^2 - a \lambda + 1)I_k + (\lambda - a)D_{k,1} + \frac{1}{2} D_{k,2}.
\end{eqnarray*}
Therefore $S_1 =(\frac{1}{2} \lambda^2 - a\lambda +1)I_k$ and $N_1 = (\lambda - a)D_{k,1} + \frac{1}{2} D_{k,2}$. It follows that 
\[
\mathrm{pstr}\, q^{N_1}
=  \sum_{j = 0}^{k-1} \frac{1}{j!} \binom{j}{k-j-1} 2^{-k+j + 1} (\lambda - a)^{2j - k + 1}(\log q)^j. \]

{\bf Degree 2:} Here we have $M_a(1)(2) = \mathrm{span} \{ \alpha_{-1}^2 {\bf 1}, \alpha_{-2}{\bf 1}\}$, so that for a weight-preserving linear operator expressed in terms of a strongly interlocked basis, the corresponding $A$ and $B$ submatrices for $Soc(W)$ and $W/Rad(W)$, respectively, will both be $2 \times 2$ matrices. For instance, a  strongly interlocked  basis for $W(a,\lambda,k)(2)$ is given by 
\[\{\alpha^2_{-1}u_1, \alpha_{-2}u_1, \alpha_{-1}^2u_2, \alpha_{-2}u_2, \dots, \alpha_{-1}^2u_k, \alpha_{-2}u_k\},\]  with the first two terms spanning $Soc(W)_{\lambda + 2} = Soc(W)(2)$ and all but the last two terms spanning $Rad(W)_{\lambda + 2} = Rad(W)(2)$.  Then in this basis, $\alpha_0|_{W(2)} = \lambda I_{2k} + D_{2k,2}$, so that 
\begin{eqnarray*}
L_0^a|_{W(2)} 
&=& \big( \alpha_{-1} \alpha_1 + \alpha_{-2} \alpha_2 + \frac{1}{2}  \alpha_0^2 - a \alpha_0 \big)|_{W(2)}\\
&=& 2I_{2k} + \frac{1}{2} \Big(\lambda^2I_{2k} + 2 \lambda D_{2k,2} + D_{2k,4}\Big) - a\lambda  I_{2k} - a D_{2k,2} \\
&=& \Big(\frac{1}{2} \lambda^2 - a \lambda + 2\Big) I_{2k} + (\lambda -a)D_{2k,2} + \frac{1}{2}D_{2k, 4}. 
\end{eqnarray*}
Thus $N_2 = (\lambda -a)D_{2k,2} + \frac{1}{2}D_{2k, 4}$ and
\begin{eqnarray*}
q^{N_2} &=& \sum_{j = 0}^{2k-1} \frac{1}{j!}  \sum_{r = 0}^j \binom{j}{r} 2^{-r} (\lambda - a)^{j-r} D_{2k,2}^{j-r} D_{2k,4}^r (\log q)^j \\
&=&  \sum_{j = 0}^{2k-1} \frac{1}{j!}  \sum_{r = 0}^j \binom{j}{r} 2^{-r} (\lambda - a)^{j-r} D_{2k,2(j-r)} D_{2k,4r} (\log q)^j\\
&=& \sum_{j = 0}^{2k-1} \frac{1}{j!}  \sum_{r = 0}^j \binom{j}{r} 2^{-r} (\lambda - a)^{j-r} D_{2k,2(j+r)} (\log q)^j.
\end{eqnarray*}

Then the $B$ matrix is the $\mathfrak{p}(2) \times \mathfrak{p}(2) = 2 \times 2$ upper right submatrix of $q^{N_2}$, and the diagonal of this matrix is $I_2$ times the coefficient of $D_{2k,2k-2}$ in $q^{N_2}$. 
Therefore 
\begin{eqnarray*}
\mathrm{pstr}\, q^{N_2} &=& \mathrm{tr} ( B) \ = \ \mathrm{tr}\Bigg(\bigg(  \sum_{j = 0}^{2k-1} \frac{1}{j!} \binom{j}{k-j-1} 2^{-k+j + 1} (\lambda - a)^{2j - k + 1}(\log q)^j \bigg) I_2 \Bigg) \\
&=&  2 \sum_{j = 0}^{2k-1} \frac{1}{j!} \binom{j}{k-j-1} 2^{-k+j + 1} (\lambda - a)^{2j - k + 1}(\log q)^j, 
\end{eqnarray*}
where in fact the sum is nontrivial only for $0\leq j \leq k-1$.

Continuing in this manner, we see that more generally, for degree $\ell \in \mathbb{Z}_{\geq 0}$:

{\bf Degree $\ell$:} Here we have $\dim M_a(1)(\ell) = \mathfrak{p}(\ell)$ 
so that for a grade-preserving linear operator expressed in terms of a  strongly interlocked basis, the corresponding $A$ and $B$ submatrices for $Soc(W)$ and $W/Rad(W)$, respectively, will both be $\mathfrak{p}(\ell) \times \mathfrak{p}(\ell)$.  So for instance, using lexicographical ordering on the basis for $M_a(1)(\ell)$, in the corresponding basis for $W(\ell) = M_a(1) (\ell) \otimes \Omega(\lambda,k) $ we have  $\alpha_0|_{W(\ell)} = \lambda I_{k\mathfrak{p}(\ell)} + D_{k\mathfrak{p}(\ell),\mathfrak{p}(\ell)}$, so that
\begin{eqnarray*}
L_0^a|_{W(\ell)} &=& \ell I_{k\mathfrak{p}(\ell)} + \frac{1}{2} \left( \lambda^2 I_{k\mathfrak{p}(\ell)} + 2 \lambda D_{k \mathfrak{p}(\ell), \mathfrak{p}(\ell)} + D_{k\mathfrak{p}(\ell), 2\mathfrak{p}(\ell)} \right)- a\lambda I_{k \mathfrak{p}(\ell)} - aD_{k \mathfrak{p}(\ell), \mathfrak{p}(\ell)}\\
&=& \Big(\frac{1}{2}\lambda^2 - a \lambda + \ell\Big)I_{k \mathfrak{p}(\ell)} + (\lambda - a)D_{k \mathfrak{p}(\ell), \mathfrak{p}(\ell)} + \frac{1}{2} D_{k \mathfrak{p}(\ell), 2\mathfrak{p}(\ell)}.
\end{eqnarray*}
Thus $N_\ell = (\lambda -a)D_{k\mathfrak{p}(\ell),\mathfrak{p}(\ell)} + \frac{1}{2}D_{k\mathfrak{p}(\ell),2\mathfrak{p}(\ell)}$ and 
\begin{eqnarray}\label{Nell}
q^{N_\ell} &=& \sum_{j = 0}^{k\mathfrak{p} (\ell)-1} \frac{1}{j!}  \sum_{r = 0}^j \binom{j}{r} 2^{-r} (\lambda - a)^{j-r} D_{k\mathfrak{p}(\ell),\mathfrak{p}(\ell)}^{j-r} D_{k\mathfrak{p}(\ell),2 \mathfrak{p}(\ell)}^r (\log q)^j \\
&=& \sum_{j = 0}^{k\mathfrak{p}(\ell)-1} \frac{1}{j!}  \sum_{r = 0}^j \binom{j}{r} 2^{-r} (\lambda - a)^{j-r} D_{k\mathfrak{p}(\ell),(j-r)\mathfrak{p}(\ell)} D_{k\mathfrak{p}(\ell),2r\mathfrak{p}(\ell)} (\log q)^j \nonumber \\
&=& \sum_{j = 0}^{k\mathfrak{p}(\ell)-1} \frac{1}{j!}  \sum_{r = 0}^j \binom{j}{r} 2^{-r} (\lambda - a)^{j-r} D_{k\mathfrak{p}(\ell),(j+r)\mathfrak{p}(\ell)} (\log q)^j. \nonumber 
\end{eqnarray}
Then the $B$ matrix is the $\mathfrak{p}(\ell) \times \mathfrak{p}(\ell)$ upper right submatrix of $q^{N_\ell}$, and the diagonal of this matrix is $I_{\mathfrak{p}(\ell)}$ times the coefficient of $D_{k\mathfrak{p}(\ell),(k-1)\mathfrak{p}(\ell)}$ in $q^{N_\ell}$. 
Therefore 
\begin{eqnarray*}
\mathrm{pstr}\, q^{N_\ell} &=& \mathrm{tr} ( B) \  = \  \mathrm{tr}\left(\Bigg(  \sum_{j = 0}^{k\mathfrak{p}(\ell)-1} \frac{1}{j!} \binom{j}{k-j-1} 2^{-k+j + 1} (\lambda - a)^{2j - k + 1}(\log q)^j \Bigg) I_{\mathfrak{p}(\ell)} \right) \\
&=&  \mathfrak{p}(\ell) \sum_{j = 0}^{k\mathfrak{p}(\ell)-1} \frac{1}{j!} \binom{j}{k-j-1} 2^{-k+j + 1} (\lambda - a)^{2j - k + 1}(\log q)^j. 
\end{eqnarray*}

Thus in general, we have that if $W=W(a,\lambda, k)$ then
\begin{eqnarray}
\lefteqn{ \mathrm{pstr}_{W}({\bf 1}, \tau) \ = \ \mathrm{pstr}_W(q^{L_0 - c/24}) 
 \ = \ \sum_{\ell \in \mathbb{Z}_{\geq 0}} \mathrm{pstr} (q^{N_\ell}) q^{S_\ell - c/24}  }\\
&=&  q^{\frac{1}{2} (\lambda - a )^2 -1/24} \sum_{\ell \in \mathbb{Z}_{\geq 0}} q^\ell \Bigg( \sum_{j = 0}^{\mathfrak{p}(\ell) k-1} \frac{\mathfrak{p}(\ell)}{j!} \binom{j}{k-j-1} 2^{-k+j + 1} (\lambda - a)^{2j - k + 1}(\log q)^j\Bigg). \nonumber
\end{eqnarray}

But note that the binomial coefficient is zero unless $\frac{k-1}{2} \leq j \leq k-1$. Thus we have the following theorem.

\begin{thm}\label{Heisenberg-pseudo-trace-thm} Let $W = W(a, \lambda, k) = M_a(1) \otimes \Omega(\lambda, k)$ be the $M_a(1)$-module induced from the $A(M_a(1)) = \mathbb{C}[x]$-module $U = \mathbb{C}[x]/((x-\lambda)^k)$, for fixed $k \in \mathbb{Z}_{>0}$ and $\lambda, a \in \mathbb{C}$.  Then the graded pseudo-trace of $W$  associated to the vacuum, i.e. the 0-point function, is given by 
\begin{eqnarray*}
\lefteqn{\mathrm{pstr}_{W}({\bf 1}, \tau) } \\
&=&  q^{\frac{1}{2} (\lambda-a)^2 -1/24} \sum_{\ell \in \mathbb{Z}_{\geq 0}} q^\ell \mathfrak{p}(\ell)\Bigg( \sum_{j = \lceil \frac{k-1}{2} \rceil}^{k-1} \frac{1}{j!} \binom{j}{k-j-1} 2^{-k+j + 1} (\lambda - a)^{2j - k + 1}(\log q)^j\Bigg)  \nonumber \\
&=&  q^{\frac{1}{2} (\lambda- a)^2 }\eta(q)^{-1}\Bigg( \sum_{j = \lceil \frac{k-1}{2} \rceil}^{k-1} \frac{1}{j!} \binom{j}{k-j-1} 2^{-k+j + 1} (\lambda - a)^{2j - k + 1}(\log q)^j\Bigg)  .  \nonumber
\end{eqnarray*}
where $\lceil \frac{k-1}{2} \rceil$ denotes the closest integer greater than or equal to $\frac{k-1}{2}$, and formally $\log q = 2 \pi i \tau$.  
\end{thm}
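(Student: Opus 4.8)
The plan is to assemble the formula directly from the degree-by-degree analysis already in place: for each $\ell\in\mathbb{Z}_{\geq 0}$ the nilpotent operator $N_\ell = L_0^{a,N}|_{W(\ell)}$ and its exponential $q^{N_\ell}$ are given explicitly in Eqn.\ (\ref{Nell}), and by Proposition \ref{Heisenberg-interlocked-prop} the strongly interlocked structure guarantees that $\mathrm{pstr}(q^{N_\ell}) = \mathrm{tr}(B)$, where $B$ is the $\mathfrak{p}(\ell)\times\mathfrak{p}(\ell)$ upper-right block of $q^{N_\ell}$ in a strongly interlocked basis. First I would identify $B$: since $q^{N_\ell}$ is a polynomial in the block-shift matrices $D_{k\mathfrak{p}(\ell),m\mathfrak{p}(\ell)}$, its upper-right block equals $I_{\mathfrak{p}(\ell)}$ times the scalar coefficient of $D_{k\mathfrak{p}(\ell),(k-1)\mathfrak{p}(\ell)}$, so $\mathrm{tr}(B) = \mathfrak{p}(\ell)$ times that coefficient. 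Reading this coefficient off Eqn.\ (\ref{Nell}) by imposing $j+r=k-1$ produces the inner $j$-sum appearing in the statement, together with the prefactor: using $c = 1-12a^2$ one checks $S_\ell - c/24 = \tfrac12(\lambda-a)^2 + \ell - 1/24$, which supplies the factor $q^{\frac12(\lambda-a)^2 - 1/24}q^{\ell}$.

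The key combinatorial step is the observation that this coefficient is governed by $\binom{j}{k-j-1}$, which vanishes unless $0 \le k-j-1 \le j$, i.e.\ unless $\lceil \tfrac{k-1}{2}\rceil \le j \le k-1$. This does two things simultaneously: it truncates the summation range to the one claimed, and, crucially, it renders the inner sum \emph{independent of $\ell$}, since the nominal upper limit $k\mathfrak{p}(\ell)-1$ always exceeds $k-1$ for every $\ell$. Consequently the inner $j$-sum factors cleanly out of the $\ell$-summation.

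Finally I would evaluate the remaining $\ell$-sum via the partition generating function $\sum_{\ell\in\mathbb{Z}_{\ge0}}\mathfrak{p}(\ell)q^\ell = q^{1/24}\eta(q)^{-1}$, which follows from the definition of $\eta$ in Eqn.\ (\ref{eta}). Multiplying by the prefactor $q^{\frac12(\lambda-a)^2-1/24}$ cancels $q^{-1/24}$ against $q^{1/24}$, leaving $q^{\frac12(\lambda-a)^2}\eta(q)^{-1}$ times the truncated inner sum, which is exactly the asserted closed form.

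The computation is essentially routine once Eqn.\ (\ref{Nell}) is in hand, so I expect the only delicate point to be the extraction of the upper-right block of $q^{N_\ell}$ and the verification that its diagonal is a scalar multiple of $I_{\mathfrak{p}(\ell)}$; it is precisely this feature that lets $\mathrm{tr}(B)$ be written as $\mathfrak{p}(\ell)$ times a single coefficient rather than as the trace of a genuinely nondiagonal block. This transparency rests on $N_\ell$ being a polynomial in the commuting block-shift matrices $D_{k\mathfrak{p}(\ell),\,\cdot\,}$, so that the block structure of $q^{N_\ell}$ is immediate from $D_{m,i}D_{m,j}=D_{m,i+j}$.
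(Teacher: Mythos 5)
Your proposal is correct and follows essentially the same route as the paper: extracting the coefficient of $D_{k\mathfrak{p}(\ell),(k-1)\mathfrak{p}(\ell)}$ from Eqn.\ (\ref{Nell}) via $j+r=k-1$, noting that $\binom{j}{k-j-1}$ truncates the sum to $\lceil\frac{k-1}{2}\rceil \le j \le k-1$ (making it independent of $\ell$), and then summing the partition generating function against the prefactor $q^{\frac12(\lambda-a)^2-1/24}$ coming from $S_\ell - c/24$. The only cosmetic slip is the claim that $k\mathfrak{p}(\ell)-1$ always \emph{exceeds} $k-1$; for $\ell\in\{0,1\}$ it equals $k-1$, but since the range still contains all $j\le k-1$ the argument is unaffected.
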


\begin{cor} \label{cor:vanish}
For all $a\in \mathbb{C}$ and $m\in \mathbb{Z}_{\geq 1} $,  the $M_a(1)$ module $W(a,a,2m)=M_a(1) \otimes \Omega(a, 2m)$ induced from the $A(M_a(1)) = \mathbb{C}[x]$-module $U = \mathbb{C}[x]/((x-a)^{2m})$ has trivial graded pseudo-trace associated to the vacuum vector.
\end{cor}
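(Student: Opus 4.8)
The plan is to derive this directly from Theorem \ref{Heisenberg-pseudo-trace-thm}, which expresses the vacuum graded pseudo-trace of $W(a,\lambda,k)$ as the nonzero prefactor $q^{\frac{1}{2}(\lambda-a)^2}\eta(q)^{-1}$ times the coefficient function
\[
\delta_{a,\lambda,k}(\log q) = \sum_{j = \lceil \frac{k-1}{2}\rceil}^{k-1} \frac{1}{j!}\binom{j}{k - j - 1} 2^{-k + j + 1}(\lambda - a)^{2j - k + 1}(\log q)^j .
\]
Specializing to $k = 2m$ and $\lambda = a$, the prefactor becomes simply $\eta(q)^{-1}$, which is nonzero, so the claim that $\mathrm{pstr}_{W(a,a,2m)}({\bf 1},\tau) = 0$ reduces to showing that $\delta_{a,\lambda,2m}(\log q)$ vanishes upon setting $\lambda = a$.

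The first step is to identify the lower limit of the summation for even $k$. Since $\lceil \frac{2m-1}{2}\rceil = \lceil m - \tfrac{1}{2}\rceil = m$, the index runs over $m \leq j \leq 2m - 1$. The key observation is then a parity count on the exponent of $(\lambda - a)$: for each such $j$ the relevant power is $(\lambda - a)^{2j - 2m + 1}$, and the constraint $j \geq m$ forces $2j - 2m + 1 \geq 1$. Hence every summand of $\delta_{a,\lambda,2m}(\log q)$ carries a strictly positive power of $(\lambda - a)$, so each term is annihilated by the substitution $\lambda = a$, giving $\delta_{a,a,2m}(\log q) = 0$ and therefore the desired vanishing.

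I would emphasize that the evenness of $k$ is exactly what drives the result, and I expect the only point requiring care to be the bookkeeping showing that the minimal exponent occurring is $1$ rather than $0$. For odd $k = 2m+1$ the lower limit is again $m$, but there $2j - k + 1 = 2(j - m)$ vanishes at $j = m$, so the $j = m$ term survives the specialization and contributes a nonzero multiple of $(\log q)^m$; thus the odd case does \emph{not} vanish. This dichotomy is already transparent in the table of coefficient functions displayed after Theorem \ref{Heisenberg-pseudo-trace-thm}, where every $\delta_{a,\lambda,k}$ for even $k$ is visibly divisible by $(\lambda - a)$ while those for odd $k$ begin with a pure power of $\log q$. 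There is no genuine analytic obstacle here; the corollary is a direct specialization once the ceiling computation pinning down the summation range is carried out.
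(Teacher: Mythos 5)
Your proposal is correct and is essentially the paper's own argument: the corollary is stated as an immediate consequence of Theorem \ref{Heisenberg-pseudo-trace-thm}, where setting $k=2m$ gives summation range $m \leq j \leq 2m-1$ and hence exponents $2j-2m+1 \geq 1$ on $(\lambda-a)$, so every term dies at $\lambda = a$. Your ceiling computation and the parity dichotomy with odd $k$ (visible in the paper's table of $\delta_{a,\lambda,k}$) are exactly the intended reading.
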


Next, we calculate another graded pseudo-trace for the $M_a(1)$-module $W(a,\lambda,k)$ for $a, \lambda \in \mathbb{C}$ and $k \in \mathbb{Z}_{>0}$, and then also  note that by the logarithmic derivative property Theorem \ref{log-thm} (3), we can easily determine $\mathrm{pstr}_W(\omega, \tau)$. 

{\bf Graded pseudo-trace for the strong generator -- $\mathrm{pstr}_W (\alpha_{-1}{\bf 1},\tau)$:} To calculate $\mathrm{pstr}_W (\alpha_{-1}{\bf 1},\tau)$, we first need to calculate the pseudo-traces of $o(\alpha_{-1}{\bf 1})|_{W(\ell)} q^{N_\ell}$ for each $\ell \in \mathbb{Z}_{\geq 0}$. From Eqn.\ (\ref{Nell}), we have 
\begin{eqnarray*}
\lefteqn{o(\alpha_{-1}{\bf 1})|_{W(\ell)} q^{N_\ell} \ = \  \alpha_0|_{W(\ell)} q^{N_\ell}}\\
&=&  ( \lambda I_{k\mathfrak{p}(\ell)} + D_{k\mathfrak{p}(\ell), \mathfrak{p}(\ell)})\sum_{j = 0}^{k\mathfrak{p}(\ell)-1} \frac{1}{j!}  \sum_{r = 0}^j \binom{j}{r} 2^{-r} (\lambda - a)^{j-r} D_{k\mathfrak{p}(\ell),(j+r)\mathfrak{p}(\ell)} (\log q)^j \\
&=& \sum_{j = 0}^{k\mathfrak{p}(\ell)-1} \frac{1}{j!}  \sum_{r = 0}^j \binom{j}{r} 2^{-r} (\lambda - a)^{j-r} \left( \lambda D_{k\mathfrak{p}(\ell),(j+r)\mathfrak{p}(\ell)} +  D_{k\mathfrak{p}(\ell),(j+r + 1)\mathfrak{p}(\ell)}\right) (\log q)^j.
\end{eqnarray*}
The pseudo-trace of this is the trace of the matrix $B$ given by the upper right hand $\mathfrak{p}(\ell) \times \mathfrak{p}(\ell)$ matrix. Thus we are interested in the coefficient of $D_{k\mathfrak{p}(\ell),  (k-1)\mathfrak{p}(\ell)}$.  

It follows that 
\begin{eqnarray*}
\lefteqn{\mathrm{pstr}_W (\alpha_{-1}{\bf 1},\tau) \  = \  \sum_{\ell \in \mathbb{Z}_{\geq 0}} \mathrm{pstr} (\alpha_0 |_{W(\ell)} q^{N_\ell}) q^{S_\ell - c/24} } \\
&=& q^{\frac{1}{2} (\lambda - a)^2}  \eta(q)^{-1} \Bigg(  \sum_{j = \lceil{\frac{k-1}{2}}\rceil}^{k-1} \frac{\lambda}{j!} \binom{j}{k-j-1} 2^{-k+j + 1} (\lambda - a)^{2j - k + 1}(\log q)^j\\
& & \quad  + \sum_{j = \lceil\frac{k-2}{2} \rceil }^{k-2} \frac{1}{j!} \binom{j}{k-j-2} 2^{-k+j + 2} (\lambda - a)^{2j - k + 2}(\log q)^j\Bigg).
\end{eqnarray*}

{\bf Graded pseudo-trace of the conformal vector -- $\mathrm{pstr}_W  (\omega^a,\tau)$:}  As an application of Theorem \ref{log-thm} (3), we have that
\begin{eqnarray}
\mathrm{pstr}_W(\omega^a, \tau) \ = \ q^{ -(1-12a^2)/24} q \frac{d}{dq} q^{ (1-12a^2)/24} \mathrm{pstr}_W({\bf 1} , \tau)\nonumber.
\end{eqnarray} 

In future work, we plan to extend the results of, for instance, \cite{DMN-Heisenberg, MT1, MT2} on graded traces for $n$-point functions to the graded pseudo-trace setting.

\section{The universal Virasoro vertex operator algebras--The setting}

In this Section we recall the relevant facts about the Virasoro universal vertex operator algebras $V_{Vir}(c,0)$ for $c \in \mathbb{C}$ necessary to analyze the indecomposable modules that are generated from their level zero Zhu algebras, which will comprise Sections 6 and 7. 

We start by recalling 
from \cite{FF, FF2, F, AF, As}, the relevant facts about the Virasoro algebra highest weight modules, which have been extensively studied in the literature and will be used heavily in Sections 6 and 7 where we give our main results. In particular, we recall important results regarding the Verma modules $M(c,h)$ for $c,h \in \mathbb{C}$, their maximal submodules $T(c,h)$, and the singular vectors that generate them. We discuss the Shapavolov form and the determinant of the Gram matrix in terms of the curves $\Phi_{r,s}(c,h)$ in $\mathbb{C}^2$ as defined in \cite{FF}.

\subsection{The universal Virasoro vertex operator algebra, its level zero Zhu algebra, and Verma modules}

Let $\mathcal{L}$ be the Virasoro algebra with central charge $\mathbf{c}$, that is, $\mathcal{L}$ is the vector space with basis $\{L_n \,|\, n\in\Z\}\cup \{\mathbf{c}\}$ and with bracket relations 
\begin{align*}
[L_m,L_n]=(m-n)L_{m+n}+\frac{m^3-m}{12} \delta_{m+n,0} \, \textbf{c},\quad\quad  [\textbf{c},L_m]=0  
\end{align*}  
for $m,n\in \Z$.  

Let $\mathcal{L}^{\geq 0}$ be the Lie subalgebra with basis $\{ L_n  \,|\, n\geq 0 \} \cup \{\mathbf{c}\}$, and let $\mathbb{C}_{c,h}$ be the $1$-dimensional $\mathcal{L}^{\geq 0}$-module where $\mathbf{c}$ acts as $c$ for some $c\in \mathbb{C}$, $L_0$ acts as $h$ for some $h\in \mathbb{C}$, and $L_n$ acts trivially for $n\geq 1$. The Verma module is the induced $\mathcal{L}$-module
\begin{equation}\label{V-module-M}
M(c,h)= \mathcal{U}(\mathcal{L})\otimes_{ \mathcal{U}(\mathcal{L}^{\geq 0})} \mathbb{C}_{c,h} .
\end{equation}

\begin{defn}
 Denote the image of $L_n$ in $M(c,h)$ by $L_n$ again and let $\mathbf{1}_{c,h} := 1 \in \mathbb{C}_{c,h}$. Define 
\[ V_{Vir}(c,0):= M(c,0)/\langle L_{-1}\mathbf{1}_{c,0}\rangle.
\] 

It was proved by Frenkel and Zhu in \cite{FZ} that for any $c\in \mathbb{C}$ the space $V_{Vir}(c,0)$ has a natural vertex operator algebra structure with vacuum vector $1=\mathbf{1}_{c,0}$, and conformal element $\omega=L_{-2}\mathbf{1}_{c,0}$, satisfying  $Y(\omega,x)= \sum_{n\in\Z } L_nx^{-n-2}$.  In addition, for each $h\in\C$, we have that $M(c,h)$ is an ordinary $V_{Vir}(c,0)$-module with $\mathbb{Z}_{\geq 0}$-gradation 
\begin{equation}\label{grade-M}
M(c,h)=\coprod_{\ell \in \mathbb{Z}_{\geq 0}} M(c,h)(\ell)
\end{equation}
where $M(c,h)(\ell)$ is the $L_0$-eigenspace with eigenvalue $\ell+ h$, so that  $M(c,h)(\ell)$ has degree $\ell$ and weight $\ell + h$.  

The vertex operator $V_{Vir}(c,0)$ is called the \emph{universal Virasoro vertex operator algebra} of central charge $c$. 
\end{defn}

The simple quotient  of $M(c,0)$, denoted $L(c,0)$, is a quotient of $V_{Vir}(c,0)$ 
 and therefore, it  admits a vertex operator algebra structure as well. Note that all $L(c,0)$- and $V_{Vir}(c,0)$-modules are $\vir$-modules.  Another important result of \cite{FZ} is that every  $\vir$-module of central charge $c$ is a $V_{Vir}(c,0)$-module.

Recall, following e.g., \cite{LL, KarL}, that $V_{Vir}(c,0)$ is spanned by vectors of the form
\begin{equation}\label{span}
L_{-n_1} \cdots L_{-n_m} \mathbf{1}_{c,0} \qquad \mbox{for $n_1 \geq \cdots \geq n_m \geq 2$ and $m \in \mathbb{Z}_{\geq 0}$} .
\end{equation}

It was shown in \cite{W} that
\begin{equation}\label{level-zero-iso}
A_0(V_{Vir}(c,0)) \cong \mathbb{C}[x]
\end{equation}
for $x = L_{-2} \mathbf{1}_{c,0} + O_0(V_{Vir}(c,0))$.

By \cite{FZ}, there is a bijection between isomorphism classes of irreducible $\mathbb{Z}_{\geq0}$-gradable $V_{Vir}(c,0)$-modules and irreducible $\mathbb{C}[x]$-modules which is given by $L(c,h)
\leftrightarrow \mathbb{C}[x]/(x - h)$ where $T(c,h)$ is the largest proper submodule of $M(c,h)$, and $L(c,h) = M(c,h)/T(c,h)$.   
For any central charge $c\in \C$,  $V_{Vir}(c,0)$ is not $C_2$-cofinite and in fact admits infinitely many nonisomorphic irreducible modules. However, a straightforward computation shows $V_{Vir}(c,0)/C_1(V_{Vir}(c,0))=span_{\C}\{ {\bf 1}_{c,0}, \omega\}$ and thus $V_{Vir}(c,0)$ is $C_1$-cofinite for any $c\in\C$.

It was proved in \cite{FF} (see also \cite{W}) that if $c$ is not of the form 
\begin{equation}\label{cpq}
c_{p,q} = 1 - 6\frac{(p-q)^2}{pq} \quad \mbox{for any $p,q \in \{2,3,4,\dots\}$ with $p$ and $q$ relatively prime,}
\end{equation}
then $T(c, 0) = \langle L_{-1} \mathbf{1}_{c,0} \rangle$, implying $V= V_{Vir}(c,0) = L(c,0)$, is simple as a vertex operator algebra.  
It was also shown in \cite{W, FF}  that for $c = c_{p,q}$, $T(c,  0) \neq \langle L_{-1} \mathbf{1}_{c,0} \rangle$ and thus in this case $V_{Vir}(c_{p,q}, 0)$ is not simple as a vertex operator algebra.

\begin{defn} 
${}$

\begin{itemize}
    \item[(i)]  A non zero vector $v\in M(c,h)$ is called a {\it singular} vector if $L_nv=0$ for $n>0$.
    \item[(ii)]  A non zero vector $v\in M(c,h)$ is a {\it conformal singular vector} if it is singular and satisfies $L_0v=(\ell+h) v$ for some $\ell \in \mathbb{Z}_{\geq 0}$. 
\end{itemize}
\end{defn} 
 Note that if $v$ is a conformal singular vector with $L_0v=(\ell+h) v$ then, $\mathrm{Vir}(c,0).v$ is a proper submodule isomorphic to $M(c,h+\ell)$. Since $M(c,h)_0$ is $1$-dimensional, the sum of all proper submodules is again a proper submodule and so $M(c,h)$ has a maximal proper submodule $T(c,h)$, and the quotient $L(c,h)= M(c,h)/T(c,h)$ is irreducible. In general either $T(c,h)=0$ or $T(c,h)$ is generated by up to $2$ conformal singular vectors \cite{FF}. We recall important results on conformal singular vector formulas following the exposition in,  for instance \cite{FF} and \cite{AF}, however, we note the following.
 
 \begin{rem} In \cite{FF}, the notation is in terms of $e_i = -L_i$ and is a study of Verma modules and thus highest weight modules, i.e., with only a finite number of nontrivial positive weight  spaces, whereas we are interested in the vertex operator algebra structure, i.e., ``positive energy modules" for the Virasoro, and thus with only a finite number of nontrivial negative weight spaces, or in other words, lowest weight modules.  Thus our results are in terms of $L_{-i}$ instead of $L_i$.  The minus signs in $-e_{-i} = L_{-i}$  cancel each other, and thus the analysis and in particular the singular vectors theory introduced below is identical.
 
 In \cite{AF}, the notation is stated to be that $e_i = - L_{-i}$, but this notation results in their Lie algebra structure being the opposite Lie algebra bracket structure for the usual Virasoro algebra. Nevertheless, their singular vector results hold in our notational setting.  
 \end{rem}

\begin{defn}\label{define-c-h}
For $r,s\in \mathbb{Z}_{>0}$ and $t \in \mathbb{C}^\times$, define
\begin{equation} \label{eq:virpara}
 c=c(t)=13 + 6t + 6t^{-1}, \qquad
h=h_{r,s}(t)=\frac{1-r^2}{4}t + \frac{1-rs}{2}+\frac{1-s^2}{4}t^{-1},
\end{equation}
and let $\Phi_{r,s}(c,h)$ denote the curve in the $\mathbb{C}^2(c,h)$ plane defined by these parametric equations.
\end{defn}
 Note that when $r=s$ the curve $\Phi_{r,s}(c,h)$ is the line $h=\frac{c-1}{24}(1-r^2)$. Note also that $\Phi_ {r,s}(c,h)=\Phi_{s,r}(c,h)$ and that otherwise all the curves $\Phi_{r,s}(c,h)$ are different.  In fact, in \cite{FF}, these curves are given (once a typo is fixed) by 
 \begin{multline}\label{Phi-r-s}
\Phi_{r,s}(c,h) = \Big( h + \frac{1}{24}(r^2 - 1) (c-13) + \frac{1}{2} (rs - 1) \Big)\Big( h + \frac{1}{24}(s^2 - 1) (c-13) \\
+ \frac{1}{2} (rs - 1) \Big) 
 + \frac{(r^2 - s^2)^2}{16}. 
 \end{multline}
As noted in \cite{FF}, once $c$ is fixed to be of the form $c(t)=13+6t+6t^{-1}$ for $t\in \mathbb{C}^\times$, the roots with respect to $h$ of the equation $\Phi_{r,s}(c,h)=0$ are given by $h_{r,s}(t)$ and $h_{r,s}(t^{-1})$.

 \begin{thm}[\cite{RW1, RW2, K, FF2}] \label{reducible}
 
The module $M(c,h)$ is reducible if and only if $(c,h)$ belongs to the union of the curves $\Phi_{r,s} (c,h)$ with $r,s>0$. If $(c,h)\in\Phi_{r,s}(c,h)$ with $r,s>0$ and $(c,h)\notin \Phi_{r',s'}(c,h)$ for any $r',s'>0$ such that $r's'<rs$ then $M(c,h)$ has a singular vector in $M(c,h)(rs)$ and no singular vectors in $M(c,h)(\ell)$ with $\ell<rs$.
\end{thm}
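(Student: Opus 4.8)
The plan is to prove both statements through the contravariant (Shapovalov) bilinear form on $M(c,h)$ together with the Kac determinant formula, which is the content recalled in Section 5.2. First I would set up the form: the unique bilinear form $\langle\cdot,\cdot\rangle$ on $M(c,h)$ satisfying $\langle L_{-n}u,v\rangle=\langle u,L_n v\rangle$ for all $n$ and normalized by $\langle\mathbf{1}_{c,h},\mathbf{1}_{c,h}\rangle=1$. Contravariance makes distinct graded pieces $M(c,h)(\ell)$ orthogonal, so the form is encoded degree by degree by the Gram matrices $\mathcal{A}_\ell(c,h)$ in the PBW spanning set of $M(c,h)(\ell)$ consisting of the monomials $L_{-n_1}\cdots L_{-n_m}\mathbf{1}_{c,h}$ with $n_1\geq\cdots\geq n_m\geq 1$ and $\sum_i n_i=\ell$. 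Its radical $R=\{v:\langle v,w\rangle=0\ \forall w\}$ is a submodule, and is proper since $\langle\mathbf{1}_{c,h},\mathbf{1}_{c,h}\rangle\neq 0$ forces $\mathbf{1}_{c,h}\notin R$. Moreover every proper submodule $N$ lies in $R$: for homogeneous $v\in N\cap M(c,h)(\ell)$ and $w=U\mathbf{1}_{c,h}\in M(c,h)(\ell)$ with $U$ a degree-$\ell$ word in the $L_{-n}$, contravariance gives $\langle v,w\rangle=\langle U^{\dagger}v,\mathbf{1}_{c,h}\rangle$, where $U^{\dagger}v\in N\cap M(c,h)(0)=0$ since a proper submodule cannot contain the cyclic generator $\mathbf{1}_{c,h}$. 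Hence $R=T(c,h)$, and $M(c,h)$ is reducible if and only if $R\neq 0$, equivalently $\ker\mathcal{A}_\ell(c,h)\neq 0$, equivalently $\det\mathcal{A}_\ell(c,h)=0$, for some $\ell\in\mathbb{Z}_{>0}$.

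The key input is the Kac determinant formula
$$\det\mathcal{A}_\ell(c,h)=K_\ell\prod_{\substack{r,s\in\mathbb{Z}_{>0}\\ rs\leq \ell}}\Phi_{r,s}(c,h)^{\mathfrak{p}(\ell-rs)},$$
where $K_\ell$ is a nonzero constant and $\mathfrak{p}$ is the partition function. Granting this, there exists $\ell>0$ with $\det\mathcal{A}_\ell(c,h)=0$ if and only if $\Phi_{r,s}(c,h)=0$ for some $r,s>0$ (choosing $\ell\geq rs$, the factor $\Phi_{r,s}$ appears with positive exponent $\mathfrak{p}(\ell-rs)$). This is exactly the statement that $(c,h)$ lies on the union of the curves $\Phi_{r,s}$, giving the first equivalence.

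For the second statement, assume $(c,h)\in\Phi_{r,s}$ with $rs$ minimal, i.e.\ $(c,h)\notin\Phi_{r',s'}$ whenever $r's'<rs$. For non-existence of low-degree singular vectors: a singular vector at degree $\ell_0<rs$ would generate a proper submodule meeting $M(c,h)(\ell_0)$ nontrivially, forcing $T(c,h)\cap M(c,h)(\ell_0)=\ker\mathcal{A}_{\ell_0}(c,h)\neq 0$ and hence $\det\mathcal{A}_{\ell_0}(c,h)=0$; by the determinant formula this requires $\Phi_{r',s'}(c,h)=0$ for some $r's'\leq\ell_0<rs$, contradicting minimality. Thus $T(c,h)$ has no component in degrees below $rs$. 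For existence at degree $rs$: since $\Phi_{r,s}(c,h)=0$ contributes the factor $\Phi_{r,s}^{\mathfrak{p}(0)}$ to $\det\mathcal{A}_{rs}$, we get $\det\mathcal{A}_{rs}(c,h)=0$, so $T(c,h)\cap M(c,h)(rs)=\ker\mathcal{A}_{rs}(c,h)\neq 0$. Because $T(c,h)$ vanishes in all degrees below $rs$, any nonzero $v$ in this kernel satisfies $L_n v\in T(c,h)\cap M(c,h)(rs-n)=0$ for all $n>0$, i.e.\ $v$ is singular. This produces the required singular vector in $M(c,h)(rs)$ and confirms there are none in lower degree.

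The main obstacle is the Kac determinant formula itself, which is the genuinely hard ingredient and is precisely the content of the cited references \cite{K}, \cite{FF2}; everything else is bookkeeping with the contravariant form. If one wanted a self-contained derivation rather than citing it, the standard route is: (i) show each $\Phi_{r,s}^{\mathfrak{p}(\ell-rs)}$ divides $\det\mathcal{A}_\ell$ as a polynomial in $h$, by producing the explicit singular vector $S_{r,s}(t)$ on the curve $\Phi_{r,s}=0$ and counting its $\mathfrak{p}(\ell-rs)$ independent descendants in $M(c,h)(\ell)$, all of which lie in $\ker\mathcal{A}_\ell$ there; and (ii) match the total degree in $h$ of both sides, using that the leading $h$-term of $\det\mathcal{A}_\ell$ comes from the product of diagonal Gram entries, to show these factors exhaust $\det\mathcal{A}_\ell$ up to the nonzero constant $K_\ell$. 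Pinning down that the multiplicities are exactly $\mathfrak{p}(\ell-rs)$ and that $K_\ell\neq 0$ is the delicate part of that argument.
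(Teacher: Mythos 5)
The paper offers no proof of Theorem \ref{reducible}: it is quoted from \cite{RW1, RW2, K, FF2}, and Section \ref{shapovalov-subsection} merely records that the Shapovalov determinant formula (Proposition \ref{determinant-prop}) is the main tool used in its proof. Your argument is correct and is exactly that standard route: the radical of the contravariant form equals the maximal proper submodule $T(c,h)$, so reducibility is equivalent to $\det\mathcal{A}_\ell(c,h)=0$ for some $\ell>0$; and minimality of $rs$ forces $T(c,h)$ to vanish in all degrees below $rs$, so that any nonzero vector in its degree-$rs$ component is annihilated by every $L_n$ with $n>0$ (these land in lower-degree components of $T(c,h)$, which vanish) and is therefore singular.

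Two small points. First, the determinant formula as you wrote it---the product of $\Phi_{r,s}(c,h)^{\mathfrak{p}(\ell-rs)}$ over all \emph{ordered} pairs $(r,s)$---is, up to a nonzero constant, the square of $\det\mathcal{A}_\ell(c,h)$, cf.\ Eqn.\ (\ref{det-Phi}): since $\Phi_{r,s}=\Phi_{s,r}$, the determinant itself is the product over unordered pairs, with the diagonal factors entering as $\sqrt{\Phi_{\alpha,\alpha}}$ rather than $\Phi_{\alpha,\alpha}$ (Remark \ref{det-remark}). This is harmless for your purposes, since the proof only uses the zero locus of the determinant, which is the same for both expressions, but as stated your formula is not the determinant. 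Second, you tacitly assume that every submodule of $M(c,h)$ is graded when you take $v\in N\cap M(c,h)(\ell)$ homogeneous; this is true because $L_0$ acts semisimply on $M(c,h)$ with distinct eigenvalues on distinct graded pieces, but it deserves a sentence.
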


Next, we recall the following results about singular vectors in reducible Verma modules following the exposition in \cite{AF}.

\begin{prop}\label{singular-prop} \cite{F}
The Verma module $M(c(t), h_{r,s}(t))$ contains a singular vector of the form $S_{r,s}(t)\vac_{c,h}$ where $r\geq s$ and
\begin{align}\label{S}
S_{r,s}(t)=L_{-1}^{rs}+\sum_{I\in \mathcal{I}_{r,s}} P_{r,s}^{I}(t)L_{-I},   
\end{align}
 where  $\mathcal{I}_{r,s}=\{(i_1, \dots, i_k) \in \mathbb{Z}_{>0}^k \; | \;  k \in \mathbb{Z}_{>0}, \ (i_1, \dots, i_k)\neq (1,\dots, 1) \ \mbox{and} \   i_1+\cdots + i_k=rs\}$,   $P_{r,s}^{I}(t)$ are polynomials in $t$, and $L_{-I} = L_{-i_1} \cdots L_{-i_k}$ for $I = (i_1, \dots, i_k) \in \mathcal{I}$.

\begin{rem}
Note that we fixed $r\geq s$ in the expression above to account for the symmetry in the conformal weights $h_{r,s}=h_{s,r}$ when $r$ and $s$ are exchanged.
\end{rem}

\end{prop}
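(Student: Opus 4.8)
The plan is to obtain the singular vector first as an abstract radical vector via the Kac determinant, and then to extract its structural form \eqref{S}. Throughout I fix $c=c(t)$ and $h=h_{r,s}(t)$ with $r\geq s$, and work in the degree-$rs$ space $M(c,h)(rs)$, whose basis is $\{L_{-I}\mathbf{1}_{c,h}\}$ with $I$ ranging over partitions of $rs$; this basis consists of $L_{-1}^{rs}\mathbf{1}_{c,h}$ together with the vectors indexed by $\mathcal{I}_{r,s}$.

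First I would prove existence. By Definition \ref{define-c-h} the chosen parameters satisfy $\Phi_{r,s}(c(t),h_{r,s}(t))=0$, and by the Kac determinant formula recalled in Section 5.2 this forces $\det\mathcal{A}_{rs}(c,h)=0$, where $\mathcal{A}_{rs}$ is the Gram matrix of the Shapovalov form on $M(c,h)(rs)$. Choosing $t$ generic on the curve so that $(c,h)\notin\Phi_{r',s'}$ for every $r's'<rs$, as in Theorem \ref{reducible}, guarantees $\det\mathcal{A}_\ell(c,h)\neq 0$ for all $\ell<rs$ and that $\ker\mathcal{A}_{rs}(c,h)$ is one-dimensional. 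Any nonzero $v\in\ker\mathcal{A}_{rs}(c,h)$ lies in the radical of the contravariant form, i.e.\ in the maximal proper submodule $T(c,h)$; since it sits in the lowest degree where that radical is nonzero, each $L_n v$ with $n>0$ has degree $rs-n<rs$ and therefore vanishes, so $v$ is singular and $\mathrm{Vir}(c,0).v\cong M(c,h+rs)$.

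Next I would pin down the leading term. Writing $v=\sum_I a_I(t)\,L_{-I}\mathbf{1}_{c,h}$, the content of \eqref{S} beyond bare existence is that the coefficient $a_{(1^{rs})}(t)$ of $L_{-1}^{rs}\mathbf{1}_{c,h}$ is nonzero, which permits the normalization $a_{(1^{rs})}=1$ with the remaining terms indexed by $\mathcal{I}_{r,s}$. I expect to obtain this either from the explicit Benoit--Saint-Aubin construction for the cases $(r,1)$ and $(1,s)$, extended to general $(r,s)$ by fusion, in which $L_{-1}^{rs}$ is the leading term by design, or by imposing the singular conditions directly: since $L_1$ and $L_2$ generate all $L_n$ with $n>0$, the equations $L_1 v=0$ and $L_2 v=0$ suffice, and the resulting system, triangular with respect to the number of parts of $I$, shows that the $L_{-1}^{rs}$ component cannot be killed without forcing $v=0$ at a generic point of $\Phi_{r,s}$.

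\emph{The main obstacle is the polynomiality of the coefficients $P_{r,s}^I(t)$.} Solving $\mathcal{A}_{rs}(c(t),h_{r,s}(t))\,v=0$ by Cramer's rule expresses each normalized $a_I(t)$ as a ratio of cofactors of $\mathcal{A}_{rs}$; because the entries of $\mathcal{A}_{rs}$ are polynomials in $c,h$ while $c(t)$ and $h_{r,s}(t)$ are only Laurent polynomials in $t$, these are a priori just rational functions of $t$, and one must rule out both finite poles and negative powers of $t$. My approach would follow the analytic-continuation method of Feigin--Fuchs \cite{FF}: uniqueness of the normalized singular vector wherever the kernel is one-dimensional makes the family $t\mapsto v(t)$ single-valued and meromorphic on $\mathbb{C}^\times$, and I would show that the recursion coming from $L_1 v=L_2 v=0$ carries at each step only the nonvanishing Shapovalov norms as denominators, so that the $a_I$ are produced with removable apparent singularities. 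A growth/degree bound as $t\to 0$ and $t\to\infty$, compatible with the symmetry $\Phi_{r,s}=\Phi_{s,r}$ that the convention $r\geq s$ records, would then force the $a_I$ to be honest polynomials in $t$. Carrying out this recursion carefully and confirming that no denominator survives is the technical heart of the argument.
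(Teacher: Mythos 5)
First, a point of context: the paper does not actually prove Proposition \ref{singular-prop}; the statement is imported from the literature, and the text following it reads ``A proof of this fact is contained in \cite{F}.'' So your proposal must be judged on its own merits rather than against an argument in the paper. Your first two steps are sound in outline and consistent with the machinery recalled in Section \ref{shapovalov-subsection}: the Kac determinant formula (Proposition \ref{determinant-prop}) produces a nonzero Shapovalov-kernel vector in degree $rs$ for generic $t$ on $\Phi_{r,s}$; a kernel vector in the lowest degree where the radical is nonzero is automatically singular; and the nonvanishing of the $L_{-1}^{rs}$-coefficient can indeed be made rigorous by a triangularity argument in the number of parts (pass to the associated graded by number of factors $L_{-n}\mapsto y_n$; modulo terms with fewer parts, $L_m$ acts by $\sum_{n>m}(n+m)\,y_{n-m}\partial_{y_n}$, whose joint kernel over $m\geq 1$ is $\mathbb{C}[y_1]$, so the component of a singular vector with the maximal number of parts must be a nonzero multiple of $y_1^{rs}$).

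The genuine gap is the step you yourself call the technical heart, and as planned it would prove something false. You propose to rule out negative powers of $t$ via a growth bound as $t\to 0$, so that the normalized coefficients become honest polynomials in $t$. But for $r\geq s\geq 2$ the normalized coefficients genuinely contain negative powers of $t$: the paper's own example
\[
S_{2,2}(t)= L_{-1}^4+2(t+t^{-1})L_{-2}L_{-1}^2+\bigl((t+t^{-1})^2-4\bigr)L_{-2}^2 + \cdots
\]
has coefficients involving $t^{-1}$ and $t^{-2}$, and the Astashkevich--Fuchs leading-term formula (Theorem 1.2 of \cite{AF}, quoted right after the proposition) exhibits the extreme term $(s-1)!^{2r}L_{-s}^{r}\,t^{-(s-1)r}$, i.e.\ powers of $t$ down to $-(s-1)r$. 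So ``polynomials in $t$'' in the statement must be read as Laurent polynomials (this is what \cite{F} and \cite{AF} prove; only for $s=1$ are they honest polynomials), and any strategy whose conclusion is the absence of negative powers cannot be completed. Separately, even Laurent polynomiality is only sketched in your proposal: columns of the adjugate of $\mathcal{A}_{rs}(c(t),h_{r,s}(t))$ do have Laurent-polynomial entries, but after normalizing the $L_{-1}^{rs}$-coefficient to $1$ you only know the coefficients are rational in $t$, and the assertion that all apparent poles (at the finitely many $t$ where the kernel jumps in dimension or the normalizing cofactor vanishes) are removable is precisely the content of Fuchs's theorem --- ``the recursion carries only nonvanishing Shapovalov norms as denominators'' is stated, not shown. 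That is where the real work lies, and it is why the paper defers to \cite{F} at this point.
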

A proof of this fact is contained in \cite{F}. Moreover, we have the following important property of conformal singular vectors as shown in \cite{FF}.

\begin{prop}\cite{FF} \label{propcases}
Any conformal singular vector in $M(c,h)$ is a non zero vector proportional to either $S_{r,s}(t)\vac_{c,h}$ or to $S_{\tilde{r},\tilde{s}}(t)S_{r,s}(t)\vac_{c,h}$ 
 with $(c,h)\in\Phi_{r,s}(c,h)$ and $(c, h + rs)\in \Phi_{\tilde{r},\tilde{s}}(c, h + rs)$, for some $r,s, \tilde{r}, \tilde{s} \in \mathbb{Z}_{>0}$.

In particular, letting $T(c,h)$ denote the maximal proper submodule of $M(c,h)$, we have three cases:

Case (0):  $M(c,h)$ has no  singular vectors, and $L(c,h) = M(c,h)/T(c,h) = M(c,h)$.

Case (1):  $T(c,h)$ is generated by a singular vector at some degree $d$ of the form $S_{r,s}(t) {\bf 1}_{c,h}$ with $r,s>0$ and $rs = d$, so that $L(c,h) = M(c,h)/ \langle S_{r,s}(t) {\bf 1}_{c,h}\rangle$. 

Case (2): $T(c,h)$ is generated by exactly two singular vectors at some degrees $d$ and $d' \neq d$, and the singular vectors are of the form $S_{r,s}(t) {\bf 1}_{c,h}$ and $S_{r',s'}(t) {\bf 1}_{c,h}$, respectively, with $r,s, r', s'>0$ and $rs = d$ and $r's'=d'$, so that \[L(c,h) = M(c,h)/ \langle S_{r,s}(t) {\bf 1}_{c,h}, S_{r',s'}(t){\bf 1}_{c,h} \rangle.\] 
\end{prop}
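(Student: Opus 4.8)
The plan is to derive the classification from the Kac determinant formula together with the Feigin--Fuchs embedding structure of Virasoro Verma modules. First I would introduce the contravariant (Shapovalov) form $\langle \cdot,\cdot\rangle$ on $M(c,h)$, normalized by $\langle \vac_{c,h}, \vac_{c,h}\rangle = 1$ and characterized by $\langle L_{-n} u, v\rangle = \langle u, L_n v\rangle$. This form is homogeneous, so it restricts to a symmetric bilinear form on each graded piece $M(c,h)(\ell)$ with Gram matrix $\mathcal{A}_\ell(c,h)$ relative to the PBW basis $\{L_{-I}\vac_{c,h}\}$. The standard fact I would use is that the radical of $\langle\cdot,\cdot\rangle$ coincides with the maximal proper submodule $T(c,h)$, so that $T(c,h)(\ell) = \ker \mathcal{A}_\ell(c,h)$ and in particular $M(c,h)$ acquires a proper submodule with nonzero degree-$\ell$ part precisely when $\det \mathcal{A}_\ell(c,h) = 0$.

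Next I would invoke the Kac determinant formula, which expresses $\det \mathcal{A}_\ell(c,h)$ up to a nonzero constant as $\prod_{rs \leq \ell} \Phi_{r,s}(c,h)^{\,p(\ell - rs)}$, with $\Phi_{r,s}$ as in Definition \ref{define-c-h} and $p(m)$ the number of partitions of $m$. Combined with Theorem \ref{reducible}, this pins down the degree at which a primitive singular vector first appears: if $(c,h)\in\Phi_{r,s}(c,h)$ with $rs$ minimal among all curves through $(c,h)$, then the lowest nonzero graded piece of $T(c,h)$ sits in degree $rs$, and any nonzero vector there is automatically singular, since for $n>0$ one has $L_n v \in T(c,h)(rs - n) = 0$ by minimality of the degree. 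Proposition \ref{singular-prop} then identifies this singular vector, up to scalar, as $S_{r,s}(t)\vac_{c,h}$, giving the leading normal form in the statement.

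The composed form arises from iterating this construction. A conformal singular vector $S_{r,s}(t)\vac_{c,h}$ of degree $rs$ generates a submodule isomorphic to the Verma module $M(c, h+rs)$ via $\vac_{c,h+rs}\mapsto S_{r,s}(t)\vac_{c,h}$. Applying the previous two steps inside this submodule, any singular vector it contains is, up to scalar, $S_{\tilde r,\tilde s}(t)\vac_{c,h+rs}$ with $(c,h+rs)\in\Phi_{\tilde r,\tilde s}(c,h+rs)$, which pulls back under the embedding to $S_{\tilde r,\tilde s}(t)S_{r,s}(t)\vac_{c,h}$ in $M(c,h)$; this yields exactly the second normal form. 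To control the number of generators of $T(c,h)$ I would analyze, for fixed $c = c(t)$, the set of pairs $(r,s)$ with $h = h_{r,s}(t)$, equivalently the curves $\Phi_{r,s}$ through $(c,h)$, partially ordered by their degrees $rs$; the Feigin--Fuchs theorem (Theorem \ref{reducible} and its refinements in \cite{FF}, \cite{FF2}) shows that the primitive singular vectors---those not lying in a submodule generated by another singular vector---number at most two, giving Cases (0), (1), and (2) according to whether zero, one, or two such vectors occur.

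The main obstacle is precisely this last structural step: establishing that at most two singular vectors are needed to generate $T(c,h)$, and sorting out the degenerate geometry of the curve arrangement $\{\Phi_{r,s}\}$. The difficulty concentrates at the coincidences and tangencies of these curves, most notably at $c = 1$ and $c = 25$, i.e. $t = \pm 1$, where $h_{r,s}(t) = h_{s,r}(t)$ and distinct curves can meet with high order of contact; handling these cases requires the careful Jantzen-filtration and embedding-diagram analysis of Feigin and Fuchs rather than the determinant formula alone, and it is exactly this delicate behavior at $c = 1, 25$ that resurfaces as Case (1)(ii) in the later sections of this paper.
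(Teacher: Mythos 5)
The paper does not actually prove this proposition: it is recalled verbatim from Feigin--Fuchs (the citation \cite{FF} in the statement is the entire ``proof''), together with the remark translating the paper's Cases (0)--(2) into the Feigin--Fuchs case labels. So there is no step-by-step argument in the paper to compare yours against; the only fair comparison is between your outline and what you would actually have to extract from \cite{FF}. On that score, your sketch is sound where it relies on the determinant formula and Theorem \ref{reducible}: the identification of $T(c,h)$ with the radical of the Shapovalov form, the conclusion that the lowest-degree nonzero piece of $T(c,h)$ sits in degree $rs$ and consists of singular vectors (the argument $L_n v \in T(c,h)(rs-n)=0$ is exactly right), and the iteration step producing $S_{\tilde r,\tilde s}(t)S_{r,s}(t)\vac_{c,h}$ from the Verma submodule $\langle S_{r,s}(t)\vac_{c,h}\rangle \cong M(c,h+rs)$. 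You also correctly identify where the real content lies and honestly defer it.

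Two gaps are worth naming, both of which you partially acknowledge but which mean the proposal is an outline rather than a proof. First, uniqueness up to scalar of the singular vector at the minimal degree does not follow from Proposition \ref{singular-prop} (which asserts existence only) nor from the determinant formula alone: when $(c,h)$ lies on two distinct curves $\Phi_{r,s}$ and $\Phi_{r',s'}$ with $rs = r's'$, the vanishing order of $\det\mathcal{A}_{rs}(c,h)$ exceeds one, so the kernel could a priori be higher-dimensional; one needs the Feigin--Fuchs fact that the space of singular vectors of any fixed degree in a Virasoro Verma module is at most one-dimensional (equivalently $\dim\mathrm{Hom}(M(c,h'),M(c,h))\le 1$). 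Second, and more seriously, your iteration only classifies singular vectors lying \emph{inside} $\langle S_{r,s}(t)\vac_{c,h}\rangle$; the dichotomy claimed in the proposition --- that \emph{every} conformal singular vector is proportional to $S_{r,s}(t)\vac_{c,h}$ or to a composite $S_{\tilde r,\tilde s}(t)S_{r,s}(t)\vac_{c,h}$, and that $T(c,h)$ needs at most two generators --- is exactly the embedding-diagram classification of \cite{FF}, which cannot be recovered from the curve arrangement and determinant formula by the methods in your sketch. Since the paper itself imports this wholesale from \cite{FF}, your proposal ends up in the same place the paper does, just with a more explicit account of the elementary reductions surrounding the citation.
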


\begin{rem} 
In \cite{FF} Feigin and Fuchs divide these cases in a different way. The correspondence with our notation is as follows
\begin{center}
\begin{tabular}{ |c|c|c|c|c| } 
\hline
 Case &Case 0 & Case 1 & Case 2 \\
\hline
\multirow{3}{4em}{Feigin and Fuchs subcases involved } & $I$, &$II_+, $& $III_{-}, $ \\ 
& $II_{0}$, & $III^{00}_{-}, $ & $III_{+}. $\\ 
& $II_{-}$. & $III^{0}_{-}, $ & \ \\ 
& \ & $III^{00}_{+}, $& \ \\
& \ & $III^{0}_{+}. $ & \ \\
\hline
\end{tabular}
\end{center}
\end{rem}

The conformal singular vectors can be computed by hand for small values of $r$ and $s$ (see  \cite{FF, AF}):
 \begin{align*}
    &S_{1,1}(t)=L_{-1}\\
    &S_{2,1}(t)=L_{-1}^2+ tL_{-2} \\ 
    &S_{3,1}(t)=L_{-1}^3+4tL_{-2}L_{-1}+(4t^2 + 2t)L_{-3}\\
    &S_{4,1}(t)=L_{-1}^{4}  +10tL_{-2}L_{-1}^2+9t^2L_{-2}^2+(24t^2 + 10t)L_{-3}L_{-1}+ (36t^3+24t^2+ 6t)L_{-4}\\
    &S_{2,2}(t)= L_{-1}^4+2(t+t^{-1})L_{-2}L_{-1}^2+((t+t^
    {-1})^2-4)L_{-2}^2\\
    &\ \ \quad  \quad \quad \quad \ \ \ \ \ +( 2(t+t^{-1})+6)L_{-3}L_{-1} +(3(t+t^{-1})+6)L_{-4}.
 \end{align*}

 More generally, we have the following formula for $S_{r,s}(t)$ proved in \cite{AF}.
 
 \begin{prop} [Theorem 1.2. \cite{AF}]  The polynomial coefficients in the singular vector formula (\ref{S}) are given by
 \begin{align}
 S_{r,s}(t)=(r-1)!^{2s}L_{-r}^st^{(r-1)s}+\cdots + (s-1)!^{2r}L_{- s}^rt^{-(s-1)r}, 
 \end{align}
 where $\cdots$ denotes the  terms of intermediate degrees in $t$.
 \end{prop}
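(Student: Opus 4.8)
The plan is to establish the two extreme-in-$t$ terms separately, after first using a symmetry that collapses one case into the other. The key observation is the duality $S_{r,s}(t)=S_{s,r}(t^{-1})$, valid as an identity of Laurent polynomials in $t$ whose coefficients are elements of the universal enveloping algebra of the negative part of $\mathcal{L}$. To see this, note from Definition \ref{define-c-h} that $c(t)=c(t^{-1})$ and $h_{r,s}(t)=h_{s,r}(t^{-1})$, so $S_{s,r}(t^{-1})\vac_{c,h}$ is a singular vector of $M(c(t),h_{r,s}(t))$ of degree $sr=rs$. For generic $t$ on the curve $\Phi_{r,s}$ the pair $(c,h)$ lies on no $\Phi_{r',s'}$ with $r's'<rs$, so by Theorem \ref{reducible} the space of singular vectors in degree $rs$ is one dimensional; since both $S_{r,s}(t)\vac_{c,h}$ and $S_{s,r}(t^{-1})\vac_{c,h}$ have coefficient $1$ on $L_{-1}^{rs}$ by the normalization in Proposition \ref{singular-prop}, they coincide for generic $t$, hence identically. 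Consequently the lowest-$t$ term of $S_{r,s}(t)$ is obtained from the highest-$t$ term of $S_{s,r}(u)$ by the substitution $u=t^{-1}$ together with the exchange $r\leftrightarrow s$: if the top term of $S_{r,s}(t)$ is $(r-1)!^{2s}L_{-r}^{s}\,t^{(r-1)s}$, then the bottom term is forced to be $(s-1)!^{2r}L_{-s}^{r}\,t^{-(s-1)r}$. Thus it suffices to identify the top-degree-in-$t$ term.

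For the top term I would pass to the limit $t\to\infty$. From Definition \ref{define-c-h} one has $c=6t+O(1)$ and $h_{r,s}(t)=\tfrac{1-r^2}{4}t+O(1)$, both linear in $t$, so the positive-mode annihilation equations $L_n\big(S_{r,s}(t)\vac_{c,h}\big)=0$ (it is enough to impose $n=1,2$, since these generate the positive part of $\mathcal{L}$) become, on retaining only the highest powers of $t$, a system of leading-order equations: powers of $t$ enter precisely through the central term of $[L_2,L_{-2}]=4L_0+\tfrac{c}{2}$ and through the eigenvalue of $L_0$ on $\vac_{c,h}$. Writing $S_{r,s}(t)\vac_{c,h}=\sum_m t^m v_m$ in the PBW basis, the claim is that the top index is $m=(r-1)s$ and that $v_{(r-1)s}=(r-1)!^{2s}L_{-r}^{s}\vac_{c,h}$. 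I would first establish the degree bound $\deg_t P^{I}_{r,s}\le (r-1)s$ with equality only for $I=(r,\dots,r)$; the cleanest route is the free-field (Feigin--Fuchs/Dotsenko--Fateev) realization, in which $S_{r,s}(t)\vac_{c,h}$ is a contour integral of screening currents and the power of $t$ carried by each Heisenberg monomial is read off from the background charge, exactly one screening direction scaling like $\sqrt{t}$ as $t\to\infty$. Translating the dominant Fock-space vector back into Virasoro descendants of $\vac_{c,h}$ isolates the single surviving monomial $L_{-r}^{s}$.

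The numerical constant $(r-1)!^{2s}=\big((r-1)!^{2}\big)^{s}$ I would obtain by bootstrapping from the case $s=1$. There $S_{r,1}(t)$ is governed by a single screening, and a direct recursion (or the Benoit--Saint-Aubin formula) shows its top term is $(r-1)!^{2}L_{-r}\,t^{r-1}$, consistent with the explicit $S_{2,1},S_{3,1},S_{4,1}$ listed above. The factorial structure of the general constant then reflects a factorization of the leading-order system into $s$ mutually commuting copies of the level-$r$ problem: in the $t\to\infty$ contraction the top-$t$ part of the annihilation equations decouples into $s$ independent constraints, each solved by the $s=1$ leading vector $(r-1)!^{2}L_{-r}$, so that $v_{(r-1)s}=\big((r-1)!^{2}L_{-r}\big)^{s}\vac_{c,h}=(r-1)!^{2s}L_{-r}^{s}\vac_{c,h}$ (no reordering factor arises, since all factors coincide).

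The main obstacle is the combination of the degree bound and the constant, that is, making the $t\to\infty$ degeneration rigorous. Concretely one must show that no monomial other than $L_{-r}^{s}$ reaches $t$-degree $(r-1)s$, a fact the naive filtration $\deg L_{-n}=n-1$ fails to detect since a single block such as $L_{-rs}$ has small $t$-degree despite large filtration degree, and one must track the exact factorial constant through the bosonization dictionary (or the decoupled recursion) without loss. Once the leading term is pinned down, the duality of the first step delivers the trailing term, and Proposition \ref{propcases} confirms that these are the vectors generating the submodules $T(c,h)$ in Cases (1) and (2).
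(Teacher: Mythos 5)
First, a remark on the benchmark: the paper does not prove this proposition at all; it is quoted as Theorem 1.2 of \cite{AF}, so the only meaningful comparison is with Astashkevich--Fuchs' published argument, and any blind proof must fully establish the two leading coefficients on its own. Your opening reduction is correct and worth keeping: the identity $S_{r,s}(t)=S_{s,r}(t^{-1})$ does follow from $c(t)=c(t^{-1})$, $h_{r,s}(t)=h_{s,r}(t^{-1})$ (Definition \ref{define-c-h}), the normalization of the coefficient of $L_{-1}^{rs}$ in Proposition \ref{singular-prop}, and one-dimensionality of the space of degree-$rs$ singular vectors for generic $t$. One small repair there: Theorem \ref{reducible} gives existence of a singular vector at degree $rs$, not uniqueness; uniqueness follows instead from Proposition \ref{determinant-prop}, since the factor $(h-h_{r,s})$ occurs in $\det\mathcal{A}_{rs}(c,h)$ with exponent $\mathfrak{p}(0)=1$, so along the generic part of $\Phi_{r,s}$ the Gram matrix has a simple zero and hence a one-dimensional kernel. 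With that fixed, the bottom term is indeed forced by the top term.

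The genuine gap is that the identification of the top term --- which is the entire content of the theorem --- is never actually proved; it is deferred to machinery whose applicability is precisely what requires proof. Concretely, two things are asserted rather than derived. (1) The claim that in the $t\to\infty$ limit ``the top-$t$ part of the annihilation equations decouples into $s$ mutually commuting copies of the level-$r$ problem, each solved by $(r-1)!^{2}L_{-r}$,'' so that the leading vector is the literal $s$-th power $\bigl((r-1)!^{2}L_{-r}\bigr)^{s}$, is a heuristic: the conditions $L_{1}v=L_{2}v=0$ do not factor monomial-by-monomial, and nothing in the proposal rules out other partitions of $rs$ (e.g.\ monomials such as $L_{-r-1}L_{-r+1}L_{-r}^{s-2}$ or $L_{-2r}L_{-r}^{s-2}$) contributing at $t$-degree $(r-1)s$. (2) The free-field route requires first proving that $S_{r,s}(t)\vac_{c,h}$ is given by the screened contour integral and then controlling the bosonization dictionary uniformly as $t\to\infty$; establishing the degree bound $\deg_{t}P^{I}_{r,s}\leq(r-1)s$ with equality only for $I=(r,\dots,r)$, together with the constant $(r-1)!^{2s}$, through that dictionary is exactly the hard step, and you say so yourself (``the main obstacle is \dots making the $t\to\infty$ degeneration rigorous''). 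A proof whose crux is flagged by its author as an unresolved obstacle has a gap, not a proof; checking the formula against $S_{2,1},S_{3,1},S_{4,1},S_{2,2}$ confirms consistency but does not close it. Closing these two points by a rigorous asymptotic analysis is precisely what \cite{AF} do, and is what would have to be reproduced here.
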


\begin{rem}\label{remark-L(-1)}
Note that any conformal singular vector in $M(c,h)$ may be expressed as a linear combination of Poincar\'{e}--Birkhoff--Witt-ordered monomials in the $L_i$, $i<0$, acting on the  vector $\vac_{c,h}$. A crucial fact for what follows is that the coefficient of $L_{-1}^{rs}$, is never $0$ (irrespective of the chosen order). Here, $rs$ is the degree of the conformal singular vector which is the conformal weight of the conformal singular vector minus that of $\vac_{c,h}$. 
\end{rem}

\begin{defn}\label{Hc}
For fixed $c = c(t) \in \mathbb{C}$ as in Eqn.\ (\ref{eq:virpara}), set
\begin{equation}\label{H_c-notation}
H_c = \left\{h \mid M(c,h)\ \text{is reducible}\right\}.
\end{equation}
By Theorem \ref{reducible}, we have $H_c = \{h_{r,s}(t) \mid r,s \in \mathbb{Z}_{>0}\}$ for $h_{r,s}(t)$ given by Eqn.\ (\ref{eq:virpara}).

The set $H_c$ of conformal weights is often referred to as the \emph{extended Kac table}, and more generally if $(c,h) \in \mathbb{C}^2$ are such that $H_c \neq \emptyset$ then $(c,h)$ is said to be in the extended Kac table.
\end{defn}

 Note that the term ``exteded Kac table" is used  because the original Kac table pertains to the Virasoro minimal models $L(c_{p,q}, 0)$. More precisely, the minimal models are the irreducible vertex operator algebras $L(c,0) = M(c,0)/T(c,0)$ which are a (nontrivial) quotient of $V_{Vir}(c_{p,q},0) = M(c_{p,q},0)/\langle L_{-1} {\bf 1}_{c_{p,q},0} \rangle$ for these particular central charges $c=c_{p,q}$. In this case, the Kac table $\bar{H}_{r,s}$ lists the irreducible representations of these rational theories and is given by the subset of $h_{r,s}(t)$ with $t=-p/q$, for $p,q \in \mathbb{Z}_{\geq2}$ and $\gcd\{p,q\}=1$, $r=1,\dots, q-1$, and $s=1,\dots, p-1$.

 In particular, we have the following Corollary to Proposition \ref{singular-prop} and Remark \ref{remark-L(-1)}.
 
\begin{cor} \label{cor_rep_L(-1)}
If $(c,h)$ is in the extended Kac table, i.e.,  $M(c,h) \neq L(c,h)$ and we are in the setting of Case (1) or Case (2), and $S_{r,s}(t) {\bf 1}_{c,h}$ is a singular vector at degree $rs=d$, then $L_{-1}^d {\bf 1}_{c,h}$ can be expressed as a linear combination of terms of the form $L_{-d}^{j_d} L_{-d+1}^{j_{d-1}} \cdots L_{-2}^{j_2} L_{-1}^{j_1}{\bf 1}_{c,h}$ for $j_1, \dots, j_d \in \mathbb{Z}_{\geq 0}$ and with $j_1 <d$ such that $j_1 + 2j_2 + \cdots d j_d = d$. In other words, $L_{-1}^d {\bf 1}_{c,h}$ can be expressed as a linear combination of terms of the form $L_{-n_1} L_{-n_2} \cdots L_{-n_i}$ such that $(n_1, \dots , n_i)$ is a partition of $rs=d$ that does not involve the partition of $d$ parts that are equal to 1, i.e., $(n_1, n_2, \dots, n_i) = (n_1, n_2, \dots, n_d) = (1,1,\dots, 1)$.
\end{cor}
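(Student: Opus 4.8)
The plan is to read the claimed relation directly off the singular vector formula of Proposition \ref{singular-prop}, after passing to the quotient $L(c,h) = M(c,h)/T(c,h)$ in which the singular vector becomes zero. Under the hypothesis that $(c,h)$ lies in the extended Kac table and we are in Case (1) or Case (2), Proposition \ref{propcases} guarantees that $T(c,h)$ is generated by the conformal singular vector $S_{r,s}(t)\mathbf{1}_{c,h}$ (possibly together with a second generator). In particular $S_{r,s}(t)\mathbf{1}_{c,h} \in T(c,h)$, so its image in $L(c,h)$ vanishes; the entire relation we are after will be the statement that this image is $0$.

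First I would write out the singular vector using $S_{r,s}(t) = L_{-1}^{rs} + \sum_{I\in\mathcal{I}_{r,s}} P_{r,s}^I(t) L_{-I}$ from Proposition \ref{singular-prop}, where $\mathcal{I}_{r,s}$ ranges over the compositions $I = (i_1,\dots,i_k)$ of $rs$ with $I \neq (1,\dots,1)$. Since the monomials $L_{-I} = L_{-i_1}\cdots L_{-i_k}$ need not be Poincar\'e--Birkhoff--Witt ordered, the next step is to reorder each $L_{-I}\mathbf{1}_{c,h}$ into decreasing-index PBW form $L_{-d}^{j_d}\cdots L_{-2}^{j_2}L_{-1}^{j_1}\mathbf{1}_{c,h}$. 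Because $[L_{-i},L_{-j}] = (j-i)L_{-i-j}$ preserves the total degree $rs$ and only ever produces generators of index $\leq -2$, the reordering keeps every resulting monomial at degree $d = rs$ and can never create a pure $L_{-1}^{rs}$ term out of an $L_{-I}$ with $I\neq(1,\dots,1)$. This is exactly the content of Remark \ref{remark-L(-1)}, which I would invoke to conclude that the coefficient of $L_{-1}^{rs}$ in the PBW expansion of $S_{r,s}(t)$ is a fixed nonzero constant.

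With the PBW-ordered expression in hand, I would set $S_{r,s}(t)\mathbf{1}_{c,h} = 0$ in $L(c,h)$ and solve for $L_{-1}^d\mathbf{1}_{c,h}$, dividing by its nonzero coefficient supplied by Remark \ref{remark-L(-1)}. This expresses $L_{-1}^d\mathbf{1}_{c,h}$ as a linear combination of the monomials $L_{-d}^{j_d}\cdots L_{-1}^{j_1}\mathbf{1}_{c,h}$ with $\sum_{i} i\, j_i = d$. The final step is purely notational: each such monomial corresponds to the partition of $d$ recording its factors, and since the $L_{-1}^{rs}$ term has been isolated on the left, every monomial on the right has $j_1 < d$, i.e. its partition is not the all-ones partition $(1,\dots,1)$, which is precisely the restated form of the conclusion.

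The argument is essentially immediate once the quotient viewpoint is adopted; the only point requiring genuine care — and the step I expect to be the main obstacle — is the bookkeeping of the PBW reordering, namely confirming that no rearrangement of the lower monomials $L_{-I}$ with $I\neq(1,\dots,1)$ can ever reproduce $L_{-1}^{rs}$. The clean way to see this is that the bracket relation can only decrease the number of factors of a monomial, while $L_{-1}^{rs}$ has the maximal number of factors $rs$; every $L_{-I}$ with $I\neq(1,\dots,1)$ contains a part $\geq 2$ and hence strictly fewer than $rs$ factors, so the $L_{-1}^{rs}$ coefficient is left untouched throughout, as asserted in Remark \ref{remark-L(-1)}.
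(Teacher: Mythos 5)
Your proposal is correct and follows essentially the same route the paper intends: the corollary is stated there as an immediate consequence of Proposition \ref{singular-prop} and Remark \ref{remark-L(-1)}, i.e., expand $S_{r,s}(t)\mathbf{1}_{c,h}$ via Eqn.\ (\ref{S}), observe that PBW reordering of the terms $L_{-I}$ with $I \neq (1,\dots,1)$ can only merge factors and so never reproduces $L_{-1}^{rs}$, and solve for $L_{-1}^{d}\mathbf{1}_{c,h}$ in the quotient where the singular vector vanishes. Your factor-counting justification of the nonvanishing $L_{-1}^{rs}$ coefficient, and your explicit identification that the relation lives in $L(c,h)$ rather than in the Verma module itself, are exactly the details the paper leaves implicit.
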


\subsection{The Shapovalov form and the determinant of the Gram matrix}\label{shapovalov-subsection} Many of the results above rely on the analysis of the Shapovalov form for the Verma modules $M(c,h)$ of the Virasoro algebra. In addition, the Shapovalov form, and extensions of it to broader classes of modules, will be used in our analysis of the indecomposable modules induced from the level zero Zhu algebra for $V_{Vir}(c, 0)$.  

The universal enveloping algebra for the Virasoro algebra, $\mathcal{U}(\mathcal{L})$ has an adjoint operator 
\begin{eqnarray}
(\cdot)^\dag  \ \ :  \ \ \mathcal{U}(\mathcal{L}) &\longrightarrow& \mathcal{U}(\mathcal{L})\\
L_{n_1} L_{n_2} \cdots L_{n_m}  &\mapsto& L_{-n_m} L_{-n_{m-1}} \cdots L_{-n_1} \nonumber \\
\mathbf{c} &\mapsto& \mathbf{c} \nonumber
\end{eqnarray}
extended linearly.  
Writing $\mathcal{L} = \mathcal{L}^- \oplus \mathbb{C}L_0 \oplus \mathcal{L}^+$ with $\mathcal{L}^\pm = \mathrm{span}_\mathbb{C} \{ L_n \, | \, \pm n >0 \}$, respectively, let $\mathcal{U}(\mathcal{L})^\pm = \mathcal{U}(\mathcal{L}^\pm)$, respectively. Thus, for instance $M(c,h) \cong \mathcal{U}(\mathcal{L})^-.\mathbf{1}_{c,h}$ as vector spaces.

The Shapovalov form on a Verma module $M(c,h)$, denoted $< \cdot , \cdot >_{c,h} : M(c,h) \times M(c,h) \longrightarrow  \mathbb{C}$ is defined to be the contravariant (i.e., symmetric with respect to $(\cdot)^\dag$) bilinear form uniquely determined by
\begin{equation} \label{shap}
  < T_1.\mathbf{1}_{c,h} , T_2.\mathbf{1}_{c,h} >_{c,h} \ = \ < \mathbf{1}_{c,h}, T_1^\dag T_2. \mathbf{1}_{c,h} >_{c,h}, \ \mbox{for $T_1, T_2 \in \mathcal{U}(\mathcal{L})^-$, } 
\end{equation} 
and 
\[< \mathbf{1}_{c,h} , \mathbf{1}_{c,h} >_{c,h} = 1.\]

\begin{rem} Note the fact that $M(c,h)$ is a lowest weight module for the Virasoro Lie algebra implies that the Shapovalov form can be expressed in terms of a PBW basis for $\mathcal{U}(\mathcal{L})^-$.
\end{rem}

The determinant of the Shapovalov form restricted to the degree $d$ space of $M(c,h)$ with respect to the $\mathbb{Z}_{\geq 0}$-grading of $M(c,h)$ given by (\ref{grade-M}) is sometimes called the {\it Kac determinant}. The following was proved in \cite{FF} and is the main tool used in  the proof of the results given in  Theorem \ref{reducible}.

\begin{prop}[\cite{FF}]\label{determinant-prop} The Shapovalov form satisfies the following 
\begin{equation}\label{det}
\det < \cdot, \cdot >|_{M(c,h)_\ell} = \gamma_\ell^{1/2} \prod_{\alpha,\beta \in \mathbb{Z}_{>0}, \ \alpha \beta\leq \ell} (h - h_{\alpha,\beta})^{\mathfrak{p}(\ell - \alpha \beta)} 
\end{equation} 
where $\gamma_\ell$ is a nonzero constant independent of $h$, $c = c(t)$, the $h_{\alpha, \beta}$ are given by (\ref{eq:virpara}) and $\mathfrak{p}(n)$ denotes the integer partitions of $n$ for $n\in \mathbb{Z}_{\geq 0}$. In particular, the determinant (\ref{det}) is zero for some $\ell \in \mathbb{Z}_{\geq 0}$ if and only if $M(c,h) = M(c(t), h_{ \alpha,\beta})$ for some $\alpha,\beta \in  \mathbb{Z}_{>0}$ and $t \in \mathbb{C}^\times$ satisfying (\ref{eq:virpara}), i.e., for $M(c,h)$ with $h \in H_c$ as defined by (\ref{H_c-notation}).  

Moreover 
\begin{equation}\label{det-Phi}
(\det < \cdot, \cdot >|_{M(c,h)_\ell} )^2 = \gamma_\ell \prod_{m= 1}^\ell  \prod_{\alpha|m}  \Phi_{\alpha,m/\alpha}(c,h)^{\mathfrak{p}(\ell - m)} = \gamma_\ell \prod_{\alpha,\beta \in \mathbb{Z}_{>0}, \ \alpha \beta \leq \ell} \!\!\Phi_{\alpha,\beta}(c,h)^{\mathfrak{p}(\ell - \alpha \beta)}
\end{equation}
for some nonzero constant $\gamma_\ell$.
\end{prop}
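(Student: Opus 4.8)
The plan is to reduce everything to the Kac determinant formula (\ref{det}) and then obtain (\ref{det-Phi}) by squaring. Fix the PBW basis $\{L_{-\lambda}\mathbf{1}_{c,h} : \lambda \vdash \ell\}$ of $M(c,h)(\ell)$, where $L_{-\lambda} = L_{-\lambda_1}\cdots L_{-\lambda_k}$ for a partition $\lambda = (\lambda_1 \geq \cdots \geq \lambda_k)$, and let $\mathcal{A}_\ell(c,h)$ be the Gram matrix of the form (\ref{shap}) in this basis; by the contravariance built into (\ref{shap}) each entry $\langle L_{-\lambda}\mathbf{1}_{c,h}, L_{-\mu}\mathbf{1}_{c,h}\rangle_{c,h}$ is a polynomial in $h$ with coefficients polynomial in $c$, so $\det\mathcal{A}_\ell$ is such a polynomial and I may compare it with the right-hand side of (\ref{det}) as polynomials in $h$. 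Granting (\ref{det}), equation (\ref{det-Phi}) follows formally: squaring (\ref{det}) and using that, for fixed $c = c(t)$, the quadratic $\Phi_{\alpha,\beta}(c,h)$ of (\ref{Phi-r-s}) is monic in $h$ with the two roots $h_{\alpha,\beta}(t)$ and $h_{\alpha,\beta}(t^{-1}) = h_{\beta,\alpha}(t)$ (a direct check from (\ref{eq:virpara})), so that $\Phi_{\alpha,\beta}(c,h) = (h - h_{\alpha,\beta})(h - h_{\beta,\alpha})$; the reindexing $(\alpha,\beta)\mapsto(\beta,\alpha)$, which preserves $\alpha\beta$ and hence $\mathfrak{p}(\ell-\alpha\beta)$, then identifies $\prod_{\alpha\beta\le\ell}\Phi_{\alpha,\beta}^{\mathfrak{p}(\ell-\alpha\beta)}$ with $\big(\prod_{\alpha\beta\le\ell}(h-h_{\alpha,\beta})^{\mathfrak{p}(\ell-\alpha\beta)}\big)^2$, while the reindexing $m = \alpha\beta$ produces the middle expression of (\ref{det-Phi}).

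For (\ref{det}) itself I would argue by a degree count in $h$ together with divisibility, in the style of Kac--Kazhdan. \textbf{Upper bound.} Moving the raising operators in $L_\mu = (L_{-\mu})^\dagger$ to the right in $\langle L_{-\lambda}\mathbf{1}_{c,h}, L_{-\mu}\mathbf{1}_{c,h}\rangle = \langle \mathbf{1}_{c,h}, L_\mu L_{-\lambda}\mathbf{1}_{c,h}\rangle$ and using $L_0\mathbf{1}_{c,h} = h\mathbf{1}_{c,h}$, one checks that this entry has $h$-degree at most $\min(k(\lambda),k(\mu))$, where $k(\cdot)$ denotes the number of parts, with the diagonal entry for $\lambda$ having degree exactly $k(\lambda)$ and nonzero leading coefficient $\prod_i 2\lambda_i$ times positive combinatorial factors. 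A permutation bookkeeping argument then shows the top $h$-degree part of $\det\mathcal{A}_\ell$ is exactly the product of the diagonal leading terms, so $\deg_h \det\mathcal{A}_\ell = \sum_{\lambda\vdash\ell} k(\lambda)$ with a nonzero, $h$-independent leading coefficient, which will be $\gamma_\ell^{1/2}$. The combinatorial identity $\sum_{\lambda\vdash\ell}k(\lambda) = \sum_{\alpha,\beta:\,\alpha\beta\le\ell}\mathfrak{p}(\ell-\alpha\beta)$, both sides equalling $\sum_{N=1}^\ell d(N)\,\mathfrak{p}(\ell-N)$ with $d(N)$ the number of divisors of $N$ (counting on the left the multiplicity of each part, on the right the divisor pairs of $N=\alpha\beta$), matches this bound to the total degree of the right-hand side of (\ref{det}).

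\textbf{Divisibility.} I would work first at a generic central charge $c$, where the values $\{h_{\alpha,\beta} : \alpha\beta\le\ell\}$ are pairwise distinct. For each such $(\alpha,\beta)$, setting $h = h_{\alpha,\beta}$ produces, by Proposition \ref{singular-prop}, a singular vector $S_{\alpha,\beta}(t)\mathbf{1}_{c,h}$ of degree $\alpha\beta$; being killed by every $L_n$ with $n>0$, it is orthogonal to all of $M(c,h)(\alpha\beta)$, and its $\mathfrak{p}(\ell-\alpha\beta)$ linearly independent descendants $\{L_{-\nu}S_{\alpha,\beta}(t)\mathbf{1}_{c,h} : \nu\vdash(\ell-\alpha\beta)\}$ lie in the radical of the form at degree $\ell$. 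Hence $\mathcal{A}_\ell(c,h_{\alpha,\beta})$ has nullity at least $\mathfrak{p}(\ell-\alpha\beta)$, and the elementary fact that the order of vanishing of a matrix polynomial's determinant is at least the nullity of its specialization shows $(h-h_{\alpha,\beta})^{\mathfrak{p}(\ell-\alpha\beta)}$ divides $\det\mathcal{A}_\ell$. Since the roots are distinct at generic $c$, the total multiplicity $\sum_{\alpha\beta\le\ell}\mathfrak{p}(\ell-\alpha\beta)$ already equals the degree bound, forcing equality of multiplicities and the absence of any further $h$-factor; the leading coefficient computed above supplies the constant $\gamma_\ell^{1/2}$, manifestly independent of $h$. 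As both sides of (\ref{det}) are polynomial in $c$, the identity at generic $c$ extends to all $c$ by continuity.

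\textbf{Main obstacle.} The genuinely delicate points, which I expect to consume most of the work, are the sharpness of the degree bound, namely showing that the maximal-$h$-degree contribution to $\det\mathcal{A}_\ell$ comes only from the diagonal (the permutation bookkeeping must rule out off-diagonal products of equal total degree among partitions with the same number of parts), and the clean passage from nullity to order of vanishing, together with justifying the generic-$c$ reduction so that collisions among the $h_{\alpha,\beta}$ at special values of $c$ do not corrupt the multiplicity count. By contrast, the singular-vector input of Proposition \ref{singular-prop} and the factorization of $\Phi_{\alpha,\beta}$ are the easy ingredients.
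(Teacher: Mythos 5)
A point of comparison first: the paper does not prove Proposition \ref{determinant-prop} at all---it is imported verbatim from Feigin--Fuchs \cite{FF}---so your attempt has to be measured against the classical proof of the Kac determinant formula, whose architecture you have essentially reproduced. Several of your steps are sound. The reduction of (\ref{det-Phi}) to (\ref{det}), via the factorization $\Phi_{\alpha,\beta}(c(t),h) = (h - h_{\alpha,\beta}(t))(h - h_{\beta,\alpha}(t))$ and the reindexing $(\alpha,\beta)\mapsto(\beta,\alpha)$ of the ordered-pair product, is correct. So is the combinatorial identity $\sum_{\lambda\vdash\ell}k(\lambda) = \sum_{\alpha\beta\le\ell}\mathfrak{p}(\ell-\alpha\beta)$, and so is the divisibility half: singular vectors supplied by Proposition \ref{singular-prop}, their descendants lying in the radical of the form at degree $\ell$ (linear independence coming from $\mathcal{U}(\mathcal{L}^-)$ having no zero divisors), nullity bounding the order of vanishing of the determinant, and genericity of $c$ keeping the roots $h_{\alpha,\beta}$ distinct.

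The genuine gap is the one you flag yourself and then do not fill: the leading-coefficient step. The bound $\deg_h\langle L_{-\lambda}\mathbf{1}_{c,h},L_{-\mu}\mathbf{1}_{c,h}\rangle \le \min(k(\lambda),k(\mu))$ is too weak to isolate the diagonal, because any permutation $\sigma$ of the partitions satisfying $k(\sigma(\lambda)) = k(\lambda)$ for all $\lambda$ saturates the same total degree $D = \sum_\lambda k(\lambda)$; hence the coefficient of $h^D$ in $\det\mathcal{A}_\ell$ is a priori a signed sum over all such permutations, not the product of diagonal leading terms. Without the sharpened lemma---that for $\lambda\ne\mu$ with $k(\lambda)=k(\mu)$ the entry has degree strictly less than $k(\lambda)$, which is the hard computational core of the classical proofs (cf.\ \cite{K}, \cite{FF})---your divisibility-plus-degree argument yields only $\det\mathcal{A}_\ell(c,h) = \gamma_\ell(c)\prod_{\alpha\beta\le\ell}(h-h_{\alpha,\beta})^{\mathfrak{p}(\ell-\alpha\beta)}$ with $\gamma_\ell(c)$ an undetermined function of $c$ that could a priori depend on $c$ or vanish at special values of $c$; and if $\gamma_\ell$ can vanish, the final ``extension by continuity'' collapses, since divisibility established at generic $c$ says nothing at the zeros of $\gamma_\ell$. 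A smaller point in the same direction: the claim that ``both sides are polynomial in $c$'' needs justification for the right-hand side, whose coefficients are a priori Laurent polynomials in $t$; one must observe that the full product is invariant under $t\mapsto t^{-1}$ (this swaps $h_{\alpha,\beta}$ with $h_{\beta,\alpha}$ and preserves the exponents), so its coefficients are polynomials in $t+t^{-1}$, i.e., in $c$, before Zariski-density arguments apply. In short, the skeleton is the right one, but as written your proof establishes only the divisibility statement, not formula (\ref{det}) with its nonzero, $h$- and $c$-independent constant $\gamma_\ell^{1/2}$.
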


We note that $\mathcal{U}(\mathcal{L})^- = \coprod_{\ell \in \mathbb{Z}_{>0}} \mathcal{U}(\mathcal{L})(\ell)$ with 

\begin{multline} \label{p(d)}
\mathcal{U}(\mathcal{L})(\ell) =  \mathrm{span}_{\mathbb{C}} \{ L_{-n_1} L_{-n_2} \cdots L_{-n_m} \; | \; m \in \mathbb{Z}_{>0}, \ n_1, \dots, n_m \in \mathbb{Z}_{>0}, \\ \mbox{and} \ n_1 + \cdots + n_m = \ell \}.
\end{multline}
Since $M(c,h)$ is a lowest weight module, we can fix a PBW basis for $\mathcal{U}(\mathcal{L})(\ell)$, say  $\mathcal{B}_d = \{B_1, \cdots, B_{\mathfrak{p}(\ell)} \}$,  and we can write the Gram matrix of the Shapovalov form on $M(c, h)(\ell)$ with respect to this basis as
\begin{equation}\label{define-A} \mathcal A_\ell(c,h) = ( < B_i.\mathbf{1}_{c,h} , B_j.\mathbf{1}_{c,h} > )_{1 \leq i, j \leq \mathfrak{p}(\ell) } .
\end{equation}

\begin{rem}\label{det-remark}
{\em Since we have the symmetry $\Phi_{\alpha,\beta}(c,h) = \Phi_{\beta,\alpha}(c,h)$, from Proposition \ref{determinant-prop}, in particular, from Eqn. (\ref{det-Phi}), we have that 
\[\det \mathcal{A}_\ell (c,h) = \gamma_\ell^{1/2} \prod_{0<\alpha<\beta , \ \alpha \beta \leq \ell}  \Phi_{\alpha,\beta}(c,h)^{\mathfrak{p}(\ell -\alpha\beta)} \prod_{1\leq \alpha \leq \sqrt{\ell}} \left(\sqrt{\Phi_{\alpha,\alpha}(c,h)}\right)^{\mathfrak{p}(\ell -\alpha^2)}  ,\]
where 
\[\sqrt{\Phi_{\alpha,\alpha}(c,h)} = h + \frac{\alpha^2 -1}{24}(c-1) .\]}
\end{rem}

\section{The structure of the indecomposable $V_{Vir}(c,0)$-modules induced from the level zero Zhu algebra}\label{first-induced-module-section}

In this Section we analyze the structure of the indecomposable modules for the Virasoro algebra induced from its level zero Zhu algebra. 
This Section gives the most technical results of this paper, and gives the results necessary to classify which $V_{Vir}(c,0)$-modules induced from the level zero Zhu algebra are interlocked, which is done in Section 7.

To aid in the understanding of and shed light on the technicalities, we provide some concrete examples in Subsections 6.3 and 6.6.

\subsection{Inducing modules $U(c,h,k)$ for the level zero Zhu algebra and characterizing $J(c,h,k)$ as the kernel of a certain family of matrices $\{\mathfrak{A}_{\ell}^{(k)}\}_{\ell \in \mathbb{Z}_{\geq 0}}$}

We denote by $U(c,h,k)$ the indecomposable $A_0(V_{Vir}(c,0))$-module 
\begin{equation}\label{level-zero-indecomp}
U(c,h, k) = \mathbb{C}[x]/((x - h)^{k}), \qquad \mbox{for $c,h \in \mathbb{C}$ and $k \in \mathbb{Z}_{>0}$,}
\end{equation}
under the isomorphism given in (\ref{level-zero-iso}).

Using the induction functor $\mathfrak{L}_0$ from $A_0(V_{Vir}(c,0))$-modules to $V_{Vir}(c,0)$-modules, we let $W(c,h,k) = $ 
$\mathfrak{L}_0(U(c,h,k))$ as defined in Eqn.\  (\ref{defLnfunctor}).

Then the $\mathbb{Z}_{\geq 0}$-grading of $W(c,h,k)$ is given by, for fixed $\ell > 0$,
\begin{eqnarray*}
W(c,h,k)(\ell) &=& \mathfrak{L}_0(U(c,h,k))(\ell) = (M_0(U(c,h,k))/J(c,h,k))(\ell) \\
&=& M_0(U(c,h,k))(\ell)/ J(c,h,k)(\ell)
\end{eqnarray*}
where 
\begin{multline} 
M_0(U(c,h,k))(\ell) = 
\{ L_{-n_1} L_{-n_2} \cdots L_{-n_m} u \; | \; m \in \mathbb{Z}_{>0}, \,  n_1, \dots, n_m \in \mathbb{Z}_{>0},\\
 n_1 + \cdots + n_m = \ell , \,  \, \mathrm{and} \, u \in U(c,h,k)\}, 
\end{multline}
and 
\[J(c,h,k)(\ell) = M_0(c,h,k)(\ell) \cap J(c,h,k),\]
for $J(c,h,k) = J_0(U(c,h,k))$ given by Eqn.\ \ref{J-def} where in the present setting $U = U(c,h,k)$.
Thus dim $W(c,h,k)(\ell) \leq \mathfrak{p}(\ell)$. Recall the PBW basis (\ref{p(d)}) for $\mathcal{U}(\mathcal{L})(\ell)$ given by $\mathcal{B}_\ell = \{B_1, \dots, B_{\mathfrak{p}(\ell)} \}$. 
Then  since $M_0(U(c,h,k))$ 
 is a lowest weight module, $M_0(U(c,h,k))(\ell) = (\mathrm{span}_\mathbb{C} \mathcal{B}_\ell).U(c,h,k)$, 
and 
\[\mathrm{dim} \,M_0(U(c,h,k))(\ell) = k \mathfrak{p}(\ell).\] 

\begin{rem} \label{Vir-M=Mbar}
We note that in this case, as in the case of the Heisenberg vertex algebra, $\overline{M}_0(U(c,h,k))=M_0(U(c,h,k))$ due to the properties of the induced module. More precisely, we have that $\overline{M}_n(U(c,h,k))$ is a \textit{ restricted} module for the Virasoro Lie algebra in the sense of Lepowsky-Li Remark 5.1.6 \cite{LL},  since for any $w \in \overline{M}_n(U)$ we have that $L_n w=0$ for $n$ sufficiently large. By Theorem 6.1.7 in \cite{LL} this implies that $\overline{M}_n(U(c,h,k))$ is a $V_{Vir}(c,0)$ module. Thus, in light of Remark \ref{L-a-V-module-remark} $W_A=0$ and 
$\overline{M}_n(U(c,h,k))=M_n(U(c,h,k)).$ This shows that $M_0(U(c,h,k))$ is the universal $V_{Vir}(c,0)$-module with degree zero space $U(c,h,k)$.
\end{rem}

Since $U(c,h,k) = W(c,h,k)(0)$, we have a singular vector $w$ for the Virasoro algebra (i.e., $L_n w = 0$ if $n>0$) such that  
\[
(L_0 -h I)^{k-1} w \neq 0; \;\;\; (L_0 - h I)^{k}w = 0,
\]
and $W(c,h, k)(0) = \mathrm{span}_\mathbb{C} \{ (L_0 - h I)^{k - i} w \; | \; i =  1, \dots, k \}$.  

So $L_0-h I $ acts on $W(c,h, k)(0)$ for the Jordan basis $u_i = (L_0 - h I)^{k - i}w$ for $i = 1, \dots,  k$ so as to have zeros on the diagonal and 1's on the super-diagonal. 
That is $L_0$ with respect to this Jordan basis $u_1, \dots, u_k$ is, using the notation of Section 4.3
\begin{equation}\label{Jordan-block}
L_0|_{W(c,h,k)(0)} = h Id_{k} + D_{k,1} .
\end{equation}

Note that, in particular, by induction on $m$ we have that 
\begin{equation}\label{L_0-on-u}
L_0^m u_j = \sum_{i = 0}^m h^{m-i} \binom{m}{i} u_{j-i} ,
\end{equation}
 where we take $u_{j-i} = 0$ if $j-i<1$. \\

{\bf Notation:} To simplify notation, we let 
\begin{align} \label{defR_d}
\mathcal{R}_\ell:=\mathrm{span}_\mathbb{C} \mathcal{B}_\ell,
\end{align}
and we will from now on write $M(c,h,k)$ for $M_0(U(c,h,k))$.

\vskip.5cm

The structure of $W(c,h,k) = \coprod_{\ell \in \mathbb{Z}_{\geq 0}} W(c,h,k)(\ell) = M(c,h,k)/J(c,h,k)$ is determined by which vectors in $M(c,h,k)(\ell) = \mathcal{R}_\ell.U(c,h,k)$ are also in $J(c,h,k)(\ell) = M(c,h,k)(\ell) \cap J(c,h,k)$, for each $\ell \in \mathbb{Z}_{\geq 0}$, and this is determined by whether lowering a vector in $\mathcal{R}_\ell.U(c,h,k)$ back down to $W(c,h,k)(0)= U(c,h,k)$ annihilates the vector.  The lowering operators are given by
\[
\mathcal{R}_\ell^\dag = \mathrm{span}_{\mathbb{C}} \{L_{n'_1}L_{n'_2} \cdots L_{n'_m} \; | \; m \in  \mathbb{Z}_{>0}, \, n'_1, \dots, n'_m \in \mathbb{Z}_{>0} , \, n'_1 + \cdots + n'_m = \ell\}\]
and thus  using the PBW basis for $\mathcal{R}_\ell$, we can form a PBW basis  
$\mathcal{B}_\ell^\dag = \{B_1^\dag, \dots, B_{\mathfrak{p}(\ell)}^\dag \}$ for $\mathcal{R}_\ell^\dag$. Note that $\mathcal{R}^\dag_\ell = \mathrm{span}_\mathbb{C} \mathcal{B}^\dag_\ell$, we have that 
$v + J(c,h,k)(\ell) = 0 + J(c,h,k)(\ell)$  if and only if $\mathcal{R}_\ell^\dag.v = 0$, i.e., if and only if $B^\dag_iv = 0$ for all $1\leq i \leq \mathfrak{p}(\ell)$. 

This implies that to determine $J(c,h,k)(\ell) = \mathcal{R}_\ell.U(c,h,k) \cap J(c,h,k)$, we must determine when $\mathcal{R}_\ell^\dag . \mathcal{R}_\ell. u = 0$ for $u \in U(c,h,k)  = \mathrm{span}_\mathbb{C} \{ u_1, \dots, u_k\}$.   Thus we must consider the $k\mathfrak{p}(\ell) \times k\mathfrak{p}(\ell)$ linear system given by the action of $ \mathcal{B}_\ell^\dag$ on the $k\mathfrak{p}(\ell)$ vectors in $\mathcal{B}_\ell.U(c,h,k)$ giving $k\mathfrak{p}(\ell)$ equations in $k\mathfrak{p}(\ell)$ unknowns in terms of the basis $\{u_1, \dots, u_k \}$.  This implies that $\mathcal{R}_\ell.U \cap J  = 0$ if and only if the determinant of this $k\mathfrak{p}(\ell
) \times k\mathfrak{p}(\ell)$ linear system is nonzero, and more generally the rank of this linear system is the dimension of $W(c,h,k)(\ell)$.  

More precisely, in terms of the Jordan basis for $U = U(c,h,k)$, given by $\mathcal{B}_U = \{ u_1, \dots, u_k\}$, and a basis for $\mathcal{R}_\ell$ given by $ \mathcal{B}_\ell = \{B_1, \dots, B_{\mathfrak{p}(\ell)} \}$, then 
$\mathcal{R}_\ell.U = M_0(U)(\ell)$ has a corresponding basis given by 
\begin{eqnarray}\label{basis}
 \mathcal{B}_{\mathcal{R}_\ell.U} &=& \{ B_1u_1, \dots, B_1u_k, B_2u_1, \dots, B_2u_k, \dots, B_{\mathfrak{p}(\ell)}u_1, \dots,B_{\mathfrak{p}(\ell)}u_k \}\\
 &=&  \{B_1^1, B_1^2,\dots, B_1^k, B_2^1, B_2^2, \dots, B_2^k, \dots, B_{\mathfrak{p}(\ell)}^1, B_{\mathfrak{p}(\ell)}^2, \dots, B_{\mathfrak{p}(\ell)}^k  \},\nonumber
 \end{eqnarray} 
where we have set the notation $B_i^j = B_iu_j$ for $i = 1, \dots, \mathfrak{p}(\ell)$ and $j = 1, \dots, k$.

\begin{rem}
    Note that this basis (\ref{basis}) is useful for determining the $V_{Vir}(c,0)$-module induced from $U(c,h,k)$, i.e. in determining $J$, but will not be a  strongly interlocked basis for degree $\ell$ for when the module $W(c,h,k)$ is strongly interlocked. 
\end{rem}

Then $v \in \mathcal{R}_\ell.U$ is in $J$ if and only if  every linear combination of the $B_i^\dag v$ is zero, for $i$ running through $1 \leq i \leq \mathfrak{p}(\ell)$,  which is equivalent to $v$ being in the kernel of the following matrix with respect to the bases (\ref{basis}) given by
\begin{eqnarray}\label{define-A-k}
\mathfrak A_\ell^{(k)}(c,h) =  \left[ \begin{array}{cccccc} 
N_{1,1}  & N_{1,2} & \cdots & N_{1, \mathfrak{p}(d)} \\
N_{2,1} & N_{2,2} & \cdots & N_{2, \mathfrak{p}(\ell)}  \\
\vdots & \vdots & \ddots & \vdots \\
N_{\mathfrak{p}(\ell), 1} & N_{\mathfrak{p}(\ell), 2} & \cdots & N_{\mathfrak{p}(\ell), \mathfrak{p}(\ell)}
\end{array} \right]
\end{eqnarray}
where $N_{i,j}$ is the $k \times k$ matrix given by 
\begin{equation}\label{define-blocks}
N_{i,j} = \left[ [B_i^\dag B_j  u_1]_{\mathcal{B}_U} \ \ [B_i^\dag B_j  u_2]_{\mathcal{B}_U} \ \  \cdots \ \  [B_i^\dag B_ju_k]_{\mathcal{B}_U} \right] 
\end{equation}
for $1\leq i, j \leq \mathfrak{p}(\ell)$, with entries in $\mathbb{C}[c, h]$.
Note then that the Gramm matrix $ \mathcal A_\ell(c,h)$, with respect to the basis $\mathcal{B}_{\ell}$, of the degree $\ell \in \mathbb{N}$ Shapovalov form  corresponds to  $\mathfrak A_\ell^{(1)}(c,h)$, i.e., $\mathfrak A_\ell^{(k)}(c,h)$ for the case $k=1$.  In this case  $V.u_k = V.u_1$, for $V = V_{Vir}(c,0)$, is a highest weight irreducible module since $U = \mathbb{C}u_1$ and so $U_0(c, h, 1) = \mathbb{C}[x]/(x-h)$.  In other words, in the case of $k = 1$,  $W = \mathfrak{L}_0(U(c, h, 1) )\cong M(c,h)/J(c,h) = M(c,h)/T(c,h) = L(c,h)$ which is $M(c,h)$ if and only if $J(c,h) = T(c,h) = 0$. Namely, if and only if we are in Case (0) of Proposition \ref{propcases} and 
the pair $(c,h)$ is not of the form  $(c(t),h_{r,s}(t))$ as in (\ref{eq:virpara}) for $r, s \in \mathbb{Z}_{>0}$ and $t \in \mathbb{C}^\times$.  Therefore, for $k=1$ we obtain the Verma module $M(c,h)$ as an induced module from the zero level Zhu algebra if and only if the Shapovalov determinant $ \mathcal A_\ell(c,h)$ acting on $W = M(c,h)$ is nonsingular for all degree spaces and there are no conformal singular vectors in $M(c,h)$.

We summarize this in the following Proposition.

\begin{prop}\label{NulJ-prop}
For $k \in \mathbb{Z}_{>0}$, let $W(c,h,k)$ be the induced module $W(c,h,k) = \mathfrak{L}_0(U(c, h, k))$ for the $A_0(V_{Vir}(c,0))$-module given by $U(c, h, k)  \cong \mathbb{C}[x]/ ((x - h)^k )$. Then \[W(c,h,k)  = \coprod_{\ell \in \mathbb{Z}_{\geq 0}} W(c,h,k)(\ell) \cong M_0(U(c,h,k))/J(c,h,k)\] 
has $\ell$th degree space for $\ell \in \mathbb{Z}_{\geq 0}$ determined by $J(c,h,k)(\ell)= Ker \, \mathfrak A_\ell^{(k)}(c,h)$, where $\mathfrak A_\ell^{(k)}(c,h)$ is given by Eqns.\ (\ref{define-A-k})--(\ref{define-blocks}) and $M_0(U(c,h,k))(\ell)=\mathcal{R}_\ell.U(c,h,k)$.
\end{prop}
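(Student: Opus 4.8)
The plan is to translate membership in $J(c,h,k)(\ell)$ into a homogeneous linear condition on the coordinates of a vector with respect to the basis $\mathcal{B}_{\mathcal{R}_\ell.U}$ of $M(c,h,k)(\ell)$, and then to recognize that condition as exactly $\mathfrak{A}_\ell^{(k)}(c,h)\mathbf{a}=0$. First I would recall the defining description $J(c,h,k)=\{v\in M(c,h,k)\mid \langle u',xv\rangle=0\ \text{for all}\ u'\in U^*,\ x\in\mathcal{U}(\hat{V})\}$, where $U^*$ is extended to $M(c,h,k)$ by annihilating every graded component except the degree $0$ component $U(c,h,k)$, an action which is well defined by the construction recalled in Section 2.1. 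The identity $M(c,h,k)(\ell)=\mathcal{R}_\ell.U(c,h,k)$ is then immediate from $M_0(U)(\ell)=\mathcal{U}(\hat{V})_\ell U$ together with $\overline{M}_0(U(c,h,k))=M_0(U(c,h,k))$ (Remark \ref{Vir-M=Mbar}) and the fact that $V_{Vir}(c,0)$ is strongly generated by $\omega$, so that $\mathcal{U}(\hat{V})_\ell$ acts on $U$ through $\mathcal{R}_\ell=\mathrm{span}_\mathbb{C}\mathcal{B}_\ell$.

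The key reduction is that, for $v\in M(c,h,k)(\ell)$, one has $v\in J(c,h,k)$ if and only if $B_i^\dag v=0$ in $U(c,h,k)$ for every $i=1,\dots,\mathfrak{p}(\ell)$. Here I would argue by degree: since $U^*$ kills all components of nonzero degree, $\langle u',xv\rangle$ can be nonzero only for the homogeneous degree $-\ell$ part of $x$, i.e.\ for $x\in\mathcal{U}(\hat{V})_{-\ell}$; and, again using $\overline{M}_0=M_0$ and that $V_{Vir}(c,0)$ is generated by $\omega$, every element of $\mathcal{U}(\hat{V})_{-\ell}$ acts on $M(c,h,k)(\ell)$, landing in $U(c,h,k)$, as an element of $\mathcal{R}_\ell^\dag=\mathrm{span}_\mathbb{C}\mathcal{B}_\ell^\dag$. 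Thus $v\in J(c,h,k)$ is equivalent to $\langle u',B_i^\dag v\rangle=0$ for all $u'\in U^*$ and all $i$; since $U(c,h,k)$ is finite dimensional, $U^*$ separates its points, so this is equivalent to $B_i^\dag v=0$ for all $i$, establishing the reduction used in the discussion preceding the Proposition.

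Finally I would make the linear-algebra identification explicit. Writing $v=\sum_{j=1}^{\mathfrak{p}(\ell)}\sum_{m=1}^{k}a_{j,m}B_ju_m$ in the basis $\mathcal{B}_{\mathcal{R}_\ell.U}$ of Eqn.\ (\ref{basis}), linearity gives $B_i^\dag v=\sum_{j,m}a_{j,m}B_i^\dag B_ju_m$, and each $B_i^\dag B_ju_m$ lies in $U(c,h,k)$ with coordinate column $[B_i^\dag B_ju_m]_{\mathcal{B}_U}$ equal to the $m$-th column of the block $N_{i,j}$ of Eqn.\ (\ref{define-blocks}). Hence the $u_p$-coordinate of $B_i^\dag v$ is $\sum_{j,m}(N_{i,j})_{p,m}a_{j,m}$, which is precisely the entry of $\mathfrak{A}_\ell^{(k)}(c,h)\mathbf{a}$ indexed by block $i$ and row $p$, for $\mathbf{a}=(a_{j,m})$. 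Therefore $B_i^\dag v=0$ for all $i$ if and only if $\mathbf{a}\in\mathrm{Ker}\,\mathfrak{A}_\ell^{(k)}(c,h)$, and under the identification of $v$ with $\mathbf{a}$ this yields $J(c,h,k)(\ell)=\mathrm{Ker}\,\mathfrak{A}_\ell^{(k)}(c,h)$. I expect the main obstacle to be the reduction step of the second paragraph—carefully justifying that the full space of degree $-\ell$ operators in $\mathcal{U}(\hat{V})$ contributes nothing beyond $\mathcal{R}_\ell^\dag$ when paired against $U^*$, which is exactly where the hypotheses $\overline{M}_0(U)=M_0(U)$ and $\omega$-generation of $V_{Vir}(c,0)$ are essential; the remaining bookkeeping with the blocks $N_{i,j}$ is routine.
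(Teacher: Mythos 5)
Your proposal is correct and takes essentially the same route as the paper: the paper likewise reduces membership of $v\in M(c,h,k)(\ell)$ in $J(c,h,k)$ to the vanishing of all lowerings $B_i^\dag v$ into $U(c,h,k)$, and then identifies this system of equations with the block matrix $\mathfrak{A}_\ell^{(k)}(c,h)$ formed from the blocks $N_{i,j}$ of Eqn.\ (\ref{define-blocks}). Your second paragraph (the degree/PBW reduction to $\mathcal{R}_\ell^\dag$, using $\overline{M}_0(U)=M_0(U)$ and the fact that $U^*$ separates points of the finite-dimensional space $U(c,h,k)$) spells out details the paper leaves implicit, but the underlying argument is the same.
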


Motivated by this characterization of the space $J(c,h,k)(\ell) = Ker \, \mathfrak A_\ell^{(k)}(c,h)$ and the form of $\mathfrak A_\ell^{(k)}(c,h)$, we prove the following basic linear algebra result.

\begin{prop}\label{lin-alg-prop}
 Let $p,k \in \mathbb{Z}_{>0}$, let $A = (a_{i,j})_{1 \leq i,j \leq p}$ be a $p \times p$ matrix with entries in $\mathbb{C}$, and let $N$ be a $pk \times pk$ matrix consisting of $p^2$  block matrices of size $k \times k$ such that the $(i,j)$th block is upper triangular with the constant $a_{i,j}$  on the diagonal.  Then 
\[\det (N) = \det(A)^k.\]
\end{prop}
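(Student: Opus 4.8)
The plan is to reduce $N$ to a block-upper-triangular matrix by a single simultaneous permutation of its rows and columns, after which the determinant can be read off directly. First I would index the rows and columns of $N$ by pairs $(i,s)$ with $i \in \{1,\dots,p\}$ recording which $k\times k$ block one is in and $s \in \{1,\dots,k\}$ recording the position within that block, so that the entry of $N$ in row $(i,s)$ and column $(j,t)$ is precisely the $(s,t)$-entry of the block $N_{i,j}$. By hypothesis this entry equals $a_{i,j}$ when $s=t$, equals $0$ when $s>t$, and is some unconstrained scalar when $s<t$.

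Next I would reorder the index set by sorting first on the within-block coordinate $s$ and only afterwards on the block coordinate $i$; that is, I would apply the permutation $\sigma$ carrying the lexicographic order on $(i,s)$ to the lexicographic order on $(s,i)$. Grouping the $pk$ indices into $k$ consecutive ``super-blocks'' of size $p$ according to the value of $s$, the $(s,t)$ super-block of the permuted matrix has $(i,j)$-entry equal to $(N_{i,j})_{s,t}$. Hence the diagonal super-blocks (where $s=t$) are each exactly $A$, the super-blocks below the diagonal (where $s>t$) vanish identically, and the super-blocks above the diagonal (where $s<t$) are irrelevant to the determinant. The permuted matrix is therefore block upper triangular with $k$ diagonal blocks, all equal to $A$.

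Since reordering the rows and columns by the same permutation $\sigma$ replaces $N$ by $P N P^{\mathsf{T}}$ for the associated permutation matrix $P$, and $\det(P N P^{\mathsf{T}}) = \det(P)\det(N)\det(P^{\mathsf{T}}) = \det(N)$, the determinant is unchanged. Evaluating the determinant of the block-upper-triangular matrix as the product of the determinants of its diagonal blocks then yields $\det(N) = \prod_{s=1}^{k}\det(A) = \det(A)^{k}$, as claimed. The argument is essentially pure index bookkeeping; the only point requiring care is to verify that the strictly off-diagonal structure of each $N_{i,j}$ lands entirely in the strictly upper (super-)triangular part after reindexing, so that the arbitrary entries of the blocks never contribute to the determinant.
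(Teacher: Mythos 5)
Your proof is correct, and it takes a genuinely different route from the paper's. The paper argues by induction on $p$ using the Schur-complement identity
\[
\det \begin{bmatrix} M & B \\ C & D \end{bmatrix} = \det(M)\,\det\bigl(D - C M^{-1} B\bigr),
\]
peeling off the first block row and column; this forces a case analysis when the corner block $N_{1,1}$ is singular (either the whole first block column of $A$ vanishes, or one pre-composes with permutation matrices to move an invertible block into the corner), and the inductive step requires checking that the Schur complement again has the assumed block-upper-triangular-with-constant-diagonal structure. Your argument instead applies one global ``perfect shuffle'' permutation, simultaneously on rows and columns, sorting indices by within-block position before block index. The verification that the strictly lower super-blocks vanish (because $s>t$ picks out strictly subdiagonal entries of each upper-triangular $N_{i,j}$) and that each diagonal super-block is exactly $A$ is immediate, and conjugation by a permutation matrix preserves the determinant. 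This buys you a uniform, induction-free, case-free proof that also makes transparent why the unconstrained above-diagonal entries of the blocks can never contribute; the paper's approach, while heavier, keeps the computation inside the original block ordering, which matches how the matrices $\mathfrak{A}_\ell^{(k)}$ are assembled from the Zhu-algebra action. Both are valid; yours is the cleaner linear-algebra argument.
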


\begin{proof} We prove the result by induction on $p$, and by using the block form of the determinant
\[ \det \left[ \begin{array}{cc} 
M & B \\
C & D
\end{array} \right] = \det(M) \det ( D - C M^{-1} B), \quad \mbox{if $M$ is invertible.}\]

If $p = 1$, then $A = (a_{1,1})$ and $N$ is one $k \times k$ upper triangular block with $a_{1,1}$ on the diagonal.  Thus $\det A = a_{1,1}$ and $\det N = a_{1,1}^k$ and the result holds.  

Assume the result holds for $N$ a $(p-1)k \times (p-1)k$ matrix that consists of $(p-1)^2$ blocks of size $k \times k$ such that the $(i,j)$th block is upper triangular  with $a_{i,j}$ on the diagonal.  

Note that $N = (N_{i,j})_{1\leq i,j \leq p}$ where $N_{i,j} = a_{i,j}I_k + T_{i,j}$ with $I_k$ the $k \times k$ identity matrix and $T_{i,j}$ a $k \times k$ strictly upper triangular matrix.  Let $D_N$ be the $(p-1)k \times (p-1)k$ matrix given by $D_N = (N_{i,j})_{2 \leq i,j \leq  p}$, let $B_N$ be the $k \times (p-1)k$ matrix given by $B_N = (N_{1,j})_{2\leq j \leq p}$, and let $C_N$ be the $(p-1)k \times k$ matrix given by $C_N = (N_{i,1})_{2 \leq i \leq p}$.  So that we have 
\[N = \left[ \begin{array}{cc} N_{1,1} & B_N \\ 
C_N & D_N  \end{array}\right] .\] 

If $N_{1,1} = a_{1,1} I_k + T_{1,1}$ is invertible, then $N_{1,1}^{-1} = a^{-1}_{1,1} \sum_{n =0}^{k-1} (-1)^n a^{-n}_{1,1} T_{1,1}^n$ and 
\begin{eqnarray*}
 \det (N) &=& \det(N_{1,1}) \det(D_N - C_NN_{1,1}^{-1} B_N ) \ = \  a_{1,1}^k \det \Big( D_N -   \sum_{n=0}^{k-1} (-1)^n a^{-n-1}_{1,1} C_N T_{1,1}^n B_N  \Big)\\
 &=&  a_{1,1}^k \det \Big( D_N -      a_{1,1}^{-1} C_NB_N +   \sum_{n =1}^{k-1} (-1)^n a^{-n-1}_{1,1} C_N T_{1,1}^n B_N  \Big) \\
 &=& a_{1,1}^k \det \Big( D_N  -      a_{1,1}^{-1} (N_{i,1}N_{1,j})_{2\leq i,j \leq p}  +   \sum_{n =1}^{k-1} (-1)^n a^{-n-1}_{1,1} C_N T_{1,1}^n B_N  \Big) \\
 &=& a_{1,1}^k \det \left( D_N  -   a_{1,1}^{-1} (N_{i,1}N_{1,j})_{2\leq i,j \leq p}  + T  \right)
 \end{eqnarray*}
 where $T$ is a $(p-1)k \times (p-1)k$ matrix consisting of $(p-1)^2$ blocks of size $k \times k$ that are each strictly upper triangular, and $D_N -a_{1,1}^{-1} (N_{i,1}N_{1,j})_{2\leq i,j \leq p}$ is a $(p-1)k \times (p-1) k$ matrix consisting of $(p-1)^2$ blocks of size $k \times k$ that are upper triangular with the $((i-1), (j-1))$th block given by $a_{i,j} I_k - a_{1,1}^{-1} a_{i,1}a_{1,j} I_k + T'_{i,j}$ for some strictly upper triangular matrix $T'_{i,j}$.

Thus by the inductive assumption, we have that 
\begin{eqnarray*}
 \det (N) &=& a_{1,1}^k \det \left( D_N  -      a_{1,1}^{-1} (N_{i,1}N_{1,j})_{2\leq i,j \leq p}  + T  \right)\ = \  a_{1,1}^k (\det ( (a_{i,j} - a_{1,1}^{-1} a_{i,1} a_{1,j})_{2 \leq i, j \leq p}))^k\\
&=&  (a_{1,1} \det ( ( a_{i,j} - a_{1,1}^{-1} a_{i,1} a_{1,j})_{2 \leq i, j \leq p}))^k.
\end{eqnarray*}
Decomposing $A$ as follows:
\[ A = \left[ \begin{array}{cc} 
a_{1,1} & B_A\\
C_A & D_A
\end{array} \right]
\]
where $B_A$ is the $1 \times p$ matrix $B_A = (a_{1,2} \ a_{1,3} \ \cdots \ a_{1,p})$, $C_A$ is the $p \times 1$ matrix $C_A = (a_{2,1} \ a_{3,1} \ \cdots \ a_{p,1})^T$ and $D_A$ is the $(p-1) \times (p-1)$ matrix given by $D_A = (a_{i,j})_{2 \leq i, j \leq p}$, then we have that 
\begin{eqnarray*}
 \det (N) &=&  (a_{1,1} \det ( ( a_{i,j} - a_{1,1}^{-1} a_{i,1} a_{1,j})_{2 \leq i, j \leq p}))^k \ = \  (\det(a_{1,1}) \det( D_A - C_A a_{1,1}^{-1} B_A))^k \\
 &=& (\det(A))^k.
\end{eqnarray*}

If $N_{1,1} = a_{1,1} I_k + T_{1,1}$ is not invertible, i.e., if $a_{1,1} = 0$,  then we have two cases: Case I.  $N_{i,1} = a_{i,1}I_k + T_{i,1}$ is not invertible for all $1\leq i\leq p$;  Case II. $N_{i,1} = a_{i,1}I_k + T_{i,1}$ is invertible for some $2\leq i\leq p$. In the first case, this implies $a_{i,1} = 0$ for all $1 \leq j \leq p$ which implies that both the first column of $N$ and the first column of $A$ are all zeros, and thus the result holds since $\det (N) = 0 = (0)^k = (\det(A))^k$.   

If Case II holds, then $N_{i,1}$ is invertible for some $i = 2, \dots, p$, we can multiply $N$ by the permutation matrix $P_N$  given by $p^2$ blocks of size $k \times k$ that switches the $k$ rows containing the block $N_{i,1}$ with the first $k$ rows containing the block $N_{1,1}$. 
Then $\det (P_N) \det (N) = (-1)^k \det(N)$, since $P_N$ has switched $k$ rows.  

Similarly, we can multiply $A$ by the permutation matrix  $P_A$ that switches the $i$th row containing $a_{i,1}$ with the first row containing  the $a_{1,1}$ entry.  Then $\det(P_A) \det(A) = - \det(A)$.  Applying the proof above to the matrices $P_N N$ and $P_A A$, we have  
\begin{eqnarray*}
\det (N) &=& (-1)^k \det(P_N) \det(N) \ = \  (-1)^k \det(P_NN)  \ = \  (-1)^k  (\det(P_AA))^k \\
&=& (-1)^k (\det(P_A) \det(A))^k  \ =  \ (-1)^k (- \det(A))^k \ = \ (\det(A))^k.
\end{eqnarray*}
\end{proof}

\subsection{The relationship between the determinants of $\mathcal{A}_\ell$ and $\mathfrak{A}_\ell^{(k)}$ }

From Propositions \ref{NulJ-prop} and \ref{lin-alg-prop}, we have the following Theorem.

\begin{thm}\label{det-thm}  For $\ell \in \mathbb{Z}_{\geq 0}$, fix a basis $\mathcal{B}_{\ell} = \{ B_1, \dots, B_{\mathfrak{p}(\ell)} \}$ of $\mathcal{R}_\ell$.  
Let $\mathcal{A}_\ell$ be the Gram matrix of the Shapovalov form on degree $\ell$ of the Verma module  $M(c,h)$ with respect to the basis $\mathcal{B}_{\ell}$, i.e., 
\[\mathcal{A}_\ell (c,h) = ( < B_i {\bf 1}_{c,h}, B_j {\bf 1}_{c,h} > )_{1 \leq i,j \leq \mathfrak{p}(\ell)}.\]
For $k \in \mathbb{Z}_{>0}$, let  $W(c,h,k) = \mathfrak{L}_0(U(c,h,k)) = M(c,h,k)/J(c,h,k)$ be the induced module for $U(c, h, k)  \cong \mathbb{C}[x]/((x - h)^k)$ as an $A_0(V_{Vir}(c,0))$-module.  Let 
$\mathfrak{A}_\ell^{(k)}(c,h)$ be the Gram matrix of the Shapovalov form on degree $\ell$ of $M(c,h,k)$ with respect to the basis 
\[\mathcal{B}_{\mathcal{R}_\ell.U} = \{ B_1u_1, \dots, B_1u_k, B_2u_1, \dots, B_2u_k, \dots, B_{\mathfrak{p}(\ell)}u_1, \dots,B_{\mathfrak{p}(\ell)}u_k \},\] 
where $\mathcal{B}_U = \{u_1, \dots, u_k\}$ is a Jordan basis for $U(c,h,k)$.  That is $\mathfrak{A}_\ell^{(k)}(c,h)$ is given by Eqns.\ (\ref{define-A-k}) and (\ref{define-blocks}). 

Then $W(c,h,k)  = \coprod_{\ell \in \mathbb{Z}_{\geq 0}} W(c,h,k)(\ell)$ has $\ell$th degree space for $\ell \in \mathbb{Z}_{\geq 0}$ given by 
\[W(c,h,k)(\ell) =  M(c,h,k)(\ell)/J(c,h,k)(\ell) \]
where 
\[ J(c,h,k)(\ell) = Ker \, \mathfrak A_\ell^{(k)}(c,h),\] 
and we have that 
\[\det (\mathfrak A_\ell^{(k)}(c,h)) = \left( \det ( \mathcal A_\ell(c,h)) \right)^k = \gamma_\ell^k \prod_{\alpha,\beta \in \mathbb{Z}_{>0}, \ \alpha \beta \leq \ell} (h - h_{\alpha,\beta})^{k\mathfrak{p}(\ell - \alpha \beta )} . \]
\end{thm}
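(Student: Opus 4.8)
The plan is to reduce the determinant identity to the purely linear-algebraic Proposition~\ref{lin-alg-prop} by showing that $\mathfrak{A}_\ell^{(k)}(c,h)$ has exactly the block structure hypothesized there, with the diagonal scalars supplied by the entries of the Shapovalov Gram matrix $\mathcal{A}_\ell(c,h)$. Concretely, I would set $p = \mathfrak{p}(\ell)$ and $a_{i,j} = (\mathcal{A}_\ell(c,h))_{i,j} = \langle B_i\mathbf{1}_{c,h}, B_j\mathbf{1}_{c,h}\rangle$, and aim to prove that each $k\times k$ block $N_{i,j}$ defined in Eqn.~(\ref{define-blocks}) is upper triangular with the constant $a_{i,j}$ on its diagonal. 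Once this is established, Proposition~\ref{lin-alg-prop} gives $\det(\mathfrak{A}_\ell^{(k)}(c,h)) = (\det \mathcal{A}_\ell(c,h))^k$, and the final product expression follows by raising the Kac determinant formula (\ref{det}) of Proposition~\ref{determinant-prop} to the $k$-th power (the resulting power of the nonzero $h$-independent constant being reabsorbed into $\gamma_\ell$). That $J(c,h,k)(\ell) = \mathrm{Ker}\,\mathfrak{A}_\ell^{(k)}(c,h)$ and that $\mathfrak{A}_\ell^{(k)}(c,h)$ is given by Eqns.~(\ref{define-A-k})--(\ref{define-blocks}) is already recorded in Proposition~\ref{NulJ-prop}, so the work is entirely in analyzing the blocks $N_{i,j}$.

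The key step, and the \emph{main obstacle}, is understanding how the degree-zero operator $B_i^\dagger B_j \in \mathcal{U}(\mathcal{L})$ acts on the vacuum space $U(c,h,k) = \mathrm{span}_{\mathbb{C}}\{u_1,\dots,u_k\}$. First I would note that $B_i^\dagger B_j$ has degree $0$ for the grading $\deg L_n = -n$, and that each $u_m$ is annihilated by all positive modes $L_n$, $n>0$. Using the triangular decomposition $\mathcal{U}(\mathcal{L}) = \mathcal{U}(\mathcal{L}^-)\,\mathcal{U}(\mathbb{C}L_0 \oplus \mathbb{C}\mathbf{c})\,\mathcal{U}(\mathcal{L}^+)$ and PBW reordering, every PBW monomial occurring in $B_i^\dagger B_j$ that carries a positive mode on the right annihilates $u_m$; since a degree-zero monomial with no positive mode can carry no negative mode either, the surviving contribution is a polynomial $P_{i,j}(L_0,\mathbf{c})$. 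Writing $P_{i,j}(z) := P_{i,j}(z,c)$ after letting $\mathbf{c}$ act by $c$, this shows $B_i^\dagger B_j\, u_m = P_{i,j}(L_0)\, u_m$ on $U(c,h,k)$. The one point demanding care here is precisely this normal-ordering reduction: one must verify explicitly that no negative mode can survive once the positive modes are pushed to the right, which is forced by the degree-zero constraint. I would then identify the scalar $P_{i,j}(h)$ by evaluating on the genuine lowest-weight vector $\mathbf{1}_{c,h}$ of the Verma module $M(c,h)$, where $B_i^\dagger B_j \mathbf{1}_{c,h} = P_{i,j}(h)\mathbf{1}_{c,h}$; by the contravariance of the Shapovalov form (\ref{shap}) together with $\langle \mathbf{1}_{c,h}, \mathbf{1}_{c,h}\rangle = 1$, this gives $P_{i,j}(h) = \langle B_i\mathbf{1}_{c,h}, B_j\mathbf{1}_{c,h}\rangle = a_{i,j}$.

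To finish the block analysis I would invoke the Jordan form of $L_0$ on $W(c,h,k)(0)$ recorded in Eqn.~(\ref{Jordan-block}): in the ordered basis $\mathcal{B}_U = (u_1,\dots,u_k)$ one has $L_0 = hI_k + D$ with $D$ strictly upper triangular (the nilpotent shift $Du_m = u_{m-1}$, $Du_1 = 0$). Taylor expanding and using $D^k = 0$,
\[
P_{i,j}(L_0) = P_{i,j}(hI_k + D) = \sum_{r \geq 0} \tfrac{1}{r!}\,P_{i,j}^{(r)}(h)\,D^r = a_{i,j}\,I_k + (\text{strictly upper triangular}),
\]
so the matrix of $P_{i,j}(L_0)$ in the basis $\mathcal{B}_U$—whose columns are exactly the coordinate vectors appearing in Eqn.~(\ref{define-blocks})—is the block $N_{i,j}$, upper triangular with constant diagonal $a_{i,j}$. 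This is precisely the hypothesis of Proposition~\ref{lin-alg-prop} with $A = \mathcal{A}_\ell(c,h)$, yielding $\det(\mathfrak{A}_\ell^{(k)}(c,h)) = (\det \mathcal{A}_\ell(c,h))^k$; combining with the Kac determinant of Proposition~\ref{determinant-prop} then gives the stated product formula and completes the proof.
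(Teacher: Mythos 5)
Your proposal is correct and follows essentially the same route as the paper's proof: establish that each $k\times k$ block $N_{i,j}$ of $\mathfrak{A}_\ell^{(k)}(c,h)$ is a polynomial in $L_0$ (and $c$) acting on the Jordan basis, hence upper triangular with the Shapovalov entry $a_{i,j}$ on its diagonal, and then invoke Proposition \ref{lin-alg-prop} together with Propositions \ref{NulJ-prop} and \ref{determinant-prop}. The only difference is that you spell out the normal-ordering reduction and the Taylor expansion $P_{i,j}(hI_k+D)=\sum_r \tfrac{1}{r!}P_{i,j}^{(r)}(h)D^r$ explicitly, details the paper leaves implicit in this proof (and formalizes later in Lemma \ref{derivative-lemma} and Corollary \ref{structure-cor}).
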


\begin{proof} The matrix $\mathfrak A_\ell^{(k)}(c,h)$ consists of $\mathfrak{p}(\ell)^2$ blocks of $k \times k$ upper triangular matrices with entries in $\mathbb{C}[c, h]$ since the blocks are polynomials in $L_0$ and $cI$ acting on the Jordan basis $\mathcal{B}_U = \{u_1, \dots, u_k\}$ with respect to $L_0$.  Furthermore, the $(i,j)$th block is of the form $a_{i,j} I_k + T$ where $a_{i,j}$ is the $(i,j)$th entry of $ \mathcal A_\ell$ and $T$ is strictly upper triangular. Thus the result follows by Proposition \ref{lin-alg-prop}.
\end{proof}

 \begin{rem}
In light of the result above we note that we can use the singular vector formulas, which have been extensively studied in the literature \cite{AF, FF, FF2, K}, to describe the generators of $J(c,h,k)(\ell) = Ker\, \mathfrak A_\ell^{(k)}(c,h)$ for different values of $c,h$ and $k$.
\end{rem} 

\subsection{Examples of $\mathfrak{A}_\ell^{(k)}$ for $k = 3$}\label{example-section} 

Here we give some concrete examples of the relationship between 
$\mathcal{A}_\ell(c,h)$ and $\mathfrak{A}^{(k)}_\ell(c,h)$ for 
low degrees $\ell$ in the case $k = 3$ for general $(c, h)$.  In this 
case $U(c, h, 3) = \mathrm{span}_\mathbb{C} \{u_1,u_2, u_3 \}$ where 
$u_1 = (L_0 - h I)^2w$, $u_2 = (L_0 - h I)w$, and $u_3 = w$ is a basis for $U$ and $w$ is singular (i.e., $L_nw = 0$ for $n>0$) and in addition $(L_0 - h I)^3 w = 0$. 
At degree 1, we have that  $\mathcal{R}_1 = \mathbb{C} L_{-1}$, and that $ \mathcal A_1$ is the $1 \times 1$ matrix given by
\begin{eqnarray*} \mathcal A_1 (c,h) &=& (< L_{-1}{\bf 1}_{c,h}, L_{-1}{\bf 1}_{c,h} >) \ = \ ( <  {\bf 1}_{c,h}, L_{1}L_{-1}{\bf 1}_{c,h} >) \\
&=& ( <  {\bf 1}_{c,h}, 2L_0 {\bf 1}_{c,h} >) \ = \ ( 2 h ).
\end{eqnarray*}

We note that a basis for $M(c,h,3)(1)$ is $\mathcal{B}_{\mathcal{R}_1.U} = \{B_1 u_1, B_1 u_2, B_1 u_3\}$ for $B_1 = L_{-1}$ and with respect to this basis we have that
\[\mathfrak A_1^{(3)}(c,h) =  \left[ \begin{array}{ccc}
2 h   & 2 & 0 \\
0  & 2 h & 2 \\
0 & 0 & 2 h
\end{array} \right] .\]
And, as an example of Theorem \ref{det-thm}, we have that $\det \mathfrak A_1^{(3)}(c,h) =( \det (\mathcal{A}_1)(c,h))^3$.

At degree $2$, we let $B_1 = L_{-1}^2$ and $B_2 = L_{-2}$, so that $\mathcal{R}_2 = \mathrm{span}_\mathbb{C} \{B_1, B_2\}$ and $\mathcal{R}^\dag_2 = \mathrm{span}_\mathbb{C} \{B_1^\dag, B_2^\dag\} = \mathrm{span}_\mathbb{C} \{ L_{1}^2, L_{2} \}$. Then  for $u \in U$, $L_nu = 0$ for $n>0$, and so
\begin{eqnarray*}
B^\dag_1 B_1 u \ =& L_{1}^2 L_{-1}^2 u &= \  8L_0^2u + 4L_0u  \\
B^\dag_1 B_2  u\ = & L_{1}^2 L_{-2} u &= \ 6L_0 u\\
B^\dag_2 B_1 u \ = & L_{2}L_{-1}^2 u &= \ 6L_0u\\
B^\dag_2 B_2 u \ = & L_{2} L_{-2} u &= \ 4L_0u + \frac{c}{2}u. 
\end{eqnarray*}

Thus in terms of the basis $\{B_1, B_2\}$ for $\mathcal{R}_d$, with corresponding dual basis for $\mathcal{R}_d^\dag$, we have that 
\[ \mathcal A_2 (c,h) = \left[ \begin{array}{cc}
8 h^2 + 4 h  & 6 h \\
6 h & 4 h + c/2 
\end{array} \right] .\]
And with respect to the basis 
$\mathcal{B}_{ \mathcal{R}_2.U }=\{ B_1 u_1, B_1 u_2, B_1 u_3, B_2u_1, B_2u_2, B_2 u_3 \}$ for the subspace $M(c,h,3)(2)$, we have
\begin{eqnarray*} \lefteqn{\mathfrak A_2^{(3)} (c,h) }\\
&=&
\left[ \begin{array}{ccc|ccc}
[B_1^\dag B_1 u_1]_{\mathcal{B}_U}   & [B_1^\dag B_1 u_2]_{\mathcal{B}_U}   & [B_1^\dag B_1 u_3]_{\mathcal{B}_U}  & [B_1^\dag B_2 u_1]_{\mathcal{B}_U}  & [B_1^\dag B_2 u_2]_{\mathcal{B}_U}  &  [B_1^\dag B_2 u_3]_{\mathcal{B}_U}  \\
& & \\

\big[ B_2^\dag B_1 u_1 \big]_{\mathcal{B}_U}   & [B_2^\dag B_1 u_2]_{\mathcal{B}_U}   & [B_2^\dag B_1 u_3]_{\mathcal{B}_U}  & [B_2^\dag B_2 u_1]_{\mathcal{B}_U}  & [B_2^\dag B_2 u_2]_{\mathcal{B}_U}  &  [B_2^\dag B_2 u_3]_{\mathcal{B}_U}  \\
\end{array} \right] 
 \\
 \\
&=& \left[ \begin{array}{ccc|ccc}
8 h^2 + 4 h   & 16 h + 4  & 8 & 6h & 6 & 0 \\
0 & 8 h^2 + 4 h  & 16 h + 4  & 0 & 6 h & 6\\
0 & 0 & 8 h^2 + 4 h  & 0 & 0 & 6 h \\
\mbox{\sout{ \qquad \quad  } }& \mbox{\sout{ \qquad \quad  } } & \mbox{\sout{ \qquad \quad  } } & \mbox{\sout{ \qquad \quad  } } & \mbox{\sout{ \qquad \quad  } } & \mbox{\sout{ \qquad \quad  }}  \\
6h & 6 & 0 & 4 h + c/2 & 4 & 0\\
0 & 6 h & 6 & 0 & 4 h + c/2 & 4 \\
0 & 0 & 6 h & 0 & 0 & 4h + c/2 
\end{array} \right] .
\end{eqnarray*} 
\vskip1cm
 At degree $3$ if we let $B_1 = L_{-1}^3$ and $B_2 = L_{-2}L_{-1}$ and $B_3=L_{-3}$, we have $\mathcal{R}_3 = \mathrm{span}_\mathbb{C} \{B_1, B_2, B_3\}$ and $\mathcal{R}^\dag_3 = \mathrm{span}_\mathbb{C} \{B_1^\dag, B_2^\dag, B_3^\dag\} = \mathrm{span}_\mathbb{C} \{ L_{1}^3,L_1L_2, L_3 \}$. 
In this case, we obtain
\begin{equation}\label{A_3} \mathcal A_3(c,h) = \left[ \begin{array}{ccc}
24 h(2h^2 + 3h + 1)  & 12h(3h + 1) & 24 h \\
12h(3h + 1) &  h(8h + 8 + c) & 10 h \\
24h & 10h & 6h + 2c
\end{array} \right].
\end{equation} 
\begin{rem}
In \cite{FF} Feigin and Fuchs use the basis $ \{L_{-1}^3, L_{-1} L_{-2}, L_{-3} \}$ for $\mathcal{R}_3$ and thus get a different but equivalent matrix representation for the Shapovalov form.  Our modified choice of basis simplifies computations for our purposes. We note that there is a typo in the coefficient of the determinant of $\det \mathcal{A}_3$ in \cite{FF}. From Eqn.\ (\ref{A_3}), we have that $\det \mathcal{A}_3(c,h) = 2304 \Phi_{1,1}(c,h) \Phi_{2,1}(c,h) \Phi_{3,1}(c,h).$ 
\end{rem}

Instead of building the complicated $9\times 9$ matrix  $\mathfrak{A}_3^{(3)}(c,h)$ by direct computation of its entries as prescribed by Eqns.\ (\ref{define-A-k}) and (\ref{define-blocks}), we will give a general formula for $\mathfrak{A}_\ell^{(k)}(c,h)$ for all $k\geq 1$ in terms of the entries for $\mathcal{A}_\ell(c,h)$ for $\ell\geq 1$ in the next subsection.

\subsection{The entries of $\mathfrak{A}_\ell^{(k)}(c,h)$ as derivatives of $\mathcal{A}_\ell(c,h)$}

Note that for any $T_n, R_n \in \mathcal{R}_n$ and $u \in U = M(c,h,k)(0)$, since $T_n^\dag R_n u \in M(c,h,k)(0)$, we have that 
\begin{equation}\label{define-a} T_n^\dag R_n u = a_{T_n, R_n}(c, L_0)u
\end{equation}
for  a unique polynomial $a_{T_n, R_n}(x,y) \in \mathbb{C}[x,y]$  in the formal variables $x$ and $y$, where  $a_{T_n, R_n}(c, L_0) = a_{T_n, R_n}(x,y)|_{(x,y) = (c, L_0)} \in \mathrm{End} \, M(c,h,k)(0)$.  In addition, $a_{T_n, R_n}(x,y)$ is of degree $n$ or less in $y$.

Then we have the following lemma.

\begin{lem}\label{derivative-lemma} 
Let $T_\ell, R_\ell \in \mathcal{R}_\ell$ for $\ell\geq 1$, and let $\{u_1, \dots, u_k\}$ be the Jordan basis for $U(c,h,k)$ giving the action of $L_0$ on $W(c,h,k)$ as in (\ref{Jordan-block}). For fixed $j \in \{1,\dots, k\}$, we have that as an element in $M(c,h,k)(0)$
\begin{equation}
T_\ell^\dagger R_\ell u_j =  \sum_{i=0}^{\ell} \frac{1}{i!}\left(\frac{\partial}{\partial y}\right)^i a_{T_\ell, R_\ell }(x,y) u_{j-i}\bigg|_{(x,y) = (c,h)},
\end{equation}
where $u_{j-i} = 0$ when $j-i<1$, and $a_{T_i, R_i}(x,y) \in \mathbb{C}[x,y]$ is uniquely defined by Eqn.\ (\ref{define-a}).
\end{lem}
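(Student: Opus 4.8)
The plan is to reduce the statement to an elementary \emph{operator Taylor expansion} of a polynomial evaluated on a nilpotent-plus-scalar operator. First I would record the structure of $L_0$ on the degree-zero space. By the Jordan block form (\ref{Jordan-block}), on $U(c,h,k) = \mathrm{span}_\mathbb{C}\{u_1,\dots,u_k\}$ we may write $L_0 = h\,\mathrm{Id} + \mathcal{N}$, where $\mathcal{N}$ is the nilpotent shift determined by $\mathcal{N}u_j = u_{j-1}$ (with $u_{j-i} = 0$ whenever $j - i < 1$), so that $\mathcal{N}^i u_j = u_{j-i}$ and $\mathcal{N}^k = 0$.

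Next I would pin down the polynomial $a_{T_\ell,R_\ell}$. Since $R_\ell$ raises degree by $\ell$ and $T_\ell^\dagger$ lowers degree by $\ell$, the operator $T_\ell^\dagger R_\ell$ preserves $M(c,h,k)(0) = U$. Using $[L_n, L_0] = nL_n$ one checks that each $u_j$ is annihilated by all $L_n$ with $n > 0$ (the positive modes pass through $(L_0 - h\,\mathrm{Id})^{k-j}$ and kill the singular vector $w = u_k$), so repeatedly applying the Virasoro relations rewrites $T_\ell^\dagger R_\ell u$ as a polynomial in the central charge $c$ and $L_0$ acting on $u$; this is precisely the defining identity $T_\ell^\dagger R_\ell u = a_{T_\ell,R_\ell}(c,L_0)u$, and the resulting polynomial has degree at most $\ell$ in the variable $y$ into which $L_0$ is substituted. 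For $k=1$ this recovers $\langle T_\ell\mathbf{1}_{c,h}, R_\ell\mathbf{1}_{c,h}\rangle = a_{T_\ell,R_\ell}(c,h)$, the entry of $\mathcal{A}_\ell(c,h)$.

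Finally I would apply the Taylor expansion. Writing $p(y) = a_{T_\ell,R_\ell}(c,y)$, a polynomial of degree $\leq \ell$ in $y$, and using that $h\,\mathrm{Id}$ and $\mathcal{N}$ commute,
\[
p(L_0) = p(h\,\mathrm{Id} + \mathcal{N}) = \sum_{i = 0}^{\ell} \frac{1}{i!}\, p^{(i)}(h)\,\mathcal{N}^i,
\]
the sum being finite since $p$ has degree $\le \ell$ (and $\mathcal{N}^k = 0$). Applying both sides to $u_j$ and substituting $\mathcal{N}^i u_j = u_{j-i}$ yields exactly the claimed formula, with $p^{(i)}(h) = (\partial/\partial y)^i a_{T_\ell,R_\ell}(x,y)\big|_{(x,y)=(c,h)}$.

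The computation presents no serious obstacle; the only points requiring care are the justification that $a_{T_\ell,R_\ell}$ exists as a genuine polynomial of degree $\leq \ell$ in $y$ (so that the Taylor series truncates at $i = \ell$) and the bookkeeping identifying the operator Taylor coefficients with the partial $y$-derivatives. Alternatively, one can avoid the operator-Taylor formulation entirely by expanding $a_{T_\ell,R_\ell}(c,y) = \sum_m a_m(c) y^m$, invoking the binomial identity (\ref{L_0-on-u}) for $L_0^m u_j$, and matching the coefficient of $u_{j-i}$ on each side against $\binom{m}{i}h^{m-i}$, which is the same Taylor identity written out termwise.
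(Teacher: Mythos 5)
Your proof is correct and is essentially the paper's own argument: the paper expands $a_{T_\ell,R_\ell}(c,y)=\sum_m a_m(c)y^m$, applies the binomial identity (\ref{L_0-on-u}) for $L_0^m u_j$, and identifies $\binom{m}{i}h^{m-i}=\frac{1}{i!}\big(\frac{\partial}{\partial y}\big)^i y^m\big|_{y=h}$ --- precisely the ``termwise'' variant you describe in your final paragraph. Packaging this as the operator Taylor expansion $p(h\,\mathrm{Id}+\mathcal{N})=\sum_{i}\frac{1}{i!}p^{(i)}(h)\mathcal{N}^i$ for the nilpotent shift $\mathcal{N}$ is the same computation in slightly more conceptual dress, so no comparison beyond this is needed.
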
 

\begin{proof}
 Recall  Eqn.\ (\ref{L_0-on-u}). Writing 
\[a_{T_\ell, R_\ell}(c,h) =a_{T_\ell,R_\ell}(x,y)|_{(x,y) = (c,h)} =  \sum_{m=0}^\ell a_m(x) y^m |_{(x,y) = (c,h)}\]
for $a_m(x) \in \mathbb{C}[x]$, we have that for $j = 1,\dots, k$ 
\begin{eqnarray*}
 T_\ell^\dag R_\ell u_j &=& a_{T_\ell, R_\ell}(c, L_0)u_j \ = \  \sum_{m=0}^\ell a_m(c) L_0^m u_j  \nonumber \\
&=& \sum_{m=0}^\ell a_m(c) \sum_{i= 0}^m  \binom{m}{i} h^{m-i}u_{j-i} 
= \sum_{m=0}^\ell a_m(c) \sum_{i=0}^m \frac{1}{i!} \Big(\frac{\partial}{\partial y} \Big)^i y^m u_{j-i} \Big|_{y = h} \nonumber \\
&=& \sum_{i = 0}^{\ell} \frac{1}{i!} \Big(\frac{\partial}{\partial y}\Bigr)^i a_{T_\ell, R_\ell}(x,y) u_{j-i} \Big|_{(x,y) = (c,h)},
\end{eqnarray*}
giving the result.
\end{proof}

Next we introduce the following projection maps. 

\begin{defn} \label{proj-def}
For $\ell \in \mathbb{Z}_{\geq 0}$ and $1\leq j\leq k$, we define {\it the projection map onto the $u_j$ component at degree $\ell$} as the linear map $\pi_j^\ell$ satisfying 
\begin{eqnarray*}
\pi_j^\ell: M(c,h,k) &\longrightarrow & \mathcal{R}_nu_{j}\\
\sum_{i,m} R_{m}^iu_i &\mapsto & \pi_j^\ell(\sum_{i, m} R_{m}^iu_i )=R_\ell^ju_j 
\end{eqnarray*}
for any (not necessarily homogeneous) element $\sum_{i,  m} R_{ m}^i u_i=\sum_{i=1}^k\left(\sum_{m\geq 0}R_{m}^i\right)u_i \in M(c,h,k)$, i.e., $R_m^i \in \mathcal{R}_m$, and the $R_m^i$ are nonzero only for a finite number of $m \in \mathbb{Z}_{\geq 0}$.
\end{defn}

Recall the basis of $M(c,h,k)(\ell)$ given by Eqn.\ (\ref{basis}), i.e.,
\[ \{B_1^1, \dots, B_1^k, B_2^1, \dots, B_2^k, \dots, B_{\mathfrak{p}(\ell)}^1, \dots, B_{\mathfrak{p}(\ell)}^k \}\]
with $\{B_1, \dots, B_{\mathfrak{p}(\ell)}\}$ a basis of $\mathcal{R}_\ell$ and $B_i^m = B_i u_m$.

For $1\leq i,j \leq \mathfrak{p}(\ell)$, and $1 \leq m, n \leq k$, we define the form 
\begin{equation}\label{matrix-notation}
< B_i^m, B_j^n > : = \pi_m^0(a_{B_i, B_j} (c, L_0)u_n) = \pi_m^0(B_i^\dag B_j u_n).
\end{equation} 
So then the $(i,j)$th block of $\mathfrak{A}_\ell^{(k)}(c,h)$ is given by $(< B_i^m, B_j^n >)_{1 \leq m,n \leq k}$  
and 
\begin{equation} \label{matrix} 
\mathfrak{A}_\ell^{(k)}(c,h) = (< B_i^m,  B_j^n >)_{1 \leq i,j \leq \mathfrak{p}(\ell), 1\leq m,n \leq k}. \end{equation} 

We have the following corollary to Lemma \ref{derivative-lemma}.

\begin{cor}\label{structure-cor} The following gives the structure of $\mathfrak{A}_\ell^{(k)}(c,h)$ in the basis (\ref{basis}) using the notation (\ref{matrix-notation}) and (\ref{matrix}), and where $a_{B_i, B_j}(x,y) \in \mathbb{C}[x,y]$ is uniquely defined by Eqn.\ (\ref{define-a}).  

For $1 \leq i,j \leq \mathfrak{p}(\ell)$ the $(i,j)$th block of $\mathfrak{A}_\ell^{(k)}$ is given by 
\begin{eqnarray*}
< B_i^m, B_j^m > \ =&  a_{B_i, B_j}(c,h), \qquad \qquad &\mbox{for $m = 1, \dots, k$}\\
< B_i^m, B_j^{m+1} > \ =& \frac{\partial}{\partial y} a_{B_i, B_j}(x,y) \Big|_{(x,y) = (c,h)}, \qquad &\mbox{for $m = 1, \dots, k-1$}\\
< B_i^m, B_j^{m+2} > \  =& \frac{1}{2!} \Big( \frac{\partial}{\partial y} \Big)^2 a_{B_i, B_j}(x,y)\Big|_{(x,y) = (c,h)}, \qquad &\mbox{for $m = 1, \dots, k-2$}\\
\vdots \qquad \ \ 
=&  \qquad \vdots \\
< B_i^m , B_j^{m+n} > \  
=& \frac{1}{n!}\Big( \frac{\partial}{\partial y}\Big)^n a_{B_i, B_j}(x,y)\Big|_{(x,y) = (c,h)}, \qquad &\mbox{for $m = 1, \dots, k-n$}\\
\vdots \qquad \ \ 
=& \qquad \vdots\\
< B_i^1, B_j^{k} > \   
=& \frac{1}{k!} \Big( \frac{\partial}{\partial y} \Big)^k a_{B_i, B_j}(x,y) \Big|_{(x,y) = (c,h)}, 
\end{eqnarray*}
and 
\begin{equation}\label{zeros}
< B_i^m, B_j^n > \  = 0 \qquad \mbox{for $1\leq n < m \leq k$. }
\end{equation}
In particular: 
\begin{itemize}
\item Each $(i,j)$th block of $\mathfrak{A}_\ell^{(k)}(c,h)$ is upper triangular;

\item Each $(i,j)$th block has the $(i,j)$th entry of  $\mathcal{A}_\ell(c,h)$ on the diagonal;

\item Each $(i,j)$th block has on the $n$th super-diagonal $D_{k,n}$ of that block, for $n = 1,\dots, k-1$,  $\frac{1}{n!} \left(\frac{\partial}{\partial y}\right)^n$ of the $(i,j)$th entry of $\mathcal{A}_\ell(x,y)$  evaluated at $(x,y) = (c,h)$. 
\end{itemize}
\end{cor}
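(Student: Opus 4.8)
The plan is to obtain the corollary as an immediate consequence of Lemma \ref{derivative-lemma} together with the definition of the form in (\ref{matrix-notation}). Recall that by (\ref{matrix-notation}) the entry $< B_i^m, B_j^n >$ is by definition $\pi_m^0(B_i^\dagger B_j u_n)$, i.e.\ the coefficient of the Jordan basis vector $u_m$ in the expansion of $B_i^\dagger B_j u_n$ as an element of $M(c,h,k)(0) = \mathrm{span}_{\mathbb{C}}\{u_1, \dots, u_k\}$. So the whole task is to read off these coefficients from the formula in Lemma \ref{derivative-lemma}.

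First I would invoke Lemma \ref{derivative-lemma} with $T_\ell = B_i$, $R_\ell = B_j$, and the Jordan index equal to $n$, which gives
\[
B_i^\dagger B_j u_n = \sum_{p=0}^{\ell} \frac{1}{p!}\left(\frac{\partial}{\partial y}\right)^p a_{B_i, B_j}(x,y)\, u_{n-p}\bigg|_{(x,y)=(c,h)},
\]
subject to the convention $u_{n-p}=0$ whenever $n-p<1$. Applying $\pi_m^0$ then isolates the unique summand for which $u_{n-p}=u_m$, namely the term with $p = n-m$, and annihilates all others.

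The second step is the elementary index bookkeeping. If $m \leq n$, then $p = n-m \geq 0$ lies in the admissible range and $u_m \neq 0$, so $< B_i^m, B_j^n > = \tfrac{1}{(n-m)!}(\partial/\partial y)^{n-m} a_{B_i,B_j}(x,y)|_{(x,y)=(c,h)}$; rewriting the column index as $m+n$ then yields exactly the stated formula for the $n$th superdiagonal, with $n=0$ recovering the diagonal entry $a_{B_i,B_j}(c,h)$, which is the $(i,j)$th entry of $\mathcal{A}_\ell(c,h)$. If instead the column index is strictly less than $m$, there is no $p \geq 0$ with $n-p = m$, so the projection vanishes; this establishes (\ref{zeros}) and shows each $(i,j)$th block is upper triangular. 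Assembling the blocks according to (\ref{matrix}) completes the proof.

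I do not expect a genuine obstacle here, since all the analytic content already lives in Lemma \ref{derivative-lemma}; the only care needed is to match indices correctly via $p = n-m$, to keep straight the two roles played by the symbols (the column index versus the superdiagonal offset), and to honor the convention $u_j = 0$ for $j < 1$, which is precisely what produces the vanishing (\ref{zeros}) below the diagonal.
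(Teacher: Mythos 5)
Your proof is correct and is precisely the argument the paper intends: the corollary is stated there as an immediate consequence of Lemma \ref{derivative-lemma}, read off through the definition of $< B_i^m, B_j^n >$ in (\ref{matrix-notation}), exactly as you do via the substitution $p = n-m$ together with the convention $u_{j}=0$ for $j<1$. Your index bookkeeping also exposes an off-by-one typo in the last displayed line of the statement: $< B_i^1, B_j^{k} >$ should equal $\frac{1}{(k-1)!}\bigl(\frac{\partial}{\partial y}\bigr)^{k-1} a_{B_i, B_j}(x,y)\big|_{(x,y)=(c,h)}$, which is what your argument yields and what the general superdiagonal formula and the corollary's concluding sentence require.
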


\subsection{The relationship between the derivatives of $\mathcal{A}_\ell(c,h)$ and $J(c,h,k)(\ell)=Ker \, \mathfrak{A}_\ell^{(k)}(c,h) \subset M(c,h,k)(\ell)$}

Recall that the Gram matrix $A_\ell (c,h)$ of the Shapovalov form at degree $\ell$ has entries that are polynomials in $c$ and $h$.  
For clarity, we replace $c$ and $h$ with formal variables $x$ and $y$. We do this since in this section we relate the partial 
derivatives of $\mathcal{A}_\ell(x,y)$ with respect to $y$ to the entries of $\mathfrak{A}_\ell^{(k)}(x,y)$, and the partial 
derivatives of $\det \mathcal{A}_\ell(x,y)$ with respect to $y$, evaluated at 
particular $(x,h) = (c,h)$, to $J(c,h,k)(\ell) = Ker \, \mathfrak{A}_\ell^{(k)}(c,h) \subset M(c,h,k)(\ell)$ to 
determine the structure of $W(c,h,k) (\ell) = M(c,h,k)(\ell) /J(c,h,k)(\ell)$.  In particular, we have the following Proposition.

\begin{prop}\label{derivative-equations-prop} Fix a basis $\mathcal{B}_\ell $ for $\mathcal{R}_\ell$, and let $\mathcal{A}_\ell(c,h)$ denote the Gram matrix of the Shapovalov form with respect to this basis. For $v\in M(c,h,k)(\ell)$, write $v = \sum_{j = 1}^k R_\ell^ju_j$ for $R_\ell^j \in \mathcal{R}_\ell$. Let $[R_\ell^j]_{\mathcal{B}_\ell}$ denote the coordinate vector of $R_\ell^j$ in the $\mathcal{B}_\ell$ basis for $j = 1,\dots, k$. Then $v \in J(c,h,k)(\ell) = Ker \, \mathfrak{A}_\ell^{(k)}(c,h)$ if and only if all of the following equations hold
\begin{eqnarray}
0 &=& \mathcal{A}_\ell (c,h) [R_\ell^k]_{\mathcal{B}_\ell} \label{der1}\\
0 &=& \mathcal{A}_\ell (c,h) [R_\ell^{k -1} ]_{\mathcal{B}_\ell} +  \frac{\partial}{\partial y} \mathcal{A}_\ell (x,y)[R_\ell^k]_{\mathcal{B}_\ell} \Big|_{(x,y) = (c,h)} \label{der2} \\
0 &=& \mathcal{A}_\ell (c,h) [R_\ell^{k -2} ]_{\mathcal{B}_\ell} + \Big( \frac{\partial}{\partial y} \mathcal{A}_\ell (x,y)[R_\ell^{k-1}]_{\mathcal{B}_\ell}  +  \frac{1}{2!} \Big( \frac{\partial}{\partial y} \Big)^2\mathcal{A}_\ell (x,y)[R_\ell^{k}]_{\mathcal{B}_\ell} \Big) \Big|_{(x,y) = (c,h)} \\
 \vdots &=& \vdots \nonumber \\
 0 &=& \mathcal{A}_\ell (c,h) [R_\ell^{k - m} ]_{\mathcal{B}_\ell} + \Big(\frac{\partial}{\partial y} \mathcal{A}_\ell (x,y)[R_\ell^{k - m+1} ]_{\mathcal{B}_\ell}   +  \frac{1}{2!} \Big( \frac{\partial}{\partial y} \Big)^2\mathcal{A}_\ell (x,y)[R_\ell^{k-m+2}]_{\mathcal{B}_\ell} \label{der-n}\\
 & & \quad +  \cdots +  \frac{1}{m!} \Big( \frac{\partial}{\partial y} \Big)^m\mathcal{A}_\ell (x,y)[R_\ell^{k}]_{\mathcal{B}_\ell} \Big) \Big|_{(x,y) = (c,h)}\nonumber  \\
\vdots &=& \vdots \nonumber \\
0 &=& \mathcal{A}_\ell (c,h) [R_\ell^1 ]_{\mathcal{B}_\ell} + \Big( \frac{\partial}{\partial y} \mathcal{A}_\ell (x,y)[R_\ell^2]_{\mathcal{B}_\ell} + \frac{1}{2!} \Big( \frac{\partial}{\partial y} \Big)^2\mathcal{A}_\ell (x,y)[R_\ell^{3}]_{\mathcal{B}_\ell}\label{der-last} \\
& & \quad +  \cdots + \frac{1}{k!} \Big( \frac{\partial}{\partial y} \Big)^k\mathcal{A}_\ell (x,y)[R_\ell^{k}]_{\mathcal{B}_\ell} \Big) \Big|_{(x,y) = (c,h)}. \nonumber 
\end{eqnarray}
\end{prop}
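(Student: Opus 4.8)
The plan is to obtain the whole list of equations by expanding the single kernel condition $\mathfrak{A}_\ell^{(k)}(c,h)\,\vec v = 0$ blockwise, where $\vec v$ is the coordinate vector of $v$ in the basis $\mathcal{B}_{\mathcal{R}_\ell.U}$ of (\ref{basis}). By Proposition \ref{NulJ-prop} (and Theorem \ref{det-thm}), membership $v \in J(c,h,k)(\ell)$ is equivalent to $v \in Ker\,\mathfrak{A}_\ell^{(k)}(c,h)$, so everything reduces to unwinding this matrix--vector product. First I would record the coordinates of $v = \sum_{n=1}^k R_\ell^n u_n$: writing $R_\ell^n = \sum_{j} ([R_\ell^n]_{\mathcal{B}_\ell})_j B_j$, the coordinate of $v$ attached to the basis vector $B_j u_n = B_j^n$ is exactly $([R_\ell^n]_{\mathcal{B}_\ell})_j$. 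Thus, grouping the entries of $\vec v$ by the Jordan index $n \in \{1,\dots,k\}$ inside each $B_j$-block recovers the column vectors $[R_\ell^n]_{\mathcal{B}_\ell}$.

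Next I would invoke Corollary \ref{structure-cor}, by which the $(i,j)$-th $k \times k$ block of $\mathfrak{A}_\ell^{(k)}(c,h)$ is upper triangular with $\langle B_i^m, B_j^{m+p}\rangle = \frac{1}{p!}\left(\frac{\partial}{\partial y}\right)^p a_{B_i,B_j}(x,y)\big|_{(x,y)=(c,h)}$ on its $p$-th superdiagonal and $a_{B_i,B_j}(c,h) = (\mathcal{A}_\ell(c,h))_{ij}$ on the diagonal. Computing the component of $\mathfrak{A}_\ell^{(k)}(c,h)\vec v$ in block-row $i$, position $m$, then gives $\sum_{j}\sum_{n\geq m}\frac{1}{(n-m)!}\left(\frac{\partial}{\partial y}\right)^{n-m} a_{B_i,B_j}(x,y)\big|_{(c,h)}\,([R_\ell^n]_{\mathcal{B}_\ell})_j$, where the superdiagonal structure forces the sum to run only over $n \geq m$. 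For fixed position $m$, the inner sum over $j$ is precisely the $i$-th entry of $\frac{1}{(n-m)!}\left(\frac{\partial}{\partial y}\right)^{n-m}\mathcal{A}_\ell(x,y)\big|_{(c,h)}[R_\ell^n]_{\mathcal{B}_\ell}$, so the vanishing of the entire block-row for position $m$ is equivalent to the single vector equation
\[
\sum_{p=0}^{k-m}\frac{1}{p!}\left(\frac{\partial}{\partial y}\right)^p \mathcal{A}_\ell(x,y)\Big|_{(x,y)=(c,h)}[R_\ell^{m+p}]_{\mathcal{B}_\ell} = 0 .
\]

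Finally I would observe that $\vec v$ lies in $Ker\,\mathfrak{A}_\ell^{(k)}(c,h)$ if and only if every block-row vanishes, i.e.\ if and only if the displayed equation holds for each $m \in \{1,\dots,k\}$; reading these off in the order $m = k, k-1, \dots, 1$ produces exactly (\ref{der1}), (\ref{der2}), \dots, (\ref{der-n}), \dots, (\ref{der-last}). I do not expect a genuine obstacle here, since all the analytic content is already packaged in Lemma \ref{derivative-lemma} and Corollary \ref{structure-cor}; the remaining work is the bookkeeping of matching the block-triangular structure to the derivative equations. The one point requiring care is the index alignment: one must keep track that the $p$-th superdiagonal pairs the derivative order $p$ with the Jordan-shifted component $R_\ell^{m+p}$, and that the ordering in (\ref{basis}) groups by $B_i$ first and by $u_n$ second, so that the equation at position $m$ emerges with leading term $\mathcal{A}_\ell(c,h)[R_\ell^m]_{\mathcal{B}_\ell}$ as claimed.
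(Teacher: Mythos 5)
Your proof is correct and is essentially the paper's own argument: the paper proves the statement by observing that $v \in J(c,h,k)(\ell)$ if and only if $\pi_m^0(T_\ell^\dag v) = 0$ for all $m = 1,\dots,k$ and all $T_\ell \in \mathcal{R}_\ell$, and then citing Lemma \ref{derivative-lemma} and Corollary \ref{structure-cor}; your blockwise expansion of $\mathfrak{A}_\ell^{(k)}(c,h)\vec{v} = 0$ is the identical computation, since by the construction in Eqns.\ (\ref{define-A-k})--(\ref{define-blocks}) the entry of $\mathfrak{A}_\ell^{(k)}(c,h)\vec{v}$ in block-row $i$, position $m$, is precisely $\pi_m^0(B_i^\dag v)$, so the two bookkeeping schemes coincide.

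One remark worth recording: your general formula $\sum_{p=0}^{k-m}\frac{1}{p!}\left(\frac{\partial}{\partial y}\right)^p \mathcal{A}_\ell(x,y)\big|_{(x,y)=(c,h)}[R_\ell^{m+p}]_{\mathcal{B}_\ell} = 0$ is the correct one, but for $m=1$ its final term is $\frac{1}{(k-1)!}\left(\frac{\partial}{\partial y}\right)^{k-1}\mathcal{A}_\ell(x,y)[R_\ell^{k}]_{\mathcal{B}_\ell}$, whereas Eqn.\ (\ref{der-last}) as printed ends with $\frac{1}{k!}\left(\frac{\partial}{\partial y}\right)^{k}\mathcal{A}_\ell(x,y)[R_\ell^{k}]_{\mathcal{B}_\ell}$. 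This is an off-by-one typo in the paper, inherited from the last displayed line of Corollary \ref{structure-cor} (the pairing $\langle B_i^1, B_j^k\rangle$ sits on the $(k-1)$-st superdiagonal and should carry $\frac{1}{(k-1)!}\left(\frac{\partial}{\partial y}\right)^{k-1}$), not an error in your derivation; so your claim that the block-row equations reproduce (\ref{der1})--(\ref{der-last}) ``exactly'' is true for the intended statement rather than the printed one.
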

\begin{proof}
By definition of $J(c,h,k) = Ker \, \mathfrak{A}_\ell^{(k)}$ and linear independence of $u_1, \dots, u_k$, we have that $v \in J(c,h,k)$ if and only if $\pi_j^0(T^\dag_\ell v) = 0$ for all $j = 1, \dots, k$ and for all $T_\ell \in \mathcal{R}_\ell = \mathrm{span} \, \mathcal{B}_\ell$.  The result follows by Lemma \ref{derivative-lemma}, Corollary \ref{structure-cor}, and the fact that by Eqn.\ (\ref{define-A}), 
\[\mathcal{A}_\ell   = ( < B_i.\mathbf{1}_{c,h} , B_j.\mathbf{1}_{c,h} > )_{1 \leq i, j \leq \mathfrak{p}(\ell) }  = (a_{B_i, B_j}(c,h) )_{1 \leq i, j \leq \mathfrak{p}(\ell) } , \]
with Eqn.\ (\ref{der1}) corresponding to the requirement that $\pi_k^0(T^\dag_\ell v) = 0$, Eqn.\ (\ref{der2}) corresponding to the requirement that $\pi_{k-1}^0(T^\dag_\ell v) = 0$, etc. 
\end{proof} 

To analyze solutions to the equations characterizing $J(c,h,k)$ given in Proposition \ref{derivative-equations-prop}, we prove the following 

\begin{prop}\label{Phi-derivative-prop} 
Suppose the maximal submodule $T(c,h) \subset M(c,h)$  is either generated by one singular vector $S_{r,s}(t) {\bf 1}_{c,h}$ or two singular vectors, $S_{r,s}(t) {\bf 1}_{c,h}$ and $S_{r',s'}(t) {\bf 1}_{c,h}$ with  $rs + 1<r's'$. Then the following hold for the Gram matrix $\mathcal{A}_\ell (x,y)$ in formal variables $(x,y)$ of the Shapovalov form: 

(i) $\left. \det \mathcal{A}_{\ell}(x,y) \right|_{(x,y) = (c(t), h_{r,s})} =  0$  if and only if $\ell\geq rs$.

(ii) For $\ell = rs$ or $rs + 1$
\[
\left.  \Big(  \frac{\partial}{\partial y} \det \mathcal{A}_{\ell}(x,y) \Big)\right|_{(x,y) = (c(t), h_{r,s})}  =  0 \quad \mbox{ if and only if $r \neq s$ and $t = \pm 1$.} \]
\end{prop}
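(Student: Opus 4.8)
The plan is to reduce both parts to the Kac determinant formula of Remark~\ref{det-remark}, which expresses $\det\mathcal A_\ell(x,y)$ (in formal variables $x=c$, $y=h$) as a nonzero constant times a product of the factors $\Phi_{\alpha,\beta}(x,y)$ (for $0<\alpha<\beta$) and $\sqrt{\Phi_{\alpha,\alpha}}=y+\frac{\alpha^2-1}{24}(x-1)$, each raised to the power $\mathfrak p(\ell-\alpha\beta)$, over all $\alpha\beta\le\ell$. Fixing $x=c(t)$ and regarding $\det\mathcal A_\ell$ as a one-variable polynomial in $y$, both parts become statements about the order of vanishing $N_\ell$ of this polynomial at $y=h_{r,s}(t)$: part~(i) is $N_\ell>0\iff\ell\ge rs$, and part~(ii) is the assertion that $\partial_y\det\mathcal A_\ell$ vanishes at $P=(c(t),h_{r,s}(t))$ exactly when $h_{r,s}(t)$ is a \emph{multiple} root, i.e.\ $N_\ell\ge2$, for $\ell\in\{rs,rs+1\}$. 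The computational engine is the factorization (obtained by completing the square in Eqn.~(\ref{eq:virpara}))
\[
h_{r,s}(t)-h_{\alpha,\beta}(t)=\frac{\big((\alpha-r)t+(\beta-s)\big)\big((\alpha+r)t+(\beta+s)\big)}{4t},
\]
together with the identities $\Phi_{\alpha,\beta}(c(t),y)=(y-h_{\alpha,\beta}(t))(y-h_{\beta,\alpha}(t))$ and $h_{\beta,\alpha}(t)=h_{\alpha,\beta}(t^{-1})$. These let me read off exactly which factors vanish at $P$, and give the derivative of the distinguished factor as $\partial_y\Phi_{r,s}|_P=h_{r,s}(t)-h_{r,s}(t^{-1})=\tfrac{s^2-r^2}{4}\,(t-t^{-1})$.

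For part~(i), the factor indexed by $(r,s)$ is present in the product precisely when $\ell\ge rs$ and vanishes at $P$ by the definition of $h_{r,s}$, so $N_\ell\ge1$ whenever $\ell\ge rs$. For the converse I would invoke the hypothesis that $S_{r,s}(t)\mathbf 1_{c,h}$ is the lowest-degree singular vector of $M(c,h)$: by Theorem~\ref{reducible} this forces $P\notin\Phi_{\alpha,\beta}$ for every pair with $\alpha\beta<rs$, so no factor of index $\alpha\beta\le\ell<rs$ can vanish at $P$ and $\det\mathcal A_\ell(P)\ne0$. This establishes $N_\ell>0\iff\ell\ge rs$.

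For part~(ii) the two implications separate. When $r\ne s$ and $t=\pm1$ we have $t=t^{-1}$, hence $h_{r,s}(t)=h_{r,s}(t^{-1})$ and the (genuinely quadratic, since $r\ne s$) factor $\Phi_{r,s}(c(t),y)$ has a \emph{double} root at $h_{r,s}(t)$; as $\mathfrak p(\ell-rs)=\mathfrak p(0)=\mathfrak p(1)=1$ for $\ell\in\{rs,rs+1\}$, this alone gives $N_\ell\ge2$, so $\partial_y\det\mathcal A_\ell|_P=0$. Conversely, in the cases $r=s$ (any $t$) or $t\ne\pm1$ I must show $N_\ell=1$. Expanding the derivative of the product, $\partial_y\det\mathcal A_\ell|_P$ equals $\gamma_\ell^{1/2}$ times $(\partial_y F)|_P$ times the product of the remaining factors evaluated at $P$, where $F$ is the distinguished factor ($\sqrt{\Phi_{r,r}}$ if $r=s$, otherwise $\Phi_{r,s}$). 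Its derivative is nonzero, since $\partial_y\sqrt{\Phi_{r,r}}\equiv1$ and, for $r\ne s$ with $t\ne\pm1$, $\tfrac{s^2-r^2}{4}(t-t^{-1})\ne0$; hence it remains only to prove that $F$ is the \emph{unique} factor of index $\alpha\beta\le rs+1$ vanishing at $P$, so that the product of the remaining factors is itself nonzero there.

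This uniqueness is the heart of the argument, and is precisely where the full force of the hypothesis enters. By Proposition~\ref{singular-prop}, any factor $\Phi_{\alpha,\beta}$ vanishing at $P$ produces an honest singular vector $S_{\alpha,\beta}(t)\mathbf 1_{c,h}$ or $S_{\beta,\alpha}(t)\mathbf 1_{c,h}$ at degree $\alpha\beta$ in $M(c,h)$. A second vanishing factor with $\{\alpha,\beta\}\ne\{r,s\}$ and $\alpha\beta=rs$ would give a singular vector at the lowest degree $rs$ independent of $S_{r,s}(t)\mathbf 1_{c,h}$ (their distinctness I would extract from the explicit extremal monomials $L_{-r}^s,L_{-s}^r$ of Proposition~\ref{singular-prop} together with the leading-term normalization of Remark~\ref{remark-L(-1)}); this contradicts $T(c,h)$ being generated either by a single singular vector or by two singular vectors at \emph{distinct} degrees. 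A vanishing factor of index $rs+1$ would give a singular vector at degree $rs+1$, which by Proposition~\ref{propcases} is either a new primitive generator (excluded in Case~(1), and excluded in Case~(2) because $r's'>rs+1$) or of the form $S_{1,1}(t)S_{r,s}(t)\mathbf 1_{c,h}$, which forces $h_{r,s}(t)=-rs$ and hence again a second generator at degree $rs+1$, excluded by the strict inequality $rs+1<r's'$. With all extra factors eliminated, $N_\ell=1$ and $\partial_y\det\mathcal A_\ell|_P\ne0$. I expect the main obstacle to be exactly this last bookkeeping — rigorously converting ``an extra Kac factor vanishes at $P$'' into ``a forbidden singular vector exists'' at the two critical degrees $rs$ and $rs+1$ — since it is the step that genuinely requires the singular-vector classification of Propositions~\ref{singular-prop}–\ref{propcases} rather than the determinant formula alone, and since it is exactly where special rational values of $t$ (where several curves $\Phi_{\alpha,\beta}$ of equal degree can meet at $P$) must be ruled out by the hypothesis.
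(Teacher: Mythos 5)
Your strategy is the same as the paper's: factor $\det\mathcal{A}_\ell$ via the Kac determinant formula of Remark \ref{det-remark}, note that for $\ell\in\{rs,rs+1\}$ the distinguished factor occurs to the first power, and reduce everything to (a) the derivative of that factor at $P=(c(t),h_{r,s}(t))$ and (b) the nonvanishing at $P$ of all the other factors of index $\le rs+1$. Part (i), the forward direction of (ii), and your derivative computation are correct and match the paper: your double-root criterion $h_{r,s}(t)=h_{r,s}(t^{-1})$ is exactly the paper's $\partial_y\Phi_{r,s}|_P=\tfrac{1}{4t}(s^2-r^2)(t^2-1)$.

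The gap is in the step you yourself flag as the heart of the argument, and your proposed repair does not work. If a second factor $\Phi_{\alpha,\beta}$ with $\alpha\beta=rs$, $\{\alpha,\beta\}\neq\{r,s\}$, vanishes at $P$, the resulting singular vectors are \emph{not} independent: by Proposition \ref{propcases} all conformal singular vectors at a fixed degree are proportional, and since both $S_{r,s}(t)$ and $S_{\alpha,\beta}(t)$ are normalized with $L_{-1}^{rs}$-coefficient $1$ in Eqn.\ (\ref{S}), they give the \emph{same} vector of $M(c,h)$; the extremal monomials $L_{-r}^{s}$, $L_{-s}^{r}$ constrain only two coefficients and cannot separate them. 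So no contradiction with the hypothesis arises, and the scenario genuinely occurs: take $t=3$, $c=c(3)=33$; then $h_{1,6}(3)=h_{2,3}(3)=-\tfrac{65}{12}$, so the two distinct curves $\Phi_{1,6}$ and $\Phi_{2,3}$, both of index $6$, pass through $P$, and one checks from the parametrization that no other curve of index $\le 7$ does and that $(33,\tfrac{7}{12})$ lies on no curve, so $T(33,-\tfrac{65}{12})$ is generated by the single degree-$6$ singular vector — the hypothesis holds in Case (1) with $r\neq s$ and $t=3\neq\pm1$, yet $\det\mathcal{A}_6(33,y)$ has a double root at $y=-\tfrac{65}{12}$ and its derivative vanishes there. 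Your index-$(rs+1)$ bookkeeping fails for the same underlying reason: in the sub-case $h=-rs$ the degree-$(rs+1)$ singular vector $L_{-1}S_{r,s}(t)\mathbf{1}_{c,h}$ is a \emph{descendant} of $S_{r,s}(t)\mathbf{1}_{c,h}$, not a second generator of $T(c,h)$, so there is nothing to contradict; concretely at $t=2$, $(c,h)=(28,-2)$, both $\Phi_{2,1}$ and $\Phi_{1,3}$ pass through while $T$ is generated by $S_{2,1}(2)\mathbf{1}_{c,h}$ alone, and $\partial_y\det\mathcal{A}_3$ vanishes at that point. For fairness, the paper's own proof disposes of this same step in one clause — asserting $f(c,h)\neq 0$ ``by Theorem \ref{reducible}'' — but Theorem \ref{reducible} only excludes curves of index $<rs$; so you have correctly isolated the delicate point, but it cannot be closed by singular-vector counting: the dichotomy in part (ii) actually requires the additional hypothesis that $\Phi_{r,s}$ is the \emph{only} curve of index $\le rs+1$ through $(c(t),h_{r,s}(t))$, which fails at certain rational values $t\neq\pm1$.
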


\begin{proof}
Recall from Proposition \ref{determinant-prop} and Remark \ref{det-remark}, that 
\begin{equation}\label{det-proof}
\det \mathcal{A}_\ell(c,h) = \gamma_{\ell}^{1/2}\prod_{0<\alpha< \beta ,\  \alpha \beta \leq \ell}  \Phi_{ \alpha,\beta}(c,h)^{\mathfrak{p}(\ell-\alpha \beta)}  \prod_{1\leq  \alpha \leq \sqrt{\ell}} \left( \sqrt{\Phi_{ \alpha, \alpha}(c,h)}\right)^{\mathfrak{p}(\ell - \alpha^2)},
\end{equation}
where 
\[\sqrt{\Phi_{ \alpha, \alpha}(c,h)} = h + \frac{ \alpha^2 -1}{24}(c-1) .\]
Thus when $\ell \geq d=rs$, $\det \mathcal{A}_\ell (c(t), h_{r,s}(t)) = 0$ since $\Phi_{r,s}(c(t), h_{r,s}(t)) = 0$ and $\Phi_{r,s}(x,y)$ divides $ \det \mathcal{A}_\ell(x,y)$, whereas if $\ell<rs$, since
$S_{r,s}(t){\bf 1}_{c,h}$ is the lowest degree vector in $T(c,h)$, we have that $(c,h) = (c(t),h_{r,s}(t)) \in\Phi_{r,s}(c,h)$, and  $(c,h)  = (c(t),h_{r,s}(t)) \notin \Phi_{\alpha,\beta}(c,h)$ for $\alpha \beta <rs$, so that $\det \mathcal{A}_\ell(c,h) \neq 0$, proving $(i)$.

In order to prove $(ii)$, we consider the two cases: $r \neq s$ and $r = s$, for  $(c,h) = (c(t), h_{r,s}(t))$. If $r \neq s$, we write $\det \mathcal{A}_\ell(c,h)$ as 
\[\det \mathcal{A}_\ell(c,h) = f(x,y) \Phi_{r,s}(x,y)^{\mathfrak{p}(\ell - rs)}\big|_{(x,y) = (c,h)} \]
where 
\[f(x,y) = \gamma_{\ell}^{1/2}\prod_{\substack{0<\alpha< \beta ,\  \alpha \beta \leq \ell\\
 (\alpha, \beta) \neq (r,s)}}  \Phi_{ \alpha,\beta}(x,y)^{\mathfrak{p}(\ell-\alpha \beta)}  \prod_{1\leq  \alpha \leq \sqrt{\ell}} \left( \sqrt{\Phi_{ \alpha, \alpha}(x,y)}\right)^{\mathfrak{p}(\ell - \alpha^2)}.\]
Then $f(x,y)|_{(x,y) = (c,h)} \neq 0$ since $\Phi_{\alpha, \beta}(x,y)|_{(x,y) = (c,h)} \neq 0$ if $(\alpha, \beta)\neq (r,s)$ by Theorem \ref{reducible}, and the fact that we are assuming that if $M(c,h)$ falls in Case (2) of Proposition \ref{propcases}, i.e., $T(c,h) =\langle S_{r,s}(t){\bf 1}_{c,h}, S_{r',s}(t) {\bf 1}_{c,h} \rangle$, then $r's'>\ell + 1$. 

Thus by the product rule and the fact that $\Phi_{r,s}(x,y)|_{(x,y) = (c(t), h_{r,s}(t))} = 0$, 
\begin{eqnarray*}
\lefteqn{\Big(\frac{\partial}{\partial y} \det A_\ell (x,y) \Big)\Big|_{(x,y) = (c(t), h_{r,s}(t))}  }\\
&
=& \Big(\frac{\partial}{\partial y} f(x,y) \Big) \Phi_{r,s}(x,y)^{\mathfrak{p}(\ell - rs)} + f(x,y) \frac{\partial}{\partial y} \Phi_{r,s}(x,y)^{\mathfrak{p}(\ell - rs)} \Big|_{(x,y) = (c,h)} \\
&=& f(x,y) \frac{\partial}{\partial y} \Phi_{r,s}(x,y)^{\mathfrak{p}(\ell - rs)} \Big|_{(x,y) = (c,h)},
\end{eqnarray*}
which is zero if and only if one of the following holds: Either $\mathfrak{p}(\ell - rs) >1$ (i.e.,  $\ell > rs + 1$); or $\mathfrak{p}(\ell - rs) = 1$ (i.e., $\ell = rs$ or $rs + 1$) and
\[\Big(\frac{\partial}{\partial y}\Phi_{r,s}(x,y)\Big) \Big|_{(x,y) = (c(t), h_{r,s}(t))}   = 0.\]

Observe that from Eqn.\ (\ref{Phi-r-s}) we have 
\begin{eqnarray*}
\frac{\partial}{\partial y}\Phi_{ r,s} (x,y)\Big|_{(x,y) = (c, h)} &=&  \Big( h + \frac{1}{24} (s^2 - 1)(c-13) + \frac{1}{2}(rs - 1)\Big) \\
& & \quad + \Big( h + \frac{1}{24} (r^2 - 1)(c-13) + \frac{1}{2}(rs - 1)\Big)   \\
&=& 2h + \frac{1}{24}(r^2 + s^2 - 2)(c-13) + rs -1 .\\
\end{eqnarray*}
Thus letting $(c,h) =(c(t), h_{r,s}(t)) \in \Phi_{r,s}(c,h)$, Eqns.\ (\ref{eq:virpara}) imply that
\begin{eqnarray*}
\frac{\partial}{\partial y} \Phi_{ r,s} (x,y)\Big|_{(x,y) = (c(t), h_{r,s}(t))} 
&=&  2\Big( \frac{1-r^2}{4}t + \frac{1-rs}{2}+\frac{1-s^2}{4}t^{-1}\Big) \\
& & \quad + \frac{1}{24} (r^2 + s^2 - 2)(6t + 13 + 6t^{-1} - 13) + rs - 1\\
&=& \Big( \frac{1-r^2}{2} + \frac{r^2 + s^2 - 2}{4}\Big) t + \Big( \frac{1-s^2}{2} + \frac{r^2 + s^2 - 2}{4}\Big) t^{-1}\\
&=& \frac{1}{4t} (s^2-r^2) (t^2 - 1).
\end{eqnarray*}
Therefore in the case when $r \neq s$ this is zero if and only if $t = \pm 1$.

Whereas, if $r = s$, we have that 
\[\frac{\partial}{\partial y} \det A_\ell (x,y) \Big|_{(x,y) = (c(t), h_{r,s}(t))} =0  ,\]
if and only if $\mathfrak{p}(\ell - r^2)>1$ (i.e.,  $\ell > r^2 + 1$)
since 
\[\frac{\partial}{\partial y} \sqrt{\Phi_{r,r}(x,y)} = 1 \neq 0 \quad \mbox{for all $(x,y) = (c,h)$.}  \]

\end{proof}

Next, we recall the following result from linear algebra.

{\bf Jacobi's Formula:}  Let $A(y)$ be an $n \times n$ matrix with entries in $\mathbb{C}[y]$. Then, its derivative satisfies 
\[\frac{\partial}{\partial y}\det A(y) = \mathrm{tr} \left( adj (A (y)) \frac{\partial}{\partial y} A(y) \right) , \]
where $adj( A)$ denotes the adjugate (also called the classical adjoint) of $A$, i.e., the transpose of the cofactor matrix of $A$.

In the following Lemma we note some additional basic linear algebra results that will be used in Proposition \ref{Jacobi-lemma}

\begin{lem} \label{orth} Let $A$ be an $n\times n$ matrix with complex coefficients of rank $n-1$.  Let $\{\vec{s}\}$ be a basis for $Ker A$  and let $\{\vec{z}\}$ be a basis for $Ker A^T$. Then,

(i) $A\vec{b}=\vec{d}$ has a solution $\vec{b}\in \mathbb{C}^n$ if and only if $\vec{d}^{\, T} \vec{z} = 0$.

(ii) The adjugate matrix of $A$ has rank $1$ and is of the form \[adj(A)=\mu \ \vec{s}\  \vec{z}^{\ T}  \quad \mbox{
for a nonzero constant $\mu.$}\]
\end{lem}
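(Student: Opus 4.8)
The plan is to prove the two parts by the standard Fredholm-alternative and adjugate-identity arguments, using throughout that $\operatorname{rank} A = n-1$ forces both $\ker A$ and $\ker A^T$ to be one-dimensional, so that $\{\vec s\}$ and $\{\vec z\}$ are genuine (single-vector) bases.

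For part (i), I would first observe that solvability of $A\vec b = \vec d$ means precisely $\vec d \in \operatorname{im} A$, and that $\operatorname{im} A$ is exactly the hyperplane $\{\vec d : \vec d^{\,T}\vec z = 0\}$. The ``only if'' direction is immediate: if $A\vec b = \vec d$, then $\vec d^{\,T}\vec z = (A\vec b)^T \vec z = \vec b^{\,T}(A^T\vec z) = 0$, since $A^T\vec z = 0$. For the converse I would use a dimension count: $\operatorname{im} A$ has dimension $\operatorname{rank} A = n-1$, while $\{\vec d : \vec d^{\,T}\vec z = 0\}$ is the kernel of the nonzero linear functional $\vec d \mapsto \vec d^{\,T}\vec z$, hence also of dimension $n-1$. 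Since the ``only if'' direction shows $\operatorname{im} A$ is contained in this hyperplane, the two subspaces coincide, so $\vec d^{\,T}\vec z = 0$ already guarantees $\vec d \in \operatorname{im} A$.

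For part (ii), the key tool is the adjugate identity $A\,\operatorname{adj}(A) = \operatorname{adj}(A)\,A = (\det A)\, I$, which equals $0$ here because $\det A = 0$. From $A\,\operatorname{adj}(A)=0$, every column of $\operatorname{adj}(A)$ lies in $\ker A = \mathbb{C}\vec s$, so $\operatorname{adj}(A) = \vec s\,\vec w^{\,T}$ for some row vector $\vec w^{\,T}$. Transposing $\operatorname{adj}(A)\,A = 0$ gives $A^T\operatorname{adj}(A)^T = 0$, so every column of $\operatorname{adj}(A)^T$ lies in $\ker A^T = \mathbb{C}\vec z$; equivalently, every row of $\operatorname{adj}(A)$ is a multiple of $\vec z^{\,T}$, i.e. $\operatorname{adj}(A) = \vec v\,\vec z^{\,T}$. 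Once I know $\operatorname{adj}(A)\neq 0$, matching these two rank-one factorizations forces their common row space to be spanned simultaneously by $\vec w^{\,T}$ and $\vec z^{\,T}$, so $\vec w = \mu\,\vec z$ for a scalar $\mu$, yielding $\operatorname{adj}(A) = \mu\,\vec s\,\vec z^{\,T}$.

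The main obstacle — really the only nontrivial point — is establishing $\operatorname{adj}(A)\neq 0$, equivalently $\mu \neq 0$. I would handle this by recalling that $\operatorname{rank} A = n-1$ guarantees some $(n-1)\times(n-1)$ minor of $A$ is nonzero; since the entries of $\operatorname{adj}(A)$ are exactly the signed $(n-1)\times(n-1)$ cofactors of $A$, that nonzero minor appears (up to sign) as an entry of $\operatorname{adj}(A)$, whence $\operatorname{adj}(A)\neq 0$. Being a nonzero outer product $\vec s\,\vec w^{\,T}$, it then has rank exactly $1$, which completes the proof.
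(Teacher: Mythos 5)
Your proof is correct, and part (ii) is essentially the paper's own argument: both use $A\,\mathrm{adj}(A)=\mathrm{adj}(A)\,A=\det(A)I=0$ to place the columns of $\mathrm{adj}(A)$ in $\ker A$ and its rows in the left kernel, and both rule out $\mathrm{adj}(A)=0$ by observing that rank $n-1$ forces a nonzero cofactor. Your part (i), however, takes a genuinely different route. The paper invokes the Hermitian-orthogonality identity $\mathrm{Range}(A)=(\ker A^{*})^{\perp}$, which forces it to pass through the conjugate transpose: since the lemma is stated in terms of $\ker A^{T}$ rather than $\ker A^{*}$, the paper must identify $\ker A^{*}$ with the complex conjugate $\vec{\bar{z}}$ of $\vec{z}$ and then translate the Hermitian pairing $\langle \vec{d},\vec{\bar z}\rangle_H$ back into the bilinear expression $\vec{d}^{\,T}\vec{z}$. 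You avoid this conjugation bookkeeping entirely: the inclusion $\mathrm{im}\,A\subseteq\{\vec d:\vec d^{\,T}\vec z=0\}$ follows from the one-line computation $(A\vec b)^{T}\vec z=\vec b^{\,T}(A^{T}\vec z)=0$, and equality then follows from comparing dimensions ($n-1$ on both sides, using that $\vec z\neq 0$ makes the functional nonzero). Your version is purely bilinear, hence a bit cleaner over $\mathbb{C}$ and valid verbatim over any field; the paper's version leans on a standard named fact at the cost of the $A^{*}$-versus-$A^{T}$ translation. One small point worth making explicit in your part (ii): when you match the two rank-one factorizations $\vec s\,\vec w^{\,T}=\vec v\,\vec z^{\,T}\neq 0$, you should note that nonvanishing forces $\vec w\neq 0$ and $\vec v\neq 0$, so both row spaces are the full spans of $\vec w^{\,T}$ and $\vec z^{\,T}$ respectively, which is what yields $\vec w=\mu\vec z$ with $\mu\neq 0$; as written this is implicit but the argument is sound.
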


\begin{proof}
For part $(i)$ of the Lemma, we note that 
the system $A\vec{b}=\vec{d}$ has a solution if and only if $\vec{d}$ is in the column space of $A$, which we denote by $Range(A)$. Since $Range(A)=(Ker A^*)^\bot$ and $KerA^*$ is given by $\vec{\bar{z}}$, the complex conjugate of $\vec{z}$, we obtain that $\vec{d}$ is in $Range(A)$ if and only if $\langle\vec{d}, \vec{\bar{z}}\rangle_H=\vec{d}^{\, T}  \vec{z}=0$, where $\langle \cdot,\cdot  \rangle_H$ denotes the usual Hermitian form. Therefore, we have that $A\vec{b}=\vec{d}$ has a solution if and only if $\vec{d}^{\, T}  \vec{z} = 0$.

For part $(ii)$ of the Lemma, since $A \ adj (A)=det(A) I=0$ we have that each column of $adj(A)$ is in the kernel of $A$ and therefore, a scalar multiple of $\vec{s}$. This implies that $rk(adj(A))\leq 1$. Moreover, since $A$ has rank $n-1$, we know that at least one of its cofactors is nonzero and therefore $adj(A)
\neq 0$. Thus, we obtain $rk(adj(A))=1$.  Because $adj(A)$ is a rank one matrix, it is of the form $\vec{u} \vec{v}^T$ for some vectors $\vec{u}$ and $\vec{v}$. The fact that the columns of $adj(A)$ are all scalar  multiples of $\vec{s}$ together with $adj(A) \neq 0$ implies we can take $\vec{u}$ to be a nonzero scalar multiple of $\vec{s}$. On the other hand, from $adj (A) A=0$, it follows that the rows of $adj(A)$ are in the left kernel of $A$ and are therefore scalar multiples of $\vec{z}$. Thus, we have that $adj(A)=\mu \vec{s} \  \vec{z}^{\ T} $ for a nonzero scalar $\mu \in \mathbb{C}$.
\end{proof}

Next, we use Jacobi's Formula together with the Lemma above to prove the following general linear algebra results.

\begin{prop}\label{Jacobi-lemma} Let $A(y)$ be an $n \times n$ matrix with entries in $\mathbb{C}[y]$ for $y$ a formal variable, and with $\det A(y) \neq 0$.  Assume  that when evaluated at $y = h$,  for a fixed $h \in \mathbb{C}$,  we have $\det A(h) = 0$ with the rank of $A(h)$ equal to $n-1$, and $Ker \, A(h) = \mathbb{C}\vec{s}$.  

Then, the equation 
\[A(y) \Big|_{y = h}  \vec{b} = -  \Big(\frac{\partial}{\partial y} A(y)\Big) \Big|_{y = h} \vec{s}\]
has a solution $\vec{b} \in \mathbb{C}^n$ if and only if 
\[ \frac{\partial}{\partial y} \det A(y)\Big|_{y = h} = 0.\]
\end{prop}

\begin{proof} 
Although in our applications of this Lemma $A$ will depend on two variables and thus we have written the statement of the Lemma in terms of partial derivatives, for the purposes of the proof, for shorthand we write $A'(h)$ for the partial derivative with respect to $y$ evaluated at $h$.

Using Lemma \ref{orth} $(i)$, we have that under the hypothesis of the Proposition, $A(h)\vec{b}=-A'(h)\vec{s}$  will have a solution if and only if 
$ \vec{z} ^{\ T} A'(h) \vec{s}=0.$ 
Note that  on the one hand, we have that 
\begin{align}\label{zA's}
 \vec{z} ^{\ T} A'(h) \vec{s}= \sum_{1\leq i,j\leq n} z_i[A'(h)]_{i, j}s_j.
\end{align}
On the other hand, by Lemma \ref{orth} $(ii)$, we know that 
$$adj(A)=\mu \ \vec{s} \  \vec{z}^{\ T}= \mu\begin{bmatrix}
    \vert & \vert &\vert &\vert \\
  z_1\vec{s}   &  z_2\vec{s} &\cdots &   z_n \vec{s}  \\
    \vert & \vert &\vert &\vert
\end{bmatrix}$$
which implies that $[adj (A)]_{i,j}=\mu \ s_i   z_j$.
In particular, using (\ref{zA's}) we have that 
\begin{eqnarray*} 
  \mathrm{tr}(adj (A(h)) A'(h))&=& \sum_{1 \leq i,j \leq n} [adj(A)]_{i,j} [A'(h)]_{j,i} \  = \ \sum_{1 \leq i,j \leq n} \mu \ s_i   z_j [A'(h)]_{j,i} \nonumber \\
  & =&  \sum_{1 \leq i,j \leq n} \mu \    z_j [A'(h)]_{j,i}s_i \ = \ \mu \  \vec{z}^{\ T}  A'(h) \vec{s}. 
\end{eqnarray*}
Thus $A(h)\vec{b}=-A'(h)\vec{s}$ has a solution if and only if $tr(adj(A(h)) A'(h))=0$. Finally, using the Jacobi formula, we have that $tr(adj(A(y)) A'(y))=\frac{d}{dy} det(A(y))$ and therefore,  the system $A(h)\vec{b}=-A'(h)\vec{s}$ has a solution if and only if $\frac{d}{dy} det(A(y))|_{y=h}=0$.
\end{proof}

\begin{rem} Although we have stated the results given in Lemma \ref{orth} and Proposition \ref{Jacobi-lemma} for $A(y)$ having polynomial entries in $\mathbb{C}[y]$, which is the context in which we will use them, the proofs follow through for entries that are analytic functions in a neighborhood of $y$.
\end{rem}

For the next Theorem and elsewhere, we will make use of the following Lemma.

\begin{lem}\label{S-lemma} Suppose the maximal submodule $T(c,h) \subset M(c,h)$ is nonzero and $S_{r,s}{\bf 1}_{c,h}$ is the lowest degree vector in $T(c,h)$. Let $v \in M(c,h,k)(\ell)$, with $\ell \geq rs$ such that one of the following hold:   Case (1) holds and $T(c,h) = \langle S_{r,s}(t){\bf 1}_{c,h}\rangle$; or Case (2) holds and $T(c,h) = \langle S_{r,s}(t){\bf 1}_{c,h}, S_{r',s'}(t){\bf 1}_{c,h} \rangle$ and $\ell < r's'$.  Writing $v = \sum_{i = 1}^j R_\ell^iu_i$, for $R_\ell^1, \dots, R_\ell^j \in \mathcal{R}_\ell$ with $R_\ell^j \neq 0$ for some $1 \leq j \leq k$, it follows that   
if $v \in J(c,h,k)(\ell) = Ker \, \mathfrak{A}_{\ell}^{(k)}(c,h)$, 
then $\pi_j^\ell(v) = R_{\ell-rs}S_{r,s}(t) u_j$ for some $R_{\ell-rs} \in \mathcal{R}_{\ell-rs}$.
\end{lem}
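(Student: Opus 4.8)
The plan is to isolate the top nonzero Jordan component of $v$ and show that the kernel condition $v \in \mathrm{Ker}\,\mathfrak{A}_\ell^{(k)}(c,h)$ forces that component into the radical of the Shapovalov form, which in the relevant degree range is exactly the degree-$\ell$ part of the submodule generated by $S_{r,s}(t)\mathbf{1}_{c,h}$.

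First I would apply Proposition \ref{derivative-equations-prop}, which characterizes $J(c,h,k)(\ell) = \mathrm{Ker}\,\mathfrak{A}_\ell^{(k)}(c,h)$ as the solution set of the system \eqref{der1}--\eqref{der-last}, equivalently the requirement that $\pi_m^0(T_\ell^\dagger v) = 0$ for all $m$ and all $T_\ell \in \mathcal{R}_\ell$. Writing $v = \sum_{i=1}^j R_\ell^i u_i$ with $R_\ell^j \neq 0$ and $R_\ell^i = 0$ for $i > j$, I would concentrate on the equation coming from the projection onto the $u_j$-component, $\pi_j^0(T_\ell^\dagger v) = 0$. By Lemma \ref{derivative-lemma} the $u_j$-component of $T_\ell^\dagger R_\ell^i u_i$ equals $\frac{1}{(i-j)!}\bigl(\frac{\partial}{\partial y}\bigr)^{i-j} a_{T_\ell,R_\ell^i}(x,y)\bigl|_{(c,h)}\, u_j$, which is nonzero only for $i \geq j$. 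Since $R_\ell^i = 0$ for $i > j$, only the $i = j$ term survives and it carries no derivative. Letting $T_\ell$ range over the basis $\mathcal{B}_\ell$, this collapses to the single untwisted matrix equation $\mathcal{A}_\ell(c,h)[R_\ell^j]_{\mathcal{B}_\ell} = 0$, i.e.\ $[R_\ell^j]_{\mathcal{B}_\ell} \in \mathrm{Ker}\,\mathcal{A}_\ell(c,h)$. This is the crucial simplification: the vanishing of all higher Jordan components removes every derivative contribution, so the top component obeys the plain Shapovalov equation rather than a derivative-twisted one.

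Next I would interpret $\mathrm{Ker}\,\mathcal{A}_\ell(c,h)$ module-theoretically. Since $\mathcal{A}_\ell(c,h)$ is the Gram matrix of the Shapovalov form on $M(c,h)(\ell)$ and the map $\mathcal{R}_\ell \to M(c,h)(\ell)$, $R \mapsto R\mathbf{1}_{c,h}$, is a linear isomorphism (PBW basis, $M(c,h)$ being a Verma module), the condition $\mathcal{A}_\ell(c,h)[R_\ell^j]_{\mathcal{B}_\ell} = 0$ says precisely that $R_\ell^j\mathbf{1}_{c,h}$ lies in the radical of the Shapovalov form at degree $\ell$, which is the degree-$\ell$ part of the maximal proper submodule $T(c,h)$. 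By Propositions \ref{singular-prop} and \ref{propcases}, $S_{r,s}(t)\mathbf{1}_{c,h}$ is the lowest-degree singular vector and generates the submodule $\mathcal{U}(\mathcal{L})^-S_{r,s}(t)\mathbf{1}_{c,h} \cong M(c,h+rs)$, whose degree-$\ell$ part is $\mathcal{R}_{\ell-rs}S_{r,s}(t)\mathbf{1}_{c,h}$. In Case (1) this submodule is all of $T(c,h)$; in Case (2) the second generator $S_{r',s'}(t)\mathbf{1}_{c,h}$ lives at degree $r's' > \ell$ by the hypothesis $\ell < r's'$, hence contributes nothing below degree $r's'$, so in both cases $T(c,h)(\ell) = \mathcal{R}_{\ell-rs}S_{r,s}(t)\mathbf{1}_{c,h}$. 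Therefore $R_\ell^j\mathbf{1}_{c,h} = R_{\ell-rs}S_{r,s}(t)\mathbf{1}_{c,h}$ for some $R_{\ell-rs} \in \mathcal{R}_{\ell-rs}$.

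Finally, using again the injectivity of $R \mapsto R\mathbf{1}_{c,h}$ on $\mathcal{U}(\mathcal{L})^-(\ell)$, the equality of vectors upgrades to the equality $R_\ell^j = R_{\ell-rs}S_{r,s}(t)$ of operators in $\mathcal{U}(\mathcal{L})^-(\ell)$; applying this common operator to $u_j$ gives $\pi_j^\ell(v) = R_\ell^j u_j = R_{\ell-rs}S_{r,s}(t)u_j$, which is the claim. I expect the main obstacle to be the module-theoretic identification in the third paragraph: one must justify that the radical of the Shapovalov form coincides with $T(c,h)$ and, most importantly in Case (2), exploit the hypothesis $\ell < r's'$ to guarantee that the higher singular vector does not enlarge the degree-$\ell$ radical beyond the descendants of $S_{r,s}(t)\mathbf{1}_{c,h}$; the reduction to the top component and the final injectivity step are then routine.
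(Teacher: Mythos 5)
Your proof is correct and takes essentially the same approach as the paper's: you project onto the top Jordan component $u_j$, observe that the vanishing of all higher components eliminates every derivative term from Lemma \ref{derivative-lemma} so that $[R_\ell^j]_{\mathcal{B}_\ell} \in \mathrm{Ker}\,\mathcal{A}_\ell(c,h)$, and then identify this kernel with $T(c,h)(\ell) = \mathcal{R}_{\ell-rs}S_{r,s}(t)\mathbf{1}_{c,h}$, using $\ell < r's'$ to exclude the second singular vector in Case (2). The paper's argument is terser (it leaves the radical-equals-$T(c,h)$ identification and the final injectivity step implicit), but the substance is identical.
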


\begin{proof}
Since $v \in Ker \, \mathfrak{A}_\ell^{(k)}$ if and only if $T_\ell^\dag v = 0$ for all $T_\ell \in \mathcal{R}_\ell$, we must have that 
$\pi_i^0(T_\ell^\dag v) = 0$ for all $i = 1 , \dots, j$. In particular, we must have that $\pi_j^0(T_\ell^\dag v) = 0$.

 However, by Lemma \ref{derivative-lemma}, $\pi_j^0(T_\ell^\dag v) = \pi_j^0(T_\ell^\dag R_\ell^ju_j) = a_{T_\ell, R_\ell}(c,h) u_j$.  
 Thus we must have $a_{B_i,R_\ell}(c,h) = 0$ for all $B_i \in \mathcal{B}_\ell$, which implies $R_\ell^j{\bf 1}_{c,h} \in Ker \, \mathcal{A}_\ell(c,h)$.  Thus $R_\ell^j{\bf 1}_{c,h} \in T(c,h)$ and since $\ell<r's'$ if $T(c,h)$ is generated by two singular vectors, this implies $R_\ell^j {\bf 1}_{c,h} \in \langle S_{r,s}(t){\bf 1}_{c,h}\rangle$, and so $R_\ell^ju_j \in \langle S_{r,s}(t) u_j
\rangle$, giving the result.   
\end{proof}

Now we are ready to give a characterization of the degree $d = rs$ subspace of $W(c,h,k) = M(c,h,k)/J(c,h,k)$ in Case (1) or (2) when $T(c,h)$ is nonzero and contains $S_{r,s}(t) {\bf 1}_{c,h}$ with $rs  = d$ by determining aspects of the submodules $J(c,h,k) \subset M(c,h,k)$.

\begin{thm}\label{degree-d-theorem} Suppose the maximal submodule $T(c,h) \subset M(c,h)$ is nonzero and $S_{r,s}(t) {\bf 1}_{c,h}$ is the lowest degree vector,  of degree $d=rs$ in $T(c,h)$. Let $W(c,h,k) = \mathfrak{L}_0(U(c,h,k)) = M(c,h,k)/J(c,h,k)$, for $k \in \mathbb{Z}_{>0}$.

Then, one of the following holds:

(i) If either $t \neq \pm 1$,  or $t = \pm 1$ and $r = s$, then for $c = c(t)$ and $h = h_{r,s}(t)$, we have that 
\[J(c,h,k)(d) = Ker \, \mathfrak{A}_{d}^{(k)} (c,h) = \mathrm{span} \{S_{r,s}(t)u_1\}.\]

(ii) If $t = \pm 1$, and $r \neq s$, then for $c = c(\pm1)$ and $h = h_{r,s}(\pm1)$, we have that either there exists a positive integer $\kappa^{\pm}_{r,s} \geq 2$ such that for each $1\leq n\leq \kappa^\pm_{r,s}$, there exist $R_d^1, \dots, R_d^n \in \mathcal{R}_d$ with $R_d^n \neq 0$ that give a solution to the equation
\begin{equation}\label{kappa}
\sum_{j=1}^{n} \frac{1}{(j-1)!}\Big(\frac{\partial}{\partial y} \Big)^{j-1} \mathcal{A}_{d} (x,y)[R_d^j]_{\mathcal{B}_d}\Big|_{(x,y) = (c,h)} = 0,
\end{equation}
and such that if $n>\kappa^\pm_{r,s}$, then there is no solution to Eqn.\ (\ref{kappa}) with $R_d^n \neq 0$, or no such positive integer $\kappa_{r,s}^\pm$ exists and solutions to Eqn.\ (\ref{kappa}) exist for all $n \in \mathbb{Z}_{>0}$, in which case we write $\kappa_{r,s}^\pm = \infty$. Note that  $\kappa_{r,s}^\pm$ depends on $t = \pm1$, respectively, and $r,s$.

Then, for $n \leq \kappa_{r,s}^\pm$, up to a scalar multiple, a solution $R^1_d, \dots, R_d^n$ to Eqn.\ (\ref{kappa}) can be written as 
\[R_{r,s}^{\pm,n-1}, \  R_{r,s}^{\pm, n - 2}, \ \dots, \ R_{r,s}^{\pm, 2}, \ R_{r,s}^{\pm,1}, \ S_{r,s}(\pm 1),\] 
for $t = 1$ or $t = -1$, respectively, for some  $R_d^j = R_{r,s}^{\pm, n - j} \in \mathcal{R}_d$, for $j = 1, \dots, n-1$, and $R_d^n = S_{r,s}(\pm 1)$. In addition, 
we have that $J(c,h,k)(d)  =  Ker \, \mathfrak{A}_{d}^{(k)}(c,h)$ is given by
\begin{eqnarray*}
J(c,h,k)(d)  &=& \mathrm{span}\{S_{r,s}(
\pm 1)u_1,  \ S_{r,s}(\pm 1)u_2 + R^{\pm, 1}_{r,s}u_1, \ S_{r,s}(\pm 1)u_3 + R^{\pm, 1}_{r,s} u_2 + R^{\pm, 2}_{r,s} u_1,  \dots  \nonumber \\ 
& & \quad \dots, \ S_{r,s}(\pm1 )u_k + R_{r,s}^{\pm, 1}u_{k-1} + \cdots + R_{r,s}^{\pm, k-1}u_1    \} \nonumber \\
&=& \mathrm{span} \{ S_{r,s}(\pm1)u_n +  \sum_{i = 1}^{n-1} R_{r,s}^{\pm, i}u_{n-i} \; | \; n = 1, \dots, k \} 
\end{eqnarray*}
if $k\leq \kappa^\pm_{r,s}$ for $\kappa^\pm_{r,s}$ finite and for any $k$ if $\kappa^\pm_{r,s}$ is infinite, whereas for $\kappa^\pm_{r,s}$ finite and $k> \kappa^\pm_{r,s}$, then 
\[J(c,h,k)(d) = J(c,h,\kappa^\pm_{r,s})(d).\]
\end{thm}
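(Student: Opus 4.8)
The plan is to reduce the computation of $J(c,h,k)(d) = \ker\mathfrak{A}_d^{(k)}(c,h)$ entirely to the recursive system (\ref{der1})--(\ref{der-last}) of Proposition~\ref{derivative-equations-prop}, exploiting the fact that at degree $d = rs$ the radical of the Shapovalov form is as small as possible. Since $S_{r,s}(t)\mathbf{1}_{c,h}$ is the \emph{lowest} degree vector of $T(c,h)$, the space $T(c,h)\cap M(c,h)(d)$ is one--dimensional, so $\mathcal{A}_d(c,h)$ has rank $\mathfrak{p}(d)-1$ with $\ker\mathcal{A}_d(c,h) = \mathbb{C}[S_{r,s}(t)]_{\mathcal{B}_d}$; moreover $\mathcal{A}_d$ is symmetric, so its left and right kernels coincide. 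By Corollary~\ref{structure-cor}, $\mathfrak{A}_d^{(k)}(c,h)$ is block upper--triangular Toeplitz, with $\tfrac{1}{n!}(\partial/\partial y)^n\mathcal{A}_d$ on its $n$--th block superdiagonal; thus $v = \sum_j R_d^j u_j$ lies in the kernel precisely when its coefficient tuple is a Jordan chain for the matrix function $y \mapsto \mathcal{A}_d(c,y)$ at $y = h$, which is the content of (\ref{der1})--(\ref{der-last}). Throughout I would use Proposition~\ref{Jacobi-lemma} with Lemma~\ref{orth} as the solvability criterion at each step: the system $\mathcal{A}_d(c,h)\vec b = -\,(\partial/\partial y)\mathcal{A}_d|_{(c,h)}\,[S_{r,s}(t)]_{\mathcal{B}_d}$ has a solution if and only if $(\partial/\partial y)\det\mathcal{A}_d|_{(c,h)} = 0$.

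For part (i) I would argue by downward recursion in the component index. Equation (\ref{der1}) forces $[R_d^k]_{\mathcal{B}_d}\in\ker\mathcal{A}_d(c,h) = \mathbb{C}[S_{r,s}(t)]_{\mathcal{B}_d}$, say $R_d^k = \beta_k S_{r,s}(t)$. Then (\ref{der2}) reads $\mathcal{A}_d(c,h)[R_d^{k-1}]_{\mathcal{B}_d} = -\beta_k(\partial/\partial y)\mathcal{A}_d|_{(c,h)}[S_{r,s}(t)]_{\mathcal{B}_d}$, which by the criterion above is solvable only if $\beta_k(\partial/\partial y)\det\mathcal{A}_d|_{(c,h)} = 0$; since in case (i) either $t\neq\pm1$ or $r = s$, Proposition~\ref{Phi-derivative-prop}(ii) gives $(\partial/\partial y)\det\mathcal{A}_d|_{(c,h)}\neq 0$, whence $\beta_k = 0$ and $R_d^k = 0$. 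Once a top component is killed, the higher--derivative terms in the next equation vanish, so the same argument applies verbatim to force $R_d^{k-1} = \cdots = R_d^2 = 0$, while the final equation leaves $R_d^1\in\mathbb{C}[S_{r,s}(t)]$ unconstrained. This yields $J(c,h,k)(d) = \mathrm{span}\{S_{r,s}(t)u_1\}$.

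For part (ii), where $t = \pm1$ and $r\neq s$, Proposition~\ref{Phi-derivative-prop}(ii) now gives $(\partial/\partial y)\det\mathcal{A}_d|_{(c,h)} = 0$, so the first recursive step \emph{is} solvable; applying the criterion with $R_d^2 = S_{r,s}(\pm1)$ produces a genuine length--two chain, showing $\kappa^\pm_{r,s}\geq 2$, with $\kappa^\pm_{r,s}$ defined as the largest depth $n$ for which (\ref{kappa}) admits a solution with $R_d^n\neq 0$ (and $\kappa^\pm_{r,s} = \infty$ if all depths are solvable). Fixing one maximal Jordan chain $S_{r,s}(\pm1), R^{\pm,1}_{r,s}, R^{\pm,2}_{r,s},\dots$, I would observe that the translation invariance of the Toeplitz matrix $\mathfrak{A}_d^{(k)}$ means that embedding the length--$n$ initial segment with leading zeros gives, for each $n\leq\min(k,\kappa^\pm_{r,s})$, a kernel vector $w_n = S_{r,s}(\pm1)u_n + \sum_{i=1}^{n-1}R^{\pm,i}_{r,s}u_{n-i}$, since an initial segment of a chain is again a chain and the padded equations reduce exactly to the chain equations. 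Conversely, given any $v\in J(c,h,k)(d)$ with top index $n$, Lemma~\ref{S-lemma} forces its leading component to be a nonzero multiple $\beta S_{r,s}(\pm1)$, so $n\leq\kappa^\pm_{r,s}$; subtracting $\beta w_n$ yields a kernel vector of strictly smaller top index, and downward induction shows the $w_n$ span. As they are visibly independent, this gives the displayed description when $k\leq\kappa^\pm_{r,s}$ and the stabilization $J(c,h,k)(d) = J(c,h,\kappa^\pm_{r,s})(d)$ when $k > \kappa^\pm_{r,s}$, since no kernel vector can have top index exceeding $\kappa^\pm_{r,s}$.

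The main obstacle I anticipate is in the second half of part (ii): rigorously justifying that a \emph{single} fixed maximal chain supplies consistent coefficients $R^{\pm,i}_{r,s}$ at \emph{every} truncation depth simultaneously, and that the downward induction closes. Both hinge on the affine structure of the solution sets of (\ref{kappa}): the difference of two depth--$n$ solutions with the same leading term is a solution with zero leading term, i.e. a shifted shorter chain, so the ambiguity in the $R^{\pm,i}_{r,s}$ (addition of multiples of $S_{r,s}(\pm1)$ and of lower chains) is precisely absorbed into the lower $w_m$. Making this bookkeeping precise, together with the careful translation between the $u$--component ordering of (\ref{basis}) and the Jordan--chain ordering, is where the real work sits; the $\kappa^\pm_{r,s} = \infty$ case must also be tracked so that the truncation index $\min(k,\kappa^\pm_{r,s})$ is handled uniformly.
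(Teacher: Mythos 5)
Your proposal is correct and follows essentially the same route as the paper's proof: both reduce membership in $J(c,h,k)(d)$ to the triangular system of Proposition~\ref{derivative-equations-prop}, apply the solvability criterion of Proposition~\ref{Jacobi-lemma} together with Proposition~\ref{Phi-derivative-prop} to distinguish cases (i) and (ii), and use Lemma~\ref{S-lemma} plus the rank-$(\mathfrak{p}(d)-1)$ property of $\mathcal{A}_d(c,h)$ for the spanning direction. Your Jordan-chain/Toeplitz packaging of the induction and the ``subtract $\beta w_n$ and induct downward'' bookkeeping are just a reorganization of the paper's step-by-step construction of the $R_{r,s}^{\pm,i}$, not a different argument.
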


\begin{proof}
Let  $v \in M(c,h,k)(d)$ for $d = rs$. Then $v = \sum_{j = 1}^k R_d^ju_j$ for some $R_d^1, \dots, R_d^k \in \mathcal{R}_d$, and by Proposition \ref{derivative-equations-prop}, $v \in Ker \, \mathfrak{A}_{d}^{(k)}$ is equivalent to the  Eqns.\ (\ref{der1})-(\ref{der-last}) holding for $\ell = d = rs$. 

If, for some $1\leq n \leq k,$ we have that $R_d^n \neq 0$ and $R^j_d = 0$ for $j>n$, this is equivalent to  Eqns.\ (\ref{der-n})-(\ref{der-last}) holding for $m=k-n$. Eqn.\ (\ref{der-n})  with $m=k-n$ implies that $[R_d^n]_{\mathcal{B}_d} \in Ker \, \mathcal{A}_d(c,h)$ which is $\langle S_{r,s}(t) \rangle$, and thus $R_d^nu_n \in \langle S_{r,s}(t) u_n
\rangle$, so that up to a scalar multiple, $R_d^n = S_{r,s}(t)$.  

The next equation in Proposition \ref{derivative-equations-prop} implies 
\begin{equation}\label{b-derivative}   \mathcal{A}_d(x,y) [R_d^{n-1}]_{\mathcal{B}_d} +  \left( \frac{\partial}{\partial y} \mathcal{A}_d(x,y)\right)  [S_{r,s}(t)]_{\mathcal{B}_d} \Big|_{(x,y) = (c(t), h_{r,s}(t))} = 0, 
 \end{equation}
must hold. 

Letting $A(y) = \mathcal{A}_d(x,y)|_{x = c(t)}$ and $h = h_{r,s}(t)$, in Proposition \ref{Jacobi-lemma}, we have that Eqn.\ (\ref{b-derivative}) has a solution $\vec{b} = [R_d^{n-1}]_{\mathcal{B}_d}$ if and only if  
\[ \frac{\partial}{\partial y} \det \mathcal{A}_d(x,y) \Big|_{(x,y) = (c(t), h_{r,s}(t))} = 0. \]
Therefore, by Proposition \ref{Phi-derivative-prop},  
there is a solution to  Eqn.\ (\ref{b-derivative}) if and only if $t = \pm 1$ and $r \neq s$.  This proves part $(i)$ of the Theorem.  

Continuing with part $(ii)$, we note that if $t = \pm1$ and $r \neq s$, 
then Eqn.\ (\ref{b-derivative}) does have a solution, 
and since in this case, the rank of $\mathcal{A}_d(c,h)$ is $\mathfrak{p}(d) -1$, 
this solution, which we will denote by $R_{r,s}^{\pm, 1} = R_d^{n-1}$, for $t = \pm 1$, respectively, is unique up to $Ker\, \mathcal{A}_d(c,h) = \mathbb{C}S_{r,s}(\pm1)$.  
Thus we have that 
\[\mathrm{span} \{ S_{r,s}(\pm1)u_1, \ S_{r,s}(\pm1)u_2 + R_{r,s}^{\pm, 1}u_1 \} \subseteq 
 J(c,h,k)\] if $k \geq 2$, and these give all the vectors in $J(c,h,k)$ of the form $T_d^1u_1 + T_d^2u_2$ for $T_d^1, T_d^2 \in \mathcal{R}_d$. In addition, this proves that if some maximal $\kappa^\pm_{r,s}$ exists, then $\kappa^\pm_{r,s}\geq 2$ for $t= \pm 1$ and $r \neq s$.

For the rest of part $(ii)$ of the Theorem, we have that if $t = \pm 1$, $r \neq s$, $k>2$, and $v \in Ker \, \mathfrak{A}_d^{(k)}(c,h) = J(c,h,k)$ then up to a scalar multiple
\begin{equation}\label{v}
v = S_{r,s}(\pm 1)u_n +  R_{r,s}^{\pm, 1}   u_{n-1} + \sum_{j = 1}^{n-2} R_d^j u_j,
\end{equation}
for some $1\leq n\leq k$, with $[R_{r,s}^{\pm, 1}]_{\mathcal{B}_{d}}$ the solution to Eqn. (\ref{b-derivative}) at $t = \pm 1$, respectively. Moreover, $v$ would have to  satisfy the next equation in Proposition \ref{derivative-equations-prop} which is
\begin{multline}\label{second-stage} \Big( \mathcal{A}_d (x,y) [R_d^{n -2} ]_{\mathcal{B}_d} +  \frac{\partial}{\partial y} \mathcal{A}_d (x,y)[R_{r,s}^{\pm, 1} ]_{\mathcal{B}_d}  \\
+  \frac{1}{2!} \Big( \frac{\partial}{\partial y} \Big)^2\mathcal{A}_d (x,y)[S_{r,s}(\pm1)]_{\mathcal{B}_d} \Big) \Big|_{(x,y) = (c(\pm1),h_{r,s}(\pm1))} = 0.
\end{multline}
 
If Eqn.\ (\ref{second-stage}) has no solution, then $\kappa^{\pm}_{r,s} = 2$, and there are no more linearly independent vectors in $J(c,h,k)(d)$.  If Eqn.\ (\ref{second-stage}) has a solution then $\kappa^{\pm}_{r,s} \geq 3$ or is infinite, and if $k\geq 3$, then $S_{r,s}(\pm1)u_3 + R_{r,s}^{\pm, 1}u_2 + R_{r,s}^{\pm, 2}u_1 \in J(c,h,k)(d)$ for $R^{\pm, 2}_{r,s} = R_d^{n-2}$ in Eqn.\ (\ref{second-stage}), for $t = \pm 1$, respectively. By Lemma \ref{S-lemma}, and the fact that $\mathcal{A}_d(c,h)$ has rank $\mathfrak{p}(d) - 1$, it follows that any  vector of the form $R_d^3u_3 + R_d^2u_2 +R_d^1u_1$ in $J(c,h,k)(d)$ is in $\mathrm{span} \{S_{r,s}(\pm1)u_1, \ S_{r,s}(\pm1)u_2 + R_{r,s}^{\pm, 1}u_1, \ S_{r,s}(\pm1)u_3 + R_{r,s}^{\pm, 1}u_2 + R_{r,s}^{\pm,2} u_1 \}$ for $k \geq 3$.

Proceeding in this manner, we induct on $n$ in Eqn.\ (\ref{v}). Assume that $\kappa_{r,s}^\pm \geq 3$, possibly infinite, and for $3\leq n< \kappa_{r,s}^\pm$, the vector $v$ as in Eqn.\ (\ref{v}) is in $J(c,h,k)$ for $k\geq n$ so that $v = S_{r,s}(\pm1)u_n + \sum_{j=1}^{n-1} R_{r,s}^{\pm, j}u_{n-j}$ is a solution to 
\[\sum_{j = 1}^{n} \frac{1}{(j-1)!} \Big(\frac{\partial}{\partial y} \Big)^{j-1} \mathcal{A}_{d} (x,y)[R_d^j]_{\mathcal{B}_d}\Big|_{(x,y) = (c,h)} = 0, \]
and 
\begin{multline}\label{J-pm-n} 
\mathrm{span} \{S_{r,s}(\pm1)u_1, \ S_{r,s}(\pm1)u_2 + R_{r,s}^{\pm, 1}u_2, \ \dots, \\ 
S_{r,s}(\pm 1)u_n + R^{\pm, 1}_{r,s} u_{n-1} + \cdots + R_{r,s}^{\pm, n-1}u_1\} \subseteq J(c,h,k)(d),
\end{multline} 
for $k\geq n$ and $J(c,h,k)(d)$ is equal to the span of the first $k$ vectors in this set if $k<n$. 

Then consider the equation
\begin{equation}\label{induction} 
\sum_{j = 1}^{n+1} \frac{1}{(j-1)!} \Big(\frac{\partial}{\partial y} \Big)^{j-1} \mathcal{A}_{d} (x,y)[R_d^j]_{\mathcal{B}_d}\Big|_{(x,y) = (c,h)} = 0. 
\end{equation} 
If there is no solution to Eqn.\ (\ref{induction}), then 
$\kappa^{\pm}_{r,s}$ is finite and is equal to $n$. If there is a solution, i.e., there exists a $R_{r,s}^{\pm, n} \in \mathcal{R}_d$ such that 
\begin{equation}\label{v-n+1}
v = S_{r,s}(\pm1)u_{n+1} + \sum_{i=1}^n R_{r,s}^{\pm, i}u_{n+1-i}
\end{equation}
is a solution to Eqn. (\ref{induction})  
with $R_d^j = R_{r,s}^{\pm, n+1 - j}$, for $j = 1, \dots, n$ and $R_d^{n+1} = S_{r,s}(\pm 1)$, then $\kappa_{r,s}^\pm \geq n+1$.  

If there is no solution to (\ref{induction}), then the inclusion given by Eqn.\ (\ref{J-pm-n}) is an equality and the statement of the Theorem holds with $\kappa_{r,s}^\pm = n$. 

If there is a solution to Eqn.\ (\ref{induction}), then $\kappa_{r,s}^\pm \geq n+1$,  
and the vector Eqn.\ (\ref{v-n+1}) is also in $J(c,h,k)(d)$  as long as $k \geq n +1$. Continuing in this manner, the proof of part $(ii)$ follows in a finite number of steps for a given $k$.  
\end{proof}

\begin{rem}
It is of interest to note that if $t = 1$, then $c = 25$, and if $t = -1$, then $c = 1$, which are known as the degenerate minimal model central charges \cite{Mil2, OH}.  This will be one of the settings where we will show that some of the modules $W(c,h,k)$ are interlocked and others are not depending on $k$ and $\kappa_{r,s}^\pm$. In addition, we give a concrete example of Theorem \ref{degree-d-theorem} for $t = -1$, $r = 2$, $s = 1$, i.e., $c = 1$ and $h = h_{2,1}(-1) = 1/4$, in which case $\kappa^-_{2,1} = 2$.
\end{rem}

Proposition \ref{determinant-prop}, Remark \ref{Vir-M=Mbar}, Theorems \ref{det-thm}, \ref{degree-d-theorem}, and  \ref{J-theorem} combine to give the following Theorem describing the submodule $J(c,h,k)$ of $M(c,h,k)$, and thus $W(c,h,k) = M(c,h,k)/J(c,h,k)$ up through at least the degree of the lowest singular vector of $M(c,h)$.  

\begin{cor}\label{J-theorem}
Let $W(c,h, k)(0)= \mathrm{span}\{u_1, \dots, u_k\} \cong U(c,h,k) = \mathbb{C}[x]/((x-h)^k)$ and let $W(c,h,k) = \mathfrak{L}_0(U(c,h,k)) = M(c,h,k)/J(c,h,k)$. \\   

Case (0): If $T(c,h) = 0$, then  $J(c,h,k) =0$, and $W(c,h,k) = M(c,h,k)$.\\

Case (1):  If $T(c,h) = \langle S_{r,s}(t) {\bf 1}_{c,h} \rangle$, we have two subcases: \\

(i) If either $t \neq \pm 1$, or $t = \pm 1$ but $r = s$, then   
\[J(c,h,k)(\ell) = \left\{ \begin{array}{ll}
0 & \mbox{if $\ell<rs$}\\
\mathrm{span}\{S_{r,s}(t)u_1 \} & \mbox{if $\ell = rs$} 
\end{array}
\right.  \]
and therefore 
\begin{equation}\label{containment1}
J(c,h,k) \supseteq \langle S_{r,s}(t) u_1 \rangle.
\end{equation}

(ii) 
If $t = \pm 1$, $r\neq s$, and $\kappa_{r,s}^\pm$ is as given in Theorem \ref{degree-d-theorem}, then   
\begin{multline}\label{J-pm-(ii)}
J(c,h,k)(\ell) = \\
\left\{ \begin{array}{ll}
0 & \mbox{if $\ell<rs$}\\
 \mathrm{span} \{ S_{r,s}(\pm1)u_n +  \sum_{i = 1}^{n-1} R_{r,s}^{\pm, i}u_{n-i} \; | \; n = 1, \dots, k \} & \mbox{if $\ell = rs$ and $k\leq \kappa_{r,s}^\pm$} \\
  \mathrm{span} \{ S_{r,s}(\pm1)u_n +  \sum_{i = 1}^{n-1} R_{r,s}^{\pm, i}u_{n-i} \; | \; n = 1, \dots, \kappa_{r,s}^\pm\} & \mbox{if $\ell = rs$ and $k> \kappa_{r,s}^\pm$}
  \end{array} \right. 
  \end{multline}
where in the last line $\kappa_{r,s}^\pm$ is assumed to be finite, and where the $R_{r,s}^{\pm, i} \in \mathcal{R}_{rs}$ are uniquely determined modulo $S_{r,s}(\pm 1)$ for $i = 1, \dots, \kappa_{r,s}^\pm$.
Furthermore 
\begin{equation}\label{containment2}
J(c,h,k) \supseteq 
\left\{ \begin{array}{ll}
 \langle S_{r,s}(\pm1)u_n +  \sum_{i = 1}^{n-1} R_{r,s}^{\pm, i}u_{n-i} \; | \; n = 1, \dots, k \rangle & \mbox{if $k\leq \kappa_{r,s}^\pm$} \\
  \langle S_{r,s}(\pm1)u_n +  \sum_{i = 1}^{n-1} R_{r,s}^{\pm, i}u_{n-i} \; | \; n = 1, \dots, \kappa_{r,s}^\pm\rangle & \mbox{if $k> \kappa_{r,s}^\pm$}
  \end{array} \right. .
\end{equation}
\\

Case (2):  If $T(c,h) = \langle S_{r,s}(t) {\bf 1}_{c,h}, S_{r's'}(t) {\bf 1}_{c,h} \rangle$ with $rs<r's'$, we have that  
\[J(c,h,k)(\ell) = \left\{ \begin{array}{ll}
0 & \mbox{if $\ell<rs$}\\
\mathrm{span}\{S_{r,s}(t)u_1 \} & \mbox{if $\ell = rs$} 
\end{array}
\right.  \]
and 
\begin{equation}\label{containment4}
J(c,h,k) \supseteq \langle S_{r,s}(t) u_1, S_{r',s'}(t) u_1\rangle.
\end{equation} 
\end{cor}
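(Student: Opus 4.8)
The plan is to assemble Corollary~\ref{J-theorem} by combining the determinant formula for the Shapovalov form, the kernel characterization of $J(c,h,k)(\ell)$, and the already-proved degree-$d$ analysis, treating the three cases separately and in each case verifying two things: the vanishing of $J(c,h,k)(\ell)$ below the degree $d=rs$ of the lowest singular vector, and the explicit description of $J(c,h,k)(d)$ at degree $d$. The backbone of every case is Proposition~\ref{NulJ-prop}, which identifies $J(c,h,k)(\ell)=\mathrm{Ker}\,\mathfrak{A}_\ell^{(k)}(c,h)$, together with Theorem~\ref{det-thm}, which gives $\det\mathfrak{A}_\ell^{(k)}(c,h)=(\det\mathcal{A}_\ell(c,h))^k$. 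Since a matrix has trivial kernel precisely when its determinant is nonzero, the vanishing statement reduces to showing $\det\mathcal{A}_\ell(c,h)\neq 0$ for $\ell<rs$.

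First I would dispose of the low-degree vanishing uniformly across all three cases. By Proposition~\ref{Phi-derivative-prop}(i), $\det\mathcal{A}_\ell(c,h)|_{(c,h)=(c(t),h_{r,s}(t))}=0$ if and only if $\ell\geq rs$; equivalently, for $\ell<rs$ the determinant is nonzero, so $\mathrm{Ker}\,\mathfrak{A}_\ell^{(k)}(c,h)=0$ by Theorem~\ref{det-thm} and hence $J(c,h,k)(\ell)=0$. This handles the top line of each displayed formula in Cases (1)(i), (1)(ii), and (2) at once, and in Case (0), where $T(c,h)=0$ means $(c,h)$ lies on no curve $\Phi_{r,s}$, Proposition~\ref{determinant-prop} gives $\det\mathcal{A}_\ell(c,h)\neq 0$ for \emph{all} $\ell$, forcing $J(c,h,k)=0$ and thus $W(c,h,k)=M(c,h,k)$; I would also invoke Remark~\ref{Vir-M=Mbar} to identify $M(c,h,k)$ as the universal induced module.

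Next I would treat the degree-$d$ descriptions. In Case (1)(i) ($t\neq\pm1$, or $t=\pm1$ with $r=s$) and in Case (2) with $\ell=rs<r's'$, Theorem~\ref{degree-d-theorem}(i) gives $J(c,h,k)(rs)=\mathrm{span}\{S_{r,s}(t)u_1\}$ directly, and in Case (1)(ii) ($t=\pm1$, $r\neq s$) the more intricate formula for $J(c,h,k)(rs)$ in terms of $\kappa_{r,s}^\pm$ comes verbatim from Theorem~\ref{degree-d-theorem}(ii). The only genuinely new content beyond re-citation is the containment statements (\ref{containment1}), (\ref{containment2}), (\ref{containment4}) for the full submodule $J(c,h,k)$: these assert that the degree-$d$ kernel vectors, viewed as generators, produce submodules contained in $J(c,h,k)$. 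This follows because $J(c,h,k)$ is a $V_{Vir}(c,0)$-submodule (being the kernel defining the quotient $\mathfrak{L}_0(U(c,h,k))$), so any element of $J(c,h,k)(d)$ generates a submodule $\langle\cdot\rangle$ that remains inside $J(c,h,k)$; for Case (2) I additionally need that $S_{r',s'}(t)u_1\in J(c,h,k)$, which holds because at degree $r's'$ the vector $S_{r',s'}(t){\bf 1}_{c,h}$ lies in $\mathrm{Ker}\,\mathcal{A}_{r's'}(c,h)$ (it is a singular vector of $M(c,h)$), so $S_{r',s'}(t)u_1$ lies in $\mathrm{Ker}\,\mathfrak{A}_{r's'}^{(k)}(c,h)=J(c,h,k)(r's')$ by the $j=k=1$-type argument in Lemma~\ref{S-lemma}.

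The main obstacle I anticipate is the Case (2) containment (\ref{containment4}): I must argue that $S_{r',s'}(t)u_1$ belongs to $J(c,h,k)$ even though degree $r's'$ lies above the range $\ell<r's'$ where the clean degree analysis of Lemma~\ref{S-lemma} and Theorem~\ref{degree-d-theorem} applies. The resolution is that singularity of $S_{r',s'}(t){\bf 1}_{c,h}$ is an intrinsic statement ($L_n S_{r',s'}(t){\bf 1}_{c,h}=0$ for $n>0$), independent of the Jordan-block size $k$; projecting onto the $u_1$-component and using that $u_1$ is itself annihilated by all $L_n$, $n>0$, shows $T_{r's'}^\dagger S_{r',s'}(t)u_1$ has vanishing $u_1$-projection for every $T_{r's'}\in\mathcal{R}_{r's'}$, and the off-diagonal derivative terms of Corollary~\ref{structure-cor} contribute only to higher $u_j$-components that vanish since $S_{r',s'}(t){\bf 1}_{c,h}\in\mathrm{Ker}\,\mathcal{A}_{r's'}$ forces the relevant $a_{B_i,S_{r',s'}}(c,h)$ to be zero. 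Verifying that these derivative contributions genuinely vanish — i.e. that $S_{r',s'}(t)u_1$, not merely $S_{r',s'}(t)u_n$ for $n\geq 2$, lands in the kernel — is the delicate point, but it is exactly the $n=1$ specialization already encapsulated in Lemma~\ref{S-lemma}, so I would phrase the argument as a direct application of that lemma restricted to the single component $j=1$.
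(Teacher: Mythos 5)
Your proposal is correct and follows essentially the same route as the paper's proof: nonvanishing of $\det\mathcal{A}_\ell(c,h)$ below degree $rs$ (hence trivial kernel of $\mathfrak{A}_\ell^{(k)}$ via Theorem \ref{det-thm}), a direct citation of Theorem \ref{degree-d-theorem} at degree $rs$, and the submodule property of $J(c,h,k)$ for the containments, with Remark \ref{Vir-M=Mbar} identifying $M(c,h,k)$ in Case (0). One small correction: for the Case (2) generator $S_{r',s'}(t)u_1$, Lemma \ref{S-lemma} cannot be the citation, since it assumes $\ell<r's'$ and proves the converse implication (membership in $J$ forces the top component to be a multiple of the singular vector); what your computation actually uses is Lemma \ref{derivative-lemma} (equivalently the last equation of Proposition \ref{derivative-equations-prop}), namely that acting on $u_1$ all derivative terms hit $u_{1-i}=0$, so $T^\dag S_{r',s'}(t)u_1 = a_{T,S_{r',s'}}(c,h)u_1 = 0$ because $S_{r',s'}(t)\mathbf{1}_{c,h}\in \mathrm{Ker}\,\mathcal{A}_{r's'}(c,h)$. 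With that relabeling your argument is complete, and indeed it treats this point more explicitly than the paper, whose proof justifies all three containments only by the one-line appeal to $J$ being a $V_{Vir}(c,0)$-submodule, which by itself does not account for the degree-$r's'$ generator.
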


\begin{proof}
By Proposition \ref{determinant-prop} and Theorem \ref{det-thm} in Case (0) for all $\ell \in \mathbb{Z}_{>0}$, or in Cases (1)--(2) for all $\ell<d$, we have that  $\det \mathcal{A}_\ell(c,h)\neq 0$, which implies  $\det \mathfrak{A}_\ell^{(k)}(c,h) \neq 0$, so it follows that $J(c,h,k)(\ell) = Ker\,  \mathfrak{A}_\ell^{(k)}(c,h) = 0$. This finishes the proof for Case (0), i.e., $J(c,h,k) = 0$ in this case, and the proof for Case (1)-(2) for degree $\ell<rs$.

For $\ell = rs$, the result follows from Theorem \ref{degree-d-theorem}.

For $\ell>rs$, the containments in Eqns.\ (\ref{containment1}), (\ref{containment2}), and  (\ref{containment4}) follow from the fact that $J$ is a $V_{Vir}(c,0)$-submodule of $M(U(c,h,k))=\overline{M}(U(c,h,k))$ as explained in Remark \ref{Vir-M=Mbar}.
\end{proof}

In the next Proposition, we extend the result of Corollary \ref{J-theorem} to include information on the degree $d + 1 = rs + 1$ space if $T(c,h) \neq 0$ with $d = rs$ the degree of the lowest singular vector, and in Case (2), with the additional assumption that the second generator of $T(c,h)$ is not at degree $d + 1$.  However, outside of these cases more complicated phenomenon occurs, some of which we will illustrate in examples in Section \ref{examples-subsection} below.

\begin{prop}\label{d+1-prop} 
Let $W(c,h, k)(0)= \mathrm{span}\{u_1, \dots, u_k\} \cong U(c,h,k) = \mathbb{C}[x]/((x-h)^k)$ and let $W(c,h,k) = \mathfrak{L}_0(U(c,h,k)) = M(c,h,k)/J(c,h,k)$.  Then we have the following:\\

Case (1):  If $T(c,h) = \langle S_{r,s}(t){\bf 1}_{c,h}\rangle$, then $J(c,h,k)(rs + 1) = L_{-1}.J(c,h,k)(rs)$.\\

Case (2): If $T(c,h) = \langle S_{r,s}{\bf 1}_{c,h} (t), S_{r',s'}(t){\bf 1}_{c,h} \rangle$ and $r's' > rs +1$, then 
\[J(c,h,k)(rs +1) = L_{-1}.J(c,h,k)(rs).\]
\end{prop}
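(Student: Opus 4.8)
The plan is to prove the two inclusions of the claimed equality separately, with $L_{-1}.J(c,h,k)(rs) \subseteq J(c,h,k)(rs+1)$ being the easy direction. By Remark \ref{Vir-M=Mbar} we have $M(c,h,k) = \overline{M}_0(U(c,h,k))$, so $J(c,h,k)$ is a genuine $V_{Vir}(c,0)$-submodule, hence stable under the full Virasoro action and in particular under $L_{-1}$, which raises degree by one; this gives the first inclusion. For the reverse inclusion I would first record that $L_{-1}$ is injective on $M(c,h,k) = M_0(U(c,h,k))$: by the PBW theorem this is a free $\mathcal{U}(\mathcal{L})^-$-module of rank $k$, so left multiplication by $L_{-1}$ is injective because $\mathcal{U}(\mathcal{L})^-$ is a domain. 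Consequently $\dim L_{-1}.J(c,h,k)(rs) = \dim J(c,h,k)(rs)$, and it suffices to prove $J(c,h,k)(rs+1) \subseteq L_{-1}.J(c,h,k)(rs)$.

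Write $d = rs$. I will prove the reverse inclusion by a downward induction on the ``top index''. Given $v \in J(c,h,k)(d+1)$, write $v = \sum_{j=1}^n R_{d+1}^j u_j$ with $R_{d+1}^n \neq 0$, so $n$ is the largest occupied index. The first equation of Proposition \ref{derivative-equations-prop} forces $[R_{d+1}^n]_{\mathcal{B}_{d+1}} \in Ker\, \mathcal{A}_{d+1}(c,h)$, which equals the radical $T(c,h)(d+1)$ of the Shapovalov form at degree $d+1$; in Case (1), and in Case (2) with $r's' > rs+1$, this radical is the one-dimensional space $\mathbb{C}L_{-1}S_{r,s}(t)\mathbf{1}_{c,h}$, so $R_{d+1}^n = \lambda L_{-1}S_{r,s}(t)$ for some $\lambda \neq 0$. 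The key reduction is then: if there exists $w \in J(c,h,k)(d)$ of top index $n$ with leading coefficient $\lambda S_{r,s}(t)$, then $v - L_{-1}w \in J(c,h,k)(d+1)$ has strictly smaller top index, and the induction (base case $n=1$, where $w = \lambda S_{r,s}(t)u_1 \in J(c,h,k)(d)$ always) finishes the proof. By Theorem \ref{degree-d-theorem} and Corollary \ref{J-theorem}, such a $w$ exists precisely when $n$ is at most the maximal top index occurring in $J(c,h,k)(d)$, namely $1$ in Case (1)(i) and Case (2), and $\min(k,\kappa^\pm_{r,s})$ in Case (1)(ii). Thus everything reduces to bounding the top index $n$.

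In Case (1)(i) and Case (2) I would argue directly: if $n \geq 2$ the next equation of Proposition \ref{derivative-equations-prop} requires solving $\mathcal{A}_{d+1}(c,h)[R_{d+1}^{n-1}] = -\lambda\,\partial_y \mathcal{A}_{d+1}(x,y)[L_{-1}S_{r,s}(t)]|_{(c,h)}$, which by Proposition \ref{Jacobi-lemma} has a solution if and only if $\partial_y \det \mathcal{A}_{d+1}(x,y)|_{(c,h)} = 0$; but Proposition \ref{Phi-derivative-prop}(ii), stated precisely for $\ell = rs+1$, shows this derivative is nonzero when $r = s$ or $t \neq \pm 1$, so no such $R_{d+1}^{n-1}$ exists and $n = 1$, giving $J(c,h,k)(d+1) = \mathbb{C}L_{-1}S_{r,s}(t)u_1 = L_{-1}.J(c,h,k)(d)$. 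In Case (1)(ii) the derivative vanishes and the chain can extend, so I would instead transfer a degree-$(d+1)$ configuration back to degree $d$ using $L_1$. Since $J(c,h,k)(d-1) = 0$ we have $L_1.J(c,h,k)(d) = 0$, and a direct bracket computation, using that $S_{r,s}(t)\mathbf{1}_{c,h}$ is singular and that $L_0 u_n = h u_n + u_{n-1}$, gives $\pi_n^{d}(L_1 v) = 2(h+rs)\lambda\, S_{r,s}(t)u_n$. When $h + rs \neq 0$ this shows $L_1 v \in J(c,h,k)(d)$ has top index exactly $n$ with leading coefficient a nonzero multiple of $S_{r,s}(t)$, forcing $n \leq \min(k,\kappa^\pm_{r,s})$ and producing the required $w$.

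The main obstacle is the degenerate sub-case of Case (1)(ii) with $h + rs = 0$, which by Eqn.\ (\ref{eq:virpara}) occurs exactly for $c = c(1) = 25$ and $|r-s| = 2$. Here $\langle S_{r,s}(t)\mathbf{1}_{c,h}\rangle \cong M(c,0)$, so $L_{-1}S_{r,s}(t)\mathbf{1}_{c,h}$ is itself singular, the leading term $2(h+rs)\lambda\, S_{r,s}(t)u_n$ of $\pi^d_n(L_1 v)$ vanishes, and in fact $L_1$ annihilates all of $L_{-1}.J(c,h,k)(d)$ (as $L_1.J(c,h,k)(d) = 0$ and $L_1 L_{-1} = 2L_0 + L_{-1}L_1$ with $h + rs = 0$). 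Consequently $L_1$ no longer detects whether the degree-$(d+1)$ chain has overshot, and the transfer only yields $n \leq \min(k,\kappa^\pm_{r,s}) + 1$. To rule out an a priori possible extra generator at degree $rs+1$, I would carry out a direct analysis of the nested system of Proposition \ref{derivative-equations-prop} at degree $d+1$, comparing its successive solvability conditions with those at degree $d$ to show the chain length is unchanged; controlling these higher-order conditions, which are not governed by the first-derivative criterion of Proposition \ref{Phi-derivative-prop}, is the step I expect to be the most delicate.
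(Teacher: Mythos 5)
Your treatment of Case (1)(i) and Case (2) coincides with the paper's: the first equation of Proposition \ref{derivative-equations-prop} forces the top coefficient into $\mathrm{Ker}\,\mathcal{A}_{d+1}(c,h) = \mathbb{C}[L_{-1}S_{r,s}(t)]_{\mathcal{B}_{d+1}}$, and Propositions \ref{Jacobi-lemma} and \ref{Phi-derivative-prop} (which indeed covers $\ell = rs+1$) rule out a second step, so the top index is $1$. Your $L_1$-transfer for Case (1)(ii) is a genuinely different idea, and the computation $\pi_n^{d}(L_1 v) = 2(h+rs)\lambda\, S_{r,s}(t)u_n$ is correct. But, as you yourself flag, it degenerates exactly when $h + rs = 0$, which by Eqn.\ (\ref{eq:virpara}) happens for $t=1$ (so $c = 25$) and $|r-s|=2$, and this is not a vacuous corner: e.g.\ $(c,h) = (25,-3)$ with $(r,s)=(3,1)$ lies in Case (1)(ii) with $\kappa^+_{3,1}\geq 2$. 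There your argument only yields the bound $n \leq \min(k,\kappa^+_{r,s})+1$, leaving open a possible element of $J(c,h,k)(rs+1)$ outside $L_{-1}.J(c,h,k)(rs)$, and the ``direct analysis of the nested system'' you propose to exclude it is never carried out. That is a genuine gap, not a routine verification. (A minor inaccuracy in your diagnosis of the failure: $L_1$ does not annihilate $L_{-1}.J(c,h,k)(rs)$ when $h+rs=0$; rather $L_1L_{-1}w = 2L_0w$, which is the nilpotent part of $L_0$ applied to $w$, i.e.\ the next-shorter chain element, nonzero whenever $w$ has top index at least $2$. Only the leading $u_n$-component vanishes, which is what degrades your bound.)

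The paper avoids this degeneration entirely by never leaving degree $d+1$: since in Case (1) (and Case (2) with $r's' > rs+1$) the rank of $\mathcal{A}_{d+1}(c,h)$ is $\mathfrak{p}(d+1)-1$, Proposition \ref{Jacobi-lemma} applies at degree $d+1$, and each successive equation of the nested system has a solution that is unique modulo the one-dimensional kernel $\mathbb{C}[L_{-1}S_{r,s}(\pm1)]_{\mathcal{B}_{d+1}}$. Since $L_{-1}$ applied to the degree-$rs$ chain elements $S_{r,s}(\pm1)u_n + \sum_{i}R^{\pm,i}_{r,s}u_{n-i}$ of Corollary \ref{J-theorem} are already solutions (your easy inclusion), this uniqueness forces every solution at degree $rs+1$ to agree with $L_{-1}R^{\pm,j}_{r,s}$ modulo the kernel, identifying the degree-$(rs+1)$ chain term by term with $L_{-1}$ of the degree-$rs$ chain. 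No factor of $h+rs$ ever appears, so $c=25$ with $|r-s|=2$ requires no separate argument. If you want to complete your proof, this rank-plus-uniqueness identification at degree $d+1$ is precisely the mechanism that fills the step you identified as delicate; your $L_1$-transfer can then be retained, if you wish, as a shortcut for the generic sub-case $h+rs\neq 0$.
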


\begin{proof} 
Let $S_{r,s}(t) {\bf 1}_{c,h}$ for $rs = d$ be the lowest degree singular vector of  $M(c,h)$, and if $M(c,h)$ falls into Case (2), assume that the second generator of $T(c,h)$ is at degree strictly greater than $d+1 = rs +1$ (in particular, $(c,h) \neq (0,0))$.  In this setting,  $\mathcal{A}_{d+1}(c,h)$ is an $n \times n$ matrix for $n = \mathfrak{p}(d+1)$, and we claim that the rank of $\mathcal{A}_{d+1}(c,h)$ is $\mathfrak{p}(d+1) - 1$.  Thus Proposition \ref{Jacobi-lemma} applies, and we can follow the methods used to prove Theorem \ref{degree-d-theorem}.
The claim that the rank of  $\mathcal{A}_{d+1}(c,h)$ is $\mathfrak{p}(d+1) - 1$  follows from these facts: $[\mathbb{C}S_{r,s}(t)]_{\mathcal{B}_d}= Ker \mathcal{A}_{d}(c,h)$ and so the $(d+1)$-degree subspace of $\langle S_{r,s}(t) {\bf 1}_{c,h} \rangle$, which is  $\mathbb{C}L_{-1}S_{r,s}(t) {\bf 1}_{c,h}$, corresponding to the vector $[\mathbb{C} L_{-1} S_{rs}(t)]_{\mathcal{B}_{d+1}}$ is in $Ker \mathcal{A}_{d+1}(c,h)$.  However, in fact,  $Ker \mathcal{A}_{d+1}(c,h) =  [\mathbb{C} L_{-1} S_{r,s}(t)]_{\mathcal{B}_{d+1}}$, because $\Phi_{r',s'}(c,h)$ does not divide $\det \mathcal{A}_{rs +1}(c,h)$ for any $r',s'$ not equal to $r,s$, since we are assuming any such $r',s'$ satisfies $r's'> rs+ 1 = d+ 1$. Thus the claim holds.

Let $v \in M(c,h,k)(d+1)$ for $d = rs$. Then we can write $v = \sum_{j = 1}^k R_{d+1}^ju_j$ for some $R_{d+1}^1, \dots, R_{d+1}^k \in \mathcal{R}_{d+1}$, and by Proposition \ref{derivative-equations-prop}, $v \in Ker \, \mathfrak{A}_{d+1}^{(k)}$ is equivalent to the  Eqns.\ (\ref{der1})-(\ref{der-last}) holding for $\ell = d +1 = rs +1$.

If $R_{d+1}^n \neq 0$ and $R^j_{d+1} = 0$ for $j>n$, then this is equivalent to  Eqns.\ (\ref{der-n})-(\ref{der-last}) holding for $k-m = n$. Then Eqn.\ (\ref{der-n}) at $k-m=n$ implies that $[R_{d+1}^n]_{\mathcal{B}_{d+1}} \in Ker \, \mathcal{A}_{d+1}(c,h)$, i.e., 
$R_{d+1}^nu_n \in \langle S_{r,s}(t) u_n
\rangle$, i.e., up to a scalar multiple $R_{d+1}^n = L_{-1} S_{r,s}(t)$.  

The next equation in Proposition \ref{derivative-equations-prop} implies 
\begin{equation}\label{b-derivative2}
0=   \mathcal{A}_{d+1}(x,y) [R_{d+1}^{n-1}]_{\mathcal{B}_{d+1}} +  \left( \frac{\partial}{\partial y} \mathcal{A}_{d+1}(x,y)\right)  [L_{-1}S_{r,s}(t)]_{\mathcal{B}_{d+1}} \Big|_{(x,y) = (c(t), h_{r,s}(t))} 
 \end{equation}
must hold.

By Proposition \ref{Jacobi-lemma} with $A(y) = \mathcal{A}_{d+1}(x,y)|_{x = c(t)}$ and $h = h_{r,s}(t)$, we have that  Eqn.\ (\ref{b-derivative2}) has a solution $\vec{b} = [R_{d+1}^{n-1}]_{\mathcal{B}_{d+1}}$ if and only if  
\[ \frac{\partial}{\partial y} \det \mathcal{A}_{d+1}(x,y) \Big|_{(x,y) = (c(t), h_{r,s}(t))} = 0. \]
Therefore, by Proposition \ref{Phi-derivative-prop}, and the assumption in Case (2) that $r's'>d+1$, 
there is a solution to Eqn.\ (\ref{b-derivative2}) if and only if $t = \pm 1$ and $r \neq s$. This proves that in Case (1)(i) and Case (2) of Corollary \ref{J-theorem}, we have $J(c,h,k)(rs + 1) = \mathrm{span} \{L_{-1}S_{r,s}(t) u_1\} = L_{-1}.J(c,h,k)(rs)$, proving the result in these cases.

For Case (1)(ii), we let $t = \pm 1$ and $r \neq s$. Then by Propositions \ref{Phi-derivative-prop} and \ref{Jacobi-lemma}, Eqn.\ (\ref{b-derivative2}) does have a solution, and that solution is exactly $[R^{n-1}_{d+1}]_{\mathcal{B}_{d+1} }= [L_{-1}R_{r,s}^{\pm,1}]_{\mathcal{B}_{d+1}}$ which is unique up to $Ker \, \mathcal{A}_{d+1}(c,h) =  [\mathbb{C}L_{-1}S_{r,s}(\pm1)]_{\mathcal{B}_{d+1}}$ since Rank $\mathcal{A}_{d+1}(c,h) = \mathfrak{p}(d+1) - 1$, i.e., $\dim Ker \, \mathcal{A}_{d+1}(c,h) = 1$.  By the same argument, each subsequent equation in (\ref{der1})-(\ref{der-last}) has a solution $v = \sum_{j=1}^n R_{d+1}^ju_j$ for $1\leq n \leq \kappa_{r,s}^\pm$ only if $v = L_{-1}S_{r,s}(\pm 1)u_n + \sum_{j=1}^{n-1} L_{-1}R_{r,s}^{\pm, j}u_j$, in which case this is indeed in $J(c,h,k)$ if $k\leq \kappa_{r,s}^\pm$.  The result follows.   
\end{proof} 

\begin{rem}
Case (0) is of course not mentioned in Proposition \ref{d+1-prop} because this is the trivial case when $J(c,h,k) = \coprod_{\ell \in \mathbb{Z}_{\geq 0}} J(c,h,k)(\ell) = 0$.
\end{rem}

\begin{rem} In light of Proposition \ref{d+1-prop}, one might be tempted to conjecture that the containments in Eqns. (\ref{containment1}), (\ref{containment2}), and (\ref{containment4}) are equalities. However this is not the case in general. For instance, we give a counter example below in Case (1)(i). One conceptual reason to see why this is  the case is to observe that techniques such as the use of the Jacobi Formula to prove Proposition \ref{Jacobi-lemma}, which is the main technique used in both Theorem \ref{degree-d-theorem} and Proposition \ref{d+1-prop}, rely heavily on the fact that at degree $d = rs$ where the first singular vector occurs and at degree $d+ 1 = rs+ 1$ (as long as $r's'>d+1$ in Case (2)) the Gram matrix for the Shapovalov form $\mathcal{A}_\ell (c,h)$ has rank $\ell  - 1$ at  both levels $\ell = d = rs$ and $\ell = d + 1$.  However, for $\ell > d+1$, this matrix has rank strictly less than $\ell-1$ and this gives room for particular discrete values of $(c,h)$ to have additional vectors in $Ker \, \mathfrak{A}^{(k)}_\ell = J(c,h,k)(\ell)$ which are solutions to the corresponding differential equations for $\mathcal{A}_\ell(x,y).$  Furthermore, our techniques up to this point say very little in Case (1)(ii) when $k>\kappa_{r,s}^\pm$ and further techniques are necessary in order handle this case. However, we can characterize $J$ completely when $k\leq \kappa_{r,s}^\pm$ as we show in the next result.   
\end{rem}

\begin{thm}\label{J=Case1ii}
Let $(c,h)$ be in Case (1)(ii), i.e., $t = \pm1$, $r \neq s$, and $\kappa_{r,s}^\pm$ is as given in Theorem \ref{J-theorem}.  Let 
$W(c,h, k)(0)= \mathrm{span}\{u_1, \dots, u_k\} \cong U(c,h,k) = \mathbb{C}[x]/((x-h)^k)$ and let $W(c,h,k) = \mathfrak{L}_0(U(c,h,k)) = M(c,h,k)/J(c,h,k)$.  Then if $k\leq \kappa_{r,s}^\pm$
\begin{equation}\label{equality}
J(c,h,k) = 
 \langle S_{r,s}(\pm1)u_n +  \sum_{i = 1}^{n-1} R_{r,s}^{\pm, i}u_{n-i} \; | \; n = 1, \dots, k \rangle,
\end{equation}
where the $R_{r,s}^{\pm, i} \in \mathcal{R}_{rs}$ are uniquely determined modulo $S_{r,s}(\pm 1)$ for $i = 1, \dots, \kappa_{r,s}^\pm$.
\end{thm}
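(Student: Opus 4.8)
The plan is to prove the two containments separately. The inclusion $\supseteq$ is already in hand: it is exactly Eqn.~(\ref{containment2}) of Corollary~\ref{J-theorem}, which holds because the hypothesis $k \leq \kappa_{r,s}^\pm$ guarantees via Theorem~\ref{degree-d-theorem}(ii) that each of the degree-$rs$ vectors
\[
g_n = S_{r,s}(\pm1)u_n + \sum_{i=1}^{n-1} R_{r,s}^{\pm,i}u_{n-i}, \qquad n = 1,\dots,k,
\]
exists and lies in $J(c,h,k)(rs)$. Writing $\tilde{J} = \langle g_1,\dots,g_k\rangle$ for the $V_{Vir}(c,0)$-submodule they generate (with the $R_{r,s}^{\pm,i}$ unique modulo $S_{r,s}(\pm1)$ by Theorem~\ref{degree-d-theorem}), it then remains only to prove the reverse inclusion $J(c,h,k) \subseteq \tilde{J}$.

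Since both $J(c,h,k)$ and $\tilde{J}$ are $\mathbb{Z}_{\geq 0}$-graded submodules of $M(c,h,k)$, I would establish the inclusion degree by degree, showing $J(c,h,k)(\ell) \subseteq \tilde{J}(\ell)$ for every $\ell$. For $\ell < rs$ both spaces vanish by Corollary~\ref{J-theorem}, so assume $\ell \geq rs$. The key tool is Lemma~\ref{S-lemma}, which is available throughout Case (1)(ii) since $T(c,h) = \langle S_{r,s}(\pm1){\bf 1}_{c,h}\rangle$: if $v \in J(c,h,k)(\ell)$ has top index $n$, i.e. $v = \sum_{j=1}^n R_\ell^j u_j$ with $R_\ell^n \neq 0$, then $\pi_n^\ell(v) = R_{\ell-rs}\,S_{r,s}(\pm1)u_n$ for some $R_{\ell-rs} \in \mathcal{R}_{\ell-rs}$. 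Because $M(c,h,k) = \overline{M}_0(U(c,h,k))$ is free as a graded module over $\mathcal{U}(\mathcal{L})^-$, with $\dim M(c,h,k)(\ell) = k\mathfrak{p}(\ell)$ (Remark~\ref{Vir-M=Mbar}), the degree-raising action does not mix the Jordan basis and satisfies $X(Y u_j) = (XY)u_j$; hence the module-level identity $R_\ell^n u_n = R_{\ell-rs}S_{r,s}(\pm1)u_n$ upgrades to the algebra identity $R_\ell^n = R_{\ell-rs}S_{r,s}(\pm1)$ in $\mathcal{R}_\ell$. I can then subtract the genuine element $R_{\ell-rs}\,g_n \in \tilde J \subseteq J$ to form $v - R_{\ell-rs}\,g_n \in J(c,h,k)(\ell)$.

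This reduced element has top index strictly smaller than $n$, since $R_{\ell-rs}\,g_n$ has $u_n$-component $R_{\ell-rs}S_{r,s}(\pm1)u_n$ (which cancels that of $v$) and involves no $u_j$ with $j > n$. Iterating the descent—at each stage applying Lemma~\ref{S-lemma} to the current reduced element, reading off its top index $n' \leq n \leq k$, and subtracting the corresponding multiple of $g_{n'}$—strictly lowers the top index at every step, so the procedure terminates after at most $n$ steps with the zero vector. Unwinding the subtractions exhibits $v$ as a sum of elements of $\tilde{J}$, giving $J(c,h,k)(\ell) \subseteq \tilde{J}(\ell)$ and hence the theorem.

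I expect the crux to be twofold. First, one must state and justify carefully the freeness identification that turns $\pi_n^\ell(v) = R_{\ell-rs}S_{r,s}(\pm1)u_n$ into the algebra equality $R_\ell^n = R_{\ell-rs}S_{r,s}(\pm1)$; this is where Remark~\ref{Vir-M=Mbar} and the count $\dim M(c,h,k)(\ell) = k\mathfrak{p}(\ell)$ are essential. Second, and more conceptually, the hypothesis $k \leq \kappa_{r,s}^\pm$ is precisely what makes the descent go through: every cancellation requires a genuine generator $g_{n'}$ with $n' \leq k$, and such generators exist in $J(c,h,k)(rs)$ only up to index $\kappa_{r,s}^\pm$. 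When $k > \kappa_{r,s}^\pm$, an element whose top index exceeds $\kappa_{r,s}^\pm$ has no generator to cancel against, the descent stalls, and $J$ can acquire elements outside $\tilde{J}$—exactly the more delicate behavior flagged in the preceding remark and the reason this equality is asserted only for $k \leq \kappa_{r,s}^\pm$.
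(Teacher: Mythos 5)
Your proposal is correct and follows essentially the same route as the paper's proof: the containment $\supseteq$ is quoted from Eqn.~(\ref{containment2}), and the reverse inclusion is obtained by induction (your descent) on the top Jordan index, using Lemma~\ref{S-lemma} to identify the top coefficient as $R_{\ell-rs}S_{r,s}(\pm1)$ and subtracting the corresponding multiple of the generator $g_n \in J_S$ to strictly lower that index. The only difference is cosmetic: you make explicit the freeness of $M(c,h,k)$ over $\mathcal{U}(\mathcal{L})^-$ needed to upgrade $R_\ell^n u_n = R_{\ell-rs}S_{r,s}(\pm1)u_n$ to an identity in $\mathcal{R}_\ell$, a point the paper uses implicitly.
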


\begin{proof} By Eqn.\ (\ref{containment2}), we have that the right hand side of Eqn.\ (\ref{equality}) is contained in $J(c,h,k)$. To prove the other containment, we denote by $J_S$ the right hand side of Eqn.\ (\ref{equality}), and thus $J_S$ is a submodule of $J(c,h,k)$. Note that by Lemma \ref{S-lemma}, if $v_j = \sum_{i = 1}^j R_\ell^i u_i \in J(c,h,k)(\ell)$, with $R_\ell^j \neq 0$, then $R_\ell^ju_j = R_{\ell - rs}S_{r,s}(\pm1) u_j$ for some $R_{\ell - rs} \in \mathcal{R}_{\ell-rs}$.  That is, $v_j$ must be of the form $v_j = R_{\ell-rs}S_{r,s}(\pm1)u_j + \sum_{i = 1}^{j-1} R_{\ell}^iu_i$.   

We prove the result by induction on $j$.  If $j = 1$, then $v_1 \in \mathcal{R}_{\ell-rs}.S_{r,s}(\pm1)u_1 = \langle S_{r,s}(\pm1)u_1 \rangle \subset J_S$. Assume that $w = \tilde{R}_{\ell - rs}S_{r,s}(\pm1)u_m + \sum_{i = 1}^{m-1} \tilde{R}_\ell^i u_i  \in J(c,h,k)$ for $m<j$, and some $\tilde{R}_{\ell-rs} \in \mathcal{R}_{\ell-rs}$ and $\tilde{R}_\ell^i \in \mathcal{R}_\ell$ for $i = 1,\dots, m-1$, then $w \in J_S$.  Therefore since $v' = R_{\ell - rs} (S_{r,s}(\pm1) u_j + \sum_{i = 1}^{j-1} R_{r,s}^{\pm, i} u_i) \in J_S \subseteq J(c,h,k)$, if $v_j \in J(c,h,k)$, then $v_j - v' = \sum_{i = 1}^{j-1}R_\ell^i u_i - R_{\ell-rs}\sum_{i = 1}^{j-1} R^{\pm,i}_{r,s} u_i \in J(c,h,k)$.  If $v_j - v' = 0$, then $v_j = v' \in J_S$.  Otherwise, there is some maximal integer $m$ with $1<m<j$ such that $R_\ell^m - R_{\ell -rs}R_{r,s}^{\pm,m} \neq 0$.  Since $v_j - v' \in J(c,h,k)$, by Lemma \ref{S-lemma}, we must have that $R_\ell^m - R_{\ell-rs}R_{r,s}^{\pm, m} = \hat{R}_{\ell - rs}S_{r,s}(\pm1)$ for some $\hat{R}_{\ell-rs} \in \mathcal{R}_{\ell-rs}$. By the induction hypothesis, this implies that $v_j - v' \in J_S$. The result follows by linearity in $J_S$.
\end{proof}

\subsection{Examples of low degree behavior in Case (1).}\label{examples-subsection}

In this Subsection, we give some illustrative examples to show some of the behaviors that occur at low degrees in Case (1) since this is the more interesting case of varied behavior, and as we shall see below in Section \ref{graded-pseudo-trace-section}, the case in which the existence of interlocked modules, and hence graded pseudo-traces is much more subtle.\\

 {\bf Example for Case (1)(i):} Let $t = -2$ (or equivalently, for this conformal weight, $t = -1/2$), and $r = s =  1$. Then $c = -2$ and $h = 0$.  Since $c = c_{2,1} \neq c_{p,q}$ for $p, q \in \{2,3,\dots\}$ relatively prime, this is in Case (1), and more precisely in Case(1)(i) of Corollary \ref{J-theorem}.  Then $J(c,0,k)(1) = \mathbb{C}L_{-1}u_1$.  In addition, $J(c,0,k)(2) = \mathbb{C}L_{-1}^2 u_1$ by Proposition \ref{d+1-prop}.

However, at degree $\ell = 3$, we have that 
\begin{eqnarray}\label{h=0-example}
J(-2,0,k)(3) &=& \mathrm{span} \{L_{-1}^3u_1,\  L_{-2}L_{-1}u_1, \ (L_{-1}^2-2L_{-2})L_{-1}u_2 + L_{-3}u_1   \}\\ 
&=& \langle S_{1,1}(-2)u_1\rangle(3) \cup  \mathbb{C}(S_{2,1}(-2)S_{1,1}(-2)u_2 + L_{-3}u_1 ).\nonumber\\
 &\neq&  L_{-1}.J(-2, 0, k)(2).
\end{eqnarray}

This, and more generally the case of what happens if $h = 0$ in Case (1), follows by considering $v \in J(c,0,k)(3) = Ker \, \mathfrak{A}_{3}^{(k)}(c,0)$. Then $v = \sum_{j = 1}^k R_{3}^ju_j$ for $R_{3}^1, \dots, R_{3}^k \in \mathcal{R}_{3}$. Let  $n$ for $1\leq n \leq k$ be such that $R_{3}^j = 0$ for $j>n$ and $R_3^n \neq 0$. Then by Lemma \ref{S-lemma}, we have that $R_{3}^nu_n  = R_2 S_{1,1}(t) u_n = R_2L_{-1}u_n$ for some 
$R_2 \in \mathcal{R}_2$. Therefore $v$ must be of the form 
$v = \sum_{j=1}^{n-1} R_{3}^iu_i + (aL_{-1}^2 + bL_{-2}) L_{-1}u_n$, for some constants $a,b \in \mathbb{C}$ not both zero.

But then the requirement that $T^\dag_3 v = 0$ for all $T_3 \in \mathcal{R}_3$ implies that either $n = 1$ and $v \in \langle L_{-1}u_1 \rangle$, or $n>1$ and $\pi_{j}^0(T_3^\dag v)= 0$ for all $j = 1,\dots, n$. Thus by Proposition \ref{derivative-equations-prop} we must have 
\begin{equation}\label{example}
\mathcal{A}_3(c,0)[R_3^{n-1}]_{\mathcal{B}_3} = -\frac{\partial}{\partial y} \mathcal{A}_3(c,y)[R^n_3]_{\mathcal{B}_3}\Big|_{y = 0}
\end{equation}
for $\vec{s} = [R^n_3]_{\mathcal{B}_3} = [(aL_{-1}^2 + bL_{-2})L_{-1}]_{\mathcal{B}_3} = (a, b, 0)^T$, having a solution $\vec{b} = [R_3^{n-1}]_{\mathcal{B}_3}$.  But this matrix equation, Eqn.\ (\ref{example}), is equivalent to
\[\begin{bmatrix}
0 & 0 & 0\\
0 & 0 & 0 \\
0 & 0 & 2c
\end{bmatrix} \vec{b}  = -\begin{bmatrix} 
24 & 12 & 24\\
12 & 8+c & 10\\
24 & 10 & 6
\end{bmatrix} \begin{bmatrix}
a\\b\\0
\end{bmatrix},\]
which has a solution $\vec{b} = [R_3^{n-1}]$ if and only if $c = -2$ and, up to a scalar multiple $\vec{s} = (a,b,0)^T = (1,-2,0)^T = [(L_{-1}^2 - 2L_{-2})L_{-1}]_{\mathcal{B}_3}$, in which case, $\vec{b} = (0,0,1) = [L_{-3}]_{\mathcal{B}_3}$. Thus $v$ must be of the form
$v = \sum_{j = 1}^{n-2} R_3^j u_j + L_{-3}u_{n-1} + (L_{-1}^2 - 2L_{-2})L_{-1}u_n$ and we must have $c = -2$.

Thus if $n = 2$, and $c = -2$, we have  $L_{-3}u_1 + (L_{-1}^2 - 2L_{-2})L_{-1})u_2 \in J(c,h,k)(3)$.

If $n>2$, and $c = -2$, the requirement that $\pi_{j-2}(T^\dag_3v) = 0$ for all $T_3 \in \mathcal{R}_3$ is equivalent to
\begin{equation}\label{example2}
\mathcal{A}_3(-2,0)[R_3^{n-2}]_{\mathcal{B}_3} =   -\left. \Big(\frac{\partial}{\partial y} \mathcal{A}_3(-2,y)\Big) \vec{b} - \frac{1}{2!}\left(\Big(\frac{\partial}{\partial y} \Big)^2 \mathcal{A}_3(-2,y)\right) \vec{s} \right|_{y = 0}.
\end{equation}
But  Eqn.\ (\ref{example2}) is equivalent to
\[\begin{bmatrix}
0 & 0 & 0\\
0 & 0 & 0 \\
0 & 0 & -4
\end{bmatrix} [R_3^{n-2}]_{\mathcal{B}_3}  = -\begin{bmatrix} 
24 & 12 & 24\\
12 & 6 & 10\\
24 & 10 & 6
\end{bmatrix} \begin{bmatrix}
0\\0\\1
\end{bmatrix} -\begin{bmatrix} 
72 & 36 & 0\\
36 & 8 & 0\\
0 & 0 & 0
\end{bmatrix}\begin{bmatrix}
1\\-2\\0
\end{bmatrix} = \begin{bmatrix}
-24\\-30\\-6
\end{bmatrix}  ,\]
which obviously has no solution.

From this analysis above for degree 3, we have proven the following Corollary.

\begin{cor}
The submodule $J(c,0,k) \subset W(c,0,k)$ is of the form $J(c,0,k)(3) =  \langle L_{-1}u_1 \rangle(3) = \mathrm{span}\{L_{-1}^3u_1, L_{-2}L_{-1} u_1 \}$ if and only if $c \neq -2$.  If $c = -2$, then $J(c,0,k)$ is given by Eqn.\ (\ref{h=0-example}).  
\end{cor}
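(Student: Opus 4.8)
The plan is to apply the machinery just developed, specialized to the concrete matrix $\mathcal{A}_3(c,h)$ computed in Eqn.\ (\ref{A_3}) and evaluated at $h=0$. The statement has two directions. First I would establish that $J(c,0,k)(3) \supseteq \langle L_{-1}u_1\rangle(3) = \mathrm{span}\{L_{-1}^3 u_1, L_{-2}L_{-1}u_1\}$ for all $c$, which is immediate since $S_{1,1}(t) = L_{-1}$ generates $T(c,0)$ in Case (1)(i) and $J \supseteq \langle S_{1,1}(t)u_1\rangle$ by the containment (\ref{containment1}) of Corollary \ref{J-theorem}. The content of the Corollary is therefore the \emph{reverse} inclusion together with its failure precisely at $c=-2$, and this is exactly the degree-$3$ analysis carried out in the paragraphs preceding the statement.

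The key steps, in order, are as follows. I would take an arbitrary $v \in J(c,0,k)(3) = Ker\,\mathfrak{A}_3^{(k)}(c,0)$ and write $v = \sum_{j=1}^k R_3^j u_j$, letting $n$ be maximal with $R_3^n \neq 0$. By Lemma \ref{S-lemma} (applicable since we are in Case (1)(i) with $rs=1$), the top component satisfies $R_3^n u_n = R_2 S_{1,1}(t)u_n = R_2 L_{-1}u_n$ for some $R_2 \in \mathcal{R}_2$, so $R_3^n = (aL_{-1}^2 + bL_{-2})L_{-1}$ with $(a,b)\neq(0,0)$. Then I would invoke Proposition \ref{derivative-equations-prop} to convert membership in the kernel into the chain of equations (\ref{der1})--(\ref{der-last}) at $\ell=3$, and feed in the explicit $\mathcal{A}_3(c,0)$ and its $y$-derivatives obtained by setting $h=0$ and differentiating Eqn.\ (\ref{A_3}). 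The first nontrivial equation, Eqn.\ (\ref{example}), forces (by direct inspection of the displayed $3\times 3$ matrix system) that a solution $\vec b = [R_3^{n-1}]_{\mathcal{B}_3}$ exists \emph{only} when $c=-2$, and then up to scalar $\vec s = (1,-2,0)^T$ and $\vec b = (0,0,1)^T = [L_{-3}]_{\mathcal{B}_3}$. This simultaneously shows that for $c\neq -2$ no vector with $n\geq 2$ and highest component a genuine descendant of $u_n$ can lie in $J$, forcing $J(c,0,k)(3) = \langle L_{-1}u_1\rangle(3)$, and that for $c=-2$ the vector $(L_{-1}^2-2L_{-2})L_{-1}u_2 + L_{-3}u_1$ enters. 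Finally, for $c=-2$ and $n>2$ I would check Eqn.\ (\ref{example2}), whose right-hand side $(-24,-30,-6)^T$ is not in the range of the singular matrix $\mathrm{diag}(0,0,-4)$, so no further vectors appear; this pins down $J(-2,0,k)(3)$ to be exactly the span in Eqn.\ (\ref{h=0-example}) and verifies it differs from $L_{-1}.J(-2,0,k)(2)$.

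The main obstacle is bookkeeping rather than conceptual: one must correctly compute $\mathcal{A}_3(c,0)$, $\tfrac{\partial}{\partial y}\mathcal{A}_3(c,y)|_{y=0}$, and $\tfrac{1}{2!}(\tfrac{\partial}{\partial y})^2\mathcal{A}_3(c,y)|_{y=0}$ from Eqn.\ (\ref{A_3}) and then solve the resulting small singular linear systems, tracking the solvability condition as $c$ varies. The subtle point to get right is the role of $h=0$, which makes $\mathcal{A}_3(c,0)$ drop rank well beyond the generic degree-$d$ situation analyzed in Theorem \ref{degree-d-theorem}: because three of the factors $\Phi_{1,1},\Phi_{2,1},\Phi_{3,1}$ in $\det\mathcal{A}_3$ can accumulate at $(c,h)=(-2,0)$, the Jacobi-formula argument of Proposition \ref{Jacobi-lemma} no longer guarantees rank $\mathfrak{p}(3)-1$, so the solvability of (\ref{example}) genuinely depends on $c$ and must be decided by the explicit matrix computation rather than by the clean derivative-of-determinant criterion. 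Once the solvability at $c=-2$ versus $c\neq-2$ is settled and the $n>2$ case is ruled out, the Corollary follows directly.
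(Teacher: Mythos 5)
Your proposal is correct and follows essentially the same route as the paper's own argument: Lemma \ref{S-lemma} to force the top component to be $(aL_{-1}^2+bL_{-2})L_{-1}u_n$, Proposition \ref{derivative-equations-prop} to reduce kernel membership to the explicit singular systems (\ref{example}) and (\ref{example2}) built from $\mathcal{A}_3(c,0)$ and its $y$-derivatives, solvability of the former precisely at $c=-2$ with $\vec{s}=(1,-2,0)^T$ and $\vec{b}=(0,0,1)^T=[L_{-3}]_{\mathcal{B}_3}$, and unsolvability of the latter to rule out $n>2$. Your two added remarks---the easy containment via (\ref{containment1}) and the observation that the rank drop of $\mathcal{A}_3(c,0)$ at $h=0$ is exactly why Proposition \ref{Jacobi-lemma} cannot be invoked and the solvability must be settled by direct computation---are accurate and consistent with (indeed implicit in) the paper's treatment.
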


{\bf Example for Case (1)(ii):} 
Let $t = \pm 1$, and $(c,h) = (25, -5/4)$ or $(1, 1/4)$, respectively. Then $(c,h) \in \Phi_{2,1}(c,h)$ and we have that 
$[S_{2,1}(\pm1)]_{\mathcal{B}_2} \in Ker\, \mathcal{A}_2(c,h)$ and so $S_{2,1}(\pm1)u_1 \in Ker \, \mathfrak{A}_2^{(k)} = J(c,h,k)(2)$.  Then we also have that for  $1<n\leq k$
\begin{equation}\label{example3}
\mathcal{A}_2(c(\pm1),h_{2,1}(\pm1))[R_2^{n-1}]_{\mathcal{B}_2} = -\frac{\partial}{\partial y} \mathcal{A}_2(c(\pm 1),y)[R^n_2]_{\mathcal{B}_2}\Big|_{y = h_{2,1}(\pm1)}
\end{equation}
has a solution by Theorem \ref{degree-d-theorem} for $[R^n_2]_{\mathcal{B}_2} \in Ker\, \mathcal{A}_2(c,h)$, because $t = \pm 1$.  Indeed, when $t = -1$, for instance,  Eqn.\ (\ref{example3}) is equivalent to
\begin{equation}\label{find-b} \frac{3}{2} \begin{bmatrix}
1 & 1 \\
1 & 1
\end{bmatrix} \vec{b}  = -\begin{bmatrix} 
8 & 6\\
6  & 4
\end{bmatrix} \begin{bmatrix}
a\\-a
\end{bmatrix},
\end{equation}
and has any $\vec{b} = (b_1, b_2) \in  \mathbb{C}^2$ such that $b_1 + b_2 = -\frac{4}{3}a$ as a solution.  Thus letting $n = 2$, and $k\geq 2$, we have that $(-b -\frac{4}{3}) L_{-1}^2u_1 + b L_{-2}u_1 + (L_{-1}^2 - L_{-2})u_2 \in J(1,1/4,k)(2)$ for any $b \in \mathbb{C}$, proving that
\begin{equation}\label{for-c=1}\mathrm{span}\{S_{2,1}(-1)u_1, \ S_{2,1}(- 1)u_2  - \frac{4}{3}L_{-1}^2u_1 \} \subset J(1,1/4,k)(2),
\end{equation}
and $\kappa^-_{2,1} \geq 2$.

Next we consider 
\[\mathcal{A}_2(1,1/4)[R_2^{n-2}]_{\mathcal{B}_2} =   -\left. \Big(\frac{\partial}{\partial y} \mathcal{A}_2(1,y)\Big) \vec{b} - \frac{1}{2!}\left(\Big(\frac{\partial}{\partial y} \Big)^2 \mathcal{A}_2(1,y)\right) \vec{s} \right|_{y = 1/4}\]
which is equivalent to 
\begin{equation}\label{no-solutions} \frac{3}{2} \begin{bmatrix}
1 & 1 \\
1 & 1
\end{bmatrix} [R_2^{n-2}]_{\mathcal{B}_2}  = -\begin{bmatrix} 
8 & 6\\
6  & 4
\end{bmatrix} \begin{bmatrix}
b_1\\b_2
\end{bmatrix} -  \frac{1}{2}\left[\begin{array}{cc}16&0\\0&0\end{array}\right] \begin{bmatrix}
a\\-a
\end{bmatrix},
\end{equation} 
for $\vec{b} = (b_1, b_2) = (-b_2 - \frac{4}{3} a, b_2)$ a solution to Eqn.\ (\ref{find-b}), and $\vec{s} = (a, -a) \in Ker \mathcal{A}_2(c,h)$. Then Eqn.\ (\ref{no-solutions}) has a solution if and only if $a = 0$. Thus the only solution is 
already in the left hand side of Eqn.\ (\ref{for-c=1}), so that no vectors involving $u_j$ for $j>2$ are in $J(1,1/4,k)(2)$, and $\kappa_{2,1}^- = 2$.    Therefore  for $k\geq 2$
\[
J(1,1/4,k) = \langle S_{2,1}(-1)u_1, \ S_{2,1}(-1) u_2 - \frac{4}{3} L_{-1}^2u_1 \rangle.
\]

The proof for $t = 1$, i.e., $c = 25$ and $h = h_{2,1}(1) = -5/4$ is analogous, and in this case, we also have that $\kappa^+_{2,1} = 2$, and for $k\geq 2$
\[J(25,-5/4,k) = \langle S_{2,1}(1)u_1, \ S_{2,1}( 1)u_2 + \frac{4}{3}L_{-1}^2u_1  \rangle.\]

\section{Classification of strongly interlocked $V_{Vir}$-modules induced from the level zero Zhu algebra}\label{Virasoro-classification-subsection}

In this section, we characterize when  the modules $W(c,h,k)$ for $V_{Vir}(c,0)$ induced from the level zero Zhu algebra are interlocked. We also prove that when such a module $W(c,h,k)$ is interlocked, then it is strongly interlocked, and has well-defined graded pseudo-traces. 

In analogy with the Heisenberg vertex operator algebra, the $A_0(V_{Vir}(c,0))$-modules $U(c,h,k)$ are interlocked for all $(c,h)\in \mathbb{C}^2$ and $k \in \mathbb{Z}_{>0}$ and the proof is exactly analogous to that for the Heisenberg algebra since $U(c,h,k) = U(h, k)$. However for $k>1$, we will show that the $V_{Vir}(c,0)$-module $W(c,h,k) = \mathfrak{L}_0(U(c,h,k))$ will be interlocked if and only if $(c,h) \in \mathbb{C}^2$ are such that $T(c,h)= 0$, i.e., $M(c,h)$ falls in Case (0), or $(c,h)$ falls in Case (1)(ii) and $k\geq \kappa_{r,s}^\pm$. To prove this, we first observe that for all $c \in \mathbb{C}$:\\
(i) $V_{Vir}(c,0) = \langle \omega \rangle$.\\
(ii) $A_0(V_{Vir}(c,0) \cong \mathbb{C}[o(\omega)]$. \\
Thus we can consider how Theorem \ref{extra-conditions-theorem} applies to this setting.   

If $T(c,h) = 0$, then $\det \mathcal{A}_\ell(c,h) \neq 0$, i.e. the Shapovalov form at each level $\ell \in \mathbb{Z}_{\geq 0}$ is nondegenerate.  This implies that for $\{u_1, \dots, u_k\}$ a Jordan basis for $o(\omega) = L_0$ acting on $U(c,h,k)$, then without loss of generality, we can let $u_1 = \mathbf{1}_{c,h}$ and $< \cdot, \cdot >_\ell$ is the Shapovalov form at level $\ell$.  Thus we have the following Corollary to Theorem \ref{extra-conditions-theorem}:

\begin{cor}\label{Case-0-cor}
Let $U = U(c, h, k) = \mathbb{C}[x]/((x- h)^k)$  for $k\geq 1$. If $T(c,h) = 0$, then the $V_{Vir}(c,0)$-modules $W = W(c,h,k) = \mathfrak{L}_0(U(c,h, k)) = M_0(U(c,h,k))$ are  strongly interlocked, and $\mathrm{pstr}_W(v ,\tau)$ is well defined for every $v \in V_{Vir}(c,0)$. 

In particular, letting $\{u_1, \dots, u_k\}$ be a Jordan basis for $o(\omega) = L_0$ acting on $U(c,h,k)$, and setting $W^{(j)} = \mathfrak{L}_0(\mathrm{span}\{u_1, \dots, u_j\})$ for $j = 0,\dots,k$, we have that $W^{(j)}$ is interlocked with $W^{(k-j)}$, and  
$\langle u_1 \rangle = Soc(W(c, h,k))$ is interlocked with  $\langle u_1, \dots, u_{k-1} \rangle = Rad(W(c,h,k))$.

 Furthermore, each $o(v)$ acting on $W(c,h, k)(\ell)$ for $v \in V_{Vir}(c,0)$ has a $k \mathfrak{p}(\ell) \times k\mathfrak{p}(\ell)$ matrix realization in this basis so that $o(v)|_{W(c,h,k)(\ell)}$ 
can be decomposed as in Eqn.\ (\ref{Our-decomp}), with $A = o(v)|_{Soc(W(c,h, k))(\ell)}$ a $ \mathfrak{p}(\ell) \times \mathfrak{p}(\ell)$ matrix, since $\dim Soc(W)(\ell) = \mathfrak{p}(\ell)$. 
\end{cor}

To determine which $V_{Vir}(c,0)$-modules are strongly interlocked if $T(c,h) \neq 0$, i.e. Case (1) or Case (2), we first calculate the socle and \sout{Jacobson} radical of the induced modules.  For comparison, we include Case (0) in our analysis even though Corollary \ref{Case-0-cor} covers this case.   

\begin{prop}\label{soc-rad-prop}
Let $U = U(c, h, k) = \mathbb{C}[x]/((x- h)^k)$  for $k\geq 2$,  $J = J(c,h,k)$, and $W = W(c, h, k)  = \mathfrak{L}_0(U) =  M(c,h,k)/J$. Then, for all $(c,h)$, the socle of the $V_{Vir}(c,0)$-module $W$ is given by
\[Soc(W)  = ( \langle u_1 \rangle  + J)  / J\cong L(c,h) = M(c,h)/T(c,h).\]

For the radical of $W$, we have:

In Case (0), i.e., if $T(c,h) = 0$, then recalling that in this case $J = 0$,
\[Rad(W) = \langle u_1, \dots, u_{k-1} \rangle.\]

In Case (1), i.e., if $T(c,h) = \langle S_{r,s}(t){\bf 1}_{c,h} \rangle$ for some $r,s$, then 
\[Rad(W) = (\langle u_1, \dots, u_{k-1}, S_{r,s}(t)u_k \rangle  + J) /J .\]

In Case (2), i.e., if $T(c,h) = \langle S_{r,s}(t){\bf 1}_{c,h}, S_{r',s'}(t) {\bf 1}_{c,h} \rangle$ for some $r,s, r', s'$, then 
\[Rad(W) = (\langle u_1, \dots, u_{k-1}, S_{r,s}(t)u_k, S_{r',s'}(t)u_k\rangle)  + J)  /J.\]  
\end{prop}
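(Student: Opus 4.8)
The plan is to adapt the argument of Proposition \ref{Soc-Heisenberg} to the Virasoro setting, where the new feature is that the Shapovalov form on $M(c,h)$ may be degenerate. Throughout I use that $W = W(c,h,k)$ is cyclic, generated by $u_k$ (since $(L_0 - hI)u_j = u_{j-1}$, so $u_k$ generates all $u_j$ and hence all of $W$), that every submodule $P$ is graded by generalized $L_0$-weight and hence by degree, and that on each degree space $W(\ell) = M(c,h,k)(\ell)/J(c,h,k)(\ell) = M(c,h,k)(\ell)/Ker\,\mathfrak{A}_\ell^{(k)}(c,h)$ the induced Shapovalov form is \emph{nondegenerate} by construction. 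The first key step is the lemma that every nonzero submodule $P$ of $W$ has $P \cap U \neq 0$, where $U := W(0) = \mathrm{span}\{u_1,\dots,u_k\}$: if not, $P$ would have a lowest-weight vector $\sigma$ at some degree $\ell_0 \geq 1$ which is singular, i.e.\ $L_n\sigma = 0$ in $W$ for all $n>0$; but then for every $T \in \mathcal{R}_{\ell_0}$ the raising operator $T^\dagger$ (a product of positive modes) annihilates $\sigma$, so $\sigma$ pairs to zero with all of $W(\ell_0)$ under the nondegenerate form, forcing $\sigma = 0$, a contradiction.

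For the socle, I first show $\langle u_1\rangle \cong L(c,h)$. The vector $u_1$ is a genuine lowest-weight vector ($L_0 u_1 = h u_1$), so $\langle u_1\rangle$ is a quotient of $M(c,h)$ via $R\mathbf{1}_{c,h}\mapsto Ru_1$; by Lemma \ref{derivative-lemma}, $T^\dagger R u_1 = \langle T\mathbf{1}_{c,h}, R\mathbf{1}_{c,h}\rangle u_1$, so $Ru_1 \in J(c,h,k)$ exactly when $R\mathbf{1}_{c,h}$ lies in the radical of the Shapovalov form, which is $T(c,h)$; hence $\langle u_1\rangle \cong M(c,h)/T(c,h) = L(c,h)$ is simple. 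Conversely, any simple submodule $S$ meets $U$ by the lemma above, so $\Omega_0(S) = S\cap U$ is a nonzero simple $A_0(V_{Vir}(c,0)) = \mathbb{C}[x]$-submodule of $U \cong \mathbb{C}[x]/((x-h)^k)$ (the lowest-weight space of a simple lowest-weight module is simple for the zero-mode algebra, per the correspondence recalled before Theorem \ref{mainthm}); the only such submodule is $\mathbb{C}u_1$, whence $S = \langle u_1\rangle$. Thus $Soc(W) = \langle u_1\rangle \cong L(c,h)$ in all three cases.

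For the radical I produce a surjection $\bar\Phi\colon W \to L(c,h)$ whose kernel is the claimed module $N$, and then show $N$ is the \emph{unique} maximal submodule. Applying the universal property of $\overline{M}_0(U(c,h,k)) = M(c,h,k)$ (Remark \ref{Vir-M=Mbar}) to the $\mathbb{C}[x]$-map $U \to \Omega_0(L(c,h)) = \mathbb{C}\bar v$ sending $u_k\mapsto \bar v$, $u_1,\dots,u_{k-1}\mapsto 0$, gives a $V$-module map $\Phi\colon M(c,h,k)\to L(c,h)$; since $\Phi(J(c,h,k))$ is a submodule of the simple module $L(c,h)$ with zero degree-$0$ part (because $J(c,h,k)(0) = 0$), it vanishes, so $\Phi$ descends to a surjection $\bar\Phi\colon W\to L(c,h)$. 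Writing $w = \sum_j R_j u_j$ with $R_j \in \mathcal{R}$, degree preservation gives $\bar\Phi(w) = R_k\bar v$, which is zero iff $R_k\mathbf{1}_{c,h}\in T(c,h)$. Using that $M(c,h)$ is free over $\mathcal{U}(\mathcal{L})^-$ and Proposition \ref{propcases} (which identifies the generators of $T(c,h)$ as the $S_{r,s}(t)\mathbf{1}_{c,h}$), this condition translates into $R_k u_k$ lying in the submodule generated by the corresponding $S_{r,s}(t)u_k$; one reads off $Ker\,\bar\Phi = \langle u_1,\dots,u_{k-1}\rangle$ in Case (0), $= \langle u_1,\dots,u_{k-1}, S_{r,s}(t)u_k\rangle$ in Case (1), and $= \langle u_1,\dots,u_{k-1}, S_{r,s}(t)u_k, S_{r',s'}(t)u_k\rangle$ in Case (2), i.e.\ the claimed $N$. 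In particular $W/N \cong L(c,h)$, so $N$ is maximal.

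Finally, to upgrade $N$ from maximal to $Rad(W)$, I show every proper submodule $P$ is contained in $N = Ker\,\bar\Phi$. The image $\bar\Phi(P)$ is a submodule of the simple module $L(c,h)$, hence $0$ or all of $L(c,h)$; if it were all of $L(c,h)$ then, comparing degree-$0$ parts and using that $\bar\Phi$ is degree preserving, $\bar\Phi(P\cap U) = L(c,h)(0) = \mathbb{C}\bar v \neq 0$, whereas $P$ proper forces $P\cap U$ to be a proper $\mathbb{C}[x]$-submodule of $U$ (else $u_k \in P$ and $P = W$), hence $P\cap U \subseteq \mathrm{span}\{u_1,\dots,u_{k-1}\} = Ker(\bar\Phi|_U)$ --- a contradiction. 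Therefore $\bar\Phi(P) = 0$, so $P \subseteq N$, and $N$ is the unique maximal submodule, giving $Rad(W) = N$. The main obstacle is the precise identification of $Ker\,\bar\Phi$ with the explicit singular-vector submodules in Cases (1) and (2): it requires passing from the membership condition $R_k\mathbf{1}_{c,h}\in T(c,h)$ on the Verma module to membership of $R_k u_k$ in the $V_{Vir}(c,0)$-submodule generated by $S_{r,s}(t)u_k$ (and $S_{r',s'}(t)u_k$), where the freeness of $M(c,h)$ over $\mathcal{U}(\mathcal{L})^-$ and Corollary \ref{J-theorem} are the essential inputs.
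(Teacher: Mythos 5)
Your proof is correct, and the radical half takes a genuinely different route from the paper's. For the socle, both you and the paper reduce to the fact that the only simple $\mathbb{C}[x]$-submodule of $U$ is $\mathbb{C}u_1$; the difference is that you first prove the auxiliary lemma that every nonzero submodule of $W$ meets $U = W(0)$ (a lowest-degree vector is singular, hence annihilated by all of $\mathcal{R}_{\ell_0}^\dagger$, hence zero by the defining nondegeneracy of the pairing in the construction of $\mathfrak{L}_0$), which rules out simple submodules supported in positive degrees --- a point the paper's one-line appeal to the bijection between simple $A_0$- and simple $V$-modules leaves implicit. For the radical, the paper argues directly with the pairing: in Case (0) no vector $R_\ell u_k$ can lie in a proper submodule because some $T_\ell^\dagger R_\ell u_k$ has nonzero $u_k$-component, while in Cases (1) and (2) the vectors $S_{r,s}(t)u_k$ (and $S_{r',s'}(t)u_k$) pair into $\langle u_1,\dots,u_{k-1}\rangle$, so the stated submodule is the radical. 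You instead construct the quotient map $\bar\Phi\colon W \to L(c,h)$, $u_k \mapsto \bar v$, via the universal property of $\overline{M}_0(U(c,h,k)) = M(c,h,k)$ (Remark \ref{Vir-M=Mbar}), identify $Ker\,\bar\Phi$ with the claimed submodule using freeness of $M(c,h)$ over $\mathcal{U}(\mathcal{L})^-$ and the singular-vector generators of $T(c,h)$ (Proposition \ref{propcases}), and then show this kernel is the \emph{unique} maximal submodule by the degree-zero argument: a proper submodule $P$ has $P\cap U \subseteq \mathrm{span}\{u_1,\dots,u_{k-1}\}$, forcing $\bar\Phi(P)=0$. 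Your route is slightly longer but buys more: it yields $W/Rad(W) \cong L(c,h) \cong Soc(W)$ as a byproduct (which the paper re-derives separately in Proposition \ref{interlocked-thm}) and establishes that $Rad(W)$ is the unique maximal submodule, whereas the paper's pairing computation is shorter and stays entirely inside the Section 6 machinery. Two cosmetic points: the citation of Corollary \ref{J-theorem} in your final sentence is unnecessary (freeness plus Proposition \ref{propcases} suffice, as your own kernel computation shows), and $S\cap U$ is the degree-zero space $S(0)$ rather than $\Omega_0(S)$ (the two coincide here only once one knows $S$ is simple with nonzero degree-zero part); neither issue affects correctness.
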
 

\begin{proof}  
For all cases, since $u_1$ generates a simple module, it is clear that $\mathbb{C}u_1\subset Soc(U)$. Assume there is another simple submodule $\mathbb{C}[x].v\subset Soc(U)$. Then, by Eqn.\ (\ref{L_0-on-u}) it follows that $(L_0-hI)^i v\in \mathbb{C}u_1$ for some $0\leq i\leq k$. Therefore, $\mathbb{C}u_1$ is the only simple submodule and we have that $Soc(U)=\mathbb{C}u_1$. Since the functor $\mathfrak{L}_0$ gives a bijection between simple $A_0(V_{Vir})$-modules and simple $V_{Vir}$-modules, it follows that $Soc(W) = \mathfrak{L}_0(Soc(U)) = \mathfrak{L}_0(\mathbb{C}u_1).$  Since $\mathbb{C}u_1$ is the only irreducible module for the level zero Zhu algebra with conformal weight $h$,  $Soc(W) =\mathfrak{L}_0(\mathbb{C}u_1)$ is the unique simple $V_{Vir}$-module of conformal weight $h$, i.e., $Soc(W) \cong L(c,h)$. 
 
For $Rad(W)$, we first note that viewing $U = \mathrm{span} \{ u_1, \dots, u_k\}$ as an $A_0(V_{Vir})$-module, it follows from linear algebra that $Rad(U) = \mathrm{span}_ \mathbb{C} \{ u_1, \dots, u_{k-1} \}$.  Thus the induced module $\mathfrak{L}_0( Rad(U)) =  (\langle  u_1, \dots, u_{k-1} \rangle  + J)  / J = W'$ is a proper $V_{Vir}$-submodule of $W = \mathfrak{L}_0(U)$, and $W'$ is contained in some maximal submodule $M$ of $W$. In fact, we will show that $W'$ must be contained in every maximal submodule $M$ of $W$. 

Case (0) follows from Corollary \ref{Case-0-cor}. 

For Case (1), by Lemma \ref{derivative-lemma}, for any $T_d \in \mathcal{R}_d$ for $d=rs$, we have that $T_d^\dag S_{r,s}(t) u_k  + J  = a_{T_d, S_{r,s}(t)} (c,h)  u_k + u' + J$ for some $u'  + J\in (\langle u_1, \dots, u_{k-1} \rangle + J) / J= W'$. However, since $< T_d {\bf 1}_{c,h} , S_{r,s}(t) {\bf 1}_{c,h} > =   a_{T_d, S_{r,s}(t)}(c,h)  = 0$  for all $T_d\in \mathcal{R}_d$ if and only if $S_{r,s}(t) {\bf 1}_{c,h}$ is a singular vector in $Ker \, \mathcal{A}_d(c,h) = T(c,h)$ if and only if $R_d = S_{r,s}(t)$, we have that  any descendant of $S_{r,s}(t) u_k + J$ is in $W' = (\langle u_1, \dots, u_{k-1} \rangle + J)  / J$, and no other $R_\ell u_k + J \in Rad(W)$ for $\ell \leq d$. Thus $(\langle u_1, \dots, u_{k-1}, S_{r,s}u_k  \rangle + J) /J = Rad(W)$.  

Finally, for Case (2), by the argument above, $S_{r,s} u_k + J, \ S_{r',s'} u_k + J \in Rad(W)$ and any other vectors in $Rad(W)$ must be descendants of these,  and therefore  $(\langle u_1, \dots, u_{k-1}, S_{r,s}u_k, S_{r',s'} u_k \rangle + J)  / J = Rad(W)$. 
\end{proof}

\begin{thm} \label{interlocked-thm} 
For all $c,h \in \mathbb{C}$ and $k\in \mathbb{Z}_{>0}$, the $V_{Vir}(c,0)$-module $W = W(c,h,k)$  satisfies
\begin{equation}\label{W/Rad=Soc}
W/Rad(W) \cong Soc(W).
\end{equation}
 
\noindent Moreover, the equation
\begin{equation}\label{W/Soc=Rad}
W/Soc(W) \cong Rad(W)
\end{equation}
trivially holds when $k = 1$, i.e., for $W=W(c,h,1)$ and for all $(c,h) \in\mathbb{C}^2$, whereas for $k>2$, Eqn.\ (\ref{W/Soc=Rad}) is satisfied  
if and only if one of the following holds:

Case (0) holds, i.e., $T(c,h) = 0$;

Case (1)(ii) holds and $k \leq \kappa_{r,s}^\pm$, where $\kappa_{r,s}^\pm$ is as defined in Theorem \ref{degree-d-theorem}.  That is, $t = \pm 1$, $r \neq s$, and $\kappa_{r,s}^\pm$ is either the maximum positive integer such that for each $1\leq n\leq \kappa^\pm_{r,s}$, there exist $R_d^1, \dots, R_d^n \in \mathcal{R}_d$ with $R_d^n \neq 0$ that give a solution to Eqn.\ (\ref{kappa})
and if $n>\kappa^\pm_{r,s}$, then there is no solution to Eqn.\ (\ref{kappa}) with $R_d^n \neq 0$, or no such positive integer $\kappa_{r,s}^\pm$ exists and solutions to Eqn.\ (\ref{kappa}) exist for all $n \in \mathbb{Z}_{>0}$, in which case we write $\kappa_{r,s}^\pm = \infty$.
\end{thm}

\begin{proof} We first prove Eqn.\ (\ref{W/Rad=Soc}) holds for $W = W(c,h,k)$ all $(c,h) \in \mathbb{C}^2$ and $k \in \mathbb{Z}_{\geq 0}$.

Case (0) follows from Corollary \ref{Case-0-cor}. 

For Case (1): If $T(c,h) =\langle S_{r,s}(t) {\bf 1}_{c,h} \rangle$, by Proposition \ref{soc-rad-prop}, we have 
 \begin{eqnarray*}
W/Rad(W) &=& W/((\langle u_1, \dots, u_{k-1}, S_{r,s}(t) u_k \rangle + J)  / J) \ = \ (\langle u_k \rangle  + J)/(\langle u_1, \dots, u_{k-1}, S_{r,s}(t) u_k \rangle  + J)\\
& \cong & (\langle u_1 \rangle  + J)  / (\langle  S_{r,s}(t) u_1 \rangle  + J) \ \cong \ M(c,h)/T(c,h) \ = \ L(c,h) \ \cong \ Soc(W),
\end{eqnarray*}
where we have used that $(\langle u_{k}\rangle  + J)/ (\langle u_1, \dots, u_{k-1}, S_{r,s}(t)u_k\rangle  + J$ and $(\langle u_1 \rangle  + J)/ (\langle S_{r,s}(t) u_1 \rangle + J)$ are both highest weight modules for $\mathcal{L}$, with highest weight $h$ and central charge $c$ which are generated by cyclic vectors $u_k + \langle u_1, \dots, u_{k-1}, S_{r,s}(t)u_k\rangle + J$ and $u_1 
 + \langle S_{r,s}(t)u_1 \rangle  + J$,  respectively,  that satisfy $S_{r,s}(t)u_i \equiv 0$ for $i=k, 1$, respectively, modulo the corresponding submodule. By uniqueness of $M(c,h)$, and  $T(c,h)$ for a given $(c,h)$ in Case (1), the result follows.

For Case (2): If $T(c,h) = \langle S_{r,s}(t) {\bf 1}_{c,h},   S_{r',s'}(t) {\bf 1}_{c,h}\rangle$ by Proposition \ref{soc-rad-prop}, we have  
 \begin{eqnarray*}
 W/Rad(W) &= & W/((\langle u_1, \dots, u_{k-1}, S_{r,s}(t)u_k, S_{r',s'}(t)u_k \rangle  + J)  / J)\\
&\cong& \ (\langle u_k \rangle + J) /(\langle  u_1, \dots, u_{k-1}, S_{r,s}(t) u_k , S_{r',s'}(t) u_k \rangle + J) \\
&\cong&  (\langle u_1 \rangle + J)/(\langle S_{r,s}(t) u_1 , S_{r',s'}(t) u_1 \rangle  + J) \ \cong \  
 M(c,h)/T(c,h) \ = \ L(c,h) \ \cong \ Soc(W) ,
\end{eqnarray*}
where we have used that 
\[(\langle u_{k}\rangle + J)/ (\langle u_1, \dots, u_{k-1}, S_{r,s}(t)u_k, S_{r',s'}(t) u_k \rangle  + J) \quad  \mbox{and} \quad (\langle u_1 \rangle  + J) / (\langle S_{r,s}(t) u_1, S_{r',s'}(t) u_1  \rangle + J)\] 
are both highest weight modules for $\mathcal{L}$, with highest weight $h$ and central charge $c$ which are generated by cyclic vectors $u_k + \langle u_1, \dots, u_{k-1}, S_{r,s}(t)u_k, S_{r',s'}(t) u_k \rangle  + J$ and $u_1 
 + \langle S_{r,s}(t)u_1, S_{r',s'}(t) u_1 \rangle  + J$,  respectively,  that satisfy $S_{r,s}(t)u_i \equiv 0$ and $S_{r',s'}(t)u_i \equiv 0$ for $i=k, 1$, respectively, modulo the corresponding submodule. By uniqueness of $M(c,h)$, and  $T(c,h)$ for a given $(c,h)$ in Case (2), the result follows.   

For Eqn.\ (\ref{W/Soc=Rad}), we note that if $k = 1$, then $W(c,h,k) = W(c,h,1) = Soc(W) \cong L(c,h)$ and $Rad(W) = 0$, so that the equation holds.  

For $k>1$, we  next prove the statements for Eqn.\ (\ref{W/Soc=Rad}) hold. 

Case (0) follows from Corollary \ref{Case-0-cor}.

For Case (1)(i) and Case (2): If we are in Case (1)(i), (i.e.,  $t \neq \pm 1$ or $t = \pm1$ and $r = s$), or we are in Case (2), then by Corollary \ref{J-theorem}  at degree $d = rs$ we have $J(c,h,k)(d) = \mathbb{C} S_{r,s}(t)u_1$. Therefore, since $Soc(W) = ((\langle u_1 \rangle + J)  / J)\cong L(c,h)$ and thus $Soc(W)(d) = \mathcal{R}_d.u_1 \smallsetminus \mathbb{C}S_{r,s}(t)u_1 \mod J$, we have that $(W/Soc(W))(d) = (\mathcal{R}_d.\{u_2,\dots, u_k\} + J) / J$. 
However $Rad(W)(d) = ((\mathcal{R}_d.
\{u_1, \dots, u_{k-1} \} + \mathbb{C}S_{r,s}(t)u_k \smallsetminus \mathbb{C}S_{r,s}(t)u_1) + J)  / J$. Therefore the module $W' = W/Soc(W)$ contains an eigenvector $w$ for $L_0$ in $W'(0)$ such that $S_{r,s}(t)w + J$ is a nonzero singular vector in $W'$. Namely, such a vector is given by $w = (u_2 + J) \mod Soc(W)$.

Whereas for $Rad(W)$ no such vector in $Rad (W)(0)$ exists since the only eigenvector for $L_0$ at degree zero is a scalar multiple of $u_1 + J$.  But $S_{r,s}(t)u_1 + J = 0 + J$. Thus $W' = W/Soc(W) \ncong Rad(W)$. This shows that in Case (1)(i) and Case (2), Eqn.\ (\ref{W/Soc=Rad}) holds if and only if $k = 1$.

It is left to show that Eqn.\ (\ref{W/Soc=Rad}) holds for Case (1)(ii) when $k \leq \kappa_{r,s}^\pm$ and does not hold if $k>\kappa_{r,s}^\pm$.  Thus assume that 
$t = \pm 1$ and $r \neq s$. Then by Corollary \ref{J-theorem}, in this case $J(c,h,k)(d)$ is given by Eqn.\ (\ref{J-pm-(ii)}).  
 
Then for any $k \in \mathbb{Z}_{>0}$, we have $Soc(W) = (\langle u_1 \rangle  + J) / J\cong L(c,h)$ and,  if $k \geq 2$, up to a scalar multiple, $u_2 +  J +  Soc(W)$ is the only eigenvector for $L_0$ at degree zero in $W/Soc(W)$, whereas up to a scalar multiple, $u_1 + J$ is the only eigenvector for $L_0$ at degree zero in $Rad(W)$.  

Thus, if there were an isomorphism $\varphi: W/Soc(W) \longrightarrow Rad(W)$ then $\varphi(u_2  + J + Soc(W)) = au_1 + J$ for some nonzero constant $a$. 
Looking at subsequent generalized eigenvectors for $L_0$ and their order of nilpotency with respect to $L_0 - h Id_W$ at degree zero in $W/Soc(W)$ and $Rad(W)$, if such a $\varphi$ exists, we must have that for $j = 2, \dots, k$, there exist nonzero constants $a_j$ such that $\varphi(u_j + J + Soc(W)) = a_j u_{j-1} + J$. 

If $k > \kappa_{r,s}^{\pm}$, then letting $n = \kappa_{r,s}^\pm + 1$ (we are necessarily assuming that $\kappa_{r,s}^\pm < \infty$ here), then since $S_{r,s}(\pm 1)u_{n-1} + \sum_{i = 1}^{n-1} R_{r,s}^{\pm, i}u_{n-i} \in J(c,h,k)$, we have
\begin{eqnarray*}
 \varphi(S_{r,s}(\pm 1) u_n  + J + Soc(W)) &=& a_{n} S_{r,s}(\pm1)u_{n-1} + J \ = \ -a_n \sum_{i = 2}^{n-1} R_{r,s}^{\pm, i} u_{n-i} + J  \nonumber \\
&=& -a_n \sum_{i = 2}^{n-1} R_{r,s}^{\pm, i} 
 \frac{1}{a_{n-i+1}} \varphi(u_{n-i+1} + Soc(W)) \nonumber\\
 &=& \varphi ( -a_n \sum_{i = 2}^{n-1} R_{r,s}^{\pm, i} 
 \frac{1}{a_{n-i+1}} u_{n-i+1} + Soc(W)). 
 \end{eqnarray*}
 However, $S_{r,s}(\pm1)u_n + J  \notin  (\langle u_1, \dots, u_{n-1}\rangle  + J)/J \subset W$ for $n>\kappa_{r,s}^\pm$ and since $Soc(W) = (\langle u_1 \rangle + J)  / J$, this implies $S_{r,s}(\pm1)u_n  + J + Soc(W) \notin (\langle u_1, \dots, u_{n-1} \rangle + J) /Soc(W)$. Therefore no such $\varphi$ exists, and Eqn.\ (\ref{W/Soc=Rad}) does not hold.  

It remains to show that Eqn.\ (\ref{W/Soc=Rad}) does hold when $k \leq \kappa_{r,s}^\pm$. In this case, we claim that $\varphi(u_j  + J + Soc(W)) = u_{j-1} + J$ for $2\leq j \leq k$ uniquely determines a $V_{Vir}(c,0)$-module map that is an isomorphism between $W/Soc(W)$ and $Rad(W)$.  In particular, we define 
\begin{eqnarray*}
\varphi: W/Soc(W) &\longrightarrow & Rad(W)\\
R_\ell u_j  + J + Soc(W) & \mapsto & R_\ell u_{j-1} + J,
\end{eqnarray*}
for $R_\ell \in \mathcal{R}_\ell$, and $j = 2, \dots, k$, with $\varphi$ extended linearly.  Then  
$\varphi$ is well defined since if $R_\ell u_j  + J + Soc(W) = T_\ell u_j  + J + Soc(W)$, for some $R_\ell, T_\ell \in \mathcal{R}_\ell$, then $R_\ell u_j - T_\ell u_j + J \in Soc(W)  = (\langle u_1 \rangle + J)/J$, and thus $\varphi((R_\ell - T_\ell)u_j  + J + Soc(W)) =  \varphi(0 + J + Soc(W)) =  0 + J$,   implying $R_\ell u_{j-1} + J = T_\ell u_{j-1} + J$. 

By definition $\varphi$ is surjective.

To show that $\varphi$ is injective, we note that if $\varphi(\sum_{j = 2}^k R_{\ell,j} u_j + J + Soc(W)) = 0 + J$ for some $R_{\ell,j} \in \mathcal{R}_\ell$, then $\sum_{j=2}^k R_{\ell,j} u_{j-1} \in J$.  By Theorem \ref{J=Case1ii} this implies that $\sum_{j=2}^k R_{\ell,j} u_{j-1} \in \langle S_{r,s}(\pm1)u_n + \sum_{i = 1}^{n-1} R_{r,s}^{\pm, i}u_{n-i} \; | \; n = 1, \dots, k \rangle $. If $\ell < rs$, this implies $\sum_{j=2}^k R_{\ell,j} u_{j-1}  = 0$.  By linear independence this implies $R_{\ell,j} = 0$ for each $j = 2, \dots, k$, so that 
$\sum_{j = 2}^k R_{\ell,j} u_j + J + Soc(W) = 0 + J + Soc(W)$.  If $\ell \geq rs$, then letting $m$ be the largest $m \in \{2, \dots, k\}$ such that $R_{\ell, m} \neq 0$, we have that 
$\sum_{j=2}^k R_{\ell,j} u_{j-1} = \sum_{j = 2}^m R_{\ell ,j} u_{j-1} = T_{\ell-rs} (S_{r,s}(\pm1)u_{m-1} + \sum_{i=1}^{m-2}R_{r,s}^{\pm, i}u_{m-1-i})$ for some $T_{\ell - rs} \in \mathcal{R}_{\ell-rs}$.  But then this implies that 
\begin{eqnarray*}
\sum_{j = 2}^k R_{\ell,j} u_j + J + Soc(W) &=& T_{\ell-rs}(S_{r,s}(\pm1) u_m + \sum_{i=1}^{m-2}R_{r,s}^{\pm, i}u_{m-i}) + J + Soc(W)\\
&=& -T_{\ell - rs}R_{r,s}^{\pm,m-1}u_1 + J +  (\langle u_1\rangle +J )/J \\
&=& 0 + J +(\langle u_1\rangle +J )/J \\
&=& 0+ J + Soc(W),
\end{eqnarray*}
proving injectivity.
\end{proof}

Note that, in particular, Theorem \ref{interlocked-thm} implies that for some $C_1$-cofinite vertex operator algebras, there exist indecomposable reducible modules which do not have well-defined graded pseudo-traces as defined in this paper or in the setting of \cite{Miyamoto2004}.

Next we have the following Lemma which first assumes $W$ is an interlocked $V_{Vir}(c,0)$-module with certain properties.

\begin{lem}\label{interlocked-lemma}
Let $W$ be an interlocked $V_{Vir}(c,0)$-module induced from level 0 which contains a unique maximal submodule  and has an irreducible socle. Then, as $V_{Vir}(c,0)$-modules
$$W/Soc(W)\cong Rad(W).$$
\end{lem}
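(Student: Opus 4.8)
The plan is to establish the lemma by showing that the isomorphism $W/\mathrm{Soc}(W) \cong \mathrm{Rad}(W)$ is in fact a \emph{consequence} of the interlocked hypothesis, by applying the definition of interlocked module directly to the submodule $\mathrm{Soc}(W)$. Recall that Definition \ref{interlocked-def}(2) states that $W$ is interlocked if for \emph{every} submodule $W^{(1)}$ of $W$ there exists a submodule $W^{(2)}$ with $W^{(1)} \cong W/W^{(2)}$ and $W^{(2)} \cong W/W^{(1)}$. The natural move is to take $W^{(1)} = \mathrm{Soc}(W)$ and extract the corresponding $W^{(2)}$.

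First I would apply the interlocked property with $W^{(1)} = \mathrm{Soc}(W)$, obtaining a submodule $W^{(2)} \subseteq W$ with
\begin{equation}\label{lem-interlock-app}
\mathrm{Soc}(W) \cong W/W^{(2)} \qquad \text{and} \qquad W^{(2)} \cong W/\mathrm{Soc}(W).
\end{equation}
The second isomorphism already gives $W^{(2)} \cong W/\mathrm{Soc}(W)$, so to finish it suffices to identify $W^{(2)}$ with $\mathrm{Rad}(W)$. The key observation is the first isomorphism in \eqref{lem-interlock-app}: since $\mathrm{Soc}(W)$ is simple (by Proposition \ref{soc-rad-prop}, $\mathrm{Soc}(W) \cong L(c,h)$ is simple because $W$ is indecomposable with one-dimensional lowest-degree generalized eigenspace structure coming from the induced module), the quotient $W/W^{(2)}$ is simple, which forces $W^{(2)}$ to be a \emph{maximal} proper submodule of $W$. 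Hence $\mathrm{Rad}(W) = \bigcap \{\text{maximal submodules}\} \subseteq W^{(2)}$.

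Next I would argue the reverse containment, $W^{(2)} \subseteq \mathrm{Rad}(W)$, to conclude $W^{(2)} = \mathrm{Rad}(W)$. For this I would use that $W$ is an indecomposable induced module whose quotient $W/\mathrm{Rad}(W) \cong \mathrm{Soc}(W)$ is simple (this is precisely Eqn.\ \eqref{W/Rad=Soc}, which holds unconditionally by Proposition \ref{interlocked-thm}). Thus $\mathrm{Rad}(W)$ is itself a maximal submodule, and $W$ has a unique maximal submodule equal to $\mathrm{Rad}(W)$ — this uniqueness follows from the structure of these induced modules, where every proper submodule is contained in $\mathrm{Rad}(W)$ because all proper submodules miss the top generalized weight vector $u_k$ (the nondegeneracy/singular-vector analysis of Proposition \ref{soc-rad-prop} shows any descendant of $u_k$ not killed in $J$ generates everything modulo lower $u_j$). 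Since $W^{(2)}$ is a maximal proper submodule and $\mathrm{Rad}(W)$ is the unique maximal submodule, we get $W^{(2)} = \mathrm{Rad}(W)$. Combining with the second isomorphism of \eqref{lem-interlock-app} yields $\mathrm{Rad}(W) = W^{(2)} \cong W/\mathrm{Soc}(W)$, as desired.

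The main obstacle I anticipate is rigorously justifying that $\mathrm{Rad}(W)$ is the \emph{unique} maximal submodule of $W$ — equivalently, that every proper submodule lies inside $\mathrm{Rad}(W)$. This is where the detailed module structure from Proposition \ref{soc-rad-prop} and the characterization of $J(c,h,k)$ must be invoked: one needs that the only generalized $L_0$-eigenvector at the lowest weight outside $\mathrm{Rad}(W)$ is (a scalar multiple of) $u_k + J$, so that any submodule containing a vector with nonzero $u_k$-component must, by acting with lowering operators and using the nondegeneracy of the Shapovalov pairing, contain $u_k$ and hence equal $W$. Once this uniqueness is pinned down, the rest of the argument is a formal consequence of the interlocked definition; the subtlety is entirely in the structural input about maximal submodules, which in the problematic cases (Case (1)(i), Case (2), and Case (1)(ii) with $k > \kappa^\pm_{r,s}$) is exactly what will later fail and cause $W$ to \emph{not} be interlocked, consistent with Proposition \ref{interlocked-thm}.
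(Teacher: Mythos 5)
Your proposal is correct and takes essentially the same route as the paper's own proof: apply the interlocked condition to $\mathrm{Soc}(W)$, use simplicity of the socle to force the partner submodule $W^{(2)}$ to be maximal, and then identify $W^{(2)} = \mathrm{Rad}(W)$ via the uniqueness of the maximal submodule, which follows from $W/\mathrm{Rad}(W) \cong \mathrm{Soc}(W)$ being simple (Eqn.\ (\ref{W/Rad=Soc}) of Proposition \ref{interlocked-thm}). Your added justification of why $\mathrm{Rad}(W)$ is the unique maximal submodule is a detail the paper leaves implicit, but it rests on exactly the same structural results.
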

\begin{proof}
    Let $W_1=Soc(W)\subset W$. Then, there exists $W_2\subset W$ such that $W/Soc(W)\cong W_2$ and $W/W_2\cong Soc(W)$.  Since $Soc(W)$ is a simple module $W_2$ must be a maximal submodule of $W$. Hence,  $W_2 \cong Rad(W)$, and in particular, $W/Soc(W)\cong Rad(W)$.
\end{proof}

Using Theorem \ref{interlocked-thm} and Lemma \ref{interlocked-lemma}, we have
\begin{thm}\label{interlocked-thm2}
Let $W = W (c, h, k) =  \mathfrak{L}_0(U(c, h, k))$ be the   $V_{Vir}(c,0)$-module induced from the $A_0(V_{Vir}(c,0))$-module $U(c,h,k)\cong \mathbb{C}[x]/((x-h)^k)$ for $h\in \mathbb{C}$ and $k\in \mathbb{Z}_{> 0}$ with $k \geq 2$. Then, $W(c,h,k)$ is interlocked if and only if one of the following holds:

Case (0) holds, i.e., $T(c,h) = 0$;

Case (1)(ii) holds, (i.e., $t \ \pm1$ and $r \neq s$), and $k \leq  \kappa^\pm_{r,s}$, where $\kappa^\pm_{r,s}$ is defined as in Theorem \ref{degree-d-theorem}.  

Moreover, in these cases when $W$ is interlocked, then it is strongly interlocked.
\end{thm}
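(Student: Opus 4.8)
The plan is to prove the stated equivalence in two directions, and then, in the affirmative cases, to upgrade ``interlocked'' to ``strongly interlocked'' by exhibiting the complete submodule lattice of $W = W(c,h,k)$. For the forward implication, suppose $W$ is interlocked. Then Lemma \ref{interlocked-lemma} gives $W/Soc(W) \cong Rad(W)$, and since $k \geq 2$, Proposition \ref{interlocked-thm} shows that this isomorphism forces either Case (0) or Case (1)(ii) with $k \leq \kappa^\pm_{r,s}$. (For the borderline value $k = 2$ one reads the same conclusion off the proof of Proposition \ref{interlocked-thm}: there $\kappa^\pm_{r,s} \geq 2$ always, so the constraint $k \leq \kappa^\pm_{r,s}$ is automatic in Case (1)(ii), while in Cases (1)(i) and (2) that proof yields $W/Soc(W) \cong Rad(W)$ only when $k = 1$.) This settles the ``only if'' direction.

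For the converse, I would show that in Case (0) and in Case (1)(ii) with $k \leq \kappa^\pm_{r,s}$ the proper nonzero submodules of $W$ are \emph{exactly} the terms of the chain
\[ 0 = W^{(0)} \subset W^{(1)} \subset \cdots \subset W^{(k-1)} \subset W^{(k)} = W, \qquad W^{(j)} := \langle u_1, \dots, u_j \rangle, \]
each isomorphic to $W(c,h,j) = \mathfrak{L}_0(U(c,h,j))$. Here $W^{(1)} = Soc(W)$ and $W^{(k-1)} = Rad(W)$ by Proposition \ref{soc-rad-prop}; in Case (1)(ii) with $k \leq \kappa^\pm_{r,s}$ this uses that $S_{r,s}(\pm 1) u_k \in \langle u_1, \dots, u_{k-1}\rangle$, so that the radical collapses to $\langle u_1, \dots, u_{k-1}\rangle$, as already observed in the proof of Proposition \ref{interlocked-thm}.

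To see that this chain exhausts the lattice, I would argue as follows. Every nonzero submodule $N$ contains the simple socle $\langle u_1 \rangle$, so $N(0) \ni u_1$; since $L_0 - h\,\mathrm{Id}$ acts on $W(0) = U(c,h,k)$ as a single nilpotent Jordan block, the only $L_0$-invariant subspaces of $W(0)$ containing $u_1$ are the $\mathrm{span}\{u_1, \dots, u_j\}$, whence $N(0) = \mathrm{span}\{u_1, \dots, u_j\}$ for some $j$. Then $N = \langle N(0)\rangle = W^{(j)}$: indeed $N \supseteq \langle N(0)\rangle = W^{(j)}$, and $N/W^{(j)}$ is a submodule of $W/W^{(j)} \cong W(c,h,k-j)$ with vanishing degree-zero part, hence is zero, since every nonzero submodule of $W(c,h,k-j)$ meets degree zero. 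The required identifications $W^{(j)} \cong W(c,h,j)$ and $W/W^{(j)} \cong W(c,h,k-j)$ come from the quotient computations in the proof of Proposition \ref{interlocked-thm} together with Theorem \ref{J=Case1ii} (which describes $J(c,h,k)$ precisely when $k \leq \kappa^\pm_{r,s}$), while the fact that every nonzero submodule meets degree zero rests on the nondegeneracy of $\mathfrak{A}^{(k)}_\ell(c,h)$ at the relevant degrees, supplied by Theorem \ref{det-thm}.

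Granting the lattice, the interlocking relations $W^{(j)} \cong W/W^{(k-j)}$ and $W^{(k-j)} \cong W/W^{(j)}$ are exactly the quotient isomorphisms above, so $W$ is interlocked; moreover the $W^{(j)}$ are finite in number, each indecomposable (each $W(c,h,j)$ has the simple socle $\langle u_1\rangle$ by Proposition \ref{soc-rad-prop}, and a module with simple socle cannot decompose), and together with $W^{(1)} = Soc(W)$, $W^{(k-1)} = Rad(W)$ and the interlocking of $W^{(j)}$ with $W^{(k-j)}$ they verify every clause of the definition of strongly interlocked. Hence $W$ is strongly interlocked, completing the ``moreover'' statement. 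The main obstacle is Case (1)(ii): there $J(c,h,k) \neq 0$, so a priori the naive chain need not exhaust the submodules, and the argument genuinely depends on the explicit form of $J(c,h,k)$ from Theorem \ref{J=Case1ii} (valid exactly for $k \leq \kappa^\pm_{r,s}$) and on the radical computation of Proposition \ref{soc-rad-prop}; Case (0), where $W = M(c,h,k)$ and the Shapovalov form is nondegenerate in every degree, is comparatively routine.
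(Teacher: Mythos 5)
Your proof is correct and follows essentially the same route as the paper: the ``only if'' direction via Lemma \ref{interlocked-lemma} and Proposition \ref{interlocked-thm} (including your reading of that proposition's proof to cover the borderline case $k=2$, which the paper handles implicitly), and the ``if'' direction by exhibiting the chain $W^{(j)} = \langle u_1,\dots,u_j\rangle \cong W(c,h,j)$ and computing the quotients $W/W^{(j)} \cong W(c,h,k-j)$, exactly as the paper does. The added value of your write-up is that you actually prove the claim that this chain exhausts the submodule lattice (via the Jordan-block analysis of $N(0)$ together with the fact that nonzero submodules of an induced module meet degree zero), whereas the paper only asserts it. One citation should be corrected: in Case (1)(ii) the ``meets degree zero'' fact cannot rest on Theorem \ref{det-thm}, since there the matrices $\mathfrak{A}^{(m)}_\ell(c,h)$ are genuinely degenerate for $\ell \geq rs$; what you need is Proposition \ref{NulJ-prop}, which gives $J(c,h,m)(\ell) = Ker\, \mathfrak{A}^{(m)}_\ell(c,h)$, so that for any nonzero $v + J(c,h,m) \in W(c,h,m)(\ell)$ there exists $T_\ell \in \mathcal{R}_\ell$ with $T_\ell^\dag v \neq 0$ in $U$, and since $J(c,h,m)(0) = 0$ this element survives in degree zero of the quotient --- i.e., the induced pairing on $M(c,h,m)/J(c,h,m)$ is nondegenerate even though the pairing on $M(c,h,m)$ is not.
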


\begin{proof}
If $T(c,h) = 0$, then the proof follows from Corollary \ref{Case-0-cor}.

If Case (1)(i), Case (1)(ii) with $k > \kappa_{r,s}^\pm$, or Case (2) holds, then by  Propositions \ref{soc-rad-prop},  Theorem \ref{interlocked-thm}, and Lemma \ref{interlocked-lemma}, if $k>1$ then $W(c,h,k)$ is not interlocked. 

It remains to show that $W(c,h,k)$ is interlocked if Case (1)(ii) holds, i.e., $t = \pm 1$, $r\neq s$, and $k \leq \kappa_{r,s}^\pm$, and that $W(c,h,k)$ in this case is in fact strongly interlocked.  In this case, all the submodules of $W(c,h,k)$ are given by  $W^{(j)} \cong W(c,h,j)$ for $1 \leq j \leq k$, and for $j<k$, we have $W(c,h,j)  \cong  (\langle u_1, u_2, \dots, u_{j}\rangle  + J)  / J= (\langle u_1, u_2, \dots, u_{j}, S_{r,s}(\pm1)u_{j + 1}\rangle  + J)  / J$, since $S_{r,s}(\pm1)u_{j+1}$ is equivalent mod $J(c,h,k)$ to a linear combination of elements in $\mathcal{R}_{rs}$ acting on $u_1,\dots, u_{j}$. In fact, in this case  by Theorem \ref{J=Case1ii}
\[J(c,h,k) = 
 \langle S_{r,s}(\pm1)u_n +  \sum_{i = 1}^{n-1} R_{r,s}^{\pm, i}u_{n-i} \; | \; n = 1, \dots, k \rangle.\] 

For fixed $j \in \{ 1,\dots, k\}$, define 
\begin{eqnarray*}
\varphi_j: W/W^{(j)} &\longrightarrow & W^{(k-j)}\\
R_\ell u_{i+j} + J + W^{(j)} & \mapsto & R_\ell u_{i} + J,
\end{eqnarray*}
for $R_\ell \in \mathcal{R}_\ell$, and $i = 1, \dots, k-j$, with $\varphi_j$ extended linearly.  Then  
$\varphi_j$ is well defined since if $R_\ell u_{i+j} + J + W^{(j)} = T_\ell u_{i+j} \in J + W^{(j)}$, for some $R_\ell, T_\ell \in \mathcal{R}_\ell$, then $R_\ell u_{i+j} - T_\ell u_{i+j} + J  \in W^{(j)}$, and thus $\varphi_j((R_\ell - T_\ell)u_{i+j} + J + W^{(j)}) = \varphi_j(0 + J + W^{(j)}) = 0 + J$.

By definition $\varphi_j$ is surjective.  

To show that $\varphi_j$ is injective, we note that if 
$\varphi_j(\sum_{i = 1}^{k-j} R_{\ell,i} u_{i+j} + J + W^{(j)}) = 0 + J$ for some $R_{\ell,i} \in \mathcal{R}_\ell$, then $\sum_{i = 1}^{k-j} R_{\ell,i}u_i \in J$.  By Theorem \ref{J=Case1ii} this implies that $\sum_{i = 1}^{k-j} R_{\ell,i} u_{i}   \in \langle S_{r,s}(\pm1) u_n + \sum_{i = 1}^{n-1} R^{\pm, i}_{r,s} u_{n-i} \; | \; n = 1,\dots, k \rangle$.  If $\ell<rs$ this implies $\sum_{i = 1}^{k-j} R_{\ell,i} u_{i}$.  By linear independence, this implies $R_{\ell,i} = 0$ for each $i = 1,\dots, k-j$, so that $\sum_{i = 1}^{k-j} R_{\ell,i} u_{i+j} + J + W^{(j)} = 0 + J + W^{(j)}$.  If $\ell \geq rs$, then letting $m$ be the largest $m \in \{1,\dots, k-j\}$ such that $R_{m,i} \neq 0$, we have 
$\sum_{i = 1}^{k-j} R_{\ell,i} u_{i} = \sum_{i = 1}^{m} R_{\ell,i} u_{i}  = T_{\ell - rs}(S_{r,s}(\pm1)u_{m} + \sum_{i = 1}^{m-1} R_{r,s}^{\pm,i} u_{m-i})$ for some $T_{\ell-rs} \in \mathcal{R}_{\ell-rs}$.  But this implies that 
\begin{eqnarray*}
 \sum_{i = 1}^{k-j} R_{\ell,i} u_{i+j} + J + W^{(j)} &=& T_{\ell - rs}(S_{r,s}(\pm1)u_{m+j} + \sum_{i=1}^{m-1}R_{r,s}^{\pm, i} u_{m+j-i}) + J + W^{(j)}\\
 &=& -T_{\ell -rs}( \sum_{i = m}^{m+j-1} R_{r,s}^{\pm,i} u_{m + j - i} ) + J 
 + (\langle u_1, \dots, u_j\rangle + J)/J \\
 &=& 0 + J +  (\langle u_1, \dots, u_j\rangle + J)/J\\
 &=& 0 + J + W^{(j)},
\end{eqnarray*}
since $-T_{\ell -rs}( \sum_{i = m}^{m+j-1} R_{r,s}^{\pm,i} u_{m + j - i} ) \in (\langle u_1, \dots, u_j\rangle + J)/J$,
proving injectivity.  

Replacing $j$ with $k-j$ above, we also have that $W/W^{(k-j)} \cong W^{(j)}$ proving that $W = W(c,h,k)$ is interlocked for all $1\leq k \leq \kappa_{r,s}^\pm$.

Since  $W^{(j)}/W^{(j-1)}$ is generated by $(\langle u_j\rangle + J)/J$ for $1 \leq j \leq k \leq \kappa_{r,s}^\pm$, it is simple and isomrophic to the unique simple module with conformal weight $h$, i.e. $W^{(j)}/W^{(j-1)} \cong L(c,h) \cong W^{(1)}$ for all $j = 1,\dots, k$.  Therefore $W = W(c,h,k)$ is strongly interlocked.  
\end{proof}

By Proposition \ref{matrix-form-prop}, in the cases when $W(c,h,k)$ is interlocked, and thus strongly interlocked as proven above,  there exists a strongly interlocked family of bases in which any weight-preserving module map restricted to each graded component of  $W(c,h,k)$, is of the form of Eqn.\ (\ref{Our-decomp}), and thus it is possible for there to be well-defined graded pseudo-traces for $W(c,h,k)$.  
 
\begin{rem}\label{M-bar-remark}
We note here that following Remark \ref{Vir-M=Mbar}, the universal $V_{Vir}(c,0)$-module induced from the $A_0(V_{Vir}(c,0))$-module $U(c,h,k)$, as defined in Eqn.\ (\ref{define-M-bar}), satisfies $\overline{M}_0(U(c,h,k)) = M_0(c,h,k) = M(c,h,k)$.  Thus these are the interlocked modules induced from level zero for $(c,h)$ in  Case (0), but they are not interlocked in Cases (1) and (2).  For instance,  $Soc(\overline{M}_0(U(c,h,k))) = Soc(M(c,h)) = 0$ for $(c,h)$ in Case (1) or (2),  whereas, by definition $Rad(\overline{M}_0(U(c,h,k)) \neq \overline{M}_0(U(c,h,k)) = \overline{M}_0(U(c,h,k))/Soc(\overline{M}_0(U(c,h,k)))$.  
\end{rem}

\section{Graded pseudo-traces for interlocked $V_{Vir}$-modules induced from the level zero Zhu algebra}\label{graded-pseudo-trace-section}

In this section we first recall some of the graded-traces for Verma modules for $V_{Vir}(c,0)$.  Then we show that graded pseudo traces are well defined when $W(c,h,k)$ is strongly interlocked as classified in Theorem \ref{interlocked-thm2}, i.e., in Case (0) and Case (1)(ii), and
give some key graded pseudo-traces for these $V_{Vir}(c,0)$-modules. 

\subsection{Graded traces for Verma modules}

Recall from Eqn. (\ref{grade-M}) that 
\[
M(c,h)=\coprod_{\ell \in \mathbb{Z}_{\geq 0}} M(c,h)(\ell)
\]
with  $M(c,h)(\ell) = M(c,h)_{\ell + h}$ the $L_0$  eigenspace with eigenvalue $\ell + h$.
From Eqn. (\ref{p(d)}) we know that the vectors of the form 
\begin{align*}
L_{-j_1}\cdots L_{-j_s}\vac_{c,h}, \ \ \quad \mbox{for  $j_1\geq\cdots \geq j_s>0$, $s\geq 0$, and $j_1+\cdots +j_s=n$}
\end{align*}
constitute a $\mathbb{C}$-basis for $M(c,h)(\ell)$ and therefore, $\dim M(c,h)(\ell)=\mathfrak{p}(\ell)$, the number of partitions of $\ell$. 
It follows that the graded dimension of the Verma module $M(c,h)$ is given by
\begin{equation}\label{M(c,h)-trace}
Z_{M(c,h)}({\bf 1}_{c,0}, \tau) \ = \ q^{-c/24}\sum_{\ell \in \mathbb{Z}_{\geq 0}} \mathfrak{p}(\ell)q^\ell q^h \ = \ q^{(1-c)/24 +h}\eta(q)^{-1},
\end{equation}
where $\eta(q)$ is the Dedekind $\eta$-function given by Eqn.\ (\ref{eta}).

In fact, as shown in \cite{FF}, we have the following graded dimensions, also called graded traces, for $L(c,h) = M(c,h)/T(c,h)$:

Case (0):  If $M(c,h) = L(c,h)$, i.e., $M(c,h)$ has no singular vectors and $T(c,h) = 0$,  then the graded trace is given by Eqn. \eqref{M(c,h)-trace}.

Case (1):
If $T(c,h) = \langle S_{r,s}(t) {\bf 1}_{c,h} \rangle$, so that 
\[L(c,h) = M(c,h)/T(c,h) = M(c,h)/\langle S_{r,s}(t){\bf 1}_{c,h} \rangle,\]  
with $\langle S_{r,s}(t){\bf 1}_{c,h} \rangle\cong{M(c, h+d)}$, we have that  
\begin{eqnarray*}\label{L(c,h)-one-singular-trace}
Z_{L(c,h)}({\bf 1}_{c,0}, \tau)  &= &Z_{M(c,h)}({\bf 1}_{c,0}, \tau) - q^d Z_{M(c, h + d)}({\bf 1}_{c,0}, \tau) \nonumber \\
&= & q^{(1-c)/24 + h}(1-q^d) \eta(q)^{-1}. 
\end{eqnarray*}

Case (2):  If $T(c,h) = \langle S_{r,s}(t) {\bf 1}_{c,h}, S_{r',s'}(t) {\bf 1}_{c,h}\rangle$ for some $rs=d$ and $r's' = d'$ with $d<d'$, then $L(c,h) = M(c,h)/\langle S_{r,s}(t){\bf 1}_{c,h}, S_{r',s'}(t){\bf 1}_{c,h}\rangle$  and $\langle S_{r,s}(t){\bf 1}_{c,h}\rangle \cong M(c,h+d)$, $\langle S_{r',s'}(t){\bf 1}_{c,h}\rangle\cong M(c, h+d')$ with $\langle S_{r,s}(t){\bf 1}_{c,h}, S_{r',s'}(t){\bf 1}_{c,h}\rangle\cong M(c,h+d)\oplus M(c,h+d')$. In this case, the graded traces are more complicated to express, and we refer the reader to pp.\ 485 of \cite{FF}. In fact, in the next section, we will show that the indecomposable reducible $V_{Vir}(c,0)$-modules $W(c,h,k)$ induced from the level zero Zhu algebra are not interlocked and therefore there are no graded pseudo-traces for this case, i.e., Case (2).

\subsection{Graded pseudo-traces for Case (0): $M(c,h) = L(c,h)$}

We are now ready to compute the graded pseudo-traces in Case (0), and in this setting either by Theorem \ref{extra-conditions-theorem} or by Corollary \ref{Case-0-cor} all the  $V_{Vir}$-modules $W(c,h,k)$ induced from the level zero Zhu algebra are strongly interlocked and  by Theorem \ref{extra-conditions-theorem} have well-defined graded pseudo-traces.

Thus fix $W = W(c,h, k) = \mathfrak{L}_0(U(c,h,k))$, such that  $M(c,h)$ is irreducible, i.e., the maximal submodule  $T(c,h) = 0$.  As with the Heisenberg vertex operator algebra, for convenience we organize our calculation of the graded pseudo-traces by degree using the fact that the weight spaces of $W(c,h,k)$ are related to the degree spaces by a shift, namely $W(c,h,k)_{\ell + h} = W(c,h,k)(\ell)$ for $\ell \in \mathbb{Z}_{\geq 0}$.

In this case, since $W(c,h,k) = M(c,h,k)$ we have that $\dim W(\ell) = k \mathfrak{p}$ and expressing $L_0|_{W(\ell)}$ in a  strongly interlocked  basis gives the decomposition into a $k\mathfrak{p}(\ell) \times k\mathfrak{p}(\ell)$ matrix of the form of Eqn.\ (\ref{Miyamoto-decomp}) with $A$ a $\mathfrak{p}(\ell)\times \mathfrak{p}(\ell)$ matrix, and thus, since $W$ is interlocked, the matrix $B$ must be $\mathfrak{p}(\ell) \times \mathfrak{p}(\ell)$. 

For convenience, we let $S_\ell = L^{S}_0|_{W(\ell)}$ and
$N_\ell = L^{N}_0|_{W(\ell)}$  denote the semisimple and nilpotent parts of $L_0|_{W(\ell)}$, respectively, so that from Eqn.\ (\ref{Jordan-block}) we have that, for instance, $S_0 = hI_k$ and $N_0 = D_{k,1}$.  More generally, $L_0|_{W(\ell)}$ is a $k\mathfrak{p}(\ell) \times k\mathfrak{p}(\ell)$ matrix with $S_\ell = (h + \ell)I_{k\mathfrak{p}(\ell)}$ and $N_\ell = D_{k\mathfrak{p}(\ell), \mathfrak{p}(\ell)}$.  

Then analyzing 
$q^{N_\ell}$, we have 
\[q^{N_\ell} =  \sum_{j \in \mathbb{Z}_{\geq 0}} \frac{1}{j!}(N_\ell)^j (\log q)^j = \sum_{j \in \mathbb{Z}_{\geq 0}} \frac{1}{j!} (D_{k\mathfrak{p}(\ell),\mathfrak{p}(\ell)})^j (\log q)^j = \sum_{j \in \mathbb{Z}_{\geq 0}} \frac{1}{j!}D_{k\mathfrak{p}(\ell),j\mathfrak{p}(\ell)} (\log q)^j. \]
Then the $B$ matrix for $q^{N_\ell}$ is the $\mathfrak{p}(\ell) \times \mathfrak{p}(\ell)$ matrix in the upper right corner, and thus consists of any terms involving $D_{k\mathfrak{p}(\ell), i}$ for $(k-1)\mathfrak{p}(\ell) \leq i \leq k\mathfrak{p}(\ell) - 1$.  There is just one such term, namely $\frac{1}{(k-1)!} D_{k \mathfrak{p}(\ell), (k-1)\mathfrak{p}(\ell)} (\log q)^{k-1}$, and thus $B$ is the diagonal matrix $B = \frac{1}{(k-1)!} I_{ \mathfrak{p}(\ell)} (\log q)^{k-1}$ and 
\[\mathrm{pstr}\, q^{N_\ell} = \mathrm{tr}(B) = \frac{1}{(k-1)!} (\log q)^{k-1} \mathfrak{p}(\ell).  \]
Thus 
\begin{eqnarray*}
\lefteqn{\mathrm{pstr}_{W(c,h,k)}( {\bf 1}_{c,0}, \tau) \ = \   \sum_{\ell \in \mathbb{Z}_{\geq 0}} \mathrm{pstr} (q^{N_\ell})  q^{S_\ell - c/24} \ 
= \  q^{h  - c/24}  \sum_{\ell \in \mathbb{Z}_{\geq 0}}\mathrm{pstr} ( q^{N_\ell} )  q^\ell } \\
&=& q^{h  - c/24}  \sum_{\ell \in \mathbb{Z}_{\geq 0}} \frac{1}{(k-1)!} (\log q)^{k-1} \mathfrak{p}(\ell) q^\ell  \ = \  q^{h  - c/24} \frac{1}{(k-1)!} (\log q)^{k-1} \sum_{\ell \in \mathbb{Z}_{\geq 0}} \mathfrak{p}(\ell) q^\ell\\
&=& q^{(1-c)/24 + h } \frac{1}{(k-1)!} (\log q)^{k-1} \eta(q)^{-1} \ = \   \frac{1}{(k-1)!} (\log q)^{k-1} Z_{M(c,h)}( {\bf 1}_{c,0}, \tau) \\
&=& \frac{1}{(k-1)!} (\log q)^{k-1} Z_{L(c,h)}( {\bf 1}_{c,0}, \tau).
\end{eqnarray*} 

This, along with the logarithmic derivative property, gives
\begin{thm}
If $M(c,h) = L(c,h)$ and $W(c,h,k) = \mathfrak{L}_0(U(c,h,k))$ is the $V_{Vir}(c,0)$-module induced from the $A_0(V_{Vir}(c,0))$-module $U(c,h,k) = \mathbb{C}[x]/((x - h)^k)$, then 
\begin{eqnarray*}
\mathrm{pstr}_{W(c,h,k)}({\bf 1}_{c,0}, \tau) &=& q^{(1-c)/24 + h } \frac{1}{(k-1)!} (\log q)^{k-1} \eta(q)^{-1} \\
&=&   \frac{1}{(k-1)!} (\log q)^{k-1} Z_{M(c,h)}({\bf 1}_{c,0},\tau),\nonumber 
\end{eqnarray*}
and this graded pseudo-trace satisfies the logarithmic derivative property
\begin{eqnarray*}
\mathrm{pstr}_{W(c,h,k)} (\omega, \tau) &=& q^{-c/24} \frac{1}{2\pi i} \frac{d}{d\tau} q^{c/24} \mathrm{pstr}_{W(c,h,k)} ({\bf 1}_{c,0}, \tau)  \\
&=& q^{-c/24} q\frac{d}{dq} q^{c/24} \mathrm{pstr}_{W(c,h,k)} ({\bf 1}_{c,0}, \tau),
\end{eqnarray*}
where 
\begin{eqnarray*}
\lefteqn{\mathrm{pstr}_{W(c,h,k)} (\omega, \tau) }\\
&=& q^{h - c/24}  \sum_{\ell \in \mathbb{Z}_{\geq 0}} \mathfrak{p}(\ell) q^\ell \Big( \frac{(\ell + h)}{(k-1)!} (\log q)^{k-1}  + \frac{1}{(k-2)! }  (\log q)^{k-2}  \Big)\\
&=& q^{h - c/24}  \frac{1}{(k-2)!} (\log q)^{k-2} \Big( \frac{\log q}{k-1} \Big(\sum_{\ell \in \mathbb{Z}_{\geq 0}} (\ell + h)\mathfrak{p}(\ell) q^\ell \Big)+ q^{1/24} \eta(q)^{-1} \Big).
\end{eqnarray*}
\end{thm}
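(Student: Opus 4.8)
The plan is to first establish that graded pseudo-traces are well defined in Case (0), then compute the vacuum pseudo-trace directly from the matrix structure of $L_0$ in a strongly interlocked basis, and finally obtain the conformal pseudo-trace by differentiating via the logarithmic derivative property.

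First I would invoke the structural results already in place. Since we are in Case (0) with $T(c,h) = 0$, Theorem \ref{interlocked-thm2} shows that $W(c,h,k)$ is strongly interlocked, so by Proposition \ref{matrix-form-prop} and Theorem \ref{log-thm} the graded pseudo-trace $\mathrm{pstr}_{W(c,h,k)}(v,\tau)$ is well defined for all $v$ and is linear, symmetric, and satisfies the logarithmic derivative property. Moreover, by Corollary \ref{J-theorem} (Case (0)) we have $J(c,h,k) = 0$, whence $W(c,h,k) = M(c,h,k)$ and $\dim W(c,h,k)(\ell) = k\mathfrak{p}(\ell)$ for each $\ell$.

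Next I would compute the vacuum pseudo-trace degree by degree. Choosing a strongly interlocked basis for $W(c,h,k)(\ell)$ ordered as in Eqn. \eqref{basis}, the restriction of $L_0$ to degree $\ell$ splits into its semisimple part $S_\ell = (h+\ell)I_{k\mathfrak{p}(\ell)}$ and nilpotent part $N_\ell = D_{k\mathfrak{p}(\ell),\mathfrak{p}(\ell)}$ in the $D_{m,j}$ notation; this follows from the Jordan structure \eqref{Jordan-block} of $L_0$ on the degree zero space together with the fact that $L_0$ is weight preserving. Using $D_{m,i}D_{m,j} = D_{m,i+j}$ one obtains $q^{N_\ell} = \sum_{j\geq 0}\frac{1}{j!}D_{k\mathfrak{p}(\ell),j\mathfrak{p}(\ell)}(\log q)^j$, and the upper-right $\mathfrak{p}(\ell)\times\mathfrak{p}(\ell)$ block $B$ of this matrix is picked out by the single term $j = k-1$, giving $\mathrm{pstr}\, q^{N_\ell} = \mathrm{tr}(B) = \frac{1}{(k-1)!}(\log q)^{k-1}\mathfrak{p}(\ell)$. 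Summing over $\ell$ and using $\sum_{\ell\geq 0}\mathfrak{p}(\ell)q^\ell = q^{1/24}\eta(q)^{-1}$ from \eqref{eta} then yields the stated expression $q^{(1-c)/24 + h}\frac{1}{(k-1)!}(\log q)^{k-1}\eta(q)^{-1}$, which one recognizes as $\frac{1}{(k-1)!}(\log q)^{k-1}Z_{M(c,h)}({\bf 1}_{c,0},q) = \frac{1}{(k-1)!}(\log q)^{k-1}Z_{L(c,h)}({\bf 1}_{c,0},q)$.

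Finally, for the conformal pseudo-trace I would apply Theorem \ref{log-thm} (3) to write $\mathrm{pstr}_{W(c,h,k)}(\omega,\tau) = q^{-c/24}q\frac{d}{dq}\bigl(q^{c/24}\mathrm{pstr}_{W(c,h,k)}({\bf 1}_{c,0},\tau)\bigr)$, and then differentiate the explicit series term by term, using $q\frac{d}{dq}q^{\ell+h} = (\ell+h)q^{\ell+h}$ and $q\frac{d}{dq}(\log q)^{k-1} = (k-1)(\log q)^{k-2}$. The product rule produces exactly the two displayed sums, and factoring out $\frac{1}{(k-2)!}(\log q)^{k-2}$ gives the final rearranged form. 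The calculation is otherwise routine; the only genuinely delicate points are the identification of the $B$ block of $q^{N_\ell}$ (this is where the pseudo-trace differs from an ordinary trace, so verifying that $j = k-1$ is the \emph{unique} contribution to the upper-right corner is the crux) and the careful bookkeeping in the product rule. I expect no serious obstacle beyond these verifications.
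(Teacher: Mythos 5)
Your proposal is correct and follows essentially the same route as the paper's own proof: Case (0) gives $J(c,h,k)=0$ so $W(c,h,k)=M(c,h,k)$, the nilpotent part of $L_0$ at degree $\ell$ is $N_\ell = D_{k\mathfrak{p}(\ell),\mathfrak{p}(\ell)}$ in a strongly interlocked basis, the unique $j=k-1$ term of $q^{N_\ell}$ lands in the upper-right block $B$ giving $\mathrm{pstr}\,q^{N_\ell} = \frac{1}{(k-1)!}(\log q)^{k-1}\mathfrak{p}(\ell)$, and the conformal pseudo-trace then follows from the logarithmic derivative property of Theorem \ref{log-thm}(3). Your identification of the uniqueness of the $j=k-1$ contribution as the crux is exactly where the paper's argument also carries its weight.
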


\subsection{Graded pseudo-traces for Case (1)(ii) for  $k \leq \kappa^{\pm}_{r,s}$. }\label{Case(1)(ii)-subsection}

 For the case when $t = \pm 1$, i.e., $c = 1$ or $25$, and $h = h_{r,s}$ is such that $r \neq s$,  by Theorem \ref{interlocked-thm2} if $k \leq \kappa_{r,s}^\pm$, the module $W(c,h,k)$ is strongly interlocked with $Soc(W(c,h,k)) = W^{(1)}(c,h,k) = L(c,h,k)$ the unique irreducible $V_{Vir}$-module with conformal weight $h = h_{r,s}$ and degree zero space $\mathbf{1}_{c,h}$. Thus by Theorem \ref{Virasoro-exceptional-theorem}, $W(c,h,k)$ has well-defined graded pseudo-traces.   Thus fix such a $k\leq \kappa_{r,s}^\pm$.

In this case, by Theorem \ref{J=Case1ii}, $\dim J(c,h,k)(\ell) = k\mathfrak{p}_{rs}(\ell)$, where $\mathfrak{p}_{rs}(\ell)$ is the number of partitions of $\ell$ that contain at least one $d=rs$, or equivalently, contain at least $d$ ones.  Therefore $\dim W(c,h,k)(\ell) = k(\mathfrak{p}(\ell) - \mathfrak{p}_{rs}(\ell))$, and $\dim Soc(W) (\ell) = (\mathfrak{p}(\ell) - \mathfrak{p}_{rs}(\ell)) = \dim (W/Rad(W))(\ell)$.

Again, we let $S_\ell = L^{S}_0|_{W(\ell)}$ and
$N_\ell = L^{N}_0|_{W(\ell)}$  denote the semisimple and nilpotent parts of $L_0|_{W(\ell)}$, respectively, so that from Eqn.\ (\ref{Jordan-block}) we have that, for instance, $S_0 = hI_k$ and $N_0 = D_{k,1}$. More generally, since $W$ is strongly interlocked, there exists a strongly interlocked family of basis such that for each $\ell \in \mathbb{N}$, the matrix representation of $L_0|_{W(\ell)}$ with respect to the basis at degree $\ell$ is a $k(\mathfrak{p}(\ell) - \mathfrak{p}_{rs}(\ell)) \times k(\mathfrak{p}(\ell) - \mathfrak{p}_{rs}(\ell))$ matrix with $S_\ell = (h + \ell)I_{k(\mathfrak{p} (\ell) - \mathfrak{p}_{rs}(\ell))}$ and $N_\ell = D_{k(\mathfrak{p}(\ell) - \mathfrak{p}_{rs}(\ell)), \mathfrak{p}(\ell) - \mathfrak{p}_{rs}(\ell)}$.  

Then analyzing 
$q^{N_\ell}$, we have 
\begin{eqnarray*}
q^{N_\ell} &=&  \sum_{j \in \mathbb{Z}_{\geq 0}} \frac{1}{j!}(N_\ell)^j (\log q)^j \ = \ \sum_{j \in \mathbb{Z}_{\geq 0}} \frac{1}{j!} (D_{k(\mathfrak{p}(\ell) - \mathfrak{p}_{rs}(\ell)),\mathfrak{p}(\ell) - \mathfrak{p}_{rs}(\ell)})^j (\log q)^j \\
&=& \sum_{j \in \mathbb{Z}_{\geq 0}} \frac{1}{j!}D_{k(\mathfrak{p}(\ell) - \mathfrak{p}_{rs}(\ell)),j(\mathfrak{p}(\ell) - \mathfrak{p}_{rs}(\ell))} (\log q)^j. 
\end{eqnarray*}
Then the $B$ matrix for $q^{N_\ell}$ is the $(\mathfrak{p}(\ell)- \mathfrak{p}_{rs}(\ell)) \times (\mathfrak{p}(\ell) - \mathfrak{p}_{rs}(\ell))$ matrix in the upper right corner, and thus consists of any terms involving 
$D_{k(\mathfrak{p}(\ell) - \mathfrak{p}_{rs}(\ell)), i}$ for $(k-1)(\mathfrak{p}(\ell)- \mathfrak{p}_{rs}(\ell))\leq i \leq k(\mathfrak{p}(\ell) - \mathfrak{p}_{rs}(\ell)) - 1$. There is just one such term, namely $\frac{1}{(k-1)!} D_{k (\mathfrak{p}(\ell) - \mathfrak{p}_{rs}(\ell)), (k-1)(\mathfrak{p}(\ell) - \mathfrak{p}_{rs}(\ell))} (\log q)^{k-1}$, and thus $B$ is the diagonal matrix $B = \frac{1}{(k-1)!} I_{ \mathfrak{p}(\ell) - \mathfrak{p}_{rs}(\ell)} (\log q)^{k-1}$ and 
\[\mathrm{pstr}\, q^{N_\ell} = \mathrm{tr}(B) = \frac{1}{(k-1)!} (\log q)^{k-1} (\mathfrak{p}(\ell) - \mathfrak{p}_{rs}(\ell)).  \]

Thus 
\begin{eqnarray*}
\mathrm{pstr}_{W(c,h,k)}({\bf 1}_{c,0}, \tau)  &=&   \sum_{\ell \in \mathbb{Z}_{\geq 0}} \mathrm{pstr} (q^{N_\ell})  q^{S_\ell - c/24} \ 
= \  q^{h  - c/24}  \sum_{\ell \in \mathbb{Z}_{\geq 0}}\mathrm{pstr} ( q^{N_\ell} )  q^\ell  \\
&=& q^{h  - c/24}  \sum_{\ell \in \mathbb{Z}_{\geq 0}} \frac{1}{(k-1)!} (\log q)^{k-1} (\mathfrak{p}(\ell) - \mathfrak{p}_{rs}(\ell)) q^\ell  \\
&=&  q^{h  - c/24} \frac{1}{(k-1)!} (\log q)^{k-1} \sum_{\ell \in \mathbb{Z}_{\geq 0}} (\mathfrak{p}(\ell) - \mathfrak{p}_{rs}(\ell)) q^\ell\\
&=& q^{(1-c)/24 + h  } \frac{1}{(k-1)!} (\log q)^{k-1} (1-q^{rs}) \eta(q)^{-1} \\
&=& \frac{1}{(k-1)!} (\log q)^{k-1} Z_{L(c,h)}( {\bf 1}_{c,0}, \tau).
\end{eqnarray*} 

Therefore we have the following  Corollary to Theorems \ref{interlocked-thm}, \ref{Virasoro-exceptional-theorem}, and \ref{log-thm}.
\begin{cor}  
Let $t = \pm 1$, so that $c = 25$ or $1$, respectively, let $h = h_{r,s}$ such that $r \neq s$,  and $k \leq \kappa_{r,s}^\pm$, where $\kappa_{r,s}^\pm$ is as defined in Theorem \ref{degree-d-theorem}.   Let $W(c,h,k) = \mathfrak{L}_0(U(c,h,k))$ be the $V_{Vir}(c,0)$-module induced from the $A_0(V_{Vir}(c,0))$-module $U(c,h,k) = \mathbb{C}[x]/((x-h)^k)$.  Then $W(c,h,k)$ is strongly interlocked and  has well-defined graded pseudo-traces.  For instance
\begin{eqnarray*}
\mathrm{pstr}_{W(c,h,k)}({\bf 1}_{c,0}, \tau) &=&  \sum_{\ell \in \mathbb{Z}_{\geq 0}} \mathrm{pstr} (L_0|_{W(\ell)}  \, q^{N_\ell})  q^{S_\ell - c/24} \\ 
&=& q^{(1-c)/24 + h}(1-q^{rs})\frac{1}{(k-1)!} (\log q)^{k-1} \eta(q)^{-1}\\
&=& \frac{1}{(k-1)!} (\log q)^{k-1} Z_{L(c,h)}({\bf 1}_{c,0}, \tau).
\end{eqnarray*}

Furthermore, this graded pseudo-trace satisfies the logarithmic derivative property.
\end{cor}

\section{Conclusions and Future Work}

We have defined the notion of strongly interlocked indecomposable generalized module for a vertex operator algebra $V$, and shown that the notion of graded pseudo-trace for such modules when well defined under changes of strongly interlocked bases  gives a symmetric linear operator that satisfies the logarithmic derivative property.

We have given examples of two settings in which strongly interlocked modules have well-defined graded pseudo-traces and applied these results to the Hiesenberg and Virasor vertex operator algebras. We have given a complete classification of  strongly interlocked indecomposable modules for all modules of the Heisenberg vertex operator algebras and for all modules induced from level zero for the Virasoro algebra.  In particular, we have shown that all of the indecomposable modules for the Heisenberg vertex operator algebras are strongly interlocked, have well-defined graded pseudo-traces, and we have calculated some of the graded pseudo-traces. 

For the universal Virasoro vertex operator algebras, we have shown the following:

In Case (0), all the modules $W(c,h,k)$ induced from the level zero Zhu algebra are strongly interlocked and have well-defined graded pseudo-traces. We have calculated some key graded pseudo-traces.

In Case (1)(i) and Case (2) with $k>1$, and in Case (1)(ii) with $k > \kappa_{r,s}^\pm$,  we have shown that the modules $W(c,h,k)$ induced from the level zero Zhu algebra are not interlocked and thus graded pseudo-traces as defined in this paper are not well defined.

In Case (1)(ii) when $k \leq \kappa_{r,s}^{\pm}$, we have shown the modules $W(c,h,k)$ induced from the level zero Zhu algebra are strongly interlocked and have well-defined graded pseudo-traces, and we calculated some of their graded pseudo-traces.

In future work, we further study the properties of  graded pseudo-traces  for interlocked modules for vertex operator algebras as well as other trace-like $q$-series \cite{BOHY}, and calculate more general graded pseudo-traces for the Heisenberg and Virasoro vertex operator algebras. We also plan to 
systematically study and classify $V_{Vir}(c,0)$-modules induced by higher level Zhu algebras, such as the $V_{Vir}(c,0)$-modules induced from the level one Zhu algebra that are not induced at level zero discussed in \cite{BVY, BVY-Virasoro}.  This includes further use and development of techniques such as those we have introduced and applied in this paper for studying indecomposable $V_{Vir}(c,0)$-modules. We plan to use this work to systematically study graded pseudo-traces for general $C_1$-cofinite vertex operator algebras and the categorical structures that arise from the corresponding classes of modules.


\end{document}